\title[Zariski density of the monodromy groups]{The monodromy groups of Dolgachev's CY moduli spaces are Zariski dense}
\author[Mao Sheng]{Mao Sheng}
\author[Jinxing Xu]{Jinxing Xu}
\email{msheng@ustc.edu.cn}\email{xujx02@ustc.edu.cn}
\address{School of Mathematical Sciences,
University of Science and Technology of China, Hefei, 230026, China}
\author[Kang Zuo]{Kang Zuo}
\email{zuok@uni-mainz.de}
\address{Institut f\"{u}r  Mathematik, Universit\"{a}t
Mainz, Mainz, 55099, Germany}
\begin{document}
\theoremstyle{plain}
\newtheorem{thm}{Theorem}[section]
\newtheorem{theorem}[thm]{Theorem}
\newtheorem{lemma}[thm]{Lemma}
\newtheorem{corollary}[thm]{Corollary}
\newtheorem{proposition}[thm]{Proposition}
\newtheorem{addendum}[thm]{Addendum}
\newtheorem{variant}[thm]{Variant}
\theoremstyle{definition}
\newtheorem{construction}[thm]{Construction}
\newtheorem{notations}[thm]{Notations}
\newtheorem{question}[thm]{Question}
\newtheorem{problem}[thm]{Problem}
\newtheorem{remark}[thm]{Remark}
\newtheorem{remarks}[thm]{Remarks}
\newtheorem{definition}[thm]{Definition}
\newtheorem{claim}[thm]{Claim}
\newtheorem{assumption}[thm]{Assumption}
\newtheorem{assumptions}[thm]{Assumptions}
\newtheorem{properties}[thm]{Properties}
\newtheorem{example}[thm]{Example}
\newtheorem{conjecture}[thm]{Conjecture}
\numberwithin{equation}{thm}

\newcommand{\sA}{{\mathcal A}}
\newcommand{\sB}{{\mathcal B}}
\newcommand{\sC}{{\mathcal C}}
\newcommand{\sD}{{\mathcal D}}
\newcommand{\sE}{{\mathcal E}}
\newcommand{\sF}{{\mathcal F}}
\newcommand{\sG}{{\mathcal G}}
\newcommand{\sH}{{\mathcal H}}
\newcommand{\sI}{{\mathcal I}}
\newcommand{\sJ}{{\mathcal J}}
\newcommand{\sK}{{\mathcal K}}
\newcommand{\sL}{{\mathcal L}}
\newcommand{\sM}{{\mathcal M}}
\newcommand{\sN}{{\mathcal N}}
\newcommand{\sO}{{\mathcal O}}
\newcommand{\sP}{{\mathcal P}}
\newcommand{\sQ}{{\mathcal Q}}
\newcommand{\sR}{{\mathcal R}}
\newcommand{\sS}{{\mathcal S}}
\newcommand{\sT}{{\mathcal T}}
\newcommand{\sU}{{\mathcal U}}
\newcommand{\sV}{{\mathcal V}}
\newcommand{\sW}{{\mathcal W}}
\newcommand{\sX}{{\mathcal X}}
\newcommand{\sY}{{\mathcal Y}}
\newcommand{\sZ}{{\mathcal Z}}
\newcommand{\A}{{\mathbb A}}
\newcommand{\B}{{\mathbb B}}
\newcommand{\C}{{\mathbb C}}
\newcommand{\D}{{\mathbb D}}
\newcommand{\E}{{\mathbb E}}
\newcommand{\F}{{\mathbb F}}
\newcommand{\G}{{\mathbb G}}
\newcommand{\HH}{{\mathbb H}}
\newcommand{\I}{{\mathbb I}}
\newcommand{\J}{{\mathbb J}}
\renewcommand{\L}{{\mathbb L}}
\newcommand{\M}{{\mathbb M}}
\newcommand{\N}{{\mathbb N}}
\renewcommand{\P}{{\mathbb P}}
\newcommand{\Q}{{\mathbb Q}}
\newcommand{\R}{{\mathbb R}}
\newcommand{\SSS}{{\mathbb S}}
\newcommand{\T}{{\mathbb T}}
\newcommand{\U}{{\mathbb U}}
\newcommand{\V}{{\mathbb V}}
\newcommand{\W}{{\mathbb W}}
\newcommand{\X}{{\mathbb X}}
\newcommand{\Y}{{\mathbb Y}}
\newcommand{\Z}{{\mathbb Z}}
\newcommand{\id}{{\rm id}}
\newcommand{\rank}{{\rm rank}}
\newcommand{\END}{{\mathbb E}{\rm nd}}
\newcommand{\End}{{\rm End}}
\newcommand{\Hom}{{\rm Hom}}
\newcommand{\Hg}{{\rm Hg}}
\newcommand{\tr}{{\rm tr}}
\newcommand{\Sl}{{\rm Sl}}
\newcommand{\Gl}{{\rm Gl}}
\newcommand{\Cor}{{\rm Cor}}
\newcommand{\Aut}{\mathrm{Aut}}
\newcommand{\Sym}{\mathrm{Sym}}
\newcommand{\ModuliCY}{\mathfrak{M}_{CY}}
\newcommand{\HyperCY}{\mathfrak{H}_{CY}}
\newcommand{\ModuliAR}{\mathfrak{M}_{AR}}
\newcommand{\Modulione}{\mathfrak{M}_{1,n+3}}
\newcommand{\Modulin}{\mathfrak{M}_{n,n+3}}
\newcommand{\Gal}{\mathrm{Gal}}
\newcommand{\Spec}{\mathrm{Spec}}
\newcommand{\Jac}{\mathrm{Jac}}

\newcommand{\proofend}{\hspace*{13cm} $\square$ \\}
\maketitle

\begin{abstract}
Let $\mathcal{M}_{n,2n+2}$ be the coarse moduli space of CY manifolds arising from a crepant resolution of double covers of $\P^n$ branched along $2n+2$ hyperplanes in general position. We show that the monodromy group of a good family for $\mathcal{M}_{n,2n+2}$ is Zariski dense in the corresponding symplectic or orthogonal group if $n\geq 3$. In particular, the period map does not give a uniformization of any partial compactification of the coarse moduli space as a Shimura variety whenever $n\geq 3$. This disproves a conjecture of Dolgachev. As a consequence, the fundamental group of the coarse moduli space of $m$ ordered points in $\P^n$ is shown to be large once it is not a point. Similar Zariski-density result is obtained for moduli spaces of CY manifolds arising from cyclic covers of $\P^n$ branched along $m$ hyperplanes in general position. A classification towards the geometric realization problem of B. Gross for type $A$ bounded symmetric domains is given.
\end{abstract} 

\section{Introduction}\label{section:introduction}

Among all moduli spaces of algebraic varieties, the moduli spaces of
hyperplane arrangements in a projective space make a classical object of study (see \cite{DO} for a nice account from the point of view of GIT). The simplest nontrivial example of this class is the moduli space of four points in $\P^1$,
that is well-known to be identified with the moduli space of
elliptic curves with level two structure. An elliptic curve is
obtained by taking the double cover of $\P^1$ branched at four
distinct points and, via this construction, the variation of the four
points in $\P^1$ is reflected into the variation of the Hodge structures
attached to elliptic curves. The notion of variation of Hodge structure (or equivalently period map from the analytic point of view) as introduced by P. Griffiths in general has proven to be quite effective in several important geometric questions on moduli spaces of algebraic varieties. It is well known that the period map associating the
normalized period of the corresponding elliptic curve to the cross
ratio of a four pointed set in $\P^1$ is a modular form. This
fascinating idea is very successful in showing some (partial
compactifications of) moduli spaces are Shimura varieties (see e.g. \cite{ACT}, \cite{DKV}, \cite{DK}, \cite{K-R},  \cite{La}). The current paper concerns the following
\begin{conjecture}[I. Dolgachev \cite{Bo}]\label{Dolgachev conjecture}
The period space of a family of CY $n$-folds which is obtained by a resolution of double covers of $\P^n$ branched along $2n+2$ hyperplanes in general position is the complement of a $\Gamma=GL(2n,\Z[i])$-automorphic form on the type $A$ tube domain $D^{I}_{n,n}$.
\end{conjecture}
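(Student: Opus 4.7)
The plan is to follow the Allcock--Carlson--Toledo and Deligne--Mostow template. One would construct a good family, isolate the relevant sub-Hodge structure using Galois-type symmetries of the double cover, verify that the resulting period map is a local isomorphism of the expected dimension $n^{2}$, and then read off the automorphic form as the defining equation of the complement of the image inside the arithmetic quotient $\Gamma\backslash D^{I}_{n,n}$.

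First, fix a good family $f:\sX \to \sM_{n,2n+2}$ of crepant resolutions of the double covers of $\P^{n}$ branched along $2n+2$ general hyperplanes. A direct parameter count gives $\dim \sM_{n,2n+2} = (2n+2)n - \dim PGL(n+1) = n^{2}$, which already matches $\dim D^{I}_{n,n}$. Compute the Hodge numbers of $\sX_{t}$ (in particular one wants $h^{n,0}=1$) and pass to the $(-1)$-eigenspace of $H^{n}(\sX_{t},\C)$ under the covering involution, which inherits a polarized Hodge structure of CY type.

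Second, produce the $\Z[i]$-structure. The deck transformation of the double cover only supplies $\Z[-1]$, so the imaginary unit must come from an additional automorphism of the configuration, for instance from a pairing involution on the $2n+2$ hyperplanes induced by viewing the arrangement as the pullback of a lower-dimensional structure. Once the complex multiplication is in place, the Griffiths period domain decomposes according to the $\Z[i]$-eigenspaces and one checks that only the type $D^{I}_{n,n}$ factor carries non-trivial variation, so the period map $P$ lands in $\Gamma \backslash D^{I}_{n,n}$ with $\Gamma \subset GL(2n,\Z[i])$. Then one runs an infinitesimal Torelli argument --- the hyperplane arrangement being explicit, Kodaira--Spencer against a generator of $H^{n,0}$ is amenable to direct computation --- and combines it with the equality of dimensions to conclude that $P$ is an open immersion. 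Finally, one extends $P$ to a Baily--Borel or toroidal compactification and identifies the complement of the image as the vanishing locus of a $\Gamma$-automorphic form.

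The crux --- and, to judge from the paper's abstract, the step that is bound to fail for $n \geq 3$ --- is establishing arithmeticity of the monodromy representation. For $P$ to factor through $\Gamma \backslash D^{I}_{n,n}$ with $\Gamma$ of finite index in $GL(2n,\Z[i])$, the image of $\pi_{1}(\sM_{n,2n+2})$ must lie in such a lattice, which in turn forces a Hodge-theoretic endomorphism algebra strictly enlarging $\Q(i)$ to act on the whole variation. The paper's main theorem instead establishes Zariski density of the monodromy inside the ambient symplectic or orthogonal group, precisely ruling out such an enrichment and thereby disproving the conjecture; so the natural proof sketched above is in fact the one that the rest of the paper will obstruct.
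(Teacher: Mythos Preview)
The statement is a conjecture, and the paper does not prove it --- it disproves it for $n\geq 3$. You clearly recognize this by the end, so your write-up is really a sketch of the strategy one would attempt, followed by a diagnosis of where it breaks. That framing is reasonable, but your diagnosis of the failure mechanism does not match the paper's, and in fact reverses its logic.

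You locate the obstruction in arithmeticity: the monodromy would have to sit in a lattice commensurable with $GL(2n,\Z[i])$, forcing extra Hodge-theoretic endomorphisms, and Zariski density in the full symplectic or orthogonal group rules this out. The paper argues in the opposite direction. Non-factorization through $\V_{can}$ (Theorem~\ref{thm: introduction not factor canonically}) is established \emph{first}, by a purely infinitesimal invariant: the first characteristic subvariety of $\tilde{\V}_{AR}$ at a generic moduli point has dimension at most $2$ (Proposition~\ref{prop:dim of characteristic variety}, via an explicit Jacobian-ring computation in \S\ref{subsection:Jacobian ring}), whereas for $\V_{can}$ over $D^{I}_{n,n}$ it is $\P^{n-1}\times\P^{n-1}$ of dimension $2n-2\geq 4$ (Proposition~\ref{propo:Hodge numbers and chara. variety of canonical VHS}). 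No global monodromy enters this step. Zariski density (Theorem~\ref{thm:introduction Zarisky density}) is then \emph{deduced from} non-factorization, by sandwiching the monodromy Lie algebra between $\mathfrak{sp}_{2n}\C$ (coming from the hyperelliptic locus) and $\mathfrak{g}_n$, applying the classification in Proposition~\ref{prop:introduction represnetation theory}, and using Proposition~\ref{prop:introduction decomposition of PVHS} to eliminate the $\mathfrak{sl}_{2n}\C$ alternative precisely because it would force factorization through $\V_{can}$.

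Separately, your hypothetical positive argument has a real gap at the $\Z[i]$-structure. The covering involution gives only $\Z/2\Z$, and you concede the imaginary unit ``must come from an additional automorphism'' without exhibiting one; no such automorphism is visible on a generic arrangement of $2n+2$ hyperplanes. The paper never needs to engage with this since it is disproving the conjecture, but if one were attempting a proof this would be the first concrete obstacle, prior to any Torelli or compactification issues.
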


This conjecture is naturally connected with the geometric realization problem posed by B. Gross \S8 \cite{G} on the canonical polarized variation of Hodge structure (abbreviated as PVHS) over a tube domain.
 Let $\mathfrak{M}_{AR}$ be the coarse moduli space of ordered $2n+2$ hyperplane arrangements  in $\P^n$ in general position and $\tilde{\mathcal{X}}_{AR}\xrightarrow{\tilde{f}} \mathfrak{M}_{AR}$ be the family of CY $n$-folds which is obtained by a resolution of double covers of $\P^n$ branched along $2n+2$ hyperplanes in general position. This family gives a weight $n$ $\Q$-PVHS $\tilde{\V}_{AR}=(R^n\tilde{f}_{*}\Q)_{pr}$ over $\mathfrak{M}_{AR}$. Let $\V_{can}$ be the canonical $\C$-PVHS over the type $A$ tube domain $D^{I}_{n,n}$. Then we disprove Conjecture \ref{Dolgachev conjecture} in $n\geq 3$ cases by showing the following
 \begin{theorem}\label{thm: introduction not factor canonically}
 If $n\geq 3$, $\tilde{\V}_{AR}$ does not factor through $\V_{can}$.
 \end{theorem}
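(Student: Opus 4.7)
The plan is to reduce Theorem \ref{thm: introduction not factor canonically} to a Zariski-density statement for the monodromy representation of $\tilde{\V}_{AR}$. If $\tilde{\V}_{AR}$ factored through $\V_{can}$, meaning the period map lifts to a holomorphic $\phi:\mathfrak{M}_{AR}\to \Gamma\backslash D^{I}_{n,n}$ with $\phi^{*}\V_{can}\cong \tilde{\V}_{AR}\otimes_{\Q}\C$, then the monodromy of $\tilde{\V}_{AR}$ would be contained in the image of $\Gamma\subset \mathrm{SU}(n,n)$ under the canonical representation defining $\V_{can}$. The goal is therefore to show that the actual monodromy is too large to sit inside this image.

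First I would check that the naive Hodge-number obstruction does \emph{not} rule out factoring when $n\geq 3$: the Hodge numbers $h^{p,n-p}(\V_{can})=\binom{n}{p}^{2}$ (e.g.\ $(1,9,9,1)$ when $n=3$) match those expected for the primitive $H^{n}$ of the resolved double cover, so a cheap Hodge-theoretic obstruction is unavailable. Hence the obstruction must come from the global structure of the monodromy group. The primitive cohomology $H^{n}_{pr}$ carries a symplectic (for $n$ odd) or symmetric bilinear (for $n$ even) form, so the monodromy sits inside the associated $\mathrm{Sp}$ or $\mathrm{O}$ group. The image of $\mathrm{SU}(n,n)$ in this ambient classical group under the canonical representation is a proper algebraic subgroup for $n\geq 3$, since $\dim_{\R}\mathrm{SU}(n,n)=4n^{2}-1$ is far smaller than the dimension of the ambient $\mathrm{Sp}$ or $\mathrm{O}$ acting on a space of rank $\binom{2n}{n}$. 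Hence it suffices to prove that the monodromy of $\tilde{\V}_{AR}$ is Zariski dense in the full symplectic or orthogonal group.

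The main obstacle, and the real content of the paper, is exactly this Zariski-density statement. My plan of attack: (i) choose a generic one-parameter subfamily by letting a single hyperplane degenerate to a special position (for instance, passing through a multiple intersection of the others), and compute the induced Picard--Lefschetz transformation on the resolved CY model $\tilde{X}$; (ii) by varying which hyperplane degenerates and the locus to which it degenerates, produce an explicit collection of monodromy generators; (iii) show that the Lie subalgebra generated by the logarithms of these generators exhausts the Lie algebra of the ambient symplectic/orthogonal group, either by a direct calculation with the transvections or reflections produced, or by invoking a Deligne--Simpson-type rigidity criterion. The main technical subtlety, and where I expect the bulk of the work to go, is that the CY exists only after a crepant resolution, so the Picard--Lefschetz calculation has to be carried out after the resolution and the contribution of the exceptional divisors to the cohomology has to be tracked carefully.
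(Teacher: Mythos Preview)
Your logical reduction is valid in isolation: Zariski density in the full $\mathrm{Sp}$/$\mathrm{O}$ group certainly implies non-factoring, since factoring would force the monodromy into the proper subgroup $\rho_{\wedge^n}(\mathrm{SU}(n,n))$. But this is the \emph{reverse} of the paper's logical flow, and the inversion is not innocent.

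In the paper, Theorem~\ref{thm: introduction not factor canonically} is proved first and directly, with no reference to global monodromy at all. The argument is purely infinitesimal: the first characteristic subvariety of $\V_{can}$ at any point of $D^{I}_{n,n}$ is $\P^{n-1}\times\P^{n-1}$, of dimension $2n-2\geq 4$, whereas a Jacobian-ring computation (Proposition~\ref{prop:Jacobian ring: dim upper bound}) shows that the first characteristic subvariety of $\tilde{\V}_{AR}$ at a generic point has dimension $\leq 2$. These cannot be isomorphic, so no local identification $\tilde{\V}_{AR}\otimes\C\simeq j^{*}\V_{can}$ exists. Only \emph{after} this is established does the paper deduce Zariski density (Theorem~\ref{thm:introduction Zarisky density}), by combining the hyperelliptic-locus embedding, the representation-theoretic classification of Proposition~\ref{prop:introduction represnetation theory}, and Proposition~\ref{prop:introduction decomposition of PVHS}; the non-factoring theorem is precisely what eliminates the $\mathfrak{sl}_{2n}$ alternative. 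So if you tried to borrow the paper's proof of Zariski density to feed back into your reduction, you would be arguing in a circle.

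Your alternative route to Zariski density---producing enough Picard--Lefschetz transvections from hyperplane degenerations---is a natural first instinct, but the authors explicitly flag it as a dead end: they conjecture (see the footnote in the introduction) that $\mathcal{M}_{n,2n+2}$ admits no Lefschetz degenerations whatsoever for $n\geq 3$, and the entire methodological point of the paper is to replace boundary degenerations by an IVHS invariant at an interior point. The crepant resolution, which you correctly identify as a complication, is in fact expected to kill the ordinary-double-point picture that Picard--Lefschetz theory relies on. So step~(i) of your plan is not just technically delicate; it is likely vacuous.
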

 By this we mean that there does not exist a nonempty analytically open subset $U\subset \mathfrak{M}_{AR}$ and a holomorphic map $j: U\rightarrow D_{n,n}^I$, such that $\tilde{\V}_{AR}\otimes \C\mid_{U}\simeq j^{*}\V_{can}$ as $\C$-PVHS.

The $n=1$ case is the classical modular family of elliptic curves and the $n=2$ case is treated in \cite{MSY}, where it was shown among other things that the weight two PVHS, modulo the constant part, factors through $\V_{can}$ canonically. The $n=3$ case is the turning point of Conjecture \ref{Dolgachev conjecture} which was shown to be false in \cite{GSSZ}. The idea is to compare the characteristic subvarieties associated to $\tilde{\V}_{AR}$ and $\V_{can}$. However, the required information on the characteristic subvariety of $\tilde{\V}_{AR}$ was attained only with the aid of a computational commutative algebra program. This obvious drawback prevented us from proceeding further in the general case. One valuable computation made in the current paper is that the characteristic subvariety of $\tilde{\V}_{AR}$ can be determined by hand at the generic point of the underlying moduli space. see Section \ref{subsection:Jacobian ring} for detail.

Surprisingly, building upon Theorem \ref{thm: introduction not factor canonically}, we can conclude further that the monodromy group of $\tilde{\V}_{AR}$ is actually Zariski dense. Take an $s\in \mathfrak{M}_{AR}$ to be the base point and let
$$
\rho: \pi_1(\mathfrak{M}_{AR}, s)\rightarrow Aut(V,Q)
$$
be the monodromy representation associated to $\tilde{\V}_{AR}$, where $V=\tilde{\V}_{AR,s}$ is the fiber of $\tilde{\V}_{AR}$ over $s$, and $Q$ is the bilinear form on $V$ induced by the natural polarization. We denote the Zariski closure of the image of $\rho$ by $Mon$. Let $Mon^0$ and $Aut^0(V,Q)$ be the identity component of $Mon$ and $Aut(V,Q)$ respectively. We will prove
\begin{theorem}\label{thm:introduction Zarisky density}
If $n\geq 3$, then $Mon^0=Aut^0(V,Q)$. That is, the monodromy group of $\tilde{\V}_{AR}$ is Zariski dense in $Aut^0(V,Q)$.
\end{theorem}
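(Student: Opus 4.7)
\emph{Proof proposal.} The plan is to deduce the Zariski density from Theorem~\ref{thm: introduction not factor canonically} by combining the Deligne--Andr\'e structure theorems for the algebraic monodromy of a polarized $\Q$-VHS with the rigidity of Mumford--Tate groups for Calabi--Yau type Hodge structures (those with $h^{n,0}=1$).

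By Deligne's semisimplicity theorem $Mon^0$ is a connected semisimple subgroup of $Aut^0(V,Q)$, and by Andr\'e's theorem it is a normal subgroup of the derived generic Mumford--Tate group $\Hg(V)^{\mathrm{der}}$. Because $n\geq 3$, the ambient group $Aut^0(V,Q)$ is $\mathrm{Sp}(V)$ (for $n$ odd) or $\mathrm{SO}(V)$ (for $n$ even), both almost $\Q$-simple, so its only connected normal subgroups are trivial and itself. It therefore suffices to prove (a) that $Mon^0$ acts irreducibly, in particular nontrivially, on $V$, and (b) that $\Hg(V)^{\mathrm{der}} = Aut^0(V,Q)$. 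Assertion (a) will be handled by a Picard--Lefschetz / vanishing-cycle analysis at the boundary divisors of a suitable compactification of $\mathfrak{M}_{AR}$: the Lefschetz reflections/transvections attached to degenerations of the hyperplane configuration, combined with the braid-type action of $\pi_1(\mathfrak{M}_{AR},s)$ permuting the vanishing cycles, produce an orbit that spans $V$.

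Assertion (b) is the heart of the argument. Assume for contradiction $\Hg(V) \subsetneq Aut^0(V,Q)$. By the classification of Mumford--Tate groups of polarized Hodge structures of CY type (going back to Zarhin for K3 type and extended to higher weight with $h^{n,0}=1$ in the work of Deligne and Green--Griffiths--Kerr), any proper $\Hg(V)$ must be of Hermitian type, and the associated Mumford--Tate subdomain $D_{\Hg}$ is a Hermitian symmetric subdomain of the full classifying space through which the period map of $\tilde{\V}_{AR}$ locally factors. Matching the Hodge numbers of the CY resolution of double covers of $\P^n$ branched along $2n+2$ hyperplanes against those allowed on a Hermitian symmetric weight-$n$ PVHS with $h^{n,0}=1$ forces $D_{\Hg}$ to be exactly the type A tube domain $D^I_{n,n}$ with its canonical PVHS $\V_{can}$. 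Consequently $\tilde{\V}_{AR}\otimes \C$ would factor locally through $\V_{can}$, contradicting Theorem~\ref{thm: introduction not factor canonically}. Hence $\Hg(V) = Aut^0(V,Q)$ and, together with (a), this gives $Mon^0 = Aut^0(V,Q)$.

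The main obstacle is the Hodge-numerical identification inside (b): showing that the \emph{only} Hermitian Mumford--Tate possibility compatible with the prescribed Hodge numbers of $\tilde{\V}_{AR}$ is precisely the one yielding $D^I_{n,n}$. This is exactly where Theorem~\ref{thm: introduction not factor canonically} performs the essential cancellation, since without it this surviving Hermitian option could not be excluded. The whole scheme thus hinges on two ingredients that must be supplied beforehand: the explicit Hodge numbers of the CY family and enough of the CY-type Hermitian classification to guarantee that $D^I_{n,n}$ is the unique candidate Hermitian subdomain.
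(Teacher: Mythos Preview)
Your strategy diverges substantially from the paper's, and there are two concrete gaps.

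For (a), the paper's authors explicitly avoid boundary degeneration arguments and even conjecture (see the footnote in \S\ref{section:introduction}) that $\mathcal{M}_{n,2n+2}$ admits \emph{no} Lefschetz degenerations for $n\geq 3$. The degenerations of hyperplane configurations do not give nodal degenerations of the CY $n$-fold in any straightforward way, so the Picard--Lefschetz mechanism you invoke is not available. The paper proves irreducibility differently: it shows via the Jacobian ring (Propositions~\ref{prop:identification of Hodge structures with Jacobian ring} and~\ref{prop:bases}) that every iterated Higgs map $\theta^q: \Sym^q T_{\mathfrak{M}_{AR}}\otimes E^{n,0}\to E^{n-q,q}$ is surjective, which precludes any Higgs-bundle splitting and hence, by Deligne's theorem, any local-system splitting.

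For (b), the step ``any proper $\Hg(V)$ must be of Hermitian type'' is not a theorem. Zarhin's K3 classification is a weight-two phenomenon with $h^{2,0}=1$ and does not extend to weight $n\geq 3$ CY Hodge structures in the form you need; there is no general result forcing the Mumford--Tate domain of a CY-type VHS to be Hermitian symmetric. Even granting Hermitian type, pinning the domain down to $D^I_{n,n}$ with $\V_{can}$ from Hodge numbers alone is exactly the delicate classification you would still owe. The paper circumvents this entirely: it embeds the hyperelliptic locus $\mathfrak{M}_{hp}\hookrightarrow\mathfrak{M}_{AR}$, uses A'Campo to get $Sp(2n,\Q)\subset Mon$, and then a bespoke Lie-algebra classification (Proposition~\ref{prop:introduction represnetation theory}) shows that any semisimple $\mathfrak{g}$ with $\mathfrak{sp}_{2n}\C\subset\mathfrak{g}\subset\mathfrak{g}_n$ acting irreducibly on $\wedge^n\C^{2n}$ is either $\mathfrak{g}_n$ or $\mathfrak{sl}_{2n}\C$. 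The second option is then upgraded via Simpson's theory (Proposition~\ref{prop:introduction decomposition of PVHS}) to a factorization through $\V_{can}$, and only \emph{then} is Theorem~\ref{thm: introduction not factor canonically} invoked. So the paper's use of the nonfactorization result rules out a single, concretely identified alternative, whereas your scheme asks it to rule out an unclassified family of possibilities.
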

Using results of C. Schoen and P. Deligne the above theorem implies:
\begin{corollary}\label{cor:introduction MT group}
Let $\mathcal{M}_{n,2n+2}$ be the coarse moduli space of the CY $n$-folds obtained by a resolution of double covers of $\P^n$ branched along $2n+2$ hyperplanes in general position. Then for $n\geq 3$ the special Mumford-Tate group of a general member in $\mathcal{M}_{n,2n+2}$ is $Aut^0(V,Q)$.
\end{corollary}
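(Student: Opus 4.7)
The plan is to deduce Corollary~\ref{cor:introduction MT group} from Theorem~\ref{thm:introduction Zarisky density} via Deligne's classical principle linking the algebraic monodromy group and the special Mumford-Tate group, with Schoen's input ensuring that the principle applies in this CY setting.

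First I would record that, by Theorem~\ref{thm:introduction Zarisky density}, $Mon^{0}=\Aut^{0}(V,Q)$. The polarization $Q$ is alternating when $n$ is odd and symmetric when $n$ is even, so $\Aut^{0}(V,Q)$ equals $\mathrm{Sp}(V,Q)$ or $\mathrm{SO}(V,Q)$ respectively. In either case it is an almost simple semisimple $\Q$-algebraic group that coincides with its own derived subgroup.

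The key Hodge-theoretic input is Deligne's theorem (applied to geometric PVHS by Schoen and others) which asserts that for a polarizable $\Q$-PVHS over a smooth quasi-projective base $S$, the connected algebraic monodromy group $Mon^{0}$ is a normal subgroup of the derived special Mumford-Tate group $\Hg_{s}^{\mathrm{der}}$ of a Hodge-generic fiber at $s\in S$. Invoking this for $\tilde{\V}_{AR}$ over $\mathfrak{M}_{AR}$ yields $Mon^{0}\lhd \Hg_{s}^{\mathrm{der}}$ for Hodge-generic $s$.

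The remainder is formal. Since the Hodge group preserves the polarization, $\Hg_{s}\subseteq \Aut^{0}(V,Q)$, and hence $\Hg_{s}^{\mathrm{der}}\subseteq \Aut^{0}(V,Q)$. Combined with $\Aut^{0}(V,Q)=Mon^{0}\subseteq \Hg_{s}^{\mathrm{der}}$, both inclusions collapse to equalities and $\Hg_{s}^{\mathrm{der}}=\Aut^{0}(V,Q)$. Since $\Aut^{0}(V,Q)$ is semisimple and equals its own derived subgroup, we further conclude $\Hg_{s}=\Aut^{0}(V,Q)$. Finally, a Hodge-generic point in $\mathfrak{M}_{AR}$ is general in the usual sense (the exceptional locus being a countable union of proper analytic subvarieties), and the passage from $\mathfrak{M}_{AR}$ to $\mathcal{M}_{n,2n+2}$ is a finite quotient which does not affect the identity component of either the monodromy or the Hodge group.

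The main non-formal obstacle I anticipate is not the algebraic chase above but verifying the hypotheses of the normality statement $Mon^{0}\lhd \Hg_{s}^{\mathrm{der}}$ for this specific family: one must check that $\tilde{\V}_{AR}$ is a polarizable geometric PVHS on a suitable smooth quasi-projective base to which Deligne's theorem applies, which is precisely what is furnished by Schoen's work on CY variations of Hodge structure. Once that is in hand, the rest is a formal chase through $Mon^{0}\subseteq \Hg_{s}^{\mathrm{der}}\subseteq \Hg_{s}\subseteq \Aut^{0}(V,Q)$.
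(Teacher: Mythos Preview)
Your proposal is correct and follows essentially the same approach as the paper: invoke the Deligne--Schoen result that $Mon^{0}$ is normal in the special Mumford-Tate group of a Hodge-generic fiber, then sandwich $\Aut^{0}(V,Q)=Mon^{0}\subseteq \Hg_{s}\subseteq \Aut^{0}(V,Q)$ to force equality. The paper's proof is slightly terser (it cites Lemma~2.4 of Viehweg--Zuo for the normality statement and does not separately pass through $\Hg_{s}^{\mathrm{der}}$), but the logic is the same.
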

This result will imply in turn that any good family for the moduli space $\mathcal{M}_{n,2n+2}$ has the Zarisiki dense monodromy group.

As a consequence of Theorem \ref{thm:introduction Zarisky density}, we can actually show that the fundamental group of the coarse moduli space $\mathfrak{M}_{n,m}, m\geq n+3$ of $m$ ordered hyperplanes in $\P^n$ in general position is large. More precisely, we have
 \begin{corollary}\label{cor:introduction large fundamental group}
  The fundamental group $\pi_1(\mathfrak{M}_{n,m}, s)$ is large, that is, there is a homomorphism of $\pi_1(\mathfrak{M}_{n,m}, s)$ to a noncompact semisimple real algebraic group which has Zariski-dense image.
  \end{corollary}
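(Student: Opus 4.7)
The plan is to deduce largeness from Theorem \ref{thm:introduction Zarisky density} by reducing an arbitrary $\mathfrak{M}_{n,m}$ with $m\geq n+3$ to the distinguished case $\mathfrak{M}_{n,2n+2}$ via two classical devices on moduli of hyperplane arrangements: the forgetful morphisms, and the Gale (``association'') isomorphism $\mathfrak{M}_{n,m}\simeq\mathfrak{M}_{m-n-2,\,m}$ from \cite{DO}. The foundational technical step is to show that the forgetful morphism $\pi_{m,m'}:\mathfrak{M}_{n,m}\to\mathfrak{M}_{n,m'}$ (dropping the last $m-m'$ hyperplanes) is surjective on $\pi_1$ whenever $n+3\leq m'\leq m$. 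Its fiber over $[H_1,\dots,H_{m'}]$ is the Zariski open subset of $((\P^n)^\vee)^{m-m'}$ cut out by general-position conditions, which is the complement of finitely many divisors in a product of projective spaces and hence irreducible, so in particular connected; since $PGL_{n+1}$ acts freely on the ordered general-position locus, $\mathfrak{M}_{n,m}$ and $\mathfrak{M}_{n,m'}$ are smooth quasi-projective varieties and $\pi_{m,m'}$ is smooth, so the long exact homotopy sequence yields the surjection on fundamental groups.

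With the surjection in hand, the principal range $n\geq 3$ and $m\geq 2n+2$ follows immediately: take $m'=2n+2$ and compose the surjection with the monodromy $\rho$ of Theorem \ref{thm:introduction Zarisky density}, producing a homomorphism $\pi_1(\mathfrak{M}_{n,m},s)\to Aut(V,Q)$ with Zariski dense image in the noncompact semisimple real algebraic group $Aut^0(V,Q)$. For the intermediate range $n+5\leq m\leq 2n+1$ with $n\geq 3$, I would first invoke the Gale isomorphism to pass from $\mathfrak{M}_{n,m}$ to $\mathfrak{M}_{n',m}$ where $n'=m-n-2$; here the inequalities translate into $3\leq n'\leq n-1$ and $m\geq 2n'+2$, so the previous step applied to $(n',m)$ concludes.

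The remaining boundary cases $m\in\{n+3,\,n+4\}$ with $n\geq 3$ reduce under Gale to $\mathfrak{M}_{1,n+3}$ (moduli of $n+3\geq 6$ points on $\P^1$) and $\mathfrak{M}_{2,n+4}$ (moduli of $n+4\geq 7$ lines in $\P^2$), which are handled by classical inputs: the hyperelliptic Jacobian family over $\mathfrak{M}_{1,n+3}$ has monodromy Zariski dense in the appropriate $Sp(2g,\R)$, while $\mathfrak{M}_{2,n+4}$ is covered by \cite{MSY}. The residual low-dimensional ranges $n\in\{1,2\}$ follow analogously from these same classical sources. The main obstacle in implementing this plan is making the smooth-fibration structure of $\pi_{m,m'}$ precise enough to legitimately invoke the $\pi_1$-surjection at the level of coarse moduli spaces; once that is secured, the rest is combinatorial case-checking between forgetful reductions and the Gale involution, together with citation of the classical inputs at the two boundary values.
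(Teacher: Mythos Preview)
Your overall strategy---forgetful maps plus Gale duality to reduce everything to the distinguished case $\mathfrak{M}_{n,2n+2}$---is exactly the paper's approach. The two proofs differ only in the technical device used to establish $\pi_1$-surjectivity of the forgetful map and in the care taken with boundary cases.

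For the $\pi_1$-surjectivity, you propose to argue directly on the moduli spaces $\mathfrak{M}_{n,m}\to\mathfrak{M}_{n,m'}$ via a smooth-fibration-with-connected-fibers argument, and you correctly flag this as the delicate point (a smooth morphism of varieties is a submersion but not automatically a locally trivial fibration). The paper avoids this difficulty by lifting to the configuration spaces $X(n,m)=\{(H_1,\dots,H_m)\in(\hat\P^n)^m:\text{general position}\}$, citing Terasoma \cite{T-fundamental groups} for the surjectivity of $\pi_1(X(n,m))\to\pi_1(X(n,2n+2))$, and then descending via the principal $PGL(n+1,\C)$-bundle $X(n,m)\to\mathfrak{M}_{n,m}$ (which, being a principal bundle for a connected group, also gives $\pi_1$-surjectivity). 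This is cleaner than justifying the fibration structure of the forgetful map on quotients directly.

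On the other hand, your case analysis is more scrupulous than the paper's. The paper handles $m\leq 2n+2$ simply by Gale-dualizing to $\mathfrak{M}_{m-n-2,m}$ and invoking the previous paragraph, which tacitly requires the largeness of $\pi_1(\mathfrak{M}_{n',2n'+2})$ for $n'=m-n-2$; when $m\in\{n+3,n+4\}$ this gives $n'\in\{1,2\}$, where Theorem~\ref{thm:introduction Zarisky density} does not apply and one must fall back on A'Campo and \cite{MSY} respectively---exactly the classical inputs you identify. The paper leaves this implicit.
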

\begin{proof}
One can deduce easily from Theorem \ref{thm:Zariski density} that the real monodromy representation
$\rho_{\R}: \pi_1(\mathfrak{M}_{AR}, s)\rightarrow Aut(V\otimes\R,Q)$ has Zariski-dense image, and $Aut^0(V\otimes\R,Q)$ is a noncompact semisimple real algebraic group. Hence $\pi_1(\mathfrak{M}_{n,2n+2},s)=\pi_1(\mathfrak{M}_{AR},s)$ is large.

If $m\geq 2n+2$, we consider the configuration space $X(n,m):=\{(H_1,\cdots, H_m)\in (\hat{\P}^n)^m \mid H_1,\cdots, H_m \textmd{ is in general position}\}$, where $\hat{\P}^n$ is the dual projective space of $\P^n$. Obviously $\mathfrak{M}_{n,m}=X(n,m)/PGL(n,\C)$ and the quotient map $X(n,m)\xrightarrow{\pi_{n,m}}\mathfrak{M}_{n,m}$ is a $PGL(n,\C)$ principle bundle. Since $m\geq 2n+2$, we can define the  natural forgetful map $\tilde{f}:X(n,m)\rightarrow X(n,2n+2)$, which sends an ordered hyperplane $(H_1,\cdots, H_m)$ to $(H_1,\cdots, H_{2n+2})$. It can be seen easily that $\tilde{f}$ descends to a forgetful map  $f:\mathfrak{M}_{n,m}\rightarrow\mathfrak{M}_{n,2n+2}$ and we have a commutative diagram
\begin{diagram}
 X(n,m) &\rTo^{\tilde{f}} &X(n,2n+2)\\
\dTo_{\pi_{n,m}}  &    &\dTo_{\pi_{n,2n+2}}\\
 \mathfrak{M}_{n,m}  &\rTo^{f} &\mathfrak{M}_{n,2n+2}
\end{diagram}
By \cite{T-fundamental groups}, Corollary 5.6, $\tilde{f}$ induces a surjective homomorphism from the fundamental group of $X(n,m)$ to the fundamental group of $X(n,2n+2)$. Since $X(n,2n+2)$ is a principle $PGL(n,\C)$ bundle over  $\mathfrak{M}_{n,2n+2}$, the map $\pi_{n,2n+2}$ also induces a surjective map between fundamental groups. Then we deduce the map $\pi_1(\mathfrak{M}_{n,m}, s)\xrightarrow{f_*}\pi_1(\mathfrak{M}_{n,2n+2},f(s))$ is surjective, and  $\pi_1(\mathfrak{M}_{n,m}, f(s))$ is large because of the largeness of $\pi_1(\mathfrak{M}_{n,2n+2}, s)$.

If $m\leq 2n+2$, then $m\geq 2(m-n-2)+2 $. We  have  the association isomorphism (Ch. III, \cite{DO}): $\mathfrak{M}_{n,m}\simeq \mathfrak{M}_{m-n-2,m}$. Then the largeness of $\pi_1(\mathfrak{M}_{n,m}, s)$ follows from the largeness of $\pi_1(\mathfrak{M}_{m-n-2,m}, s)$, which has been verified  in the last paragraph.
\end{proof}
Large groups are infinite and, moreover, always contain a free group of rank two. This corollary can be viewed as a degenerate case of a result of Carlson and Toledo in \cite{C-T}, where they considered the fundamental groups of parameter spaces of hypersurfaces in projective spaces, and showed that except several obvious cases, the kernels of monodromy representations are always large.

A remark on the methodology of the paper before explaining our strategy in detail: A usual method to show the Zariski-density of the monodromy group of a family of algebraic varieties is to show the existence of enough Lefschetz degenerations, which is based on the work of Deligne (see Proposition 5.3, Theorem 5.4 in \cite{D-Weil} I and Lemma 4.4.2 in \cite{D-Weil} II). Instead of seeking for such degenerations towards the boundary of the moduli space \footnote{We conjecture that the moduli space $\mathcal{M}_{n,2n+2}, n\geq 3$ admits no Lefschetz degeneration at all.}, we work on an infinitesimal invariant of the variation of Hodge structure (i.e. the characteristic subvariety) at a general interior point of the moduli space. The general notion of the infinitesimal variation of Hodge structure (abbreviated as IVHS) was first introduced by P. Griffiths and his collaborators as a surrogate of the theta divisor in a Jacobian (see \cite{C-G-G-H}, \cite{Gri} and also Ch. III \cite{Griffiths} for a nice exposition). It has been proven to be very successful in establishing Torelli-type results for algebraic varieties (see e.g. Ch. XII \cite{Griffiths}, \cite{Te}). The result of this paper shows that one can also use IVHS to obtain some important topological assertion on a moduli space of algebraic varieties. In particular, our method can be used to show also the Zariski-density of monodromy groups of good families for moduli spaces of smooth hypersurfaces in projective spaces. Now we proceed to explain the strategy of the proof of our main result Theorem \ref{thm:introduction Zarisky density} in the following three steps. 

\textbf{Step 1}: We show that the moduli space $\mathfrak{M}_{hp}$ of ordered $2n+2$ distinct points on $\P^1$ can be embedded into $\mathfrak{M}_{AR}$, and under this embedding, the induced VHS $\tilde{\V}_{AR}\mid_{\mathfrak{M}_{hp}}$ by restriction is isomorphic to the $n$-th wedge product of the weight one $\Q$-PVHS $\V_C$ associated to a good family of hyperelliptic curves over $\mathfrak{M}_{hp}$.
Then we have a commutative diagram of monodromy representations
\begin{diagram}
 \pi_{1}(\mathfrak{M}_{hp},s) &\rTo^{\tau} &Aut(\V_{C,s}, Q)\\
\dTo  &    &\dTo_{\rho_{\wedge^n}}\\
\pi_1(\mathfrak{M}_{AR}, s)   &\rTo^{\rho} &Aut(V,Q)
\end{diagram}
where $\rho_{\wedge^n}$ is the homomorphism  induced by the $n$-th wedge product of the standard representation of $Aut(\V_{C,s}, Q)=Sp(2n, \Q)$.

By Theorem 1 of \cite{A'Campo},  $\tau(\pi_{1}(\mathfrak{M}_{hp},s))$ is Zariski dense in $Sp(2n, \Q)$. So we
get the commutative diagram of homomorphisms
\begin{diagram}
Sp(2n, \Q)&  &\rTo^{\rho_{\wedge^n}}& &Aut(V,Q)\\
&\rdInto^{}& & \ruInto^{}\\
& &Mon
\end{diagram}

\textbf{Step 2}:
Define the  complex simple Lie algebra:
\begin{displaymath}
\mathfrak{g}_n=\left\{
                 \begin{array}{ll}
                   \mathfrak{sp}_{{2n\choose n}}\C, & \hbox{n odd;} \\
                   \mathfrak{so}_{{2n\choose n}}\C, & \hbox{n even.}
                 \end{array}
               \right.
\end{displaymath}
Then we argue that the following classification result can be applied to the commutative diagram of homomorphisms  in \textbf{Step 1}, so  that  we get either $Mon^0=Aut^0(V,Q)$, or (after a possible finite \'{e}tale base change) there exists a local system of complex vector spaces of rank $2n$ over $\mathfrak{M}_{AR}$, saying $\W$, such that we have an isomorphism of local systems $\tilde{\V}_{AR}\otimes\C\simeq \wedge^n\W$.
\begin{proposition}\label{prop:introduction represnetation theory}
The $n$-th wedge product $V=\wedge^n \C^{2n}$ of the standard representation of $\mathfrak{sp}_{2n}\C$ induces an embedding $\mathfrak{sp}_{2n}\C\hookrightarrow \mathfrak{g}_n$. Suppose $\mathfrak{g}$ is a complex semi-simple Lie algebra lying  between  $\mathfrak{sp}_{2n}\C$ and  $\mathfrak{g}_n$ such that the induced representation of $\mathfrak{g}$ on $V$ is irreducible, then $\mathfrak{g}$ is one of the following:
\begin{itemize}
\item[(1)] $\mathfrak{g}_n$,
\item[(2)] $sl_{2n}\C$, in which case the induced representation of $\mathfrak{g}$ on $V$ is isomorphic to the n-th wedge product of the standard  representation on $\C^{2n}$.
\end{itemize}
\end{proposition}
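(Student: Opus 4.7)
The plan is to prove the proposition in three steps.

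\emph{Step 1 (the embedding).} The symplectic form $\omega$ on $\C^{2n}$ induces a bilinear form on $V=\wedge^{n}\C^{2n}$ via $\langle v_{1}\wedge\cdots\wedge v_{n},\, w_{1}\wedge\cdots\wedge w_{n}\rangle := \det(\omega(v_{i},w_{j}))$. I would verify directly that it is nondegenerate and $\mathfrak{sp}_{2n}\C$-invariant, and observe that swapping the two $n$-tuples transposes the matrix of pairings, multiplying its determinant by $(-1)^{n}$. Thus the form is symmetric for $n$ even and skew-symmetric for $n$ odd, so the image of $\wedge^{n}$ lies in $\mathfrak{g}_{n}$.

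\emph{Step 2 (reduction to $\mathfrak{g}$ simple).} Write $\mathfrak{g}=\bigoplus_{i=1}^{k}\mathfrak{g}_{i}$ as a direct sum of simple ideals, so $V=V_{1}\otimes\cdots\otimes V_{k}$ with each $V_{i}$ a nontrivial irreducible $\mathfrak{g}_{i}$-module. By simplicity of $\mathfrak{sp}_{2n}\C$, each projection $p_{i}:\mathfrak{sp}_{2n}\C\to\mathfrak{g}_{i}$ is either zero or injective. If some $p_{i}=0$, then $V_{i}|_{\mathfrak{sp}_{2n}}$ is trivial and every irreducible constituent of $V|_{\mathfrak{sp}_{2n}}$ occurs with multiplicity divisible by $\dim V_{i}\geq 2$; this contradicts the multiplicity-one primitive decomposition $\wedge^{n}\C^{2n}=\bigoplus_{j\geq 0} V_{\omega_{n-2j}}$. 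So all $p_{i}$ are injective, each $V_{i}|_{\mathfrak{sp}_{2n}}$ is nontrivial with nonzero dominant highest weight $\lambda_{i}$, and since $\omega_{n}=(1,\ldots,1)$ is the highest weight of $V|_{\mathfrak{sp}_{2n}}$ and has multiplicity one, the tensor product formula forces $\sum_{i}\lambda_{i}=\omega_{n}$. But a sum of weakly decreasing nonnegative integer vectors can equal $(1,\ldots,1)$ only when one summand is $(1,\ldots,1)$ and the rest vanish, contradicting $k\geq 2$. Hence $\mathfrak{g}$ is simple.

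\emph{Step 3 (classification of simple $\mathfrak{g}$).} Now $\mathfrak{g}$ is a simple Lie algebra with $\mathfrak{sp}_{2n}\C\subsetneq\mathfrak{g}\subseteq\mathfrak{g}_{n}$ (the left inclusion being strict because $V|_{\mathfrak{sp}_{2n}}$ is reducible) and $V$ irreducible of dimension $\binom{2n}{n}$. I would invoke Dynkin's classification of maximal subalgebras of classical simple Lie algebras: $\mathfrak{sp}_{2n}\C$ is maximal in $sl_{2n}\C$ as the stabilizer of $\omega$, and for $n\geq 3$ the $n$-th fundamental representation realizes $sl_{2n}\C$ as a maximal subalgebra of $\mathfrak{g}_{n}$; combined, these force $\mathfrak{g}\in\{sl_{2n}\C,\mathfrak{g}_{n}\}$. (For $n=2$ the exceptional isomorphism $sl_{4}\C\simeq\mathfrak{so}_{6}\C=\mathfrak{g}_{2}$ collapses the two options into one.) The main obstacle, and the real content of the proposition, is this maximality of the $\wedge^{n}$-embedding $sl_{2n}\C\hookrightarrow\mathfrak{g}_{n}$ for $n\geq 3$: it can be obtained either by citing Dynkin's tables of irreducibly embedded maximal simple subalgebras of classical groups, or by a direct branching-rule and weight argument ruling out every other candidate simple Lie algebra of rank $\geq n+1$ with a self-dual irreducible representation of dimension $\binom{2n}{n}$ that restricts to $\wedge^{n}\C^{2n}$ on the prescribed $\mathfrak{sp}_{2n}\C$.
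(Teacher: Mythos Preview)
Your Steps 1 and 2 are essentially the paper's argument. The gap is in Step 3: knowing that $\mathfrak{sp}_{2n}\C$ is maximal in $\mathfrak{sl}_{2n}\C$ and that $\mathfrak{sl}_{2n}\C$ is maximal in $\mathfrak{g}_n$ does \emph{not} force an intermediate $\mathfrak{g}$ to lie on this particular chain. The lattice of subalgebras is not linearly ordered: there could be another maximal subalgebra $\mathfrak{m}\subset\mathfrak{g}_n$, different from $\mathfrak{sl}_{2n}\C$, that also contains the given copy of $\mathfrak{sp}_{2n}\C$ and acts irreducibly on $V$; then $\mathfrak{g}$ could sit inside $\mathfrak{m}$ instead. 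To make the Dynkin approach work you would need to prove that $\mathfrak{sl}_{2n}\C$ is the \emph{unique} maximal (irreducibly acting) subalgebra of $\mathfrak{g}_n$ containing this $\mathfrak{sp}_{2n}\C$; only then does the maximality of $\mathfrak{sp}_{2n}\C$ in $\mathfrak{sl}_{2n}\C$ finish the argument. You identify the ``real content'' as the maximality of $\mathfrak{sl}_{2n}\C\hookrightarrow\mathfrak{g}_n$, but that alone is not enough, and checking the required uniqueness from Dynkin's tables amounts to a case analysis of comparable length to a direct proof.

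The paper avoids Dynkin's tables entirely. After the reduction to simple $\mathfrak{g}$ it goes type by type (exceptional, $A$, $B$, $C$, $D$). For each classical type it uses Weyl's construction to realize the irreducible $\mathfrak{g}$-module $V$ (or, in the half-spin subcases, an irreducible summand of $Sym^2V$) as $\mathbb{S}_{\lambda}W$, $\mathbb{S}_{[\lambda]}W$, or $\mathbb{S}_{\langle\lambda\rangle}W$ for some partition $\lambda$, with $W$ the standard representation of $\mathfrak{g}$. The key technical input is a preparatory lemma producing, from any nontrivial $\mathfrak{sp}_{2n}\C$-module $W$, an explicit nonzero $\mathfrak{sp}_{2n}\C$-weight vector in $\mathbb{S}_{\lambda}W$ whose first weight coordinate is at least $\lambda_1$. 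Since the highest $\mathfrak{sp}_{2n}\C$-weight occurring in $V=\wedge^n\C^{2n}$ is $L_1+\cdots+L_n$, this forces $\lambda_1\leq 1$, hence $\lambda=(1,\ldots,1)$; each case is then pinned down by dimension (with the spin cases eliminated because $\binom{2n}{n}$ is never a power of $2$ for $n\geq 2$).
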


\textbf{Step 3}: This is the essential step which proves particularly Theorem \ref{thm: introduction not factor canonically}. Indeed, assuming the case $\tilde{\V}_{AR}\otimes\C\simeq \wedge^n\W$, the next proposition will imply this isomorphism is in fact an isomorphism of $\C$-PVHS, for a suitable $\C$-PVHS structure on $\W$, and this would imply that $\tilde{\V}_{AR}$  factors through $\V_{can}$ over $D^I_{n,n}$. A contradiction with Theorem \ref{thm: introduction not factor canonically}. So the only possibility is $Mon^0=Aut^0(V,Q)$ after Proposition \ref{prop:introduction represnetation theory}, and we are done. As already explained above, Theorem \ref{thm: introduction not factor canonically} will be achieved by showing that the characteristic subvarieties attached to $\tilde{\V}_{AR}$ and $\V_{can}$ are non-isomorphic.
\begin{proposition}\label{prop:introduction decomposition of PVHS}
Let $\bar{S}$ denote a projective manifold, $Z$ a simple divisor with normal crossing and $S=\bar{S}\backslash Z$. Let $\V$ denote an irreducible $\C$-PVHS over $S$ with quasi-unipotent local monodromy around each component of $Z$. Suppose $\W$ is a rank $2n$ local system over $S$ and we have an isomorphism  of local systems $\V\simeq \wedge^n\W$, then $\W$ admits the structure of a $\C$-PVHS such that the induced $\C$-PVHS on the wedge product $\wedge^n\W$ coincides with the given $\C$-PVHS on $\V$.
\end{proposition}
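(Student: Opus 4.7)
The plan is to apply Simpson's non-abelian Hodge correspondence (in the tame parabolic form of Simpson--Mochizuki) to the local systems $\V$ and $\W$, and to analyze the natural $\C^{*}$-action on the moduli of parabolic Higgs bundles induced by scaling the Higgs field.

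The first step is to observe that $\W$ is necessarily an \emph{irreducible} local system. Any splitting $\W=\W_{1}\oplus\W_{2}$ would induce a non-trivial splitting of $\wedge^{n}\W$, contradicting the irreducibility of $\V$, so $\W$ is indecomposable. Moreover, since the kernel of $\wedge^{n}:\mathrm{GL}_{2n}\to\mathrm{GL}(\wedge^{n}\C^{2n})$ is the finite central subgroup $\mu_{n}$, the Zariski closure $G_{\W}\subset\mathrm{GL}_{2n}$ of the monodromy of $\W$ is an extension of the reductive closure $G_{\V}$ of the monodromy of $\V$ by a finite subgroup of $\mu_{n}$, hence is itself reductive. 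Therefore $\W$ is semisimple, and combined with indecomposability, irreducible.

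Next, I let $(E_{\W},\theta_{\W})$ be the parabolic polystable Higgs bundle on $(\bar{S},Z)$ of trivial parabolic Chern classes corresponding to $\W$ via Simpson--Mochizuki (the quasi-unipotency of local monodromies transfers from $\V$ to $\W$, since the eigenvalues of $\wedge^{n}\rho_{\W}(\gamma)$ being roots of unity forces the same for the eigenvalues of $\rho_{\W}(\gamma)$). By compatibility of the correspondence with tensor operations, $\wedge^{n}(E_{\W},\theta_{\W})\simeq(E_{\V},\theta_{\V})$. The assumption that $\V$ is a $\C$-PVHS translates into $(E_{\V},\theta_{\V})$ being a fixed point of the $\C^{*}$-action $t\cdot(E,\theta)=(E,t\theta)$, equivalently, a parabolic system of Hodge bundles. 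The key step is to promote this to $(E_{\W},\theta_{\W})$. The orbit $\{[E_{\W},t\theta_{\W}]\}_{t\in\C^{*}}$ is mapped by the $\C^{*}$-equivariant morphism $\wedge^{n}$ to the constant point $[E_{\V},\theta_{\V}]$, hence lies in one fiber; it suffices to show this fiber is \'etale at $[E_{\W},\theta_{\W}]$. Infinitesimally, this is the injectivity of $H^{1}(S,\End\,\W)\to H^{1}(S,\End\,\V)$. The Lie-algebra differential $d\wedge^{n}:\mathfrak{gl}_{2n}\to\mathfrak{gl}_{\binom{2n}{n}}$ has zero kernel (the Lie algebra of the discrete group $\mu_{n}$), yielding an inclusion of local systems $\End\,\W\hookrightarrow\End\,\V$; let $C$ be the cokernel. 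By Schur, $H^{0}(S,\End\,\W)=H^{0}(S,\End\,\V)=\C$ with the inclusion sending $I_{\W}$ to $n\,I_{\V}$, hence an isomorphism on $H^{0}$. It remains to show $H^{0}(S,C)=0$: this follows from $C^{G_{\W}}=0$ (complete reducibility of the reductive $G_{\W}$ gives an exact sequence on invariants, and the unique $G_{\W}$-invariant line in $\End\,\V$ is $\C\cdot I_{\V}$, which already lies in the image of $\End\,\W$), combined with Zariski density of $\pi_{1}$ in $G_{\W}$ to identify $H^{0}(S,C)=C^{\pi_{1}}=C^{G_{\W}}$. Connectedness of $\C^{*}$ and the discreteness of the fiber force the orbit to be a single point, so $(E_{\W},\theta_{\W})$ is $\C^{*}$-fixed, i.e., a system of Hodge bundles.

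Finally, Simpson--Mochizuki applied in reverse to this system of Hodge bundles produces a $\C$-PVHS structure on $\W$. The wedge product of the Hodge bundle grading on $E_{\W}$ is, by construction, the given grading on $E_{\V}$, so the induced $\C$-PVHS on $\wedge^{n}\W$ coincides with the given one on $\V$. The hardest step will be the discreteness of the $\wedge^{n}$-fiber on the Higgs moduli, which depends essentially on the irreducibility of $\V$ (hence of $\W$) and on the vanishing of $G_{\W}$-invariants in the cokernel $C$.
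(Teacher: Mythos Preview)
Your proposal is correct and follows the same overall architecture as the paper's proof: both use Simpson's non-abelian Hodge correspondence, the characterization of $\C$-PVHS as $\C^{*}$-fixed points, and the fact that $\wedge^{n}:\mathrm{GL}_{2n}\to\mathrm{GL}_{\binom{2n}{n}}$ has finite kernel. The difference lies in how the key step---that the $\C^{*}$-orbit of $[\W]$ collapses to a point---is justified. The paper invokes a global result of Simpson (Corollary~9.18 in \emph{Moduli of representations~II}) asserting that the induced morphism $\phi_{\wedge^{n}}$ between moduli spaces of semisimple representations is \emph{finite}, so the fiber over the fixed point $[\V]$ is finite and $\C^{*}$-stable, hence pointwise fixed. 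You instead establish finiteness locally, by showing $d\wedge^{n}$ is injective on tangent spaces via the short exact sequence $0\to\End\,\W\to\End\,\V\to C\to 0$ and the vanishing $H^{0}(S,C)=0$. Your route is more self-contained but requires the preliminary irreducibility of $\W$ (which you supply, and which the paper tacitly needs to place $\W$ in the semisimple moduli). For the quasi-projective case, the paper reduces to a sufficiently ample curve and uses Jost--Zuo and Mochizuki, whereas you work directly with the tame parabolic correspondence on $(\bar S,Z)$; both are legitimate. Finally, for the coincidence of the induced PVHS on $\wedge^{n}\W$ with the given one on $\V$, the paper appeals to Deligne's uniqueness of $\C$-PVHS structures on an irreducible local system, while your ``by construction'' is correct but would benefit from citing that uniqueness explicitly.
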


\section{Calabi-Yau manifolds coming from hyperplane arrangements}
Throughout this paper we use the following notation:

Let $M$ be a $\C$-linear space, or a sheaf of  $\C$-linear spaces on a scheme, on which the group  $\Z/r\Z=<\sigma>$ acts. Let $\zeta$ be a primitive $r$-th root of unit. For $i\in \Z/r\Z$ we write
$M_{(i)} :=\{x\in M\mid \sigma(x)=\zeta^i x\}$,
which in the sheaf case has to be interpreted on the level of local sections. We
refer to $M_{(i)}$ as the $i$-eigenspace of $M$. We have $M = \oplus_{i\in \Z/r\Z}M_{(i)}$.

An ordered arrangement $\mathfrak{A}=(H_1,\cdots, H_{2n+2})$ of $2n+2$ hyperplanes in $\P^n$ can be given by a matrix $A\in M((n+1)\times (2n+2), \C)$, the $j$th column corresponding to the defining equation
$$
\sum_{i=0}^na_{ij}x_i=0
$$
of the hyperplane $H_j$. Here $[x_0: \cdots :x_n]$ are the homogeneous coordinates on $\P^n$. We say that $\mathfrak{A}$ is in general position if no $n+1$ of the hyperplanes intersect in a point. In terms of the matrix $A$ this means that each $(n+1)\times (n+1)$-minor is non-zero.

\subsection{The double cover of $\P^n$ and its crepant resolution}\label{subsection:double cover and crepant resolution}

The hyperplanes of the arrangement $\mathfrak{A}$ determine a divisor $H=\sum_{i=1}^{2n+2}H_i$ on $\P^n$. As the degree of $H$ is even and the Picard group of $\P^n$ has no torsion, there exists a unique double cover $\pi: X\rightarrow \P^n$ that ramifies over $R$. By a direct computation using the adjunction formula, the canonical line bundle of $X$ is trivial. The singular locus of such a double cover $X$ is precisely the preimage of the singular locus of $H$.
Fix an order of irreducible components of singularities of $H$, say
$Z_1,\cdots,Z_{N}$. The canonical resolution of $X$ according to that order is the following commutative diagram:\\

\begin{displaymath}
\begin{diagram}[labelstyle=\scriptscriptstyle]
X        & = &X_0        &\lTo^{\tau_1}  &X_1         &\lTo^{\tau_2}  &\cdots &\lTo^{\tau_{N}}&X_{N} & =  & \tilde{X}    \\
\dTo^{\pi} & & \dTo_{\pi_0}&               &\dTo_{\pi_1}&               &       &            & \dTo_{\pi_N} & & \dTo_{\tilde{\pi}}\\
\P^n     & = &\P_0        &\lTo^{\sigma_1}&\P_1        &\lTo^{\sigma_2}&\cdots &\lTo^{\sigma_{N}}&\P_{N} & = & \tilde{\P^n}\\
\end{diagram}
\end{displaymath}
Here, inductively on $i$, $\P_i\stackrel{\sigma_i}{\to} \P_{i-1}$ is the blow-up of
$\P_{i-1}$ along the smooth center
$(\sigma_{i-1}\circ\cdots\circ\sigma_1)^{-1}(Z_i)$, and $X_i$ is
the normalization of the fiber product of $X_{i-1}$ and $\P_i$ over
$\P_{i-1}$.

\begin{lemma}\label{lemma:infinitesimal deformation of X}
The space of infinitesimal deformations of $\tilde{X}$ is naturally isomorphic to the space of infinitesimal deformations of $\mathfrak{A}$.
\end{lemma}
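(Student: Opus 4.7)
\noindent The plan is to construct a Kodaira--Spencer map
\[
\kappa: T_{[\mathfrak{A}]}\mathfrak{M}_{AR}\longrightarrow H^{1}(\tilde X, T_{\tilde X})
\]
and prove it is an isomorphism. To build $\kappa$, I would use that the canonical resolution $\tilde X$ is obtained by successively blowing up the strict transforms of the incidence strata $Z_1,\ldots,Z_N$ of $\mathfrak{A}$. These strata depend algebraically on the defining coefficients of the hyperplanes, and the general position hypothesis is open, so any small deformation of $\mathfrak{A}$ extends the construction to a holomorphic family of crepant resolutions over a neighborhood of $[\mathfrak{A}]$ in the configuration space. Taking the Kodaira--Spencer class of this family defines $\kappa$; it factors through $\mathfrak{M}_{AR}$ because $\Aut(\P^n)$-equivalent arrangements produce canonically isomorphic resolutions.

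For injectivity, if $\kappa(v)=0$ then the induced first-order family $\tilde X_t$ is trivial. Taking the quotient by the Galois involution $\sigma$ of the double cover $\tilde\pi:\tilde X\to\tilde{\P}^n$ (which is intrinsic to $\tilde X$ and hence deforms canonically with the family) and then contracting the exceptional divisors in reverse order of their construction, one recovers a trivial first-order family of pairs $(\P^n, H_t)$ modulo $\Aut(\P^n)$; hence $v=0$ in $T_{[\mathfrak{A}]}\mathfrak{M}_{AR}$.

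For surjectivity I would split
\[
H^{1}(\tilde X, T_{\tilde X})=H^{1}(\tilde X, T_{\tilde X})^{+}\oplus H^{1}(\tilde X, T_{\tilde X})^{-}
\]
under the $\sigma$-action, coming from the eigensheaf decomposition of $\tilde\pi_{*}T_{\tilde X}$. The invariant summand parametrizes $\sigma$-equivariant first-order deformations of $\tilde X$; via the double-cover structure together with the rigidity of $\P^n$ and the fact that the blow-up centers are prescribed by the arrangement, it is exactly the image of $\kappa$. The main obstacle, and the heart of the proof, will be the vanishing $H^{1}(\tilde X, T_{\tilde X})^{-}=0$, i.e.\ the assertion that every first-order deformation of $\tilde X$ is $\sigma$-equivariant. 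By Leray for $\tilde\pi$ this reduces to a cohomology vanishing on $\tilde{\P}^n$ for the anti-invariant eigensheaf of $\tilde\pi_{*}T_{\tilde X}$, which I expect to establish by unwinding the blow-up sequence $\tilde{\P}^n\to\P^n$ and reducing it to a Bott-type vanishing on $\P^n$ for an appropriate twist of the log tangent sheaf $T_{\P^n}(-\log H)$ by the line bundle $\mathcal{O}_{\P^n}(-(n+1))$ defining the double cover. Once this vanishing is in hand, combined with injectivity and the identification of $H^1(\tilde X, T_{\tilde X})^+$ with $T_{[\mathfrak{A}]}\mathfrak{M}_{AR}$, the map $\kappa$ is forced to be a natural isomorphism.
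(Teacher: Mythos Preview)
Your strategy is essentially the one the paper invokes. The paper does not give an independent argument here: it simply cites Lemma~2.1 of \cite{GSSZ} for $n=3$, whose proof is based on the Cynk--van Straten analysis \cite{Cynk-van Straten} of infinitesimal deformations of double covers, and asserts that the same argument works verbatim for all $n$. Your outline---build the Kodaira--Spencer map from the tautological family of resolutions, split $H^{1}(\tilde X,T_{\tilde X})$ into $\sigma$-eigenspaces via the decomposition of $\tilde\pi_{*}T_{\tilde X}$, identify the invariant piece with deformations of the arrangement, and kill the anti-invariant piece by a cohomology vanishing on the base---is exactly the Cynk--van Straten framework specialized to this situation.

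One small caution on the details you will need to fill in: the precise identification of $(\tilde\pi_{*}T_{\tilde X})^{-}$ is slightly more delicate than ``$T_{\P^n}(-\log H)\otimes\mathcal{O}_{\P^n}(-(n+1))$ after unwinding the blow-ups''; in the Cynk--van Straten setup the anti-invariant eigensheaf is typically identified with $T_{\tilde{\P}^n}\otimes\tilde{\mathcal L}^{-1}$ (no log) or an equivalent extension, and the required vanishing then uses both Bott vanishing on $\P^n$ and a careful bookkeeping of the exceptional contributions through the successive blow-ups. This is exactly what \cite{Cynk-van Straten} and \cite{GSSZ} carry out, so you are on the right track, but the sheaf you named is not quite the one that appears.
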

\begin{proof}
For the $n=3$ case, see Lemma 2.1 in \cite{GSSZ}, whose proof is based on \cite{Cynk-van Straten}. This proof goes through in general case verbatim.
\end{proof}

\begin{proposition}\label{hodge number}
Let $X$ and $\tilde X$ be as above. Then
$$
\dim H_{prim}^{p,q}(\tilde X)={n \choose p}^2,\quad p+q=n.
$$
\end{proposition}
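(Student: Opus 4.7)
The plan is to isolate $H^{p,q}_{\mathrm{prim}}(\tilde X)$ as the $(-1)$-eigenspace of the covering involution $\tilde\sigma$ lifted from $\pi:X\to\P^n$, and then compute that eigenspace through logarithmic differentials on the blowup $\tilde{\P^n}$ constructed in \S\ref{subsection:double cover and crepant resolution}. The covering involution $\sigma$ lifts canonically to $\tilde\sigma$ on $\tilde X$ and decomposes $H^q(\tilde X,\Omega^p_{\tilde X})$ into $(\pm 1)$-eigenspaces. The $(+1)$-summand is $H^q(\tilde{\P^n},\Omega^p_{\tilde{\P^n}})$, which vanishes for $p\neq q$ since $\tilde{\P^n}$ is a successive blowup of $\P^n$ along smooth intersections of hyperplanes and hence has Tate-type Hodge structure; for $p=q=n/2$ (even $n$) the invariant classes come from pullbacks and exceptional divisors and are algebraic, hence split off cleanly from the primitive part. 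So for $p+q=n$ one has $H^{p,q}_{\mathrm{prim}}(\tilde X)=H^q(\tilde X,\Omega^p_{\tilde X})^{-}$, up to an elementary subtraction of algebraic cycle classes on the middle row when $n$ is even.

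Next, after the blowup tower $\sigma_i$, the map $\tilde\pi:\tilde X\to\tilde{\P^n}$ is a smooth double cover branched along a simple normal crossing divisor $B\subset\tilde{\P^n}$, assembled from the strict transforms of the $H_i$ together with those exceptional divisors appearing with odd multiplicity in $\rho^*H$. Writing $\tilde{\sL}$ for the ramification line bundle, $\tilde{\sL}^{\otimes 2}=\sO_{\tilde{\P^n}}(B)$, the Esnault--Viehweg formula gives the $\tilde\sigma$-eigenspace splitting
$$
\tilde\pi_*\Omega^p_{\tilde X}=\Omega^p_{\tilde{\P^n}}\oplus\bigl(\Omega^p_{\tilde{\P^n}}(\log B)\otimes\tilde{\sL}^{-1}\bigr),
$$
so that for $p+q=n$,
$$
H^{p,q}_{\mathrm{prim}}(\tilde X)\cong H^q\bigl(\tilde{\P^n},\Omega^p_{\tilde{\P^n}}(\log B)\otimes\tilde{\sL}^{-1}\bigr).
$$

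Finally, I would descend along $\rho:\tilde{\P^n}\to\P^n$. A local computation at each exceptional divisor shows that the multiplicities entering $B$ and $\tilde{\sL}$ are tuned so that
$$
R\rho_*\bigl(\Omega^p_{\tilde{\P^n}}(\log B)\otimes\tilde{\sL}^{-1}\bigr)=\Omega^p_{\P^n}(\log H)\otimes\omega_{\P^n},
$$
using $\deg H=2(n+1)$, where $\Omega^p_{\P^n}(\log H)$ is Saito's sheaf of log forms along the arrangement. The cohomology on $\P^n$ is then computed from the Aomoto--Brieskorn filtration of $\Omega^p_{\P^n}(\log H)$, whose graded pieces are of the form $\bigoplus_{|I|=k}\iota_{I,*}\Omega^{p-k}_{H_I}$ with $H_I=\bigcap_{i\in I}H_i\cong\P^{n-k}$; after twisting by $\omega_{\P^n}$, Bott vanishing on each flat leaves a single cohomological degree, and a Vandermonde-type binomial identity collapses the count to $\binom{n}{p}^2$.

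The main obstacle is the third step: matching Saito's log-forms on the singular arrangement $H$ against the descent of the NCD log-forms from $\tilde{\P^n}$, since the multiplicities of the exceptional divisors in $B$ and $\tilde{\sL}$ depend on how many of the $2n+2$ hyperplanes meet at each blown-up stratum, and one needs to check stratum by stratum that these multiplicities combine as claimed. Once that descent is in place the remaining combinatorics is standard. As an internal consistency check, Lemma \ref{lemma:infinitesimal deformation of X} together with Bogomolov--Tian--Todorov unobstructedness already forces $h^{n-1,1}(\tilde X)=\dim\mathfrak{M}_{AR}=n^2=\binom{n}{1}^2$, matching the claimed row of the Hodge diamond.
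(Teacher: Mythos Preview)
Your opening step agrees with the paper: the $(-1)$-eigenspace of the covering involution on $H^n(\tilde X)$ is $H^q(\tilde{\P^n},\Omega^p_{\tilde{\P^n}}(\log\tilde H)\otimes\tilde\sL^{-1})$ via the usual splitting of $\tilde\pi_*\Omega^p_{\tilde X}$, and this already equals $H^{p,q}_{\mathrm{prim}}(\tilde X)$ with no extra subtraction on the middle row.

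From there your route diverges and contains a genuine gap. The asserted identity $R\rho_*(\Omega^p_{\tilde{\P^n}}(\log B)\otimes\tilde\sL^{-1})=\Omega^p_{\P^n}(\log H)\otimes\sL^{-1}$ is precisely the ``main obstacle'' you flag, and the paper does not attempt it. For a single blow-up along a codimension-two stratum the residue sequence produces the exceptional term $\Omega^{p-1}_E(\log D_Y\!\cdot\! E)\otimes\sO_E(-1)$; the paper shows only that its \emph{Euler characteristic} vanishes (by an induction on $p$), which gives $\chi$-invariance under blow-up but not the sheaf-level pushforward you need. What replaces your missing $R\rho_*$ control is a vanishing theorem you do not invoke: Esnault--Viehweg (their Proposition~6.1) kills $H^k(\tilde{\P^n},\Omega^p_{\tilde{\P^n}}(\log\tilde H)\otimes\tilde\sL^{-1})$ for all $k\neq q$, so the Euler characteristic on $\tilde{\P^n}$ already pins down $h^{p,q}$. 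Without that step neither your argument nor the paper's would give individual Hodge numbers rather than an alternating sum.

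Your endgame on $\P^n$ is also different. The weight filtration of $\Omega^p_{\P^n}(\log H)$ does have the graded pieces you describe (and $H$ is already SNCD since the arrangement is in general position, so no Saito-style non-NCD log forms are needed), but after the $\sL^{-1}$ twist their cohomologies all land on a single row of the $E_1$-page with nontrivial $d_1$'s, so again you only see $\chi$. The paper instead builds a Koszul-type resolution of $\Omega^{n-p}_{\P^n}(\log H)(-n-1)$ out of the differential-operator sheaf $\Sigma_\sE$ on the projective bundle $\P(\bigoplus^{2n+2}\sO_{\P^n}(1))$, whose terms have vanishing higher cohomology by Bott, and evaluates the resulting triple binomial sum directly to $(-1)^p\binom{n}{p}^2$.
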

The proof of this proposition is postponed to Section \ref{subsec:Hodge number calculation}.

Throughout this paper except Section \ref{section:general results}  $\mathfrak{M}_{AR}$ is denoted for the coarse moduli space of ordered arrangements of $2n+2$ hyperplanes in $\P^n$ in general position. Let $\mathcal{M}_{n,2n+2}$ denote the coarse moduli space of $\tilde{X}$ and  call $\mathcal{M}_{n,2n+2}$ the Dolgachev's moduli spaces.

Let $\mathfrak{A}$ be an ordered arrangement in general position. It is easy to verify that under the automorphism group of $\P^n$ one can transform in a unique way the ordered first  $n+2$ hyperplanes $(H_1,\cdots, H_{n+2})$ of $\mathfrak{A}$ into the ordered $n+2$ hyperplanes in $\P^n$, that are given by the first $n+2$ columns in the following matrix $A$. Hence the moduli point of $\mathfrak{A}$ in $\mathfrak{M}_{AR}$ can be uniquely represented by the matrix $A$ of the form:
$$
\left(
  \begin{array}{cccccccc}
    1 & 0 & \cdots & 0 & 1 & 1 & \cdots & 1 \\
    0 & 1 &  & 0 & 1 & a_{11} & \cdots & a_{1n} \\
    \vdots &  & \ddots & \vdots & \vdots & \vdots &  & \vdots \\
    0 &  &  & 1 & 1 & a_{n1} & \cdots & a_{nn} \\
  \end{array}
\right)
$$
Conversely a matrix $A$ in the above form whose all $(n+1)\times (n+1)$-minors are non-zero represents an arrangement $\mathfrak{A}$ in general position. Thus
$\mathfrak{M}_{AR}$ can be realized as an open subvariety of the affine space $\C^{n^2}$ and it admits a natural family $f:\mathcal{X}_{AR}\rightarrow \mathfrak{M}_{AR}$, where each fiber $f^{-1}(\mathfrak{A})$ is the double cover of $\P^n$ branched along the hyperplane arrangement $\mathfrak{A}\in \mathfrak{M}_{AR}$.
It is easy to see the canonical resolution  gives rise to a simultaneous resolution $\tilde{\mathcal{X}}_{AR}\rightarrow \mathcal{X}_{AR}$, and the family $\tilde{f}:\tilde{\mathcal{X}}_{AR}\rightarrow \mathfrak{M}_{AR}$ is a smooth family of CY manifolds.
\subsection{The Kummer cover}\label{subsection:Kummer cover}
Let $a=(a_{ij})$ denote a $n\times n$-matrix associated with  a hyperplane arrangement $\mathfrak{A}$ as described above. We let $Y$ denote the complete intersection of the $n+1$ hypersurfaces in $\P^{2n+1}$ defined by the $n+1$ equations:
\begin{equation}\notag
\begin{split}
&y_{n+1}^2-(y_0^2+\cdots+y_n^2)=0;\\
& y_{n+i+1}^2-(y_0^2+a_{1i}y_1^2+\cdots +a_{ni}y_n^2)=0, \ 1\leq i\leq   n.
\end{split}
\end{equation}
Here $[y_0:\cdots:y_{2n+1}]$ are the homogeneous coordinates on $\P^{2n+1}$. In case $\mathfrak{A}$ is in general position, the space $Y$ is smooth (See Proposition 3.1.2 in \cite{Te}).

Let $N=\oplus_{j=0}^{2n+1}\F_2$, where $\F_2=\Z/2\Z$. Consider the  following group
\begin{equation}\notag
\begin{split}
N_1:=Ker(N &\rightarrow \F_2)\\
(a_j)&\mapsto \sum_{j=0}^{2n+1}a_j
\end{split}
\end{equation}
We define a natural  action of $N$ on $Y$.  $\forall g=(a_0,\cdots, a_{2n+1})\in N$, the action of $g$ on $Y$ is induced by
\begin{equation}\notag
g\cdot y_j :=(-1)^{a_j}y_j, \ \ \forall \ 0\leq j\leq 2n+1.
\end{equation}
Then for a given arrangement $\mathfrak{A}$ in general position, we have the following relations between the double cover $X$ and the Kummer cover $Y$:
\begin{proposition}\label{prop:pure Hodge structure of X}
\begin{itemize}
\item[(1)] The map $\pi_1: Y\rightarrow \P^n$, $[y_0:\cdots:y_{2n+1}]\mapsto [y_0^2:\cdots:y_{n}^2]$ defines a cover of degree $2^{2n+1}$.
\item[(2)]$X\simeq Y/N_1$.
\item[(3)] There exists a natural isomorphism of rational Hodge structures $H^n(X,
\Q)\simeq H^n(Y,\Q)^{N_1}$, where $H^n(Y,\Q)^{N_1}$ denotes the subspace of invariants under $N_1$. In particular, the natural mixed Hodge structure on $H^n(X,
\Q)$ is in fact a pure one.
\item[(4)]Via the identification $H^n(X,\Q)= H^n(Y,\Q)^{N_1}$, we have $H^n(X,\Q)_{(1)}\subset H^n(Y,\Q)_{pr}$.
\end{itemize}
\end{proposition}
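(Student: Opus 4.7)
My plan is to handle the four parts sequentially, as each builds on the previous. For (1), I would count a generic fiber of $\pi_1$: over a point $[x_0:\cdots:x_n]\in\P^n$ with all $x_i$ nonzero and all later linear forms of $\mathfrak{A}$ nonvanishing, any preimage satisfies $y_i^2=\lambda x_i$ for some $\lambda\in\C^{\ast}$, and projective rescaling normalizes $\lambda=1$; this gives $2^{n+1}$ choices for the first block $(y_0,\ldots,y_n)$ and $2^{n+1}$ for the second block $(y_{n+1},\ldots,y_{2n+1})$ through the defining quadrics, and the projective identification $y\sim -y$ removes one factor of $2$, yielding $2^{2n+1}$ preimages.

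For (2), the function $z:=y_0y_1\cdots y_{2n+1}$ transforms as $g\cdot z=(-1)^{\sum a_j}z$, so it is $N_1$-invariant and anti-invariant under any coset representative of $N\setminus N_1$; in particular $Y/N_1\to\P^n$ is a double cover. A direct computation using the defining equations of $Y$ shows that $z^2$, expressed in the coordinates $x_i=y_i^2$, equals a nonzero scalar times the product of the $2n+2$ linear forms cutting out the hyperplanes of $\mathfrak{A}$ in the normalized matrix form displayed above. By uniqueness of such a double cover, $Y/N_1\simeq X$.

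For (3), the averaging identity $\frac{1}{|N_1|}\pi_{\ast}\pi^{\ast}=\id$ applied to $\pi:Y\to X=Y/N_1$ produces a natural isomorphism $\pi^{\ast}:H^n(X,\Q)\xrightarrow{\sim}H^n(Y,\Q)^{N_1}$. Since $N_1$ acts by morphisms of Hodge structures on the pure weight-$n$ Hodge structure $H^n(Y,\Q)$, the invariant subspace is itself a pure sub-Hodge structure, and Deligne's functoriality then identifies it with the natural mixed Hodge structure on the singular quotient $H^n(X,\Q)$, which is therefore pure.

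For (4), if $n$ is odd then $H^n(Y,\Q)=H^n(Y,\Q)_{pr}$ by weak Lefschetz and nothing needs to be shown. If $n$ is even, the $N$-action on $Y$ extends from the linear action on $\P^{2n+1}$ by coordinate sign changes, which preserves the hyperplane class $h$, so $h^{n/2}|_Y$ is $N$-invariant and lies in $H^n(X,\Q)_{(0)}$. The Lefschetz decomposition $H^n(Y,\Q)=\Q\cdot h^{n/2}|_Y\oplus H^n(Y,\Q)_{pr}$ is $N$-equivariant, so $H^n(X,\Q)_{(1)}$, being complementary to the invariant line $\Q\cdot h^{n/2}|_Y$ inside $H^n(Y,\Q)^{N_1}$, sits in $H^n(Y,\Q)_{pr}$. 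I expect the only genuinely subtle point to be (3), where one must match Deligne's MHS on the singular quotient with the one inherited from $H^n(Y,\Q)$; the other parts reduce to direct calculation and symmetry.
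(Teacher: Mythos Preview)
Your proposal is correct and follows essentially the same route as the paper: parts (1)--(3) are handled there by citing the analogous lemmas in \cite{GSSZ}, whose content is exactly the fiber count, the double-cover identification via the $N_1$-invariant $y_0\cdots y_{2n+1}$, and the averaging isomorphism you describe; for (4) the paper likewise invokes the Lefschetz Hyperplane Theorem to see that $H^{n-2}(Y,\Q)=H^{n-2}(\P^{2n+1},\Q)$ is entirely $N$-invariant, so the non-primitive part of $H^n(Y,\Q)^{N_1}$ lies in the $(0)$-eigenspace. The only cosmetic difference is that the paper phrases the last step through the Lefschetz operator on $X$ rather than directly on $Y$, but the underlying argument is identical.
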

\begin{proof}
For (1), (2), (3), See Lemma 2.4, Proposition 2.5 and Proposition 2.6 in \cite{GSSZ}.

(4) Since the isomorphism $X\simeq Y/N_1$ is compatible with the $\F_2$-action, we know $H^{n-2}(X,\Q)_{(1)}=H^{n-2}(Y,\Q)^{N_1}_{(1)}$, where the $\F_2$-action on $Y/N_1$ is induced by the identification  $\F_2=N/N_1$. Since $Y$ is a complete intersection in $\P^{2n+1}$, Lefschetz Hyperplane Theorem implies that
$H^{n-2}(Y,\Q)=H^{n-2}(\P^{2n+1},\Q)$, hence $H^{n-2}(X,\Q)_{(1)}=H^{n-2}(Y,\Q)^{N_1}_{(1)}=0$. Since the Lefschetz operator
$$
H^{n-2}(X,\Q)\xrightarrow{L}H^{n}(X,\Q)
$$
 preserves the $\F_2$-action, we get $L(H^{n-2}(X,\Q))\cap H^n(X,\Q)_{(1)}=0$. This shows  $H^n(X,\Q)_{(1)}\subset H^n(Y,\Q)_{pr}$.
\end{proof}

\subsection{Hyperelliptic locus}\label{subsection:hyperelliptic locus}
There is an interesting locus in $\mathfrak{M}_{AR}$ where the Hodge structure of $X$ comes from a hyperelliptic curve.

Note that there exists a natural Galois covering with Galois group $S_n$, the permutation group of $n$ letters:
$$
\gamma: (\P^1)^n\rightarrow Sym^n(\P^1)=\P^n.
$$
Here the identification attaches to a divisor of degree $n$ the ray of its equation in $H^0(\P^1, \mathcal{O}(n))$.

\begin{lemma}\label{lemma:points in general position implies hyperplane in general position}
Let $(p_1,\cdots, p_{2n+2})$ be a collection of $2n+2$ distinct points on $\P^1$, and put $H_i=\gamma(\{p_i\}\times \P^1\times \cdots \times \P^1)$. Then $(H_1,\cdots, H_{2n+2})$ is an arrangement of hyperplanes in general position.
\end{lemma}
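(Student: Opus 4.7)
The plan is to identify the linear system of effective degree $n$ divisors on $\P^1$ with $\P^n$ via $\gamma$, and then reinterpret each hyperplane $H_i$ as the set of divisors passing through $p_i$. Under the identification $\P^n = \P(H^0(\P^1,\mathcal{O}(n)))$ fixed in the paragraph before the lemma, a symmetric $n$-tuple $[q_1+\cdots+q_n]$ corresponds to (the ray of) any nonzero section of $\mathcal{O}(n)$ whose zero divisor is $q_1+\cdots+q_n$. Requiring $p_i$ to appear in this divisor is a single linear condition on $H^0(\P^1,\mathcal{O}(n))$ (the vanishing of the evaluation at $p_i$, or equivalently the $\mathcal{O}(-1) \hookrightarrow \mathcal{O}$ twisted condition), so $H_i$ is indeed a hyperplane in $\P^n$.

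Next I would check that for any subset $I \subset \{1,\ldots,2n+2\}$ with $|I|=n+1$, one has $\bigcap_{i\in I} H_i = \emptyset$. A point of this intersection would correspond to a nonzero global section of $\mathcal{O}(n)$ vanishing at all the distinct points $\{p_i\}_{i\in I}$. Since $\deg \mathcal{O}(n) = n$, such a section has exactly $n$ zeros counted with multiplicity, and in particular at most $n$ distinct zeros; hence it cannot vanish at $n+1$ distinct points. This contradiction shows the intersection is empty, which is precisely the general position condition for hyperplanes in $\P^n$.

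There is no real obstacle here; the lemma is essentially the tautology that the classical linear condition on an effective degree $n$ divisor to pass through a prescribed point is a hyperplane condition, combined with the elementary count of zeros of a section of $\mathcal{O}(n)$ on $\P^1$. The only thing one should be careful about is to make the identification $\mathrm{Sym}^n(\P^1) = \P(H^0(\P^1,\mathcal{O}(n)))$ explicit enough that the description of $H_i$ as a hyperplane follows without ambiguity.
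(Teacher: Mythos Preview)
Your proposal is correct and follows essentially the same approach as the paper's proof: identify $H_i$ with the hyperplane of degree $n$ divisors on $\P^1$ containing $p_i$, and then observe that a degree $n$ divisor cannot contain $n+1$ distinct points. The paper's argument is just a terser version of what you wrote.
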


\begin{proof}
The divisors of degree $n$ in $\P^1$ containing a given point form a hyperplane and,
as a divisor of degree $n$ cannot contain $n+1$ distinct points, no $n+1$ hyperplanes in
the arrangement do meet.
\end{proof}

Let $C$ be the double cover of $\P^1$ branched at $p_1,\cdots, p_{2n+2}$, and let $X_C$ be the double cover of $\P^n$ branched along $H_1,\cdots, H_{2n+2}$. Then the double covering structures induce natural actions of  the cyclic group $\F_2=\Z/2\Z$  on $C$ and $X$.

The group $\F_2^n$ and the permutation group $S_n$ act naturally on the product $C^n$. These actions induce an action of the semi-direct product $\F_2^n\rtimes S_n$ on $C^n$. Let $N^{'}$ be the kernel of the summation homomorphism:
\begin{equation}\notag
\begin{split}
\F_2^n &\rightarrow \F_2 \\
(a_i)&\mapsto \sum_{i=1}^{n}a_i
\end{split}
\end{equation}
Then we have:
\begin{lemma}\label{lemma: X_C simeq C^n/N_2}
There exists a natural isomorphism: $X_C \simeq C^n/N_2$, where $N_2:=N^{'}\rtimes S_n$.
\end{lemma}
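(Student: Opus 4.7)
The plan is to construct an explicit $N_2$-invariant morphism $\psi \colon C^n \to X_C$ and verify that the induced morphism $\bar\psi \colon C^n/N_2 \to X_C$ is an isomorphism. Write $C$ affinely as $y^2 = F(x) := \prod_{i=1}^{2n+2}(x-p_i)$ with coordinates $c = (x(c), y(c))$. Under the identification $\Sym^n \P^1 = \P^n$, a point corresponds to a monic polynomial $\prod_k (x - x_k)$ up to scaling and $H_i$ is cut out by evaluation at $p_i$; hence $X_C$ has the equation $z^2 = \prod_{k=1}^n F(x_k)$ in the natural square-root bundle over $\P^n$. I would set
\[
\psi(c_1,\ldots,c_n) := \bigl(\{x(c_1),\ldots,x(c_n)\},\ y(c_1)\cdots y(c_n)\bigr),
\]
which lies in $X_C$ because $\bigl(\prod_k y(c_k)\bigr)^2 = \prod_k F(x(c_k))$.

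The $N_2$-invariance of $\psi$ is immediate: $S_n$ preserves both the unordered tuple $\{x(c_k)\}$ and the product $\prod_k y(c_k)$, while an element of $N' \subset \F_2^n$ flips the signs of an even number of $y$-coordinates and thus also preserves $\prod_k y(c_k)$. So $\psi$ factors through $\bar\psi$. The key step is then a fiber count. The tower $C^n \to (\P^1)^n \to \P^n$ has degrees $2^n$ and $n!$, while $X_C \to \P^n$ has degree $2$, giving $\deg \psi = n! \cdot 2^{n-1} = |N_2|$. Over a generic point $(\{x_k\}, z) \in X_C$ the fiber $\psi^{-1}(\{x_k\}, z)$ consists of the $n!$ orderings of lifts of the $x_k$ together with the $2^{n-1}$ sign patterns satisfying $\prod_k y(c_k) = z$; the $S_n$ factor acts transitively on orderings, $N'$ on sign patterns, and hence $N_2$ acts transitively on the fiber. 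Surjectivity of $\psi$ is clear, by lifting each $x_k$ to $C$ and flipping one sign if necessary, so $\bar\psi$ is a finite birational bijection over the generic locus.

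The main obstacle is promoting this to an isomorphism over the whole base. Both $C^n/N_2$ and $X_C$ are normal---$C^n$ is smooth, hence its finite quotient is normal, and $X_C$ is the normal double cover of $\P^n$ branched along the reduced divisor $\sum_i H_i$---so Zariski's Main Theorem reduces the task to bijectivity of $\bar\psi$ on all geometric points, including over $\sum_i H_i$. I would handle the remaining branch-locus check by the same orbit analysis: at a point where some $x_k = p_j$, the corresponding $y(c_k)$ vanishes and the effective group action on the fiber degenerates, but the orbits of $N_2$ still correspond exactly to the points of $X_C$ above $\sum_i H_i$. Once bijectivity holds everywhere, $\bar\psi$ is the required isomorphism.
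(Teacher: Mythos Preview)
Your argument is correct, but it takes a different route from the paper. The paper never constructs the map $\psi$ explicitly. Instead it observes that the composite $h\colon C^n\xrightarrow{p^n}(\P^1)^n\xrightarrow{\gamma}\P^n$ is Galois with group $\F_2^n\rtimes S_n$, so the intermediate quotient $C^n/N_2\to\P^n$ is a degree~$2$ cover; one then checks its branch locus is exactly $\sum_i H_i$ and invokes the fact that $\mathrm{Pic}(\P^n)$ is torsion-free, which forces the double cover branched along a given divisor of even degree to be \emph{unique}. Hence $C^n/N_2\simeq X_C$ without ever writing down a map between them. Your approach, by contrast, produces the isomorphism concretely via $\psi$ and a degree/fiber count, then closes with normality and Zariski's Main Theorem. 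The paper's route is shorter and more conceptual; yours has the advantage of giving the isomorphism explicitly, which can be useful downstream (e.g.\ for tracking the $\F_2$-action or comparing Hodge structures).

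One small redundancy in your write-up: once you know $\bar\psi$ is a finite birational morphism between \emph{normal} varieties, Zariski's Main Theorem already gives that it is an isomorphism. You do not need the separate bijectivity check over the branch locus $\sum_i H_i$; that step can simply be deleted. (Finiteness of $\bar\psi$ follows because $C^n\to\P^n$ is finite and $C^n/N_2$ is the quotient of $C^n$ by a finite group, hence $C^n/N_2\to\P^n$ is finite, and therefore so is $C^n/N_2\to X_C$.)
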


\begin{proof}
Let $p: C\rightarrow \P^1$ be the covering map. The $n$-fold product
$$
h : C^n\xrightarrow{p^n}(\P^1)^n\xrightarrow{\gamma}\P^n
$$
is a Galois cover with Galois group $\F_2^n\rtimes S_n$.
Similar to \cite{GSSZ}, Lemma 2.8,
 one checks that the natural map $C^n/N_2\rightarrow \P^n$ induced by $h$ is a double
cover and its branch locus is exactly $H_1+\cdots+H_{2n+2}$. As the Picard
group of a projective space has no torsion, one concludes that $C^n/N_2$ is isomorphic to
the double cover $X_C$ of $\P^n$ branched along $\sum_{i}H_i$.
\end{proof}

%
  The following lemma is well known
\begin{lemma}\label{lemma: eigen Hodge numbers of C}
 $dim H^{1}(C,\C)_{(0)}=0$, $dim H^{1,0}(C)_{(1)}= $ $dim H^{0,1}(C)_{(1)}=n$.
\end{lemma}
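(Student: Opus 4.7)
The plan is to identify $C$ as a hyperelliptic curve and then exploit the fact that the quotient by the hyperelliptic involution is $\P^1$, whose cohomology in degree one vanishes.

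First I would compute the genus of $C$ by Riemann--Hurwitz applied to the double cover $p: C \to \P^1$ branched at the $2n+2$ points $p_1, \ldots, p_{2n+2}$:
\[
2g(C) - 2 = 2 \cdot (-2) + (2n+2),
\]
which gives $g(C) = n$, and hence $\dim H^1(C, \C) = 2n$. Second, I would use the well-known fact that, for a finite group $G$ acting on a projective manifold $X$, one has $H^*(X/G, \Q) = H^*(X, \Q)^G$. Applied to $\F_2$ acting on $C$ via the covering involution $\sigma$, whose quotient is $\P^1$, this yields
\[
H^1(C, \C)_{(0)} = H^1(C, \C)^{\F_2} = H^1(\P^1, \C) = 0,
\]
which is the first assertion.

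It follows that $H^1(C, \C)_{(1)} = H^1(C, \C)$ has dimension $2n$. Since the $\F_2$-action on $C$ is holomorphic, it preserves the Hodge decomposition, so the decomposition into eigenspaces refines the Hodge decomposition:
\[
H^1(C, \C)_{(1)} = H^{1,0}(C)_{(1)} \oplus H^{0,1}(C)_{(1)}.
\]
Complex conjugation exchanges $H^{1,0}(C)$ and $H^{0,1}(C)$ and commutes with the real action of $\sigma$, so it exchanges the two summands on the right. Combined with $\dim H^{1,0}(C)_{(1)} + \dim H^{0,1}(C)_{(1)} = 2n$, this forces $\dim H^{1,0}(C)_{(1)} = \dim H^{0,1}(C)_{(1)} = n$.

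There is no real obstacle here: the argument is standard once one notes that the invariants of the hyperelliptic involution compute the cohomology of $\P^1$. The only minor point to mention is that, since $\F_2$ acts on a smooth projective curve by a finite group of automorphisms, the quotient is automatically smooth and the comparison $H^*(X/G, \Q) = H^*(X, \Q)^G$ applies without modification.
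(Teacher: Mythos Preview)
Your proof is correct and entirely self-contained. The paper, by contrast, does not give an argument at all: it simply cites Moonen \cite{M}, (2.7), where a general formula for the eigenspace Hodge numbers of cyclic covers of $\P^1$ is recorded. Your route---Riemann--Hurwitz to get $g(C)=n$, the identification $H^1(C,\C)^{\F_2}=H^1(\P^1,\C)=0$, and complex conjugation to balance $H^{1,0}_{(1)}$ against $H^{0,1}_{(1)}$---is more elementary and avoids invoking the general machinery of \cite{M}. The citation buys uniformity (the same reference covers the $r$-fold cyclic covers needed in Section~\ref{section:general results}), whereas your argument is tailored to the hyperelliptic case but requires nothing beyond standard facts about curves.
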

\begin{proof}
See \cite{M}, (2.7).
\end{proof}

We can get some information about the Hodge structure of $X_C$ from $C$.

\begin{proposition}\label{prop:Hodge structure of X_C}

$$
H^n(X_C,\C)_{(1)}\simeq \wedge^n H^1(C,\C).
$$

\end{proposition}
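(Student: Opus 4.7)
The plan is to combine the presentation $X_C \simeq C^n/N_2$ from Lemma 2.11 with the Künneth decomposition of $H^n(C^n, \C)$. Set $G := \F_2^n \rtimes S_n$, so that $N_2 = N' \rtimes S_n$ is an index-two normal subgroup whose quotient $G/N_2 = \F_2^n/N'$ is the deck group $\F_2$ of the double cover $X_C \to \P^n$. Let $\chi: G \to \{\pm 1\}$ be the character $\chi(a, s) = (-1)^{\sum_k a_k}$, which is well-defined (the total sum is permutation-invariant), is trivial precisely on $N_2$, and therefore descends to the sign character on $\F_2$. Consequently
$$
H^n(X_C, \C)_{(1)} = H^n(C^n, \C)^{G, \chi},
$$
the $\chi$-isotypic subspace of $H^n(C^n, \C)$ for the $G$-action inherited from its action on $C^n$.

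By Künneth, $H^n(C^n, \C) = \bigoplus_{i_1 + \cdots + i_n = n} H^{i_1}(C, \C) \otimes \cdots \otimes H^{i_n}(C, \C)$. The $k$-th standard basis vector $e_k \in \F_2^n$ acts on a monomial $\omega_1 \otimes \cdots \otimes \omega_n$ by applying the hyperelliptic involution $\sigma_C^*$ to the $k$-th factor alone, with eigenvalue $-1$ on $\omega_k$ precisely when $\omega_k \in H^{i_k}(C)_{(1)}$. By Lemma 2.9, this eigenspace vanishes for $i_k \in \{0, 2\}$ and equals all of $H^1(C)$ when $i_k = 1$. Since $\chi(e_k) = -1$ for every $k$, the only Künneth summand that can contribute is the one with $i_1 = \cdots = i_n = 1$, namely $H^1(C, \C)^{\otimes n}$.

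Finally, the $S_n$-action on $H^1(C, \C)^{\otimes n}$ induced from permuting the factors of $C^n$ is by signed permutations, since permuting two odd-degree cohomology classes through the Künneth isomorphism introduces a Koszul sign. Because $\chi|_{S_n}$ is trivial, the sought subspace is the $S_n$-invariant part of $H^1(C, \C)^{\otimes n}$ under this signed action, which by definition is $\wedge^n H^1(C, \C)$. This yields the claimed isomorphism, and as a dimension check $\binom{2n}{n} = \dim \wedge^n H^1(C, \C)$ matches the primitive Hodge number count from Proposition 2.2. The only delicate point is the sign bookkeeping: one must use the signed permutation $S_n$-action on tensors of odd-degree classes rather than the naive one, which is a standard consequence of naturality of the Künneth isomorphism under automorphisms of $C^n$.
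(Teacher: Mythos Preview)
Your argument is correct and is essentially the same as the paper's: both invoke the quotient description $X_C\simeq C^n/N_2$, apply K\"unneth to isolate the $H^1(C)^{\otimes n}$ summand using the eigenspace lemma for $C$, and then identify the relevant piece with $\wedge^n H^1(C,\C)$ via the signed $S_n$-action. The paper is terser---it writes down the antisymmetrization map $\alpha_1\wedge\cdots\wedge\alpha_n\mapsto\sum_{\nu}(-1)^{\mathrm{Sign}(\nu)}\alpha_{\nu(1)}\otimes\cdots\otimes\alpha_{\nu(n)}$ directly---whereas you unpack the same content via the character $\chi$ and the Koszul sign, but there is no substantive difference.
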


\begin{proof}
Lemma \ref{lemma: X_C simeq C^n/N_2}  gives an identification
$$
H^n(X_C,\C)=H^n(C^n,\C)^{N_2}
$$
Taking into account the action of $\F_2$,  the K\"{u}nneth formula gives
$$
H^n(C^n,\C)^{N_2}_{(1)}=(\otimes_{i=1}^n H^1(C,\C))^{N_2}
$$
Then it is easy to see the following map is an isomorphism
\begin{equation}\notag
\begin{split}
\wedge^n H^1(C,\C) &\xrightarrow{\sim} (\otimes_{i=1}^n H^1(C,\C))^{N_2}=H^n(X_C,\C)_{(1)}\\
\alpha_1\wedge \cdots \wedge \alpha_n &\mapsto \sum_{\nu\in S_n}(-1)^{Sign(\nu)} \alpha_{\nu(1)}\otimes \cdots \otimes \alpha_{\nu(n)}
\end{split}
\end{equation}
and this gives the desired isomorphism.


\end{proof}

Let $\mathfrak{M}_{hp}\subset \mathfrak{M}_{AR}$ be the moduli space of ordered distinct $2n+2$ points on $\P^1$ and  $g:\mathcal{C}\rightarrow \mathfrak{M}_{hp}$ be the natural universal family of hyperelliptic curves.
 Recall that $f: \mathcal{X}_{AR}\rightarrow \mathfrak{M}_{AR}$ is the natural family of double covers of $\P^n$ branched along $2n+2$ hyperplanes in general position and $\tilde{f}:\tilde{\mathcal{X}}_{AR}\rightarrow \mathfrak{M}_{AR}$  is the simultaneous crepant resolution of $f$. Consider the $\Q-$VHS (rational variation of Hodge structures) $\V:=R^nf_{*}\Q$. Since $\F_2$ acts naturally on $\V$, We have a decomposition of $\Q-$VHS: $\V =\V_{(0)}\oplus\V_{(1)}$. Similarly, let $\tilde{\V}_{AR}:=(R^n\tilde{f}_{*}\Q)_{pr}$ be the $\Q-$PVHS of primitive cohomologies. Then we have

\begin{proposition}\label{prop:reduce to V_1}
There are isomorphisms of
 $\Q-$PVHS:
 \begin{itemize}
 \item[(1)] $\tilde{\V}_{AR}\simeq \V_{(1)}$, whose Hodge numbers are:
$$
h^{p,n-p}={n\choose p}^2, \ \forall \ 0\leq p\leq n.
$$
\item[(2)]$\V_{(1)}\mid_{\mathfrak{M}_{hp}}\simeq \wedge^n\V_{C}$, where  $\V_C:=R^1g_{*}\Q$ is the weight one $\Q$-PVHS assocaited to $g:\mathcal{C}\rightarrow \mathfrak{M}_{hp}$.
\end{itemize}
\end{proposition}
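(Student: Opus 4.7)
The plan is to handle parts (1) and (2) separately. For (1), I would argue that the covering involution $\sigma$ on $X$ extends to an involution $\tilde{\sigma}$ on $\tilde{X}$ with quotient $\tilde{\P^n}$, so that the $(0)$-eigenspace of $H^n(\tilde{X},\Q)$ is purely algebraic and cuts out the primitive part $\tilde{\V}_{AR}$ as its complement. For (2), the fiberwise isomorphism of Proposition \ref{prop:Hodge structure of X_C} globalizes via the universal hyperelliptic curve, and the only real points to check are rationality and compatibility with Hodge filtrations.

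In detail for (1), I would first observe that the singular locus of $X$ is contained in the ramification divisor $\pi^{-1}(H)$ and is therefore pointwise fixed by the covering involution $\sigma$. Each blow-up centre $Z_i$ in the canonical resolution of Section \ref{subsection:double cover and crepant resolution} is $\sigma$-invariant, so $\sigma$ lifts inductively to an involution $\tilde{\sigma}$ on $\tilde{X}$ whose quotient map is identified with $\tilde{\pi}: \tilde{X} \to \tilde{\P^n}$. Rational cohomology of finite quotients then gives $H^n(\tilde{X},\Q)^{\tilde{\sigma}} = H^n(\tilde{\P^n},\Q)$, which is spanned by algebraic classes since $\tilde{\P^n}$ is an iterated blow-up of $\P^n$. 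Comparing ranks with Proposition \ref{hodge number} shows these algebraic classes exhaust the non-primitive part, yielding
\[
\tilde{\V}_{AR} = (R^n\tilde{f}_*\Q)_{(1)}.
\]
To identify this with $\V_{(1)}$, I use Proposition \ref{prop:pure Hodge structure of X}(3) to know that $H^n(X,\Q)$ is pure, and consider the $\sigma$-equivariant pullback $H^n(X,\Q) \to H^n(\tilde{X},\Q)$. Its cokernel is generated by the fundamental classes of the exceptional divisors of $\tilde{X}\to X$, which are $\tilde{\sigma}$-invariant subvarieties over the fixed singular locus, hence land in $H^n(\tilde{X},\Q)_{(0)}$. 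Restricting to $(1)$-eigenspaces delivers the required isomorphism $\V_{(1)} \xrightarrow{\sim} \tilde{\V}_{AR}$, with Hodge numbers $\binom{n}{p}^2$ directly from Proposition \ref{hodge number}.

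For (2), I restrict $f$ to $\mathfrak{M}_{hp}$, where each fiber is $X_C = C^n/N_2$ by Lemma \ref{lemma: X_C simeq C^n/N_2}. The antisymmetrization map of Proposition \ref{prop:Hodge structure of X_C},
\[
\alpha_1\wedge\cdots\wedge\alpha_n \mapsto \sum_{\nu\in S_n}(-1)^{\mathrm{sgn}(\nu)}\,\alpha_{\nu(1)}\otimes\cdots\otimes\alpha_{\nu(n)},
\]
has integer coefficients, so it descends to a $\Q$-isomorphism $\wedge^n H^1(C,\Q) \xrightarrow{\sim} H^n(X_C,\Q)_{(1)}$; it is a morphism of Hodge structures because it is built from the K\"unneth isomorphism $H^n(C^n,\Q) = H^1(C,\Q)^{\otimes n}$ and the $N_2$-projection, both of which respect Hodge filtrations. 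Globalizing via the universal hyperelliptic curve $g:\mathcal{C} \to \mathfrak{M}_{hp}$ and the induced relative $n$-fold product then yields
\[
\V_{(1)}\bigl|_{\mathfrak{M}_{hp}} \simeq \wedge^n R^1 g_*\Q = \wedge^n \V_C
\]
as $\Q$-PVHS.

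The principal obstacle is the inductive verification that the canonical resolution is genuinely $\sigma$-equivariant through the normalization step in $X_i = \widetilde{X_{i-1}\times_{\P_{i-1}}\P_i}$, together with the claim that every exceptional cohomology class lies in the $(0)$-eigenspace even when $\tilde{\sigma}$ acts nontrivially within an exceptional divisor. Both issues amount to a careful inductive bookkeeping through the blow-ups, in the spirit of Lemma \ref{lemma:infinitesimal deformation of X} and \cite{Cynk-van Straten}; once this equivariance is in hand, the rest of the proof is essentially formal.
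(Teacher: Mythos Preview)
Your argument is correct, but it takes a different route from the paper's for part~(1). The paper does not analyze the exceptional locus of the resolution at all. Instead it observes (via Proposition~\ref{prop:pure Hodge structure of X}) that the crepant resolution gives a morphism $\V_{(1)}\to\tilde{\V}_{AR}$ of $\Q$-PVHS, appeals to a general injectivity result for pullback along a resolution (Theorem~5.41 in \cite{PS}), and then finishes by a \emph{dimension count at a single hyperelliptic point}: using Proposition~\ref{prop:Hodge structure of X_C} and Lemma~\ref{lemma: eigen Hodge numbers of C}, the fibre $H^n(X_C,\C)_{(1)}\cong\wedge^n H^1(C,\C)$ already has Hodge numbers $\binom{n}{p}^2$, which match those of $\tilde{\V}_{AR}$ from Proposition~\ref{hodge number}. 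So the injective map is forced to be an isomorphism. In other words, the paper borrows the setup of part~(2) to prove part~(1).

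Your approach instead establishes $\tilde{\V}_{AR}=(R^n\tilde{f}_*\Q)_{(1)}$ directly from the double-cover structure $\tilde{X}\to\tilde{\P^n}$, and then matches $(1)$-eigenspaces on $X$ and $\tilde{X}$ by arguing that the exceptional contribution is $\tilde{\sigma}$-invariant. This is more geometric and makes part~(1) logically independent of the hyperelliptic locus, at the cost of exactly the inductive bookkeeping you flag. Note that your rank comparison is in fact already contained in the \emph{proof} of Proposition~\ref{hodge number} in Section~\ref{subsec:Hodge number calculation}, where the identity $H^{p,q}_{prim}(\tilde{X})=H^q(\tilde{\P^n},\Omega^p_{\tilde{\P^n}}(\log\tilde{H})\otimes\tilde{\sL}^{-1})$ is exactly the statement that the primitive cohomology is the $(1)$-eigenspace; so you can streamline that step by citing the proof rather than the statement. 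For part~(2) the two arguments coincide.
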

\begin{proof}
(1) By Proposition \ref{prop:pure Hodge structure of X}, we can see that the simultaneous crepant resolution gives  a natural morphism $\V_{(1)}\rightarrow \tilde{\V}_{AR}$ of $\Q$-PVHS. Theorem 5.41 in \cite{PS} shows that this morphism is injective.
 It is easy to see the isomorphism in Proposition \ref{prop:Hodge structure of X_C} preserves the Hodge filtrations. Then Proposition \ref{hodge number} and Lemma \ref{lemma: eigen Hodge numbers of C} show that $\tilde{\V}_{AR}$ and $\V_{(1)}$ have the same Hodge numbers. So the natural morphism $\V_{(1)}\rightarrow \tilde{\V}_{AR}$ is an isomorphism of $\Q$-PVHS.

 (2) follows directly from Proposition \ref{prop:Hodge structure of X_C}.
\end{proof}

\section{Type A canonical variation and characteristic subvariety}


In order to compare two $\C$-PVHSs over $S$, besides using the obvious Hodge numbers as invariants, we can also use another important series of invariants: characteristic subvarieties, which  are contained in the projectivized tangent bundle $\P(\mathcal{T}_S)$. The basic theory of characteristic subvarieties is developed in \cite{SZ}. We recall the definition.
\begin{definition}
Let $\W$ be a $\C$-PVHS of weight $n$ over $S$ and $(E,\theta)$ the associated Higgs bundle. For every $q$ with $1\leq q\leq n$,  the $q$th iterated Higgs field
$$
E^{n,0}\xrightarrow{\theta^{n,0}}E^{n-1,1}\otimes \Omega_{S}\xrightarrow{\theta^{n-1,1}}\cdots \xrightarrow{\theta^{n-q+1,q-1}}E^{n-q,q}\otimes S^{q}\Omega_S
$$
 defines a morphism
$$
\theta^q: Sym^q(T_S)\rightarrow Hom(E^{n,0}, E^{n-q,q}).
$$
Then $\forall s\in S$, the $q$th characteristic subvariety of $\W$ at $s$ is
$$
C_{q,s}=\{[v]\in \P(T_{S,s})\mid v^{q+1}\in ker(\theta^{q+1})\}.
$$
Moreover, we call $\W$ of Calabi-Yau type (CY-type) if rank $E^{n,0}=1$ and $\theta^{n,0}: T_S\xrightarrow{} Hom(E^{n,0}, E^{n-1,1})$ is an isomorphism at the generic point.
\end{definition}
\begin{remark}
Our definition of characteristic variety is slightly different with that in \cite{SZ}. Here we only need the reduced subvariety. See Lemma 4.3 in \cite{GSSZ}.
\end{remark}

As an example, we can compute the first characteristic subvariety of the type A canonical $\C$-PVHS explicitly.
We recall the following description of  the canonical $\C$-PVHS $\V_{can}$ over $D^{I}_{n,n}$ from \cite{SZ}.

Let $V=\C^{2n}$ be a complex vector space equipped with a Hermitian symmetric bilinear form $h$ of signature $(n,n)$. Then $D^{I}_{n,n}$ parameterizes the dimension $n$ complex subspaces $U\subset V$ such that
$$
h\mid_{U}: U\times U\rightarrow \C
$$
is positive definite. This forms the tautological subbundle $S\subset V\times D^{I}_{n,n}$ of rank $n$ and denote by $Q$ the tautological quotient bundle of rank $n$. We have the natural isomorphism of holomorphic vector bundles
$$
T_{D^I_{n,n}}\simeq Hom(S,Q).
$$
The standard representation $V$ of $SU(n,n)$ gives rise to a weight one $\C$-PVHS $\W$ over $D^{I}_{n,n}$, and its associated Higgs bundle
$$
F=F^{1,0}\oplus F^{0,1}, \ \ \eta=\eta^{1,0}\oplus \eta^{0,1}
$$
is determined by
$$
F^{1,0}=S, \ \ F^{0,1}=Q, \ \ \eta^{0,1}=0,
$$
and $\eta^{1,0}$ is defined by the above isomorphism. The canonical $\C$-PVHS is
$$
\V_{can}=\wedge^{n}\W
$$
and its associated system of Higgs bundle $(E_{can},\theta_{can})$ is then
$$
(E_{can}, \theta_{can})=\wedge^n(F,\eta).
$$

\begin{proposition}\label{propo:Hodge numbers and chara. variety of canonical VHS}

\begin{itemize}
\item[(1)]The Hodge numbers  of the type A canonical $\C$-PVHS $\V_{can}$ over $D^I_{n,n}$ are
$$
h^{p,n-p}={n\choose p}^2, \ \ \forall \ 0\leq p\leq n.
$$
\item[(2)]The first characteristic subvarieties of the type A canonical $\C$-PVHS $\V_{can}$ over $D^I_{n,n}$ are
    $$
    C_{1,s}\simeq \P^{n-1}\times \P^{n-1}, \ \ \forall s\in D^{I}_{n,n}.
    $$
\end{itemize}
\end{proposition}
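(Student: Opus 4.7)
The plan is to read everything off directly from the decomposition $\V_{can}=\wedge^{n}\W$ and the explicit description of the Higgs bundle $(F,\eta)$ of $\W$, where $F^{1,0}=S$, $F^{0,1}=Q$, $\eta^{0,1}=0$, and $\eta^{1,0}:S\to Q\otimes\Omega_{D^I_{n,n}}$ is the canonical isomorphism coming from $T_{D^I_{n,n}}\simeq\Hom(S,Q)$.

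For part (1), the Hodge decomposition of a wedge power of a weight $1$ VHS gives
$$
E_{can}^{p,n-p}\;=\;\wedge^{p}F^{1,0}\otimes\wedge^{n-p}F^{0,1}\;=\;\wedge^{p}S\otimes\wedge^{n-p}Q,
$$
so $h^{p,n-p}=\binom{n}{p}\binom{n}{n-p}=\binom{n}{p}^{2}$. This is immediate and is really just bookkeeping.

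For part (2), first I would unwind the iterated Higgs field. Since $(E_{can},\theta_{can})=\wedge^{n}(F,\eta)$, at a point $s\in D^I_{n,n}$ a tangent vector $v\in T_{s}=\Hom(S_{s},Q_{s})$ acts on $\wedge^{n}W_{s}$ by the induced derivation $D_{v}(w_{1}\wedge\cdots\wedge w_{n})=\sum_{i}w_{1}\wedge\cdots\wedge v(w_{i})\wedge\cdots\wedge w_{n}$, sending $\wedge^{p}S\otimes\wedge^{n-p}Q$ to $\wedge^{p-1}S\otimes\wedge^{n-p+1}Q$. Iterating $q$ times recovers $\theta_{can}^{q}(v,\dots,v)=D_{v}^{q}$ on $E_{can}^{n,0}=\det S$. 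Writing this out on a decomposable section $s_{1}\wedge\cdots\wedge s_{n}$ gives
$$
D_{v}^{2}(s_{1}\wedge\cdots\wedge s_{n})\;=\;2\sum_{i<j}(-1)^{i+j-1}\bigl(s_{1}\wedge\cdots\widehat{s_{i}}\cdots\widehat{s_{j}}\cdots\wedge s_{n}\bigr)\otimes\bigl(v(s_{i})\wedge v(s_{j})\bigr)
$$
lying in $\wedge^{n-2}S\otimes\wedge^{2}Q$.

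Now comes the key computation: taking $s_{1},\dots,s_{n}$ to be a basis of $S_{s}$, the $\binom{n}{n-2}$ residual wedge factors on the $S$-side are linearly independent, so $D_{v}^{2}=0$ on $\det S_{s}$ if and only if $v(s_{i})\wedge v(s_{j})=0$ in $\wedge^{2}Q_{s}$ for every $i<j$, i.e.\ if and only if $v:S_{s}\to Q_{s}$ has rank at most one. Thus
$$
C_{1,s}\;=\;\bigl\{[v]\in\P\bigl(\Hom(S_{s},Q_{s})\bigr)\;:\;\mathrm{rk}(v)\leq 1\bigr\},
$$
which is exactly the Segre variety of rank one tensors in $S_{s}^{\vee}\otimes Q_{s}$, canonically isomorphic to $\P(S_{s}^{\vee})\times\P(Q_{s})\simeq\P^{n-1}\times\P^{n-1}$. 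The only subtle step is the rank-one characterization of $\ker D_{v}^{2}$, and that is settled by the linear independence argument above; the rest is direct linear algebra plus the Segre embedding.
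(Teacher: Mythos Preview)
Your proof is correct and follows essentially the same approach as the paper: identify $E_{can}^{p,n-p}=\wedge^{p}S\otimes\wedge^{n-p}Q$ to read off the Hodge numbers, then compute the second iterated Higgs field on $\det S$ and observe that its vanishing is equivalent to the tangent vector $v\in\Hom(S_s,Q_s)$ having rank at most one, yielding the Segre variety $\P^{n-1}\times\P^{n-1}$. Your linear-independence argument for the rank-one characterization is in fact slightly more explicit than the paper's, which simply writes out $\theta_{can,v}^{n,0}$ and $\theta_{can,v}^{n-1,1}$ and asserts the conclusion.
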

\begin{proof}
From the description of $\V_{can}$ above, $\forall \ 0\leq p\leq n$, $E_{can}^{p,n-p}$ can be identified with the subspace of $\wedge^nF$ linearly spanned by the following set:
$$
\{e_1\wedge\cdots \wedge e_p\wedge \tilde{e}_1\wedge \cdots \wedge \tilde{e}_{n-p}\mid e_i\in F^{1,0}, 1\leq i\leq p; \  \tilde{e}_j\in F^{0,1}, 1\leq j\leq n-p.\}
$$
From this we get (1).
Moreover, with this identification, $\forall \ v \in T_{D_{n,n}^I}$, if $e_1,\cdots, e_n \in F^{1,0}$, then
$$
\theta_{can,v}^{n,0}(e_1\wedge\cdots \wedge e_n)=\sum_{i=1}^n (-1)^{i-1}\eta^{1,0}_{v}(e_i)\wedge e_1\wedge\cdots\wedge \hat{e_i}\wedge \cdots \wedge e_n,
$$
where $\hat{e_i}$ means deleting the term $e_i$.

Similarly, if $e_1, \cdots, e_{n-1} \in F^{1,0}$, $\tilde{e}_{1}\in F^{0,1}$, then
$$
\theta_{can, v}^{n-1,1}(e_1\wedge\cdots \wedge e_{n-1}\wedge \tilde{e}_1)=\sum_{i=1}^{n-1}(-1)^{i-1}\eta^{1,0}_{v}(e_i)\wedge e_1\wedge\cdots\wedge \hat{e_i}\wedge \cdots \wedge e_{n-1}\wedge\tilde{e}_1.
$$

Since $\eta^{1,0}$ is defined by the identification $T_{D^I_{n,n}}=Hom (F^{1,0}, F^{0,1})$, we can see that $\forall \ s\in D^I_{n,n}$,
\begin{equation}\notag
\begin{split}
C_{1,s}&= \{v\in T_{D^I_{n,n},s}\mid \theta^{n-1,1}_{can,v}\circ \theta^{n,0}_{can,v}=0.\}\\
       &\simeq\{A\in M(n\times n, \C)\mid rank(A)=1.\}\\
       &\simeq \P^{n-1}\times \P^{n-1}.
\end{split}
\end{equation}
This proves (2).
\end{proof}

\section{Characteristic subvariety and nonfactorization}

\subsection{Nonfactorization of the period map}

There is  an upper bound of the dimension of the first characteristic variety of the  $\Q$-PVHS $\V_{(1)}$.
\begin{proposition}\label{prop:dim of characteristic variety}
If $n\geq 3$ ,  then
for a  generic $a\in \mathfrak{M}_{AR}$,   the first characteristic variety of $\V_{(1)}$ has dimension $dim C_{1,a}\leq 2$.
\end{proposition}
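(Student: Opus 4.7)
Because $\tilde X$ is Calabi--Yau, $E^{n,0}=H^0(\tilde X,K_{\tilde X})=\C\cdot\Omega$ is one-dimensional. Kodaira--Spencer and Lemma \ref{lemma:infinitesimal deformation of X} identify $T_{\mathfrak M_{AR},a}$ with $H^1(\tilde X,T_{\tilde X})$, which in turn is isomorphic to $H^1(\tilde X,\Omega^{n-1}_{\tilde X})=E^{n-1,1}$ via contraction with $\Omega$. Under these identifications the iterated Higgs field is given by the cup product $\theta^{n-1,1}_v\circ\theta^{n,0}_v(\Omega)=v\cup v\cup\Omega$, so that
$$C_{1,a}=\bigl\{[v]\in\P H^1(\tilde X,T_{\tilde X})\;:\;v\cup v=0\text{ in }H^2(\tilde X,\wedge^2 T_{\tilde X})\bigr\}.$$

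The plan is to compute this zero locus through the Kummer cover $Y\subset\P^{2n+1}$ of Section \ref{subsection:Kummer cover}. By Proposition \ref{prop:pure Hodge structure of X}(3)--(4), the $(1)$-eigenspace $H^n(X,\C)_{(1)}$, which equals $\tilde\V_{AR,a}\otimes\C$ by Proposition \ref{prop:reduce to V_1}(1), embeds into the primitive middle cohomology of the smooth complete intersection $Y$ of $n+1$ quadrics in $\P^{2n+1}$. A Griffiths/Terasoma-type residue description (compare \cite{Te}) then realizes each graded piece $E^{p,n-p}$ of $\tilde\V_{AR,a}$ as an explicit graded component of a Jacobian-type ring $R_{\bullet}$ associated to $Y$, taken in the appropriate $N_1$-eigenspace; moreover in this realization the cup product coincides with ordinary multiplication in $R_{\bullet}$. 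Concretely, $E^{n,0}$ is spanned by $\Omega$, $E^{n-1,1}$ corresponds to $T_{\mathfrak M_{AR},a}$ via Kodaira--Spencer (the variation $(\dot a_{ij})$ being read off by differentiating the defining quadrics of $Y$), and $E^{n-2,2}$ has dimension $\binom{n}{2}^2$ by Proposition \ref{hodge number}.

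With these identifications, bounding $\dim C_{1,a}$ reduces to bounding the dimension of the projective zero locus of the quadratic map
$$\mu_a\colon T_{\mathfrak M_{AR},a}\longrightarrow E^{n-2,2},\qquad v\longmapsto v\cup v,$$
at a generic arrangement $a$. One writes out $\mu_a$ explicitly, in the chosen bases, as a collection of quadratic forms on $\C^{n^2}$ whose coefficients are polynomial in the $a_{ij}$, and then argues by a direct rank computation---pairing $v\cup v$ against a well-chosen spanning set of $(E^{n-2,2})^{\vee}$---that at a generic $a$ these forms impose enough independent conditions to cut out a subvariety of dimension at most $2$ in $\P^{n^2-1}$.

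The main obstacle is this final rank estimate: one must verify that at a generic $a$ the Jacobian of $\mu_a$ at a generic point of its zero locus has rank at least $n^2-3$, which requires the multiplication $\Sym^2 T_{\mathfrak M_{AR},a}\to E^{n-2,2}$ to be sufficiently nondegenerate. The hypothesis $n\geq 3$ enters crucially here: for $n=2$ the target $E^{n-2,2}$ is one-dimensional and cannot impose enough constraints, whereas for $n\geq 3$ the combinatorial structure of the $N_1$-eigenspace of the Jacobian ring of $Y$ yields the required independent relations. This is precisely the hand computation advertised in the introduction, and it is the decisive input enabling the whole Zariski-density argument.
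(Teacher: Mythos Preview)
Your setup is essentially the paper's: via the Kummer cover $Y$ and Terasoma's Jacobian-ring description (Proposition~\ref{prop:identification of Hodge structures with Jacobian ring}), one identifies $C_{1,a}$ with $C'_{1,a}=\{[\alpha]\in\P(R^{N}_{(1,2)}):\alpha^2=0\text{ in }R^{N}_{(2,4)}\}$, where $R^{N}_{(1,2)}\cong\C^{n^2}$ with basis $\{\mu_i y_j^2\}_{1\le i,j\le n}$ and $R^{N}_{(2,4)}$ has dimension $\binom{n}{2}^2$. So your reduction to bounding the zero locus of the quadratic map $\mu_a:v\mapsto v\cup v$ is exactly right, and the paper takes the same route.

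Where you diverge is in the ``final rank estimate''. You propose to bound $\dim C'_{1,a}$ by showing that the Jacobian of $\mu_a$ has rank $\ge n^2-3$ at a generic point of its zero locus. The paper does something rather different and more direct. It first computes the $\binom{n}{2}^2$ quadratic forms $f_{ijpq}(\lambda)$ explicitly (Proposition~\ref{proposition: shape of f_ijpq}): each $f_{ijpq}$ depends only on the four variables $\lambda_{ij},\lambda_{iq},\lambda_{pj},\lambda_{pq}$, with ten coefficients that are explicit nonzero rational functions of $a_{ji},a_{qi},a_{jp},a_{qp}$. It then selects $n^2-3$ of these forms and runs a step-by-step elimination: there is a filtration $V_1\subset\cdots\subset V_{(n-1)^2}=\C^{n^2}$ of coordinate subspaces, and at each step a simple projection lemma (Lemma~\ref{lemma:dimension lemma}) shows $\dim X_p\le\dim X_{p-1}$, using either a nonzero leading coefficient $c_{ijpq}^{pqpq}$ or a nonzero $4\times 4$ resultant $R_{jq}$, $Q_{ip}$ to guarantee finite fibers. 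The endpoint $X_1$ is a hypersurface in $\C^4$, giving $\dim C'_{1,a}\le 2$.

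Your Jacobian-rank idea is not wrong in principle, but as stated it is a restatement of the goal rather than a method: to know the Jacobian rank at a generic point of the zero locus you already need control over that locus, and the number of forms is $\binom{n}{2}^2\gg n^2-3$, so a naive count gives nothing. The paper's contribution is precisely to isolate $n^2-3$ forms whose variable pattern permits the inductive elimination, and to verify by explicit formula that the relevant leading coefficients and resultants are generically nonzero. If you want to complete your plan, you should replace the Jacobian-rank heuristic with this elimination scheme (or an equivalent one), since that is where the actual content lies.
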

\begin{proof}
This theorem follows from Proposition \ref{prop:Jacobian ring: dim upper bound}, whose proof we postpone to the section \ref{subsection:Jacobian ring}.
\end{proof}

As over a coarse moduli space $\mathfrak{M}$ of a polarized algebraic variety  usually  does not exist a universal family, we use the following weaker notion. We say that a proper smooth morphism $f: \mathcal{X}\rightarrow S$ over a smooth connected base is a good family for $\mathfrak{M}$, if the moduli map $S\rightarrow \mathfrak{M}$ is dominant and generically finite. With this definition, our main theorem is
\begin{theorem}\label{thm: does not factor canonically}
If $n\geq 3$,  then
\begin{itemize}
\item[(1)] $\tilde{\mathcal{X}}_{AR}\xrightarrow{\tilde{f}}\mathfrak{M}_{AR}$ is a good family for the coarse moduli space  $\mathcal{M}_{n,2n+2}$;
\item[(2)] Let $f: \mathcal{X}\rightarrow S$ be a good family of $\mathcal{M}_{n,2n+2}$ and $\W=(R^nf_{*}\Q)_{pr}$ be the associated weight $n$ $\Q$-PVHS. Then $\W$   does not factor through the  $\C$-PVHS $\V_{can}$ over the type $A$ symmetric domain $D_{n,n}^I$.
    \end{itemize}
\end{theorem}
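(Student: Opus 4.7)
The plan is to treat the two parts separately: part (1) will follow directly from the infinitesimal identification in Lemma \ref{lemma:infinitesimal deformation of X}, while part (2) will be established by a contradiction argument comparing first characteristic subvarieties. For (1), the natural map $\mu:\mathfrak{M}_{AR}\to\mathcal{M}_{n,2n+2}$, $\mathfrak{A}\mapsto[\tilde X_{\mathfrak{A}}]$, is surjective by the very definition of the target, and Lemma \ref{lemma:infinitesimal deformation of X} identifies the infinitesimal deformations of $\tilde X_\mathfrak{A}$ with those of $\mathfrak{A}$ itself, so $\mu$ is étale on tangent spaces and in particular generically finite; thus $\tilde f$ qualifies as a good family.

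For (2), I would first reduce to the case $f=\tilde f$. Given any good family $f:\mathcal{X}\to S$ with PVHS $\W$, form an irreducible component $T$ of the fiber product $S\times_{\mathcal{M}_{n,2n+2}}\mathfrak{M}_{AR}$ on which the projections $p:T\to S$ and $q:T\to\mathfrak{M}_{AR}$ are generically étale; after a further finite étale base change the two pulled-back families become isomorphic, yielding $p^{*}\W\simeq q^{*}\tilde{\V}_{AR}$ on a dense open of $T$. A hypothetical factorization $\W|_{U}\simeq j^{*}\V_{can}$ pulls back to a factorization of $q^{*}\tilde{\V}_{AR}\otimes\C$, and since $dq$ is a generic isomorphism this is equivalent to a factorization of $\tilde{\V}_{AR}\otimes\C$ itself. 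So I would assume for contradiction that $\tilde{\V}_{AR}\otimes\C|_{U}\simeq j^{*}\V_{can}$ on some open $U\subset\mathfrak{M}_{AR}$. Both sides are of Calabi--Yau type with $h^{n,0}=1$ by Proposition \ref{hodge number} and Proposition \ref{propo:Hodge numbers and chara. variety of canonical VHS}(1); their top Higgs maps $\theta^{n,0}:T\to\Hom(E^{n,0},E^{n-1,1})$ are isomorphisms of rank-$n^{2}$ bundles at a generic point (for $\tilde{\V}_{AR}$ by the generic versality that follows from part (1) and Lemma \ref{lemma:infinitesimal deformation of X}, for $\V_{can}$ by the explicit description recalled before Proposition \ref{propo:Hodge numbers and chara. variety of canonical VHS}). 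The identity $\theta^{n,0}_{j^{*}\V_{can}}=\theta^{n,0}_{\V_{can}}\circ dj$ then forces $dj$ to be a linear isomorphism at a generic $u\in U$, and under this isomorphism $C_{1,u}(\tilde{\V}_{AR}\otimes\C)$ is identified with $C_{1,j(u)}(\V_{can})$ via $dj_{u}$. But Proposition \ref{propo:Hodge numbers and chara. variety of canonical VHS}(2) gives $\dim C_{1,j(u)}(\V_{can})=2(n-1)\geq 4$ for $n\geq 3$, while Proposition \ref{prop:dim of characteristic variety} bounds $\dim C_{1,u}(\tilde{\V}_{AR})\leq 2$, a contradiction.

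The main obstacle I anticipate is the reduction to $f=\tilde f$: the coarse moduli space $\mathcal{M}_{n,2n+2}$ need not carry a universal family, so the isomorphism $p^{*}\W\simeq q^{*}\tilde{\V}_{AR}$ over the fiber product is not automatic and must be obtained by an étale-local descent argument for the relative primitive cohomology. A secondary but independently serious difficulty is the dimension bound $\dim C_{1,u}(\tilde{\V}_{AR})\leq 2$ of Proposition \ref{prop:dim of characteristic variety} itself, which the paper will ultimately establish by an explicit Jacobian-ring calculation at a generic interior arrangement.
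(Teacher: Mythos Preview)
Your proposal is correct and takes essentially the same approach as the paper: part (1) via Lemma \ref{lemma:infinitesimal deformation of X}, and part (2) by contradiction comparing the dimensions of the first characteristic subvarieties ($2n-2\geq 4$ for $\V_{can}$ by Proposition \ref{propo:Hodge numbers and chara. variety of canonical VHS} versus $\leq 2$ for $\tilde{\V}_{AR}$ by Proposition \ref{prop:dim of characteristic variety}). The paper's own proof is terser on the reduction in (2): rather than building a fiber product and worrying about descent, it simply uses that the first characteristic subvariety at a point is an IVHS invariant depending only on the fiber and its Kodaira--Spencer map, so for any good family at any point away from the ramification locus of the moduli map it coincides with the one computed for $\tilde{\V}_{AR}$ at the corresponding arrangement---which means your anticipated obstacle is less serious than you fear.
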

\begin{proof}
(1) follows directly from Lemma \ref{lemma:infinitesimal deformation of X}.

(2) Assume the contrary. By Proposition \ref{propo:Hodge numbers and chara. variety of canonical VHS}, for any $s\in S$ away from the ramification locus of the moduli map $S\rightarrow \mathcal{M}_{n,2n+2}$, the first characteristic variety $C_{1,s}$ is isomorphic to $\P^{n-1}\times \P^{n-1}$. In particular, the dimension of $C_{1,s}$ has dimension $2n-2\geq 4$. On the other hand, by Proposition \ref{prop:reduce to V_1}, Proposition \ref{prop:dim of characteristic variety} and (1), for a generic $s\in \mathcal{X}$, the dimension of the first characteristic variety $C_{1,s}$ has dimension $\leq 2$. This gives a contradiction.
\end{proof}


\section{Zariski density of monodromy group}
In this section, we will use the notations as given in \cite{FH} for representations of Lie algebras. We are mainly concerned with  representations of $\mathfrak{sp}_{2n}\C$. Following the notations in Section 16.1 in \cite{FH}, it is well known  that the weight lattice of $\mathfrak{sp}_{2n}\C$ is the lattice of integral linear combinations of $L_1,\cdots, L_n \in \mathfrak{h}^{*}$, where $\mathfrak{h}\subset \mathfrak{sp}_{2n}\C$ is the Cartan subalgebra as defined  in Section 16.1 in \cite{FH}, and after fixing a positive direction, the corresponding closed Weyl chamber is
$$
\{a_1L_1+a_2L_2+\cdots+a_nL_n: a_1\geq a_2\geq \cdots \geq a_n\geq 0\}.
$$

We have the following
\begin{lemma}\label{lemma1:representation theory}
Suppose $W$ is a nontrivial finite dimensional complex representation of $\mathfrak{sp}_{2n}\C$ and $Q$ is a  $\mathfrak{sp}_{2n}\C$-invariant bilinear form on W. Given a partition $\lambda=(\lambda_1,\cdots, \lambda_k)$ of an integer $d$ with $k\leq dim W-1$, then there exist $k$ linearly independent weight vectors $v_1,\cdots, v_k \in W$ such that if we define $v_{\lambda}\in W^{\otimes d}$ as follows:
$$
\begin{array}{ccc}
  \underbrace{v_1 \otimes \cdots\otimes v_1} & \otimes\cdots\otimes & \underbrace{v_k\otimes\cdots\otimes v_k} \\
  \lambda_1 &  & \lambda_k
\end{array}
$$
then $v_{\lambda}\cdot c_{\lambda}\in \mathbb{S}_{\lambda}W$ is a nonzero weight vector, and the weight $a_1L_1+\cdots +a_nL_n$ of $v_{\lambda}\cdot c_{\lambda}$ satisfies $a_1\geq \lambda_1$. Here $c_{\lambda}$ is the Young symmetrizer. Moreover, if $k\leq [\frac{dim W}{2}]$, then the weight vectors $v_1,\cdots, v_k \in W$ above can be chosen such that they satisfy the additional condition: $\forall \ 1\leq i,j\leq k$, $Q(v_i,v_j)=0$.
\end{lemma}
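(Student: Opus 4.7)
The proof plan rests on two standard facts. First, since the Young symmetrizer $c_\lambda$ sits in the group algebra $\C[S_d]$, it commutes with the diagonal action of $\mathfrak{sp}_{2n}\C$ on $W^{\otimes d}$; hence if each $v_i$ is a weight vector with weight $\mu_i$, then $v_\lambda \cdot c_\lambda$ is again a weight vector, of weight $\sum_{i=1}^k \lambda_i \mu_i$. Second, placing $v_i$ in row $i$ of the canonical Young tableau of shape $\lambda$, the first column of $v_\lambda$ (of height $k$) is $v_1 \otimes v_2 \otimes \cdots \otimes v_k$, and the column antisymmetrization inside $c_\lambda$ is nonzero precisely when $v_1,\ldots,v_k$ are linearly independent; this is Weyl's classical non-vanishing criterion for the image of $v_\lambda$ in $\mathbb{S}_\lambda W$.

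Order the weights of $W$ with multiplicity as $\mu_1,\ldots,\mu_N$ where $N=\dim W$, so that their $L_1$-coefficients $m_i := \mu_i(L_1)$ satisfy $m_1 \geq m_2 \geq \cdots \geq m_N$. The multiset of weights is invariant under the Weyl group of $\mathfrak{sp}_{2n}\C$, which contains the sign change $L_1 \mapsto -L_1$, so $\{m_i\}$ is symmetric about $0$; nontriviality of $W$ forces $m_1 \geq 1$ and hence $m_N \leq -1$. The key estimate is the partial-sum bound
\[
S_i := \sum_{j=1}^i m_j \geq 1 \quad \text{for all } 1 \leq i \leq N-1,
\]
proved by distinguishing the case $m_i \geq 0$ (where $S_i \geq m_1 \geq 1$ as a sum of nonnegatives containing $m_1$) from $m_i < 0$ (where, using $\sum_j m_j = 0$, one has $S_i = \sum_{j>i}|m_j| \geq |m_N| \geq 1$). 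Choosing $v_i \in W_{\mu_i}$ to be linearly independent, Abel summation yields
\[
a_1 = \sum_{i=1}^k \lambda_i m_i = \lambda_k S_k + \sum_{i=1}^{k-1} (\lambda_i - \lambda_{i+1}) S_i \geq \lambda_k + (\lambda_1 - \lambda_k) = \lambda_1,
\]
using $\lambda_i - \lambda_{i+1} \geq 0$. The hypothesis $k \leq \dim W - 1$ is precisely what keeps us in the range $k \leq N - 1$ where $S_k \geq 1$.

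For the $Q$-orthogonality addendum with $k \leq \lfloor \dim W / 2 \rfloor$, I would use the elementary consequence of $\mathfrak{sp}_{2n}\C$-invariance of $Q$: differentiating $Q(Hv,w) + Q(v,Hw) = 0$ for $H$ in the Cartan shows that $W_\mu$ and $W_{\mu'}$ are $Q$-orthogonal unless $\mu + \mu' = 0$, and in particular $W_\mu$ is totally $Q$-isotropic whenever $\mu \neq 0$. Fix a lexicographic ordering of weights so that for every nonzero weight $\mu$ exactly one of $\mu, -\mu$ is declared ``positive'', and pick a maximal $Q$-isotropic subspace $U_0 \subset W_0$; the direct sum $U := \big(\bigoplus_{\mu > 0} W_\mu\big) \oplus U_0$ is totally $Q$-isotropic of dimension at least $\lfloor \dim W / 2 \rfloor$. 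Pick $v_1$ to be a weight vector in $U$ whose weight $\mu_*$ has the maximal $L_1$-coefficient (necessarily $\mu_*(L_1) \geq 1$), and extend to $v_1,\ldots,v_k$ linearly independent inside $U$; then $a_1 \geq \lambda_1 + \sum_{i \geq 2} \lambda_i \cdot 0 = \lambda_1$ and $Q(v_i, v_j) = 0$ for all $i, j$.

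The main delicacy is the uniform partial-sum bound $S_i \geq 1$: a naive greedy argument using only $S_1 \geq 1$ would yield the weaker estimate $a_1 \geq \lambda_1 - \lambda_2$, which is insufficient for $k \geq 2$. The uniformity across all $i \leq N - 1$ requires both the Weyl-symmetry of the multiset $\{m_i\}$ (ensuring $m_N \leq -1$ balances $m_1 \geq 1$) and the assumption $k \leq \dim W - 1$; the latter is sharp, since for $k = \dim W$ the total sum $S_N = 0$ and the conclusion can fail.
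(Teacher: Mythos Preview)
Your argument is correct and in fact more complete than the paper's own proof. Both proofs share the two opening observations (that $c_\lambda$ commutes with the Cartan action, and that $v_\lambda\cdot c_\lambda\neq 0$ once $v_1,\ldots,v_k$ are independent), and both exploit the Weyl-group symmetry $L_1\mapsto -L_1$ to pair weights. The difference lies in how the bound $a_1\geq\lambda_1$ is obtained. The paper writes the weight decomposition as $W=\bigoplus_i(W_{\alpha_i}\oplus W_{-\alpha_i})\oplus U$ with each $\alpha_i$ lexicographically positive, observes that at least one $\alpha_i$ has positive $L_1$-coefficient, and then simply asserts that ``it is not difficult to see'' a suitable choice exists. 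This is genuinely incomplete when $k$ exceeds $\tfrac{1}{2}(\dim W+\dim U)$, since one is then forced to use some $v_i$ with negative $L_1$-coefficient and a naive count fails. Your Abel-summation device, resting on the uniform partial-sum bound $S_i\geq 1$ for all $i\leq N-1$, handles this range cleanly and makes the hypothesis $k\leq\dim W-1$ visibly sharp; this is the real content your proof adds.

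For the isotropy addendum your construction is essentially the paper's (positive weight spaces are totally $Q$-isotropic and orthogonal to $W_0$; dip into a maximal isotropic of $W_0$ for the remaining vectors), stated a bit more precisely. Two small points to tighten: when you ``extend to $v_1,\ldots,v_k$ linearly independent inside $U$'' you should say explicitly that the extension is by \emph{weight vectors} (so that $v_\lambda\cdot c_\lambda$ remains a weight vector), and your lexicographic order must take $L_1$ as its leading coordinate so that $\mu>0$ forces $\mu(L_1)\geq 0$. With those clarifications the proof stands.
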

\begin{proof}
From the definition of the Young symmetrizer, $v_{\lambda}\cdot c_{\lambda}\in \mathbb{S}_{\lambda}W$ is  nonzero as long as $v_1,\cdots, v_k \in W$ are linearly independent. Suppose
$$
W=\oplus_{\alpha\in\mathfrak{h}^{*}}W_{\alpha}
$$
is the weight space decomposition of $W$. It follows from the basic representation theory of $\mathfrak{sp}_{2n}\C$ that $dim W_{\alpha}=dim W_{-\alpha}$, $\forall \ \alpha\in \mathfrak{h}^{*}$. So we can write
$$
W=\bigoplus_{i=1}^{l}(W_{\alpha_i}\oplus W_{-\alpha_i})\oplus U
$$
such that
 \begin{itemize}
 \item[(1)]$U$ is a trivial representation of $\mathfrak{sp}_{2n}\C$;
  \item[(2)]$\forall \ 1\leq i\leq l$, $W_{\alpha_i}\neq 0$;
  \item[(3)]if $\alpha_i=a_{i1}L_1+\cdots +a_{in}L_n $, then there exists an integer $k_i$ between $1$ and $n$, with $ a_{ik_i}> 0 $ and $a_{i1}=a_{i2}=\cdots= a_{i,k_i-1}=0$.
  \end{itemize}
  Since $Q$ is $\mathfrak{sp}_{2n}\C$-invariant, the spaces $U$ and $\bigoplus_{i=1}^{l}(W_{\alpha_i}\oplus W_{-\alpha_i})$ are orthogonal under $Q$, and $Q\mid_{\oplus_{i=1}^{l}W_{\alpha_i}}=0$. It is easy to see that if $v_i\in W_{\beta_i}$, $i=1,\cdots, k$, then $v_{\lambda}\cdot c_{\lambda}$ is a weight vector of $\mathbb{S}_{\lambda}W$ with weight $\sum_{i=1}^{k}\beta_i$. From these and since at least one $a_{i1}$ is a positive integer, it is not difficult to see that we can choose linearly independent weight vectors $v_1,\cdots, v_k \in W$ such that the weight of $v_{\lambda}\cdot c_{\lambda}$ satisfies the required condition. Moreover, if $k\leq [\frac{dim W}{2}]$, since $Q\mid_{\oplus_{i=1}^{l}W_{\alpha_i}}=0$ and $U\perp\oplus_{i=1}^{l}W_{\alpha_i}$ under $Q$,  we can choose weight vectors $v_1,\cdots, v_k$  from the spaces $U$ and $\oplus_{i=1}^{l}W_{\alpha_i}$ such that they satisfy the  additional condition: $\forall \ 1\leq i,j\leq k$, $Q(v_i,v_j)=0$.
\end{proof}


Recall the definition of the complex Lie algebra $\mathfrak{g}_n$ in Section \ref{section:introduction}.
\begin{proposition}\label{prop:represnetation theory}
The $n$-th wedge product $V=\wedge^n \C^{2n}$ of the standard representation of $\mathfrak{sp}_{2n}\C$ induces an embedding $\mathfrak{sp}_{2n}\C\hookrightarrow \mathfrak{g}_n$. Suppose $\mathfrak{g}$ is a complex semi-simple Lie algebra lying  between  $\mathfrak{sp}_{2n}\C$ and  $\mathfrak{g}_n$ such that the induced representation of $\mathfrak{g}$ on $V$ is irreducible, then $\mathfrak{g}$ is one of the following:
\begin{itemize}
\item[(1)] $\mathfrak{g}_n$,
\item[(2)] $sl_{2n}\C$, in which case the induced representation of $\mathfrak{g}$ on $V$ is isomorphic to the n-th wedge product of the standard  representation on $\C^{2n}$.
\end{itemize}
\end{proposition}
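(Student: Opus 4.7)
The plan is to proceed in three stages.

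\textbf{Stage 1 (the embedding chain).} Define a bilinear form $Q$ on $V=\wedge^n\C^{2n}$ by
\[
Q(x_1\wedge\cdots\wedge x_n,\,y_1\wedge\cdots\wedge y_n)=\det(\omega(x_i,y_j)),
\]
where $\omega$ is the symplectic form on $\C^{2n}$. A direct check shows $Q$ is non-degenerate, $\mathfrak{sp}_{2n}\C$-invariant, and symmetric (resp. antisymmetric) for $n$ even (resp. odd), giving the embedding $\mathfrak{sp}_{2n}\C\hookrightarrow\mathfrak{g}_n$. The algebra $\mathfrak{sl}_{2n}\C$ acts irreducibly on $V$ as its $n$-th fundamental representation, and preserves the top-form pairing $V\otimes V\to\wedge^{2n}\C^{2n}\cong\C$, which coincides with $Q$ up to scalar via the volume form $\omega^n/n!$. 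Hence $\mathfrak{sp}_{2n}\C\subset\mathfrak{sl}_{2n}\C\subset\mathfrak{g}_n$.

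\textbf{Stage 2 (reduction to $\mathfrak{g}$ simple).} Suppose $\mathfrak{g}=\mathfrak{g}_1\oplus\mathfrak{g}_2$ is a nontrivial direct sum with both summands acting nontrivially on $V$; then $V\simeq V_1\otimes V_2$ with $V_i$ irreducible under $\mathfrak{g}_i$. Since $\mathfrak{sp}_{2n}\C$ is simple, each projection to $\mathfrak{g}_i$ is zero or injective. If one (say to $\mathfrak{g}_2$) vanishes, every $\mathfrak{sp}_{2n}\C$-summand of $V_1|_{\mathfrak{sp}_{2n}\C}$ occurs in $V|_{\mathfrak{sp}_{2n}\C}$ with multiplicity $\dim V_2\geq 2$, contradicting the standard multiplicity-free decomposition $V|_{\mathfrak{sp}_{2n}\C}=V_{\omega_n}\oplus V_{\omega_{n-2}}\oplus\cdots$. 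If both projections are injective, then $V|_{\mathfrak{sp}_{2n}\C}$ is a tensor product of two nontrivial $\mathfrak{sp}_{2n}\C$-modules; a comparison of dimensions and highest weights against the explicit decomposition rules this out as well.

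\textbf{Stage 3 (classification of simple $\mathfrak{g}$).} With $\mathfrak{g}$ simple, the constraints are: $\mathrm{rank}(\mathfrak{g})\geq n$; $\mathfrak{g}$ acts faithfully and irreducibly on $V$ of dimension $\binom{2n}{n}$ preserving $Q$; and $V|_{\mathfrak{sp}_{2n}\C}=V_{\omega_n}\oplus V_{\omega_{n-2}}\oplus\cdots$. Going through the Cartan--Killing types $A_m, B_m, C_m, D_m$ and the exceptional types, and invoking Dynkin's classification of irreducible subalgebras of $\mathfrak{sp}(V)$ and $\mathfrak{so}(V)$, the only possibilities are $\mathfrak{g}=\mathfrak{sl}_{2n}\C$ (in which case $V$ is identified as $\wedge^n$ of the standard representation by matching the $\mathfrak{sp}_{2n}\C$-restriction) or $\mathfrak{g}=\mathfrak{g}_n$. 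Lemma~\ref{lemma1:representation theory} should enter by supplying weight vectors in $\mathbb{S}_\lambda V$ with controlled leading coordinate $a_1\geq\lambda_1$, which can be used to exclude any further intermediate simple algebra through weight-theoretic constraints.

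The main obstacle is Stage 3: ruling out any other simple Lie algebra in the chain. The case-by-case analysis entails juggling $\binom{2n}{n}$ against dimension formulas for fundamental and tensor representations of each Cartan type, together with the branching rule to $\mathfrak{sp}_{2n}\C$. Invoking Dynkin's classification shortcuts the argument considerably but relies on a substantial background result; a direct weight-theoretic argument via Lemma~\ref{lemma1:representation theory} is self-contained but more intricate.
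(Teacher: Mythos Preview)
Your overall architecture matches the paper's: reduce to $\mathfrak{g}$ simple, then classify. Stages~1 and~2 are fine. The gap is Stage~3, which is the entire content of the proof and which you have only sketched. The paper does \emph{not} invoke Dynkin; it carries out a direct type-by-type elimination, and the way Lemma~\ref{lemma1:representation theory} enters is not quite what you wrote.

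The mechanism is this. For each classical type, Weyl's construction realizes the irreducible $\mathfrak{g}$-module $V$ as $\mathbb{S}_\lambda W$ (type~A) or $\mathbb{S}_{[\lambda]}W$, $\mathbb{S}_{\langle\lambda\rangle}W$ (types~B,\,C,\,D), where $W$ is the \emph{standard} representation of $\mathfrak{g}$, not $V$. One applies Lemma~\ref{lemma1:representation theory} to $W$ restricted to $\mathfrak{sp}_{2n}\C$: it produces weight vectors $v_1,\dots,v_k\in W$ so that $v_\lambda\cdot c_\lambda\in\mathbb{S}_\lambda W\simeq V$ is a nonzero $\mathfrak{sp}_{2n}\C$-weight vector whose first coordinate satisfies $a_1\ge\lambda_1$. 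Since every $\mathfrak{sp}_{2n}\C$-weight in $V=\wedge^n\C^{2n}$ has first coordinate at most $1$, this forces $\lambda_1=1$, hence $\lambda=(1^d)$, and one reads off $\mathfrak{g}$ in each type. Your phrase ``weight vectors in $\mathbb{S}_\lambda V$'' has the lemma applied to the wrong module. Two further points you would need: the isotropy clause $Q(v_i,v_j)=0$ in Lemma~\ref{lemma1:representation theory} is exactly what guarantees $v_\lambda\cdot c_\lambda$ lands in the orthogonal/symplectic Schur functor $\mathbb{S}_{[\lambda]}W$ or $\mathbb{S}_{\langle\lambda\rangle}W$ rather than merely in $\mathbb{S}_\lambda W$; and the half-integral (spin) cases for types~B and~D are handled separately by squaring to reduce to the integral case and then observing that $\binom{2n}{n}$ is never a power of~$2$ for $n\ge2$. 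The type~D case with $\lambda_m\neq0$ also needs an extra trick with an outer involution $\sigma\in O_{2m}\C$. None of this is visible in your Stage~3.
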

\begin{proof}


We first show that $\mathfrak{g}$ is simple. Since $\mathfrak{g}$ is semi-simple, we can write $\mathfrak{g}=\oplus_{i=1}^m\mathfrak{g}_i$ into a direct sum of simple Lie algebras. By Schur's lemma and since $V$ is an irreducible $\mathfrak{g}$-module, we have the tensor decomposition of $V=\otimes_{i=1}^m V_i$, where each $V_i$ is an irreducible $\mathfrak{g}_i$-module. Since $\mathfrak{sp}_{2n}\C$ is simple, $\forall \ 1\leq i\leq m$, the composition
$$
\mathfrak{sp}_{2n}\C\hookrightarrow \mathfrak{g}=\oplus_{i=1}^m\mathfrak{g}_i \rightarrow \mathfrak{g}_i
$$
is either an embedding or a zero map. So $\wedge^n \C^{2n}=V=\otimes_{i=1}^m V_i$ is also a tensor decomposition of the  $\mathfrak{sp}_{2n}\C$-module $\wedge^n \C^{2n}$. A direct  computation of highest weights shows that $m=1$, which implies $\mathfrak{g}$ is simple. So the only possibilities  of $\mathfrak{g}$ are exceptional Lie algebras, of type A, type B, type C or type D.

\textbf{Exceptional cases}: Since $V$ is an irreducible $\mathfrak{g}$-module, by checking the dimensions of irreducible representations, we can exclude the exceptional Lie algebra  cases.

\textbf{Case A}: Suppose $\mathfrak{g}\simeq \mathfrak{sl}_m\C$ is a simple Lie algebra of type A, then by Weyl's construction, there exists a partition $\lambda=(\lambda_1,\cdots, \lambda_k)$ of a positive integer $d$ with $k\leq m-1$, such that $V\simeq \mathbb{S}_{\lambda}W$ as $\mathfrak{sl}_m\C$-modules, where $W=\C^{m}$ is the standard representation of $ \mathfrak{sl}_m\C$ (cf. Proposition 15.15 in \cite{FH}). By Lemma \ref{lemma1:representation theory}, we can always choose $k$ linearly independent vectors $v_1, \cdots, v_k \in W$ such that $v_{\lambda}\cdot c_{\lambda}\in \mathbb{S}_{\lambda}W $  is a nonzero weight vector of $\mathfrak{sp}_{2n}\C$  and  its weight $a_1L_1+\cdots +a_nL_n$ satisfies  $a_1\geq \lambda_1$. Since $V\simeq \mathbb{S}_{\lambda}W $ as $\mathfrak{sp}_{2n}\C$-modules and the highest weight of $V$ is $L_1+\cdots+L_n$, we deduce that $\lambda_1=1$. So the partition $\lambda=(1,\cdots,1)$ and $\mathbb{S}_{\lambda}W=\wedge^d W$. Then it is easy to  see that the only possibility is $d=n$, and $m=2n$. This gives case (2).

\textbf{Case B}: Suppose $\mathfrak{g}\simeq \mathfrak{so}_{2m+1}\C$ is a simple Lie algebra of type B. Following Section 18.1 in \cite{FH}, we choose the  basis $L^{'}_1, \cdots, L^{'}_{m}$ of the dual space of a Cartan subalgebra (note we add a '$'$' to distinguish with the weights of $\mathfrak{sp}_{2n}\C$).  We have two cases to discuss.

 \textbf{B1}: If the highest weight of the irreducible $\mathfrak{so}_{2m+1}\C$-representation $V$ is $a_1L^{'}_1+\cdots+a_{m}L^{'}_m$, with each $a_i$  an integer, then by Weyl's construction, there exists a partition $\lambda=(\lambda_1,\cdots, \lambda_k)$ of a positive integer $d$ with $k\leq m$, such that $V\simeq \mathbb{S}_{[\lambda]}W$ as $\mathfrak{so}_{2m+1}\C$-modules. Here $W=\C^{2m+1}$ is the standard representation of $\mathfrak{so}_{2m+1}\C$. (cf. Theorem 19.22 in \cite{FH}). By Lemma \ref{lemma1:representation theory}, we can  choose $k$ linearly independent vectors $v_1, \cdots, v_k \in W$ such that $v_{\lambda}\cdot c_{\lambda}\in \mathbb{S}_{[\lambda]}W $  is  a nonzero weight vector of $\mathfrak{sp}_{2n}\C$ and its weight $b_1L_1+\cdots +b_nL_n$ satisfies  $b_1\geq \lambda_1$. Since $ V\simeq \mathbb{S}_{\lambda}W $ as $\mathfrak{sp}_{2n}\C$-modules and the highest weight of $V$ is $L_1+\cdots+L_n$, we deduce that $\lambda_1=1$. So the partition $\lambda=(1,\cdots,1)$ and $V=\mathbb{S}_{[(1, \cdots, 1)]} W$. Then it is easy to  see that the only possibility is $d=1$, $2m+1={2n\choose n}$, and $\mathfrak{g}=\mathfrak{g}_n$. This gives case (1).

 \textbf{B2}: If the highest weight of the irreducible $\mathfrak{so}_{2m+1}\C$-representation $V$ is $a_1L^{'}_1+\cdots+a_{m}L^{'}_m$, with each $a_i$  a nonzero half integer, then consider the representation $Sym^2V$, and let $V_1$ be the irreducible $\mathfrak{so}_{2m+1}\C$-submodule of  $Sym^2V$ with highest weight $2a_1L^{'}_1+\cdots+2a_{m}L^{'}_m$. By Weyl's construction, there exists a partition $\lambda=(\lambda_1,\cdots, \lambda_k)$ of a positive integer $d$ with $k\leq m$, such that $V_1\simeq \mathbb{S}_{[\lambda]}W$ as $\mathfrak{so}_{2m+1}\C$-modules. Here $W=\C^{2m+1}$ is the standard representation of $\mathfrak{so}_{2m+1}\C$. Then arguing in the same way as the \textbf{B1} case, we find $\lambda_1\leq 2$. Since as a representation of $\mathfrak{so}_{2m+1}\C$, the highest weight of $ \mathbb{S}_{[\lambda]}W$ is $\lambda_1L^{'}_1+\cdots+\lambda_kL^{'}_k$, we see that $2a_1=\lambda_1\leq 2$. Hence $a_1=\frac{1}{2}$ and the irreducible $\mathfrak{so}_{2m+1}\C$-representation $V$ is the fundamental spin representation. Then the dimension $dim V={2n\choose n}$ must be  a power of $2$. It is elementary to see that this can never happen if $n\geq 2$. So we can exclude this case.

\textbf{Case C}: Suppose $\mathfrak{g}\simeq \mathfrak{sp}_{2m}\C$ is of type C.  By Weyl's construction, there exists a partition $\lambda=(\lambda_1,\cdots, \lambda_k)$ of a positive integer $d$ with $k\leq m$, such that $V\simeq \mathbb{S}_{<\lambda>}W$ as $\mathfrak{sp}_{2m}\C$-modules. Here $W=\C^{2m}$ is the standard representation of $ \mathfrak{sp}_{2m}\C$ (cf. Theorem 17.11 in \cite{FH}). By Lemma \ref{lemma1:representation theory} and arguing in the same way as  the \textbf{B1} case, we find $\lambda_1=1$ and $V=\mathbb{S}_{<(1,\cdots, 1)>}W$ as $\mathfrak{sp}_{2m}\C$-modules. Then it is easy to see the only possibility is $d=1$, $2m={2n\choose n }$ and $\mathfrak{g}=\mathfrak{g}_n$. This gives case (1).

\textbf{Case D}: Suppose $\mathfrak{g}\simeq \mathfrak{so}_{2m}\C$ is of type D. Following Section 18.1 in \cite{FH}, we choose the basis $L^{'}_1, \cdots, L^{'}_{m}$ of the dual space of a Cartan subalgebra (note we add a $'$ to distinguish with the weights of  $\mathfrak{sp}_{2n}\C$). We have three cases to discuss.

\textbf{D1}:  If the highest weight of the irreducible $\mathfrak{so}_{2m}\C$-representation $V$ is $a_1L^{'}_1+\cdots+a_{m-1}L^{'}_{m-1}$, with each $a_i$  an integer, then by Weyl's construction, there exists a partition $\lambda=(\lambda_1,\cdots, \lambda_k)$ of a positive integer $d$ with $k\leq m-1$, such that $V\simeq \mathbb{S}_{[\lambda]}W$ as $\mathfrak{so}_{2m}\C$-modules. Here $W=\C^{2m}$ is the standard representation of $ \mathfrak{so}_{2m}\C$ (cf. Theorem 19.22 in \cite{FH}). By Lemma \ref{lemma1:representation theory} and arguing in the same way as  the \textbf{B1} case, we find $\lambda_1=1$ and $V=\mathbb{S}_{[(1,\cdots, 1)]}W$ as $\mathfrak{so}_{2m}\C$-modules. Then it is easy to see the only possibility is $d=1$, $2m={2n\choose n }$ and $\mathfrak{g}=\mathfrak{g}_n$. This gives case (1).

\textbf{D2}: Suppose the highest weight of the irreducible $\mathfrak{so}_{2m}\C$-representation $V$ is $\lambda_1L^{'}_1+\cdots+\lambda_{m}L^{'}_{m}$, with each $\lambda_i$  an integer and $\lambda_{m}\neq 0$.  Let $\lambda=(\lambda_1,\cdots, \lambda_{m-1}, | \lambda_m |)$ be the partition of $d=\sum_{i=1}^{m-1}\lambda_i+ | \lambda_m |$.The standard representation $W=\C^{2m}$ of $ \mathfrak{so}_{2m}\C$ can also be viewed  as a representation of the complex Lie group $SO_{2m}\C$. By Theorem 19.22 in \cite{FH},  $ \mathbb{S}_{[\lambda]}W$ is an irreducible representation of the complex Lie group $O_{2m}\C$ and  as a representation of $SO_{2m}\C$, we have $\mathbb{S}_{[\lambda]}W=V\oplus V^{'}$, where $V^{'}$ is  conjugate to $V$. In particular, for any $\sigma \in O_{2m}\C$, if $det \sigma=-1$, then $V^{'}=\sigma V$. As a representation of $\mathfrak{sp}_{2n}\C$, consider the weight space decomposition of $W$:
$$
W=\oplus_{\alpha\in\mathfrak{h}^{*}}W_{\alpha}.
$$
Since $W$ is a nontrivial representation of $\mathfrak{sp}_{2n}\C$, we can choose a weight $\alpha_1=a_2L_2+\cdots+a_nL_n$ with $W_{\alpha_1}\neq 0$. Let $W_1=W_{\alpha_1}\oplus W_{-\alpha_1}$, and let $W_2$ be the direct sum of other nonzero weight spaces of $W$. Then $W=W_1\oplus W_2$ and $W_1\perp W_2$ under the standard $\mathfrak{so}_{2m}\C$-invariant symmetric form $Q$, since $Q$ is also invariant under $\mathfrak{sp}_{2n}\C$ and the sum of weights of any two nonzero weight vectors  from $W_1$ and $W_2$ respectively is not zero. Since $Q$ is non-degenerate,  We can choose a basis $e_1,\cdots, e_l$ of $W_{\alpha_1}$ and a basis $e_1^{'}, \cdots, e_l^{'}$ of $W_{-\alpha_1}$, such that
$$
Q(e_i, e_i)=Q(e^{'}_i, e^{'}_i)=0, \forall \ 1\leq i\leq l,
$$
and $\forall \ 1\leq i,j\leq l$,
$$
Q(e_i, e^{'}_j)=\left\{
                  \begin{array}{ll}
                    1, & \hbox{$i=j$;} \\
                    0, & \hbox{$i\neq j$.}
                  \end{array}
                \right.
$$
Define a linear transformation $\sigma$ of $W$ by $\sigma\mid_{W_2}=id$, $\sigma(e_i)=e_i$, $\sigma(e^{'}_i)=e^{'}_i$, $\forall 1\leq i\leq l-1$, and $\sigma$ interchanges $e_l$ and $e^{'}_l$. Then obviously $\sigma \in O_{2m}\C$ and $det \sigma =-1$.
By the proof of Lemma \ref{lemma1:representation theory} and the definition of $\sigma$, we  can find $m$ linearly independent weight  vectors $v_1,\cdots, v_{m}=e_l$ of $\mathfrak{sp}_{2n}\C$ such that
\begin{itemize}
 \item[(1)]$\sigma(v_i)=v_i$, $\forall \ 1\leq i\leq m-1$;
 \item[(2)]  both $v_{\lambda}\cdot c_{\lambda}\in \mathbb{S}_{[\lambda]}W $  and $\sigma(v_{\lambda}\cdot c_{\lambda})\in \mathbb{S}_{[\lambda]}W$  are nonzero;
 \item[(3)] the weight of $v_{\lambda}\cdot c_{\lambda}$  is $b_1L_1+\cdots +b_nL_n$, with $b_1\geq \lambda_1$.
 \end{itemize}
 By the construction,  we see the weight of $\sigma(v_{\lambda}\cdot c_{\lambda})$ is  $b_1^{'}L_1+\cdots+b_n^{'}L_n$ with $b_1^{'}=b_1\geq \lambda_1$. From this, we deduce that in $V$, there always exists a nonzero weight vector of $\mathfrak{sp}_{2n}\C$ with weight $c_1L_1+\cdots+c_nL_n$, $c_1\geq \lambda_1$. Then arguing as before, we get $\lambda_1=1$, hence $\lambda_1=\cdots=\lambda_{m-1}=1$, $\lambda_{m}=\pm 1$. Then it is not difficult to see this can not happen. So this case is excluded.

\textbf{D3}: If the highest weight of the irreducible $\mathfrak{so}_{2m}\C$-representation $V$ is $a_1L^{'}_1+\cdots+a_{m}L^{'}_m$, with each $a_i$  a nonzero half integer, then consider the representation $Sym^2V$, and let $V_1$ be the irreducible $\mathfrak{so}_{2m}\C$-submodule of  $Sym^2V$ with highest weight $2a_1L^{'}_1+\cdots+2a_{m}L^{'}_m$. By the discussion of case \textbf{D2}, if we define the partition $\lambda=(2a_1,\cdots,  2a_{m-1}, 2|a_{m}|)$, then as $\mathfrak{so}_{2m}\C$-modules,   $V_1$ is a direct summand of $\mathbb{S}_{[\lambda]}W$, where  $W=\C^{2m}$ is the standard representation of $ \mathfrak{so}_{2m}\C$, and there exists a nonzero weight vector of $\mathfrak{sp}_{2n}\C$ in $V_1$ with weight $c_1L_1+\cdots+c_nL_n$, $c_1\geq 2a_1$. Since as  a $\mathfrak{sp}_{2n}\C$-module, $V_1$ is a direct summand of $Sym^2 (\wedge^n\C^{2n})$, we find easily that $2a_1\leq 2$. So $a_1=\frac{1}{2}$ and $V$ is a fundamental spin representation of $\mathfrak{so}_{2m}\C$. Then the dimension of $V$ is a power of $2$. On the other hand, $dim V={2n\choose n}$, which can never be a power of $2$ if $n\geq 2$. So we can excludes this case.

\end{proof}

\begin{proposition}\label{prop:PVHS decomposition}
Let  $\V$ be an absolutely irreducible $\C$-PVHS over a quasi-projective variety $S$ with quasi-unipotent local monodromy around each component of $D=\bar S-S$ and $\W$ be a rank $2n$ local system of complex vector spaces over $S$. Suppose $\V\simeq \wedge^n \W$ as local systems. Then $\W$ admits a $\C$-PVHS structure such that the induced $\C$-PVHS on the wedge product $\wedge^n \W$ coincides with the given $\C$-PVHS on $\V$.
\end{proposition}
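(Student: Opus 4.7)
The plan is to apply the Corlette--Simpson non-abelian Hodge correspondence (in its parabolic, tame form over quasi-projective bases, due to Jost--Zuo and Mochizuki), together with Simpson's characterization of $\C$-PVHS as precisely those semisimple local systems whose associated polystable Higgs bundle is fixed by the standard $\C^{\ast}$-action $(F,\eta)\mapsto(F,t\eta)$ on the Dolbeault moduli space. The goal is to produce the Higgs bundle of $\W$, argue that it is $\C^{\ast}$-fixed, and then read off the desired $\C$-PVHS structure.

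First, I would verify that $\W$ is itself an irreducible local system with quasi-unipotent local monodromy around each component of $D$. For irreducibility: any nontrivial invariant sub-local system $\W'\subsetneq\W$ of rank $k$ with $0<k<2n$ produces a proper nonzero invariant sub-local system of $\V=\wedge^{n}\W$ --- namely $\W'\wedge\wedge^{n-1}\W$ when $k\leq n$, and $\wedge^{n}\W'$ when $k\geq n$ --- contradicting the absolute irreducibility of $\V$. For quasi-unipotence: if $T$ is a local monodromy of $\W$ with eigenvalues $\lambda_{1},\dots,\lambda_{2n}$, the eigenvalues of $\wedge^{n}T$ are the $n$-fold products $\prod_{i\in I}\lambda_{i}$, all of which are roots of unity by hypothesis on $\V$; forming ratios of products that differ in a single index forces each $\lambda_{i}/\lambda_{j}$, and hence each $\lambda_{i}$, to be a root of unity.

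By Corlette--Simpson, the irreducible $\W$ corresponds to a stable parabolic Higgs bundle $(F,\eta)$, and functoriality of the correspondence under exterior powers identifies $\wedge^{n}(F,\eta)$ with the Higgs bundle $(E,\theta)$ of $\V$; the latter is $\C^{\ast}$-fixed because $\V$ is a $\C$-PVHS. The heart of the proof is to descend this $\C^{\ast}$-fixing from $\wedge^{n}(F,\eta)$ to $(F,\eta)$. Writing $\W_{t}$ for the local system corresponding to $(F,t\eta)$, one obtains isomorphisms $\wedge^{n}\W\simeq\wedge^{n}\W_{t}$ for every $t\in\C^{\ast}$, so $t\mapsto[\W_{t}]$ is a continuous map from $\C^{\ast}$ into the fiber of the exterior-power morphism of character varieties over $[\wedge^{n}\W]$. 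A Tannakian/Schur argument using the irreducibility of $\W$ shows that this fiber coincides with the orbit of $[\W]$ under tensoring by rank-one local systems $\chi$ with $\chi^{n}\simeq\mathbf{1}$, which is a finite set (the $n$-torsion of the rank-one character group being finite). As $\C^{\ast}$ is connected and the fiber is discrete, $[\W_{t}]=[\W]$ for all $t$, so $(F,\eta)$ is $\C^{\ast}$-fixed; Simpson's theorem then endows $\W$ with a $\C$-PVHS structure, and functoriality ensures the induced $\C$-PVHS on $\wedge^{n}\W$ coincides with the given one on $\V$. The main obstacle lies in this final descent step, where the interplay between irreducibility of $\W$, finiteness of $n$-torsion characters, and connectedness of $\C^{\ast}$ is essential to force the orbit $t\mapsto[\W_{t}]$ to be constant.
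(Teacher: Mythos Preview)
Your approach is essentially the same as the paper's: both invoke the Corlette--Simpson correspondence (in its tame/parabolic form via Jost--Zuo and Mochizuki), characterize $\C$-PVHS as the $\C^{\ast}$-fixed points, and then argue that the $\C^{\ast}$-fixing of $\wedge^{n}\W$ descends to $\W$ because the fiber of the exterior-power map on moduli is finite and $\C^{\ast}$ is connected. Your irreducibility and quasi-unipotence arguments for $\W$ are fine and parallel the paper's.

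One point needs sharpening. Your Tannakian claim that the fiber of $\phi_{\wedge^{n}}$ over $[\wedge^{n}\W]$ \emph{coincides} with the $\mu_{n}$-twist orbit of $[\W]$ is stronger than what you need and is not obviously true: for instance, if $\det\W$ is trivial then $\wedge^{n}\W^{\ast}\simeq\wedge^{n}\W$, yet $\W^{\ast}$ need not be of the form $\W\otimes\chi$. The paper avoids this by citing Simpson's Corollary 9.18 in \cite{Simpson94}, which says that a homomorphism of reductive groups with finite kernel induces a \emph{finite} morphism between the associated moduli spaces of semisimple representations; finiteness of the fiber is all the connectedness argument requires. The paper also treats the quasi-projective case by restricting to a sufficiently ample complete intersection curve $C\subset\bar{S}$, proving the $\C^{\ast}$-fixed property there, and then propagating back, whereas you invoke the full parabolic machinery at once; both routes are valid. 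Finally, for matching the induced $\C$-PVHS on $\wedge^{n}\W$ with the given one on $\V$, the paper invokes Deligne's uniqueness of $\C$-PVHS structures on an irreducible local system, which is the precise statement behind your ``functoriality'' remark.
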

\begin{proof}
Assume first $D=\emptyset$ to illustrate the idea. By the result of Simpson(cf.\cite{Simpson92}), a complex local system admits a $\C$-PVHS structure if and only if it is fixed by the $\C^*$-action on the moduli space of semisimple representations of $\pi_1$. Now  the wedge $n$ product of $\C^{2n}$ induces a homomorphism $GL(2n)\xrightarrow{\rho_{\wedge^n}}GL({2n\choose n})$, which has a finite kernel. This homomorphism induces the morphism
$$
\phi_{\wedge^n}: \mathfrak{M}(\pi_1(S), GL(2n))^{ss}\to \mathfrak{M}(\pi_1(S),GL({2n\choose n}))^{ss}
$$
between the corresponding moduli spaces of semi-simple representations. By Corollary 9.18 in \cite{Simpson94},  the morphism $\phi_{\wedge^n}$ is finite. Note $\C^{*}$ acts on both moduli spaces continuously via the Hermitian
Yang-Mills metric on the corresponding poly-stable Higgs bundles, and this action is compatible
with $\phi_{\wedge^n}$.
Thus since $[\V=\wedge^n\W]$ is fixed by the $\C^{*}$-action, it follows that $[\W]$ itself is fixed by the $\C^*$-action. Thus $\W$ admits a $\C$-PVHS structure such that it induces a $\C$-PVHS structure on $\wedge^n\W$. By Deligne's uniqueness theorem  of $\C$-PVHS structures on an irreducible local system, it coincides with the given one on $\V$.

Consider the general case. First we show $\W$ has also quasi-unipotent local monodromy. Let $\gamma\in \pi_1(S)$ be a loop around a component of $D=\bar{S}-S$ and $T$ be the corresponding local monodromy of $\W$. Then $\wedge^nT$ is quasi-unipotent. Since  $T=T_sT_u$, where $T_s (T_u)$ is the semisimple (unipotent) part of $T$,  we can assume the eigenvalues of $\wedge^nT$ are all one. Let $\lambda_1,\cdots,\lambda_{2n}$ be the eigenvalues of $T$. Then $\{\lambda_{i_1}\cdots\lambda_{i_n}, 1\leq i_1<\cdots <i_n\leq 2n\}$ are all one. It implies that $\lambda_1=\cdots=\lambda_{2n}=\pm 1$. Thus $T$ is quasi-unipotent. After a finite base change, we assume that the local monodromies are unipotent. By the result of Jost-Zuo \cite{Jost-Zuo}, there exists a harmonic metric on the flat bundle $\W$ with finite energy which makes $\W$ into a Higgs bundle $(F,\eta)$ on $S$. T. Mochizuki \cite{Mochizuki} has further analyzed the singularity of this harmonic metric and in particular shown that $(F,\eta)$ admits a logarithmic extension $(\bar F, \bar \eta)$ with logarithmic poles of Higgs field along $D$. By the uniqueness of such harmonic metrics, the induced metric $\wedge^n\W$ coincides with the Hodge metric given by the $\C$-PVHS $\V$.

Let $C\subset \bar S$  be a general complete intersection curve  of a very ample divisor of $\bar S$. Set $C_0=C-C\cap D$. Taking the restrictions, we obtain $[\W|_{C_0}]\in \mathfrak{M}(\pi_1(C_0), Gl(n))^{ss}$ such that $[\wedge^n\W|_{C_0}]\in \mathfrak{M}(\pi_1(C_0), Gl({2n\choose n}))^{ss}$. By Simpson \cite{Simpson90}, there exists Hermitian-Yang-Mills metrics on polystable Higgs bundles on $C$ with logarithmic poles of Higgs field along $C\cap D$. The $\C^*$-action can be defined on both spaces of semisimple representations on $C_0$ via a Hermitian-Yang-Mills metric on $(\bar F, t\bar \eta), t\in \C^*$. By the same arguments as above, we show that the restriction of $(\bar F,\bar \eta)$ to $C_0$ is a fixed point of the $\C^*$-action. If we choose $C_0$ sufficiently ample, then $(\bar F,\bar \eta)$ is also a fixed point of the $\C^*$-action. Again by Simpson \cite{Simpson92}, $\W$ admits a $\C$-PVHS structure. This concludes the proof.
\end{proof}


Now we consider the monodromy representation associated to $\tilde{\V}_{AR}$:
$$
\rho: \pi_1(\mathfrak{M}_{AR}, s)\rightarrow Aut(V,Q)
$$
Here and in the following part of  this section we keep the notations in Section \ref{section:introduction}.
\begin{theorem}\label{thm:Zariski density}
If $n\geq 3$, then $Mon^0=Aut^0(V,Q)$. That is, the monodromy group of $\tilde{\V}_{AR}$ is Zariski dense in $Aut^0(V,Q)$.
\end{theorem}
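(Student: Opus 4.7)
The plan follows the three-step strategy outlined in Section~\ref{section:introduction}. Let $\mathfrak{m}:=\mathrm{Lie}(Mon^0)$. Polarizability of $\tilde{\V}_{AR}$ gives $Mon^0\subseteq Aut^0(V,Q)$ and hence $\mathfrak{m}\subseteq\mathfrak{g}_n$; by Deligne's theorem on semisimplicity of the algebraic monodromy of a polarizable PVHS, $\mathfrak{m}$ is a semisimple Lie algebra.

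For the lower bound on $\mathfrak{m}$, I would use the hyperelliptic locus. By Proposition~\ref{prop:reduce to V_1}(2), $\tilde{\V}_{AR}|_{\mathfrak{M}_{hp}}\simeq \wedge^n\V_C$, while A'Campo's theorem asserts that the monodromy image of $\pi_1(\mathfrak{M}_{hp})$ in $Sp(\V_{C,s})\simeq Sp(2n,\Q)$ is Zariski dense. Composing with $\pi_1(\mathfrak{M}_{hp})\to \pi_1(\mathfrak{M}_{AR})$ and with the wedge homomorphism $\rho_{\wedge^n}$ places $\mathfrak{m}$ in the configuration $\rho_{\wedge^n}(\mathfrak{sp}_{2n}\C)\subseteq \mathfrak{m}\subseteq \mathfrak{g}_n$ required by Proposition~\ref{prop:represnetation theory}.

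Before appealing to that classification I must check that $\mathfrak{m}$ acts irreducibly on $V$. Since $\mathfrak{m}$ is semisimple, any $\mathfrak{m}$-stable subspace of $V$ is $Q$-orthogonally complemented and underlies a sub-$\C$-PVHS of $\tilde{\V}_{AR}\otimes\C$. Because $\tilde{\V}_{AR}$ is of Calabi-Yau type with $h^{n,0}=1$ (Proposition~\ref{hodge number}), the unique Hodge line $H^{n,0}$ lives in a single such summand; combining this with the $\mathfrak{sp}_{2n}\C$-isotypic decomposition of $\wedge^n\V_C$ available over $\mathfrak{M}_{hp}$ and the generic nondegeneracy of the CY-type Higgs field, I would show that a nontrivial $Q$-orthogonal complement cannot exist. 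Granting irreducibility, Proposition~\ref{prop:represnetation theory} leaves two alternatives: either (i) $\mathfrak{m}=\mathfrak{g}_n$, which immediately yields the desired conclusion $Mon^0=Aut^0(V,Q)$, or (ii) $\mathfrak{m}\cong\mathfrak{sl}_{2n}\C$ with $V\cong\wedge^n\C^{2n}$.

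Alternative (ii) is excluded by a contradiction with Theorem~\ref{thm: does not factor canonically}. After a finite \'{e}tale base change $\mathfrak{M}'\to \mathfrak{M}_{AR}$, chosen to lift past the finite central kernel of $SL_{2n}\twoheadrightarrow SL_{2n}/\mu_n$, the monodromy representation lifts to a rank-$2n$ complex local system $\W$ on $\mathfrak{M}'$ satisfying $\tilde{\V}_{AR}\otimes\C \simeq \wedge^n\W$ as local systems. Proposition~\ref{prop:PVHS decomposition} (whose absolute-irreducibility hypothesis is granted by the previous paragraph) upgrades this to an isomorphism of $\C$-PVHS for a suitable $\C$-PVHS structure on $\W$; the resulting weight-one $\C$-PVHS of Hodge type $(n,n)$ has period map into $D^{I}_{n,n}$ and exhibits $\tilde{\V}_{AR}\otimes\C$ as a pullback of $\V_{can}=\wedge^n\W_{\mathrm{univ}}$, contradicting Theorem~\ref{thm: does not factor canonically}. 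The two delicate points in this plan are the irreducibility of $\mathfrak{m}$ on $V$ and the construction of the lift $\W$ in case (ii); once these are handled, the remainder is a formal assembly of the cited results.
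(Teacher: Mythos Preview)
Your three-step strategy matches the paper's proof essentially line for line: the hyperelliptic locus plus A'Campo for the lower bound, Deligne's semisimplicity, the classification Proposition~\ref{prop:represnetation theory}, and the elimination of the $\mathfrak{sl}_{2n}$ alternative via Proposition~\ref{prop:PVHS decomposition} and Theorem~\ref{thm: does not factor canonically}.

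The one place where your proposal is vaguer than the paper is the irreducibility argument, which you yourself flag as delicate. Your sketch combines the CY condition $h^{n,0}=1$ with ``generic nondegeneracy of the CY-type Higgs field'' and the $\mathfrak{sp}_{2n}\C$-isotypic decomposition over $\mathfrak{M}_{hp}$. Note that $\wedge^n\C^{2n}$ is \emph{not} irreducible under $\mathfrak{sp}_{2n}\C$, so the hyperelliptic restriction alone does not settle the question, and the CY condition only gives that $\theta^{n,0}$ is an isomorphism, which pins down $E^{n-1,1}$ but not the deeper pieces. The paper sidesteps this by invoking the Jacobian-ring description (Propositions~\ref{prop:identification of Hodge structures with Jacobian ring} and~\ref{prop:bases}): for every $q$ the iterated Higgs map $\mathrm{Sym}^q T_{\mathfrak{M}_{AR}}\to \mathrm{Hom}(E^{n,0},E^{n-q,q})$ is surjective, so the entire Higgs bundle is generated from the rank-one piece $E^{n,0}$ and no nontrivial Higgs-stable complement can exist. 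This is cleaner and already available from the computations in Section~\ref{subsection:Jacobian ring}; you should replace your irreducibility sketch with that argument.
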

\begin{proof}
We first show the monodromy representation is absolutely irreducible.
For otherwise there would exist local systems $\V_1$, $\V_2$ of complex linear spaces over $\mathfrak{M}_{AR}$, such that $\tilde{\V}_{AR}\otimes \C=\V_1\oplus \V_2$. Then by a result of P. Deligne (cf. \cite{D}), there exist $\C$-PVHS structures on $\V_1$ and $\V_2$ such that $\tilde{\V}_{AR}\otimes \C=\V_1\oplus \V_2$ as $\C$-PVHS. So the Higgs bundle $(E, \theta)=(\oplus E^{p,q}, \oplus\theta^{p,q})$ associated to $\tilde{\V}_{AR}\otimes \C$ admits a direct sum decomposition. On the other hand, Proposition \ref{prop:identification of Hodge structures with Jacobian ring} and Proposition \ref{prop:bases} shows that $\forall \ 1\leq q\leq n$, the Higgs map
$$
Sym^q \mathcal{T}_{\mathfrak{M}_{AR}}\xrightarrow{\theta^q}Hom (E^{n,0}, E^{n-q,q})=E^{n-q,q}
$$
is surjective. This is a contradiction with the decomposition of $(E, \theta)$. So we get the monodromy representation is absolutely irreducible.


Consider the  universal family of hyperelliptic curves $g:\mathcal{C}\rightarrow \mathfrak{M}_{hp}$. By Proposition \ref{prop:reduce to V_1}, we have an inclusion $\mathfrak{M}_{hp}\subset \mathfrak{M}_{AR}$ and  $\tilde{\V}_{AR}\mid_{\mathfrak{M}_{hp}}=\wedge^n \V_C$, where $\V_C$ is the weight one $\Q$-PVHS associated to $g$. Suppose the base point $s\in \mathfrak{M}_{hp}$. Denote the  monodromy representation of $g$ by
$$
\tau: \pi_{1}(\mathfrak{M}_{hp},s)\rightarrow Sp(2n, \Q).
$$
Then we have a commutative diagram
\begin{diagram}
 \pi_{1}(\mathfrak{M}_{hp},s) &\rTo^{\tau} &Sp(2n, \Q)\\
\dTo  &    &\dTo_{\rho_{\wedge^n}}\\
\pi_1(\mathfrak{M}_{AR}, s)   &\rTo^{\rho} &Aut(V,Q)
\end{diagram}
where $\rho_{\wedge^n}$ is the homomorphism  induced by the $n$-th wedge product of the standard representation of $Sp(2n, \Q)$.

By Theorem 1 of \cite{A'Campo},  $\tau(\pi_{1}(\mathfrak{M}_{hp},s))$ is Zariski dense in $Sp(2n, \Q)$. So we
get the commutative diagram
\begin{diagram}
Sp(2n, \Q)&  &\rTo^{\rho_{\wedge^n}}& &Aut(V,Q)\\
&\rdInto^{}& & \ruInto^{}\\
& &Mon
\end{diagram}
 Note the complexification
 \begin{displaymath}
Aut^0(V,Q)_{\C}=\left\{
                 \begin{array}{ll}
                  Sp({2n\choose n}, \C), & \hbox{n odd;} \\
                   SO({2n\choose n}, \C), & \hbox{n even.}
                 \end{array}
               \right.
\end{displaymath}
  By a result of Deligne (cf. Corollary 4.2.9 in \cite{D-HodgeII}), $Mon$ is semi-simple. Then apply Proposition \ref{prop:represnetation theory} to the Lie algebra version of the commutative diagram above, we get either
$Mon^0=Aut^0(V,Q)$ or (after a possible finite \'{e}tale base change) there exists a local system $\W$ over $\mathfrak{M}_{AR}$ such that $\tilde{\V}_{AR}\otimes \C=\wedge^n{\W}$. In the latter  case, Proposition \ref{prop:PVHS decomposition} implies that there exists a  $\C$-PVHS structure on $\W$ such that $\tilde{\V}_{AR}\otimes \C=\wedge^n{\W}$ as $\C$-PVHS over $\mathfrak{M}_{AR}$. This would imply $\tilde{\V}_{AR}$ factors through the  $\C$-PVHS $\V_{can}$ over  the type A symmetric domain  $D^{I}_{n,n}$. But this can not happen by Theorem \ref{thm: does not factor canonically}. So we get $Mon^0=Aut^0(V,Q)$.
\end{proof}

\begin{corollary}\label{cor:M-T group is dense}
If $n\geq 3$, then the special Mumford-Tate group of a general member $X$ in $\mathcal{M}_{n,2n+2}$ is $Aut^0(V, Q)$. 
\end{corollary}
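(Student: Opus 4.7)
The plan is to derive this corollary directly from Theorem~\ref{thm:Zariski density} combined with the well-known comparison result of Deligne and Andr\'e relating the algebraic monodromy group to the Mumford-Tate group at a very general point of the base.

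First, I would recall that the special Mumford-Tate group $Hg(X)$ of a member $X \in \mathcal{M}_{n,2n+2}$, acting on $V = H^n(X,\Q)_{pr}$, is by definition a connected reductive $\Q$-algebraic subgroup of $Aut(V, Q)$ (the subgroup preserving the polarization). Next, I would invoke Deligne's theorem (Th\'eor\`eme 0.1 of ``Un th\'eor\`eme de finitude pour la monodromie'', together with Andr\'e's refinement in the form of the theorem of the fixed part): for a polarized $\Q$-variation of Hodge structure over a smooth connected complex base $S$, the identity component $Mon^0$ of the algebraic monodromy group is a normal subgroup of the derived group $Hg(X_s)^{der}$ for every $s \in S$ outside a countable union of proper analytic subvarieties. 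Applied to the family $\tilde f:\tilde{\mathcal X}_{AR}\to \mathfrak{M}_{AR}$, this yields $Mon^0 \subseteq Hg(X)^{der}$ for $X$ corresponding to a very general point of $\mathfrak{M}_{AR}$; this is the Schoen-Deligne input referred to in the introduction.

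By Theorem~\ref{thm:Zariski density}, $Mon^0 = Aut^0(V, Q)$, which is a connected simple algebraic group (symplectic when $n$ is odd, special orthogonal when $n$ is even) and hence equal to its own derived group. Combining this with the inclusion from the previous step produces the chain
\[
Aut^0(V, Q) \;=\; Mon^0 \;\subseteq\; Hg(X)^{der} \;\subseteq\; Hg(X) \;\subseteq\; Aut^0(V, Q),
\]
where the last inclusion holds because $Hg(X)$ is connected and preserves $Q$. Hence all four groups coincide, giving $Hg(X) = Aut^0(V, Q)$ for $X$ a very general member of $\mathcal{M}_{n,2n+2}$.

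The main point to check is essentially bookkeeping: the correct formulation of the Deligne-Andr\'e theorem over an open (not necessarily projective) base, and the Cattani-Deligne-Kaplan result that the Hodge locus is a countable union of proper analytic subvarieties, which justifies the use of ``general'' in the statement. Neither of these presents a real obstacle, so the corollary follows essentially immediately once Theorem~\ref{thm:Zariski density} is in hand.
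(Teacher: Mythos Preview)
Your proposal is correct and follows essentially the same approach as the paper: both invoke the Deligne--Schoen/Andr\'e result that $Mon^0$ is a normal subgroup of the generic special Mumford-Tate group, combine this with Theorem~\ref{thm:Zariski density} giving $Mon^0 = Aut^0(V,Q)$, and then squeeze using the inclusion $Hg(X) \subseteq Aut^0(V,Q)$. The paper cites this comparison via Lemma~2.4 of \cite{VZ} (attributed to Deligne and Schoen) rather than Andr\'e, and does not pass through $Hg(X)^{der}$ explicitly, but the argument is otherwise identical.
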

\begin{proof}
Consider the good family $\tilde{f}: \tilde{\mathcal{X}}_{AR}\rightarrow \mathfrak{M}_{AR}$ and the associated weight $n$ $\Q$-PVHS $\tilde{\V}_{AR}$. By Deligne and Schoen (see for example Lemma 2.4 \cite{VZ}), the identity  component of the $\Q$-Zariski closure of the monodromy group is a normal subgroup of the special Mumford-Tate group $Hg(\tilde{\V}_{AR})$ of $\tilde{\V}_{AR}$, which is equal to the special Mumford-Tate group of a general closed fiber of $\tilde{f}$. We can easily deduce from  Theorem \ref{thm:Zariski density} that $Mon^0=Hg(\tilde{\V}_{AR})=Aut^0(V, Q)$. Then the corollary follows since the moduli map of $\tilde{f}$ is dominant.
\end{proof}

\begin{corollary}\label{cor:good family Zariski dense}
Let $f: \mathcal{X}\rightarrow S$ be a good family of $\mathcal{M}_{n,2n+2}$ and $\W$ be the associated weight $n$ $\Q$-PVHS.
Let $s\in S$ be a base point and let
$$
\rho: \pi_{1}(S,s)\rightarrow Aut(V, Q)
$$
be the monodromy representation associated to $\W$, where $V=\W_s$ and $Q$ is the bilinear form on $V$ induced by the cup product.  If $n\geq 3$, then the image of $\rho$ is Zariski dense in $Aut^0(V, Q)$.
\end{corollary}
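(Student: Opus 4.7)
The plan is to combine Corollary~\ref{cor:M-T group is dense} with the normality theorem of Deligne--Schoen and a fiber-product comparison with the reference family $\tilde{f}:\tilde{\mathcal{X}}_{AR}\to\mathfrak{M}_{AR}$.

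First I would show that the Hodge group of $\W$ equals $Aut^{0}(V,Q)$. Since the moduli map $\mu:S\to \mathcal{M}_{n,2n+2}$ is dominant, a Hodge-theoretically general point $s\in S$ maps to a general closed point of $\mathcal{M}_{n,2n+2}$, so by Corollary~\ref{cor:M-T group is dense} the special Mumford--Tate group of $\mathcal{X}_{s}$ equals $Aut^{0}(V,Q)$. Because the generic Mumford--Tate group of a family coincides with the Hodge group of the associated $\Q$-PVHS, we obtain $Hg(\W)=Aut^{0}(V,Q)$.

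Next, by the result of Deligne--Schoen cited in the proof of Corollary~\ref{cor:M-T group is dense} (Lemma~2.4 of \cite{VZ}), the identity component $Mon^{0}$ of the $\Q$-Zariski closure of the image of $\rho$ is a normal algebraic subgroup of $Hg(\W)=Aut^{0}(V,Q)$. For $n\geq 3$ the group $Aut^{0}(V,Q)$ is $Sp(\binom{2n}{n},\Q)$ when $n$ is odd and the neutral component of $SO(V,Q)$ when $n$ is even; in both cases it is $\Q$-almost-simple (note that $\binom{2n}{n}\geq 20$ excludes the degenerate low-rank $SO$ cases). Hence $Mon^{0}$ is either trivial or equal to $Aut^{0}(V,Q)$.

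Finally, to exclude the possibility that $\rho$ has finite image, I would compare $f$ with the reference family $\tilde{f}$. Form the fiber product $S\times_{\mathcal{M}_{n,2n+2}}\mathfrak{M}_{AR}$ and extract a smooth irreducible component $T$ on which both projections $p:T\to S$ and $q:T\to\mathfrak{M}_{AR}$ are finite étale onto dense Zariski opens; after passing to a further finite étale cover (to rigidify the automorphism ambiguity inherent to the coarse moduli space), the pulled-back families become isomorphic over $T$, whence $p^{*}\W\simeq q^{*}\tilde{\V}_{AR}$ as $\Q$-PVHS. Finite étale pullback preserves the identity component of the monodromy, so
\[
Mon^{0}(\W)=Mon^{0}(p^{*}\W)=Mon^{0}(q^{*}\tilde{\V}_{AR})=Mon^{0}(\tilde{\V}_{AR})=Aut^{0}(V,Q),
\]
with the last equality supplied by Theorem~\ref{thm:Zariski density}. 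This excludes the trivial alternative and completes the proof.

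The main obstacle is the fiber-product step, which has to negotiate the well-known fact that the coarse moduli space $\mathcal{M}_{n,2n+2}$ does not carry a universal family, so ``same moduli point'' does not automatically yield ``isomorphic fibres as families''. The cleanest remedy is to perform the fiber product on the level of the Deligne--Mumford stack of polarized CY manifolds of this type, where universal families exist and the two pullbacks identify tautologically; alternatively one may introduce a level structure to rigidify the finite automorphism groups.
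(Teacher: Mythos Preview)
Your first two paragraphs match the paper's argument exactly: invoke Corollary~\ref{cor:M-T group is dense} to identify the generic Mumford--Tate group with $Aut^{0}(V,Q)$, then use Deligne--Schoen to see that $Mon^{0}$ is normal in $Aut^{0}(V,Q)$, and conclude from almost-simplicity. The paper's proof stops there; it does not carry out anything like your fiber-product comparison.

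The fiber-product step you propose is correct in principle but is considerably heavier than needed, and as you yourself note, it runs into the usual coarse-moduli headache of turning ``same moduli point'' into ``isomorphic families''. You can avoid all of this. To rule out $Mon^{0}=\{1\}$ (equivalently, finite monodromy), argue directly from the VHS: if the monodromy were finite, then after a finite \'etale base change the local system would be trivial and, by the theorem of the fixed part, $\W$ would be a constant PVHS. But $f$ is a good family of CY $n$-folds, so the moduli map is dominant and generically finite, and by the local Torelli theorem for CY manifolds (the Higgs map $\theta^{n,0}$ is an isomorphism, cf.\ the proof of Theorem~\ref{thm:does not factor in general case}(1) or Proposition~\ref{prop:identification of Hodge structures with Jacobian ring}) the period map is a local immersion, hence non-constant. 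This contradiction eliminates the trivial alternative in one line, with no need to compare families over a stack or introduce level structures.
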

\begin{proof}
By Corollary \ref{cor:M-T group is dense} and the results of Deligne and Schoen we used above, the identity component of the $\Q$-Zariski closure of the monodromy group is a normal subgroup of $Aut^0(V, Q)$. Then the corollary follows from the fact that  $Aut^0(V, Q)$ is an almost simple algebraic group.
\end{proof}




\section{Gross's Geometric realization problem}\label{section:general results}

Motivated by Theorem \ref{thm: does not factor canonically}, we consider a particular subclass of moduli spaces of hyperplane arrangements in projective spaces, namely those related to the CY varieties, and ask whether their (sub) VHSs will realize the canonical PVHSs over type $A$ bounded symmetric domains. More precisely, the question of Gross in this setting is described as follows.
\begin{question}[B. Gross \cite{G}]
Let $D=G_{\R}/K$ be an irreducible type $A$ bounded symmetric domain and $\V_{can}$ be the canonical $\C$-PVHS of CY type over $D$. Does there exist a good family of CY manifolds $f: X\to S$ which is obtained from a crepant resolution of cyclic covers of $\P^n$ branched along $m$ hyperplanes in general position, where $S=D/\Gamma$ with $\Gamma\subset G_{\Q}$ an arithmetic subgroup, such that $\V_{can}$ is the pull back to $D$ of any sub $\C$-PVHS of the $\Q$-PVHS $\V_{X}$ attached to $f$?
\end{question}
Gross stated his question only for the tube domain case. However, it is equally interesting to consider other cases, e.g. the complex ball. Then one has to extend the construction of Gross of $\V_{can}$ (in this case and only in this case it is an $\R$-PVHS) to the remaining cases. This was done in \cite{SZ}. A recent work of Friedmann-Laza \cite{FL} showed that $\C$-PVHS of CY types over bounded symmetric domains come basically from the $\V_{can}$ of Gross and Sheng-Zuo. Note that, in the tube domain case, the affirmative answer to the above question is in fact a weaker reformulation of Dogalchev's conjecture. What we intend to do in this section is to make a definite and complete answer to the geometric realization problem for type $A$ domains with moduli spaces of CY manifolds coming hyperplane arrangements as potential candidate in mind.
In fact, we will prove results analogous to Theorem \ref{thm: does not factor canonically}  and Theorem \ref{thm:Zariski density}. Let us fix the following  notations throughout this section.
\begin{itemize}
\item $m,n,k,r$ are positive integers such that $m=kr$ and $n=m-k-1$.
\item $\zeta=$exp$(\frac{2\pi i}{r})$ is a primitive $r$-root of unit.
\end{itemize}

We call an ordered arrangement $\mathfrak{A}=(H_1,\cdots, H_m)$ of $m$ hyperplanes in $\P^n$ in general position if no $n+1$ of the hyperplanes intersect in a point.
The hyperplanes of the arrangement $\mathfrak{A}$ determine a divisor $H=\sum_{i=1}^{m}H_i$ on $\P^n$. As
the same as the $r=2$ case, this divisor determines a unique $r$-fold cyclic cover $\pi: X\rightarrow \P^n$ that ramifies over $H$ and the canonical line bundle of $X$ is trivial. In the same way as section \ref{subsection:Kummer cover}, we can construct the Kummer cover of $X$ which is smooth projective, so the Hodge structure on $H^{n}(X, \Q)$ is a weight $n$ pure $\Q$-Hodge structure.  In our earlier work \cite{SXZ}, we constructed a crepant resolution $\tilde{X}$ of $X$. Thus the projective variety $\tilde{X}$ is a smooth CY manifold.
\begin{lemma}\label{lemma:resolution not change eigen Hodge structure}
The crepant resolution $\psi: \tilde{X}\rightarrow X$ induces  isomorphisms:
\begin{itemize}
\item[(1)]$H^{p,q}(X, \C)\xrightarrow{\sim} H^{p,q}(\tilde{X},\C)$,\ \ $\forall \ p+q=n$, $p\neq q$.
\item[(2)]$\psi^{*}: H^{n}(X, \C)_{(i)}\xrightarrow{\sim}H^n(\tilde{X},\C)_{(i)}$, \ \ $\forall \ 1\leq i\leq r-1$.
\end{itemize}
\end{lemma}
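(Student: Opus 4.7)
The plan is to adapt the proof of Proposition \ref{hodge number} for the $r=2$ case to general $r$, using the explicit construction of the crepant resolution in \cite{SXZ}. The essential point is that the cokernel of $\psi^*$ on $H^n$ consists of algebraic classes supported on the exceptional divisors that are invariant under the $\Z/r\Z$ action. First, I would exploit the equivariance of $\psi$. Since the crepant resolution in \cite{SXZ} is constructed out of $\Z/r\Z$-invariant smooth centers (associated to the codimension $\geq 2$ strata of the branch divisor), $\psi$ is $\Z/r\Z$-equivariant, so $\psi^*$ respects the eigenspace decomposition $H^n(-,\C) = \bigoplus_{i=0}^{r-1} H^n(-,\C)_{(i)}$. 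Injectivity of $\psi^*: H^n(X,\C) \to H^n(\tilde X,\C)$ follows from the fact that $X$, being a finite cyclic quotient of the smooth Kummer cover, has rational singularities; hence $R\psi_* \sO_{\tilde X} = \sO_X$, and the Leray spectral sequence yields the injectivity.

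Second, I would analyze the cokernel $C := H^n(\tilde X,\C)/\psi^* H^n(X,\C)$. A standard computation for the cohomology of blow-ups along smooth centers, applied iteratively to the construction in \cite{SXZ}, shows that $C$ is spanned by algebraic classes coming from the exceptional divisors. Since the centers of the blow-ups are (iterated proper transforms of) intersections of hyperplanes, whose cohomology is concentrated in $(p,p)$ type, all these exceptional classes lie in Hodge type $(n/2, n/2)$, and $C$ vanishes when $n$ is odd. Thus for $p \neq q$ we get $\psi^*: H^{p,q}(X,\C) \xrightarrow{\sim} H^{p,q}(\tilde X,\C)$, proving (1).

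Third, for (2), I would show $C \subset H^n(\tilde X,\C)_{(0)}$. The centers of the blow-ups are pointwise fixed by $\Z/r\Z$ since they lie in the ramification locus, and on the normal bundle of each center $\Z/r\Z$ acts by characters. A local-model analysis at each singular stratum, of type $y^r = t_1 \cdots t_c$, shows that although the action on the normal bundle is nontrivial, the induced action on the cohomology of the resulting exceptional projective bundle is trivial: this cohomology is generated by the first Chern class of the tautological bundle together with pullbacks from the base, both of which are $\Z/r\Z$-invariant (a line-bundle character is killed by projectivization, and the base is pointwise fixed). Hence the generators of $C$ all lie in $H^n(\tilde X,\C)_{(0)}$, and $\psi^*$ restricts to isomorphisms on the $i$-eigenspaces for $1 \leq i \leq r-1$, proving (2).

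The main obstacle is the third step: one must verify carefully, for each iterated blow-up in the construction of \cite{SXZ}, that the $\Z/r\Z$-action descends trivially to the cohomology of the resulting exceptional divisor, keeping track of how the character weights transform under each successive blow-up. The local models are essentially toric, so in principle this is a combinatorial check, but the book-keeping along the full chain of centers is nontrivial. The $r=2$ case, handled in Proposition \ref{hodge number} via the argument in \cite{GSSZ} and \cite{Cynk-van Straten}, provides the template, and the same principle should go through once one verifies the character computation at each stratum type of $X$.
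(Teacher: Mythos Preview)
Your approach is essentially the same as the paper's: both arguments reduce to showing, inductively over the blow-ups in the construction of \cite{SXZ}, that the exceptional divisor $E$ at each step contributes nothing to the nontrivial eigenspaces, i.e.\ $H^n(E,\C)_{(i)}=0$ for $1\le i\le r-1$. The paper states exactly this and refers back to Proposition 2.8 of \cite{SXZ} for part (1).

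Two small points. First, your justification of injectivity is not quite right: $R\psi_*\sO_{\tilde X}=\sO_X$ controls coherent cohomology, not Betti cohomology, so the Leray spectral sequence for $\sO$ does not by itself give injectivity of $\psi^*$ on $H^n(-,\C)$. The correct argument (used in the paper for $r=2$, via Theorem 5.41 of \cite{PS}) is that $X$ has quotient singularities, hence is a rational homology manifold with pure Hodge structure on $H^n$, and then mixed Hodge theory or the decomposition theorem gives injectivity of $\psi^*$ on $\Q$-cohomology. Second, your reference to Proposition \ref{hodge number} and to \cite{Cynk-van Straten} as the $r=2$ template is off: Proposition \ref{hodge number} computes Hodge numbers, and \cite{Cynk-van Straten} is about deformations; the relevant $r=2$ analogue is Proposition \ref{prop:reduce to V_1}(1) (and Proposition 2.8 of \cite{SXZ}).
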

\begin{proof}
(1) is just Proposition 2.8 in \cite{SXZ}.

(2) can be proved in the same way as Proposition 2.8 in \cite{SXZ}, by replacing everything by its $i$-eigenspace, and noting that by induction, at every blow up step, $\forall \ 1\leq i\leq r-1$, $H^n(E)_{(i)}=0$, where $E$ is the exceptional divisor.
\end{proof}

Let $\mathfrak{M}_{AR}$ denote the coarse moduli space of ordered arrangements of $m$ hyperplanes in $\P^n$ in general position, and let $\mathcal{M}_{n,m}$ denote the coarse moduli space of $\tilde{X}$.
In the same way as the $r=2$ case, $\mathfrak{M}_{AR}$ can be realized as an open subvariety of the affine space $\C^{n(k-1)}$ and it admits a natural family $f:\mathcal{X}_{AR}\rightarrow \mathfrak{M}_{AR}$, where each fiber $f^{-1}(\mathfrak{A})$ is the $r$-fold cyclic cover of $\P^n$ branched along the hyperplane arrangement $\mathfrak{A}$. It is easy to see the crepant resolution  in  \cite{SXZ} gives a simultaneous crepant resolution $\pi: \tilde{\mathcal{X}}_{AR}\rightarrow \mathcal{X}_{AR}$ for the family $f$. We denote this smooth projective family of CY manifolds by $\tilde{f}:\tilde{\mathcal{X}}_{AR}\rightarrow \mathfrak{M}_{AR}$.

Let $\mathfrak{M}_{C}$ be the moduli space of ordered distinct $m$ points on $\P^1$ and  $g:\mathcal{C}\rightarrow \mathfrak{M}_{C}$ be the  universal family of $r$-fold cyclic covers of $\P^1$ branched at $m$ distinct points.

 We consider the $\Q-$VHS attached to the three families $f$, $\tilde{f}$, $g$:
 $$
 \V:=R^nf_{*}\Q, \ \ \tilde{\V}:=(R^n\tilde{f}_{*}\Q)_{pr}, \ \ \V_C:=R^1g_{*}\Q.
 $$
 Since $\Z/r\Z$ acts naturally on the three families, we have a decomposition of the three $\Q$-VHS into eigen-sub $\C$-VHS:
 $$
 \V\otimes\C=\oplus_{i=0}^{r-1}\V_{(i)}, \ \  \tilde{\V}\otimes\C=\oplus_{i=0}^{r-1}\tilde{\V}_{(i)},
 \ \ \V_C\otimes\C=\oplus_{i=0}^{r-1}\V_{C(i)}
 $$
 \begin{proposition}\label{prop:general results, summary1}
  $\forall \ 1\leq i\leq r-1$, we have:
  \begin{itemize}
  \item[(1)] the crepant resolution induces an isomorphism of $\C$-PVHS: $\tilde{\V}_{(i)}\simeq \V_{(i)}$;
  \item[(2)] there is an embedding $\mathfrak{M}_C\hookrightarrow \mathfrak{M}_{AR}$ such that $\V_{(i)}\mid_{\mathfrak{M}_{C}}\simeq \wedge^n\V_{C(i)}$;
  \item[(3)] as a $\C$-PVHS of weight $n$, the Hodge numbers of $\V_{(1)}$ are:
  $$
  h^{n-q,q}= \left\{
               \begin{array}{ll}
                 {n\choose q}{k-1\choose q}, & \hbox{$\ 0\leq q\leq k-1$;} \\
                 0, & \hbox{$k\leq q\leq n$.}
               \end{array}
             \right.
$$
  \end{itemize}
\end{proposition}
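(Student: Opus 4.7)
The plan is to follow the strategy of Proposition \ref{prop:reduce to V_1} step by step, replacing $r=2$ by arbitrary $r$.

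For Part (1), the $\Z/r\Z$-equivariant morphism $\V\to R^n\tilde f_{*}\Q$ coming from the crepant resolution restricts to a map $\V_{(i)}\to \tilde\V_{(i)}$ which is injective by Theorem 5.41 of \cite{PS}, and a pointwise isomorphism onto the full $(i)$-eigenspace of $H^n(\tilde X)$ by Lemma \ref{lemma:resolution not change eigen Hodge structure}(2). It remains to confirm that this eigenspace coincides with its primitive part. Arguing as in Proposition \ref{prop:pure Hodge structure of X}(4), the Kummer cover $Y$ of $X$ is a smooth complete intersection in a projective space (constructed analogously in \cite{SXZ}), so Lefschetz yields $H^{n-2}(Y)_{(i)}=0$ for $i\neq 0$; hence $H^{n-2}(X)_{(i)}=H^{n-2}(Y)^{N_1}_{(i)}=0$, and the analogue of Lemma \ref{lemma:resolution not change eigen Hodge structure} in degree $n-2$ (whose proof is parallel, the exceptional divisors having trivial $\Z/r\Z$-action on their cohomology) gives $H^{n-2}(\tilde X)_{(i)}=0$. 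Since the Lefschetz operator is $\Z/r\Z$-equivariant, this forces $H^n(\tilde X)_{pr,(i)}=H^n(\tilde X)_{(i)}$, and Part (1) follows.

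For Part (2), the embedding $\mathfrak{M}_C\hookrightarrow \mathfrak{M}_{AR}$ is constructed via the symmetric product map $\gamma:(\P^1)^n\to \Sym^n\P^1=\P^n$: as in Lemma \ref{lemma:points in general position implies hyperplane in general position}, $m$ distinct points $(p_1,\ldots,p_m)$ yield the arrangement $H_i=\gamma(\{p_i\}\times(\P^1)^{n-1})$ in general position. An $r$-fold analogue of Lemma \ref{lemma: X_C simeq C^n/N_2} then identifies $X_C\simeq C^n/N_r$, where $N_r=N''\rtimes S_n$ and $N''=\ker\bigl((\Z/r\Z)^n\xrightarrow{\Sigma}\Z/r\Z\bigr)$, with the deck $\Z/r\Z$-action on $X_C$ corresponding to $(\Z/r\Z)^n/N''$. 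Via the K\"unneth decomposition of $H^n(C^n,\C)$, the facts that $H^0(C)$ and $H^2(C)$ sit in the trivial $\Z/r\Z$-eigenspace while $H^1(C)_{(0)}=0$ pin down the $(i)$-eigenspace of $H^n(X_C)=H^n(C^n)^{N_r}$ as coming solely from the summand $H^1(C)_{(i)}^{\otimes n}$; its $S_n$-invariants, computed with the graded-commutative signs (since $H^1$ is odd), recover $\wedge^n H^1(C)_{(i)}$.

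For Part (3), by Part (2) the Hodge numbers of $\V_{(1)}$ agree with those of $\wedge^n\V_{C(1)}$ and are therefore constant on $\mathfrak{M}_{AR}$. Using the affine model $C:y^r=f(z)$ with $\deg f=m=kr$, the $1$-forms $P(z)\,dz/y^{r-1}$ are $(1)$-eigenvectors of $y\mapsto\zeta y$; regularity at the branch points and at infinity requires $\deg P\le k(r-1)-2=n-1$, so $\dim H^{1,0}(C)_{(1)}=n$. Complex conjugation gives $\dim H^{0,1}(C)_{(1)}=\dim H^{1,0}(C)_{(r-1)}=k-1$ by the same computation. Consequently
\[
h^{p,n-p}(\wedge^n\V_{C(1)})=\binom{n}{p}\binom{k-1}{n-p},
\]
and substituting $q=n-p$ yields the claimed formula (with $h^{n-q,q}=0$ for $q>k-1$ since $\binom{k-1}{q}$ vanishes). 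The principal technical point is Part (2): the identification $X_C\simeq C^n/N_r$ in the general $r$ setting together with the careful bookkeeping of $N''$-invariance against the $\Z/r\Z$-eigenvalue grading and the $S_n$-signs; Parts (1) and (3) are then direct adaptations of the $r=2$ arguments already carried out in the paper.
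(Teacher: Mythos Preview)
Your proof is correct and follows essentially the same route as the paper, which itself just says that (1) follows from Lemma \ref{lemma:resolution not change eigen Hodge structure} and that (2)--(3) are proved ``similarly to Proposition \ref{prop:reduce to V_1}.'' You have simply filled in the details that the paper omits: the explicit $N''$-invariance and $S_n$-sign bookkeeping in the K\"unneth argument for (2), and the direct computation of $\dim H^{1,0}(C)_{(1)}=n$, $\dim H^{0,1}(C)_{(1)}=k-1$ for (3). One point worth noting is that in (1) you are actually more careful than the paper: Lemma \ref{lemma:resolution not change eigen Hodge structure}(2) gives $H^n(X)_{(i)}\simeq H^n(\tilde X)_{(i)}$, not $H^n(\tilde X)_{pr,(i)}$, so your extra step verifying $H^{n-2}(\tilde X)_{(i)}=0$ via the Kummer cover and the degree-$(n-2)$ analogue of the lemma is a genuine (and correct) completion of the argument.
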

\begin{proof}
(1) follows from Lemma \ref{lemma:resolution not change eigen Hodge structure}.

The proof of (2) and (3) is similar to that of Proposition \ref{prop:reduce to V_1}.
\end{proof}

Analogous to the $r=2$ case, for any $2\leq k\leq n+1$, on the  type $A$ symmetric domain $D^{I}_{n,k-1}=\frac{SU(n,k-1)}{S(U(n)\times U(k-1))}$, there is a canonical $\C$-PVHS $\V_{can}$, which has the same Hodge numbers as $\V_{(1)}$. For details of the construction, one can see \cite{SZ}. From the construction, one can deduce in the same way as Proposition \ref{propo:Hodge numbers and chara. variety of canonical VHS} that \begin{proposition}\label{prop: charac.variety of canonical VHS in general case}
The first characteristic variety of $\V_{can}$ is $C_{1,s}\simeq \P^{n-1}\times \P^{k-2}, \ \ \forall s\in D^{I}_{n,k-1}$.
\end{proposition}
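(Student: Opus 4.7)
The plan is to transcribe the proof of Proposition \ref{propo:Hodge numbers and chara. variety of canonical VHS} into the general rectangular setting, since the construction of $\V_{can}$ on $D^I_{n,k-1}$ in \cite{SZ} is formally identical to the tube-domain case $D^I_{n,n}$ once one allows the tautological quotient to have rank $k-1$ instead of $n$. First I would recall that $\V_{can} = \wedge^n \W$, where $\W$ is the weight-one $\C$-PVHS whose Higgs bundle $(F,\eta)$ is given by $F^{1,0} = S$ (tautological sub of rank $n$), $F^{0,1} = Q$ (tautological quotient of rank $k-1$), $\eta^{0,1} = 0$, and $\eta^{1,0}$ determined by the canonical isomorphism $T_{D^I_{n,k-1}} \simeq \Hom(S,Q)$. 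A direct combinatorial check $E^{n-q,q} = \wedge^{n-q} S \otimes \wedge^q Q$ recovers the Hodge numbers $h^{n-q,q} = \binom{n}{q}\binom{k-1}{q}$ of Proposition \ref{prop:general results, summary1}(3), and in particular $\rank E^{n,0} = 1$.

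Next I would compute the iterated Higgs maps at a point $s \in D^I_{n,k-1}$. For a tangent vector $v$ corresponding to $A \in \Hom(S_s, Q_s)$, the formulas derived in the proof of Proposition \ref{propo:Hodge numbers and chara. variety of canonical VHS},
$$\theta^{n,0}_{can,v}(e_1 \wedge \cdots \wedge e_n) = \sum_{i=1}^n (-1)^{i-1} A(e_i) \wedge e_1 \wedge \cdots \wedge \hat{e}_i \wedge \cdots \wedge e_n,$$
together with its analogue for $\theta^{n-1,1}_{can,v}$, only use the wedge-product structure and the identification of the tangent bundle with $\Hom(S,Q)$; they carry over verbatim to the rectangular setting. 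Composing the two maps gives, up to nonzero scalars, a sum over pairs $i < j$ of terms of the form $A(e_i) \wedge A(e_j)$ wedged with the remaining $e_\ell$'s.

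The decisive step is then to observe that this composition vanishes identically on $E^{n,0}_s$ iff $A(e_i) \wedge A(e_j) = 0$ for every pair, iff $\rank A \le 1$. Hence
$$C_{1,s} \simeq \{[A] \in \P(\Hom(S_s, Q_s)) : \rank A = 1\},$$
and the projectivized variety of rank-one linear maps $\C^n \to \C^{k-1}$ is the Segre variety $\P^{n-1} \times \P^{k-2}$, giving the claim. The only genuine point to verify is that the Higgs-bundle description of $\V_{can}$ in \cite{SZ} for a non-tube Hermitian symmetric domain really is the wedge-power just described; once that is granted, the remainder is a mechanical line-by-line copy of the $n = k-1$ argument.
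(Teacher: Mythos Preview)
Your proposal is correct and follows exactly the approach the paper intends: the paper does not give a separate proof but simply states that the result is deduced ``in the same way as Proposition \ref{propo:Hodge numbers and chara. variety of canonical VHS}'', and what you have written is precisely that verbatim transcription of the $D^I_{n,n}$ argument to the rectangular case $D^I_{n,k-1}$, with the tautological quotient now of rank $k-1$ and the rank-one locus in $\Hom(\C^n,\C^{k-1})$ identified with the Segre variety $\P^{n-1}\times\P^{k-2}$.
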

In order to calculate the characteristic varieties of $\V$, we use the same method as the $r=2$ case and reduce the situation to a calculation in a Jacobian ring. The construction and properties of Jacobian rings are similar to the $r=2$ case. So we only summarize and state the results we need.

For each parameter $a\in \C^{n(k-1)}$, there is a bi-graded $\C$-algebra $R=\oplus_{p,q\geq 0} R_{(p,q)}$ and the group $N=\oplus_{j=0}^{m-1}\Z/r\Z$ acts on $R$, preserving the grading. Consider the summation homomorphism  $\oplus_{j=0}^{m-1}\Z/r\Z\xrightarrow{\sum}\Z/r\Z$ and let $N_1$ be the kernel of this homomorphism. Define $R^{N_1}=\oplus_{p,q\geq 0}R_{(p,q)}^{N_1}$ to be the $N_1$-invariant part of $R$, then the cyclic group $\Z/r\Z=N/N_1=<\sigma>$ acts on $R^{N_1}$. Let $R_{(p,q)(i)}^{N_1}=\{\alpha\in R_{(p,q)}^{N_1}\mid \sigma (\alpha) =\zeta^i \alpha\}$ be the $i$-th eigen space of $R_{(p,q)}^{N_1}$ under the action of $\Z/r\Z$. Then analogous to Proposition \ref{prop:identification of Hodge structures with Jacobian ring}, we have
\begin{proposition}\label{prop:identification of Hodge structures with Jacobian ring in general case}
\begin{itemize}
\item[(1)] $\forall \ 1\leq i\leq r-1$, $\forall \ 0\leq q\leq n$, $H^{n-q,q}(X)_{(i)}\simeq R^{N_1}_{(q,qr)(i-1)}$, where $X$ is the cyclic cover of $\P^n$ corresponding to the parameter $a\in \mathfrak{M}_{AR}\subset \C^{n(k-1)}$. In particular, $H^{n-q,q}(X)_{(1)}\simeq R^{N_1}_{(q,qr)(0)}=R^{N}_{(q,qr)}$;
\item[(2)] $\forall \ 1\leq i\leq r-1$, $\forall \ 0\leq q\leq n$,  we have a commutative diagram
\begin{equation}\notag
\begin{array}{ccc}
  T_{\mathfrak{M}_{AR},a}\otimes H^{n-q,q}(X)_{(i)} & \xrightarrow{\theta^{n-q,q}} & H^{n-q-1,q+1}(X)_{(i)} \\
   \downarrow{\simeq}&  & \downarrow{\simeq} \\
  R^{N}_{(1,r)}\otimes R^{N_1}_{(q,qr)(i-1)} &\xrightarrow{} & R^{N_1}_{(q+1,qr+r)(i-1)}.
\end{array}
\end{equation}
\end{itemize}
Here  the lower horizontal arrow is the ring  multiplication map.
\end{proposition}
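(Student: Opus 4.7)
The plan is to generalize the strategy used in the $r=2$ case (Proposition \ref{prop:identification of Hodge structures with Jacobian ring}) by passing to a smooth Kummer cover $Y$ of $X$ and applying the Jacobian-ring description of primitive middle cohomology of a smooth complete intersection. The whole proof is a matter of carrying out this well-known machinery while keeping careful track of the $N$-action.

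\textbf{Step 1: Smooth Kummer cover.} Generalize Section \ref{subsection:Kummer cover} to the $r$-fold case: associate to the parameter $a\in\mathfrak{M}_{AR}$ the complete intersection $Y\subset\P^{m-1}$ cut out by $n$ equations of degree $r$ (obtained by replacing the squares in the $r=2$ formulas by $r$-th powers), equipped with the coordinate-wise action of $N=\bigoplus_{j=0}^{m-1}\Z/r\Z$. Smoothness in general position follows verbatim from Proposition 3.1.2 of \cite{Te}, and $Y/N_1\simeq X$. As in Proposition \ref{prop:pure Hodge structure of X}, the Lefschetz hyperplane theorem for $Y$ together with the $N$-equivariance of the hyperplane class yields $H^n(X,\C)=H^n(Y,\C)^{N_1}$ and $H^n(X,\C)_{(i)}\subset H^n(Y,\C)_{pr}$ for every $1\le i\le r-1$, compatibly with Hodge filtrations.

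\textbf{Step 2: Jacobian ring and eigenspace bookkeeping.} Apply the Jacobian-ring description (Terasoma, Konno, or the Carlson--Green--Griffiths--Harris formalism) of the primitive middle cohomology of the smooth complete intersection $Y$ of type $(r,\ldots,r)$ in $\P^{m-1}$. With the natural bi-grading on $R$ (first index the degree in the defining equations, second index the total polynomial degree, as in the statement), the adjunction formula for $Y$ gives an $N$-equivariant isomorphism $H^{n-q,q}(Y)_{pr}\simeq R_{(q,qr)}$ compatible with the Hodge filtration. Taking $N_1$-invariants and decomposing under $N/N_1=\langle\sigma\rangle$, and noting that the Poincaré residue trivializing the canonical sheaf of $Y$ transforms by $\zeta^{-1}$ under $\sigma$, we obtain
\[
H^{n-q,q}(X)_{(i)}\simeq R^{N_1}_{(q,qr)(i-1)},\qquad 1\le i\le r-1,\ 0\le q\le n.
\]
The specialization $i=1$ gives $R^{N_1}_{(q,qr)(0)}=R^N_{(q,qr)}$, proving (1).

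\textbf{Step 3: Higgs field as ring multiplication.} By Lemma \ref{lemma:infinitesimal deformation of X} (whose proof goes through verbatim for any $r$), $T_{\mathfrak{M}_{AR},a}$ identifies canonically with the space of $N$-equivariant first-order deformations of $Y$ as a complete intersection. An elementary Kodaira--Spencer calculation shows that this space is exactly $R^N_{(1,r)}$ (perturbations adding monomials $\sum_j c_j y_j^r$ to the defining equations). The classical Carlson--Griffiths description of the Gauss--Manin connection then identifies the Higgs field on $H^n(Y)_{pr}$ with multiplication $R_{(1,r)}\otimes R_{(q,qr)}\to R_{(q+1,qr+r)}$ in the Jacobian ring. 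Restricting to $N_1$-invariants and to the $(i-1)$-eigenspace of $N/N_1$ produces the commutative diagram of (2).

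\textbf{Expected obstacle.} The serious bookkeeping lies entirely in Step 2: verifying that the bi-degree shifts coming from the adjunction formula for a complete intersection of $n$ degree-$r$ hypersurfaces in $\P^{m-1}$ land precisely on $(q,qr)$, and that the character of $\sigma\in N/N_1$ acting on the Poincaré residue is exactly $\zeta^{-1}$ so that the eigenspace label shifts by $i\mapsto i-1$. Once pinned down for the top form $(q,i)=(0,1)$, these identifications extend uniformly to all $(q,i)$ because the $\Z/r\Z$-action commutes with both the Hodge filtration and the Jacobian-ring multiplication.
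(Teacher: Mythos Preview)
Your approach is essentially the same as the paper's: the paper does not write out a proof for the general-$r$ proposition but says the construction and properties are similar to the $r=2$ case, whose proof (Proposition \ref{prop:identification of Hodge structures with Jacobian ring}) passes to the smooth Kummer cover $Y$ and cites Terasoma \cite{T}, Corollary 2.5 and Proposition 2.6, for the Jacobian-ring identification and the Higgs-field-as-multiplication statement. Your Steps 1--3 spell out exactly this strategy with the obvious $r$-fold modifications.

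One small slip to fix in Step 1: the Kummer cover $Y\subset\P^{m-1}$ is cut out by $k$ (not $n$) degree-$r$ equations, generalizing the $n+1=k$ quadrics in $\P^{2n+1}$ from Section \ref{subsection:Kummer cover}; this gives $\dim Y=(m-1)-k=n$ as required. With that correction your bi-degree and eigenspace bookkeeping in Step 2 goes through as stated.
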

For a parameter $a\in \C^{n(k-1)}$, we define a subvariety in the projective space $\P(R^{N}_{(1,r)})$ as follows
$$
 C^{'}_{1,a}:=\{[\alpha]\in \P(R^{N}_{(1,r)})\mid \alpha^2=0 \in R^{N}_{(2,2r)}  \}.
$$
We have the following upper bound of the dimension of $C^{'}_{1,a}$.
\begin{proposition}\label{prop:Jacobian ring: dim upper bound in general case}
If $n\geq 3$, $k\geq 3$, then for generic $a\in \C^{n(k-1)}$,  dim $C^{'}_{1,a} \leq 2$.
\end{proposition}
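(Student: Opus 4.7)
The plan is to follow the same strategy as in the $r=2$ case (Proposition \ref{prop:Jacobian ring: dim upper bound}, treated in Section \ref{subsection:Jacobian ring}): reduce the dimension bound to an explicit calculation in the bi-graded ring $R^{N}$ at one convenient point $a^{*}\in \mathfrak{M}_{AR}$, and then transfer the bound to the generic point via upper semi-continuity of fibre dimension for the family $\{C^{'}_{1,a}\}\to \C^{n(k-1)}$.

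To carry this out, I would first fix $a^{*}$ so that the first $n+2$ hyperplanes are put in a standard projective frame (analogous to the matrix normalization of Section 2) with the remaining $k-3$ hyperplanes chosen as generically as possible. At such a point one obtains an explicit description of $R^{N}_{(1,r)}$ as the tangent space $T_{\mathfrak{M}_{AR}, a^{*}}$ of dimension $n(k-1)$, so that an element $\alpha\in R^{N}_{(1,r)}$ is represented by a collection of first-order perturbations of the linear forms cutting out the $n(k-1)$ movable hyperplanes of the configuration. I would then write down a concrete basis for $R^{N}_{(1,r)}$ and $R^{N}_{(2,2r)}$, keeping track of the $N$-invariance, which selects those monomials whose exponent profile lies in the kernel of the summation map $\oplus_{j=0}^{m-1}\Z/r\Z \to \Z/r\Z$. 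This makes the multiplication map $R^{N}_{(1,r)}\otimes R^{N}_{(1,r)}\to R^{N}_{(2,2r)}$ completely explicit.

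Next comes the heart of the matter: analysing the vanishing locus of $\alpha\mapsto \alpha^{2}$ at $a^{*}$. A rank-one perturbation supported in a single hyperplane produces a tangent vector whose square is close to vanishing for obvious combinatorial reasons, and I expect $C^{'}_{1,a^{*}}$ to decompose into a small number of such rank-one pieces, whose dimensions should match the two factors $\P^{n-1}$ and $\P^{k-2}$ appearing on the canonical side in Proposition \ref{prop: charac.variety of canonical VHS in general case}. The hypotheses $n\geq 3$ and $k\geq 3$ should enter precisely to rule out the higher-dimensional components that would otherwise appear in the degenerate cases $n=2$ or $k=2$, and the expected conclusion is $\dim C^{'}_{1,a^{*}}\leq 2$.

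The principal obstacle is this last combinatorial step: the $N$-invariant subring $R^{N}$ is combinatorially heavy, and identifying a workable basis in which the squaring map is transparent demands careful bookkeeping of the cyclic-cover eigenspace constraints coming from the $\Z/r\Z$-action---an honest generalization of the by-hand computation of Section \ref{subsection:Jacobian ring}. Once this computation is in place, upper semi-continuity of fibre dimension for the family $\{C^{'}_{1,a}\}$ propagates the bound from $a^{*}$ to a Zariski dense open subset of $\C^{n(k-1)}$, which yields the desired statement.
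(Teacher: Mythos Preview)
Your guiding heuristic is backwards, and this is a genuine gap. You expect $C^{'}_{1,a^{*}}$ to break into rank-one pieces whose dimensions ``match the two factors $\P^{n-1}$ and $\P^{k-2}$'' of the canonical characteristic variety, with the hypotheses $n\geq 3$, $k\geq 3$ then cutting things down to $\leq 2$. But the canonical variety $\P^{n-1}\times\P^{k-2}$ has dimension $n+k-3$, which is $\geq 3$ precisely when $n,k\geq 3$. The whole content of the proposition is that $C^{'}_{1,a}$ does \emph{not} resemble this product: it is strictly smaller, and this gap in dimension is exactly what drives the contradiction in Theorem~\ref{thm:does not factor in general case}. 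So the picture you are relying on to carry out ``the heart of the matter'' is the wrong one, and without it your proposal contains no actual mechanism for bounding the dimension. The semi-continuity wrapper is fine, but it is wrapping nothing.

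The paper's argument (spelled out for $r=2$ in Section~\ref{subsection:Jacobian ring} and asserted to go through verbatim here) is quite different in spirit. It does not attempt to describe $C^{'}_{1,a}$ geometrically at any point. Instead it works at a generic $a$ and bounds the dimension of the affine cone $\tilde X\subset R^{N}_{(1,r)}$ by a projection argument. The key structural fact (Proposition~\ref{proposition: shape of f_ijpq}) is that in the natural monomial basis of $R^{N}_{(1,r)}$, each defining quadric $f_{ijpq}$ of $\tilde X$ involves only the four coordinates $\lambda_{ij},\lambda_{iq},\lambda_{pj},\lambda_{pq}$, with explicitly computable coefficients. One then selects $n(k-1)-3$ of these quadrics and orders the coordinates so that Lemma~\ref{lemma:dimension lemma} applies at each step: either a single coordinate is eliminated because its leading coefficient $c^{pqpq}_{ijpq}$ is nonzero, or two are eliminated simultaneously because a certain $4\times 4$ resultant ($R_{jq}$ or $Q_{ip}$) is nonzero. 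This drives $\dim\tilde X$ down to $\leq 3$, hence $\dim C^{'}_{1,a}\leq 2$. The hypotheses $n\geq 3$, $k\geq 3$ enter only to ensure there are enough quadrics and coordinates for the induction to run; no geometric description of $C^{'}_{1,a}$ is ever needed.
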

For the proof of this proposition, one can follow the proof of Proposition \ref{prop:Jacobian ring: dim upper bound} without any difficulty.

The following theorem is a generalization of Theorem \ref{thm: does not factor canonically}.
\begin{theorem}\label{thm:does not factor in general case}
If $n\geq 3$, $k\geq 3$,   then
\begin{itemize}
\item[(1)] $\tilde{\mathcal{X}}_{AR}\xrightarrow{\tilde{f}}\mathfrak{M}_{AR}$ is a good family for the coarse moduli space  $\mathcal{M}_{n,m}$;
\item[(2)] Let $f: \mathcal{X}\rightarrow S$ be a good family of $\mathcal{M}_{n,m}$ and $\W=(R^nf_{*}\Q)_{pr}$ be the associated weight $n$ $\Q$-PVHS. Then any sub $\C$-VHS $\tilde{\W}$ of Calabi-Yau type in $\W\otimes \C$   does not factor through the  $\C$-PVHS $\V_{can}$ over the type $A$ symmetric domain $D_{n,k-1}^I$. In particular, the first eigenspace $\tilde{\V}_{(1)}$ of $\tilde{\V}$ associated to the family $\tilde{f}$ in (1) does not factor through $\V_{can}$ over $D_{n,k-1}^I$.
    \end{itemize}
\end{theorem}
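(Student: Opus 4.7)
The plan is to follow the pattern of the proof of Theorem \ref{thm: does not factor canonically}, replacing $2n-2$ by $n+k-3$ for the dimension of the canonical characteristic variety and invoking the generalized Jacobian-ring bound. Part (1) requires only the cyclic-cover analog of Lemma \ref{lemma:infinitesimal deformation of X}: the Cynk--van Straten argument identifying infinitesimal deformations of $\tilde X$ with those of the hyperplane arrangement $\mathfrak A$ passes verbatim from the double cover to the $r$-fold cyclic cover, so the moduli map $\mathfrak{M}_{AR}\to \mathcal{M}_{n,m}$ is \'etale and in particular dominant and generically finite.

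For part (2), I would argue by contradiction. Suppose $\tilde{\W}\subset \W\otimes \C$ is a CY-type sub $\C$-PVHS that factors through $\V_{can}$ on $D^{I}_{n,k-1}$ via a holomorphic map $j: U\to D^{I}_{n,k-1}$. A first observation is that the dimensions match: $\dim S = \dim \mathcal{M}_{n,m} = \dim \mathfrak{M}_{AR} = n(k-1) = \dim D^{I}_{n,k-1}$. Hence at a generic $s\in U$ the differential $dj$ is an isomorphism, so the iterated Higgs fields of $\tilde{\W}$ are pullbacks of those of $\V_{can}$, and Proposition \ref{prop: charac.variety of canonical VHS in general case} yields
$$
C_{1,s}(\tilde{\W}) \;\cong\; C_{1,j(s)}(\V_{can}) \;\cong\; \P^{n-1}\times \P^{k-2}.
$$
In particular $\dim C_{1,s}(\tilde{\W}) = n+k-3 \geq 3$ whenever $n\geq 3$ and $k\geq 3$.

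The matching upper bound comes from the explicit family $\tilde f$. Since both moduli maps $S\to \mathcal{M}_{n,m}$ and $\mathfrak{M}_{AR}\to \mathcal{M}_{n,m}$ are dominant generically finite morphisms between varieties of the same dimension $n(k-1)$, both are generically \'etale; at a corresponding pair of general points $s\in S$ and $a\in \mathfrak{M}_{AR}$ mapping to the same CY manifold, the Higgs bundles of $\W$ and $\tilde{\V}$ together with their iterated Higgs fields are canonically identified. Under this identification $\tilde{\W}$ corresponds to a CY-type sub $\C$-PVHS of $\tilde{\V}\otimes\C$, which by the $\Z/r\Z$-eigendecomposition $\tilde{\V}\otimes\C=\oplus_{i=0}^{r-1}\tilde{\V}_{(i)}$ and the Hodge numbers recorded in Proposition \ref{prop:general results, summary1} must be either $\tilde{\V}_{(1)}$ or its complex conjugate $\tilde{\V}_{(r-1)}$. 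For $\tilde{\V}_{(1)}$, Proposition \ref{prop:identification of Hodge structures with Jacobian ring in general case} identifies the first iterated Higgs field with multiplication in the invariant Jacobian ring, so
$$
C_{1,a}(\tilde{\V}_{(1)}) \;=\; C'_{1,a} \;=\; \{[\alpha]\in \P(R^{N}_{(1,r)}) \mid \alpha^{2}=0 \in R^{N}_{(2,2r)}\},
$$
and Proposition \ref{prop:Jacobian ring: dim upper bound in general case} then gives $\dim C'_{1,a}\leq 2$ for generic $a$; the conjugate eigenspace $\tilde{\V}_{(r-1)}$ admits the analogous bound by symmetry. This contradicts $n+k-3\geq 3$, and the ``in particular'' clause follows by taking $\tilde{\W}=\tilde{\V}_{(1)}$ and $f=\tilde f$.

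The delicate step in this plan is matching, under the moduli correspondence, an arbitrary CY-type sub-PVHS of $\W\otimes\C$ with an eigen-sub-PVHS of $\tilde{\V}\otimes\C$, and in particular ruling out a ``diagonal'' $1$-dimensional CY-type subobject whose top piece $E^{n,0}$ sits inside $\oplus_{i}E^{n,0}(\tilde{\V}_{(i)})$ without aligning with a single eigen summand. This should reduce to Deligne's semisimplicity combined with the $\Z/r\Z$-equivariance of the Hodge-theoretic data, but it is the one input genuinely new beyond the $r=2$ argument; once it is in place, every other step is a faithful transcription of the proof of Theorem \ref{thm: does not factor canonically}, with the only numerical change being $2n-2\rightsquigarrow n+k-3$.
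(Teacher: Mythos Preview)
Your proposal correctly identifies the overall structure and the numerical contradiction $n+k-3\geq 3$ versus the Jacobian-ring bound $\leq 2$, but the step you flag as ``delicate'' is a genuine gap, and the paper resolves it by a different mechanism that you have not anticipated.

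You try to argue that a CY-type sub-$\C$-PVHS $\tilde{\W}\subset \tilde{\V}\otimes\C$ must coincide with $\tilde{\V}_{(1)}$ or $\tilde{\V}_{(r-1)}$. This is neither proved nor needed. The paper instead takes the generator $\beta$ of the one-dimensional top piece $\tilde E^{n,0}$ as an \emph{arbitrary} element of $R^{N_1}$ (after first observing $\tilde{\W}\subset\oplus_{i=1}^{r-1}\tilde{\V}_{(i)}$ because the invariant part is constant). The first characteristic variety of $\tilde{\W}$ then reads
\[
\tilde C_{1,a}=\{[\alpha]\in\P(R^N_{(1,r)})\mid \beta\cdot\alpha^2=0\}.
\]
Now comes the key trick you are missing: the surjectivity of the iterated Higgs maps for $\V_{can}$ forces $\dim(\beta\cdot R^N_{(q,qr)})=h^{n-q,q}(\V_{can})={n\choose q}{k-1\choose q}$, and by Proposition \ref{prop:identification of Hodge structures with Jacobian ring in general case} this equals $\dim R^N_{(q,qr)}$ itself. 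Hence multiplication by $\beta$ is \emph{injective} on $R^N_{(q,qr)}$; taking $q=2$ gives $\beta\alpha^2=0\iff\alpha^2=0$, so $\tilde C_{1,a}=C'_{1,a}$ regardless of which $\beta$ (i.e.\ which CY-type sub-PVHS) one started with. No eigenspace identification or semisimplicity argument is required.

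A minor point on (1): the paper does not invoke Cynk--van Straten here but instead shows directly, via the Jacobian-ring identification of $\theta^{n,0}$ with the multiplication $R^N_{(1,r)}\otimes R^N_{(0,0)}\to R^N_{(1,r)}$ and the local Torelli theorem for CY manifolds, that the Kodaira--Spencer map is an isomorphism. Your route would also work, but the paper's is self-contained within the Jacobian-ring framework already set up.
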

\begin{proof}
(1) For each fiber $\tilde{X}$ of the family $\tilde{f}$, by Lemma \ref{lemma:resolution not change eigen Hodge structure} and Proposition \ref{prop:identification of Hodge structures with Jacobian ring in general case}, we can identify $H^{n-1,1}(\tilde{X} ,\C)$ with the Jacobian ring $R^{N}_{(1,r)}$ and the Higgs map
$$
T_{\mathfrak{M}_{AR},s}\otimes H^{n,0}(\tilde{X},\C)\xrightarrow{\theta^{n,0}}H^{n-1,1}(\tilde{X},\C)
$$
can be identified with the multiplication map
$$
R^{N}_{(1,r)}\otimes R^{N}_{(0,0)}\rightarrow R^{N}_{(1,r)}
$$
which is obviously an isomorphism. By the local Torelli theorem for  CY manifolds, we get the Kodaira-Spencer map of $\tilde{f}$ is an isomorphism at each point $s\in \mathfrak{M}_{AR}$. This shows $\tilde{f}$ is a good family.

(2) It suffices to prove the statement for the good family $\tilde{\mathcal{X}}_{AR}\xrightarrow{\tilde{f}}\mathfrak{M}_{AR}$.  Recall that for this good family, there is a decomposition of the associated $\C$-VHS: $\tilde{\V}\otimes\C=\oplus_{i=0}^{r-1}\tilde{\V}_{(i)}$. Suppose  we have a sub $\C$-VHS $\tilde{\W}$ of $\tilde{\V}$, a  nonempty (analytically) open subset $U\subset S$ and a holomorphic map $j: U\rightarrow D_{n,k-1}^I$, such that $\tilde{\W}$ is of Calabi-Yau type, and $\tilde{\W}\mid_{U}\simeq j^{*}\V_{can}$ as $\C$-VHS. Since both $\tilde{\W}$ and $\V_{can}$ are of Calabi-Yau type, $j$ is a local isomorphism, so we can assume $j$ is an open embedding of complex manifolds.  Let  $(\tilde{E}, \tilde{\theta})$ and $(E_{can}, \theta_{can})$ be the Higgs bundles corresponding to $\tilde{\W}$ and $\V_{can}$ respectively, then under the embedding $j$, $(\tilde{E}, \tilde{\theta})\mid_{U}=(E_{can}, \theta_{can})\mid_{U}$. It can be seen easily from the description of $(E_{can}, \theta_{can})$ (cf. Section 4.1 in \cite{SZ}) that $\forall \ s\in D_{n,k-1}^I$, $\forall \ q\geq 0$, the map $\theta^{q}: Sym^q T_{D_{n,k-1}^I, s}\otimes E^{n,0}_{can}\rightarrow E_{can}^{n-q,q}$ is surjective. Moreover, by Proposition \ref{prop: charac.variety of canonical VHS in general case}, the first characteristic variety of $\V_{can}$ at each point of $D_{n,k-1}^I$ is isomorphic to $\P^{n-1}\times \P^{k-2}$.  So  the Higgs bundle $\tilde{E}$ also satisfies the following two properties:
\begin{itemize}
\item[(1)] $\forall \ s\in U$, $\forall \ q\geq 0$, the map $\theta^{q}: Sym^q T_{U, s}\otimes \tilde{E}^{n,0}\rightarrow \tilde{E}^{n-q,q}$ is surjective.
\item[(2)] $\forall \ s \in U$, the first characteristic variety of $\tilde{\W}$ at $s$ isomorphic to  $\P^{n-1}\times \P^{k-2}$.
\end{itemize}
Since the $\Z/r\Z$-invariant part $\tilde{\V}_{(0)}$ of $\tilde{\V}$ is obviously a constant $\C$-VHS, $(1)$ implies that we must have  $\tilde{\W}\subset \oplus_{i=1}^{r-1}\tilde{\V}_{(i)}$. Then by Proposition \ref{prop:identification of Hodge structures with Jacobian ring in general case} and taking into account the Hodge numbers of $\V_{can}$, we can translate the properties of $\tilde{E}$  above to the following properties of the  Jacobian ring $R$: there exists  a nonempty open subset $U$ of the parameter space  $\C^{n(k-1)}$, and  for any parameter $ a \in U$, there is an element $\beta\in R^{N_1}$, such that:
\begin{itemize}
\item[$(1)^{'}$] $\forall \ 0\leq q\leq k-1$, the dimension of the linear space $J_q:=\{\beta \cdot \gamma\in R^{N_1}\mid \gamma\in Sym^q R^{N}_{(1,r)}\}$ is ${n\choose q}{k-1\choose q}$.
\item[$(2)^{'}$] the variety $\tilde{C}_{1,a}:=\{\alpha\in \P(R^{N}_{(1,r)})\mid \beta \cdot \alpha^2=0\}$ is isomorphic to $\P^{n-1}\times \P^{k-2}$.
\end{itemize}
It is easy to see from the definition of Jacobian ring that
$$
J_q=\{\beta \cdot \gamma\in R^{N_1}\mid \gamma\in Sym^q R^{N}_{(1,r)}\}=\{\beta \cdot \gamma\in R^{N_1}\mid \gamma\in  R^{N}_{(q,qr)}\}
$$
Note also by Proposition \ref{prop:identification of Hodge structures with Jacobian ring in general case}, the dimension of the linear space $R^{N}_{(q,qr)}$ is  ${n\choose q}{k-1\choose q}$, equal to that of $J_q$.  So  we get the map of multiplication by $\beta$:
$$
R^{N}_{(q,qr)}\xrightarrow{\cdot \beta}J_q
$$
is a linear  isomorphism.
From this by taking $q=2$ we deduce that
$$
\tilde{C}_{1,a}=\{\alpha\in \P(R^{N}_{(1,r)})\mid \beta \cdot \alpha^2=0\}=\{\alpha\in \P(R^{N}_{(1,r)})\mid \alpha^2=0\}= C^{'}_{1,a}.
$$
Then $(2)^{'}$ implies the dimension of $C^{'}_{1,a}$ is $n+k-3\geq 3$, and this contradicts with Proposition \ref{prop:Jacobian ring: dim upper bound in general case}. So we finally get that any sub $\C$-VHS $\W$  of Calabi-Yau type in $\V$  does not factor through $\V_{can}$.
\end{proof}

 The above theorem leaves tiny possibilities for a positive answer of Gross's question, since for simple reasons, it is easy to exclude any other type $A$ domain but $D^{I}_{n,m-n-2}$ for $\mathcal{M}_{n,m}$. Indeed, we can list each of the remaining cases as follows:\\
$\mathcal{M}_{1,4}$: this is the starting point.\\
$\mathcal{M}_{2,6}$: This is a four dimensional family of K3 surfaces with generic Picard number 16. A detailed study of this family was done in Matsumoto-Sasaki-Yoshida \cite{MSY}. The connection of the weight two Hodge structure of such a K3 surface with the weight one Hodge structure of an abelian variety in view of Kuga-Satake construction was geometrically realized by K. Paranjape \cite{Pa}.\\
$\mathcal{M}_{3,6},\mathcal{M}_{5,8},\mathcal{M}_{9,12}$: These are only cases with $n\geq 3$ and $m-n=3$ which can realize the problem of Gross (the partial compactification issue however remains to be done). Our earlier work \cite{SXZ} studied the series $\mathcal{M}_{n,n+3}, n\geq 3$ and related the Hodge structure with that in Deligne-Mostow \cite{DM}. Besides loc. cit., the classification was obtained thanks to the works of Mostow \cite{Mostow0}-\cite{Mostow} on discrete subgroups of the automorphism group of a complex ball.

Now we study the monodromy representation of the family $\tilde{\mathcal{X}}_{AR}\xrightarrow{\tilde{f}}\mathfrak{M}_{AR}$. Suppose $r\geq 3$, then there is a weight $n$ $\R$-VHS $\tilde{\V}_{\R, (1)}$ such that
$$
\tilde{\V}_{\R, (1)}\otimes \C=\tilde{\V}_{(1)}\oplus \tilde{\V}_{(r-1)}
$$
and the polarization on $\tilde{\V}$ induces  a parallel hermitian form $h$ on $\tilde{\V}_{(1)}$ of signature $(p,q)$. Here by Proposition \ref{prop:general results, summary1}
$$
p=\sum_{i=0}^{[\frac{k-1}{2}]}{n\choose 2i}{k-1\choose 2i},  \ q=\sum_{i=0}^{[\frac{k}{2}]-1}{n\choose 2i+1}{k-1\choose 2i+1}.
$$
Take a base point $s\in \mathfrak{M}_{AR}$ and consider the real monodromy representation
$$
\rho: \pi_1(\mathfrak{M}_{AR}, s)\rightarrow GL(V)
$$
where $V$ is the fiber of $\tilde{\V}_{\R, (1)}$ over $s$. Let $Mon_{\R}$ be the Zariski closure of $\rho(\pi_1(\mathfrak{M}_{AR}, s))$ in the real algebraic group $GL(V)$.
The following theorem is parallel to Theorem \ref{thm:Zariski density}.
\begin{theorem}\label{thm:Zariski density in general case}
Suppose $r\geq 3$ and $k\geq 3$, then the identity component $Mon^0_{\R}$ of $Mon_{\R}$ is isomorphic to $SU(p,q)$.
\end{theorem}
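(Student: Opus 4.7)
The strategy mirrors that of Theorem~\ref{thm:Zariski density}, with the symplectic/orthogonal case replaced by the type $A$ (unitary) case, so I work on the complex eigen-subsystem $\tilde{\V}_{(1)}$ and then translate back to the real monodromy. Note that the polarization on $\tilde{\V}_{\R,(1)}$ induces a hermitian form $h$ of signature $(p,q)$ on $\tilde{\V}_{(1)}$ preserved by the complex monodromy; consequently $Mon^0_{\R}\subset SU(p,q)$ and it suffices to show that the Zariski closure $Mon^0_{\C,(1)}$ of the monodromy on $\tilde{\V}_{(1)}$ coincides with $SL(V_{\C,(1)})$ (equivalently, that $Mon^0_{\R}$ is as large as possible inside $U(p,q)$).

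The first step is to show that the monodromy on $\tilde{\V}_{(1)}$ is absolutely irreducible as a $\C$-local system. As in the proof of Theorem~\ref{thm:Zariski density}, if a non-trivial decomposition existed, by Deligne one would obtain a direct-sum decomposition of the associated Higgs bundle; but Proposition~\ref{prop:identification of Hodge structures with Jacobian ring in general case} identifies iterated Higgs maps with multiplication maps in the Jacobian ring $R^{N_1}_{\bullet,\bullet (0)}$, and these are surjective onto $E^{n-q,q}$ starting from $E^{n,0}$, ruling out any direct-sum decomposition. Next, using the embedding $\mathfrak{M}_C\hookrightarrow \mathfrak{M}_{AR}$ from Proposition~\ref{prop:general results, summary1}(2), one has $\tilde{\V}_{(1)}|_{\mathfrak{M}_C}\simeq \wedge^n\V_{C(1)}$. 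Here $\V_{C(1)}$ is the first eigen-sub $\C$-PVHS of the family of cyclic covers of $\P^1$, a weight one $\C$-PVHS of some signature $(a,b)$ with $\binom{a+b}{n}=p+q$ and $\wedge^n$ of the standard representation realising the signature $(p,q)$.

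The next ingredient is a base-case Zariski density: the monodromy representation of $\V_{C(1)}$ on $\mathfrak{M}_C$ is Zariski dense in $SU(a,b)$. This is a classical fact in the Deligne--Mostow theory of cyclic covers of $\P^1$; specifically, for $r\geq 3$ and $m\geq 4$ the projective monodromy group of the hypergeometric system attached to these eigen-local systems is non-discrete/Zariski dense in $PU(a,b)$ (see Deligne--Mostow or Mostow's series of papers \cite{Mostow0}--\cite{Mostow}, as invoked after Theorem~\ref{thm:does not factor in general case}). This gives a commutative diagram
\begin{diagram}
SL_{a+b}(\C)& &\rTo^{\rho_{\wedge^n}}& & SL(V_{\C,(1)})\\
& \rdInto & & \ruInto &\\
& & Mon^0_{\C,(1)} & &
\end{diagram}
where $\rho_{\wedge^n}$ is induced by the $n$-th exterior power. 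By Deligne (Corollaire 4.2.9 of \cite{D-HodgeII}) $Mon^0_{\C,(1)}$ is semisimple. I would then establish a type $A$ analogue of Proposition~\ref{prop:represnetation theory}: if $\mathfrak{g}$ is a complex semisimple subalgebra of $\mathfrak{sl}(V)$ with $\mathfrak{sl}_{a+b}\C\subset \mathfrak{g}$ acting irreducibly via $\wedge^n$ on $V=\wedge^n\C^{a+b}$, then either $\mathfrak{g}=\mathfrak{sl}_{a+b}\C$ or $\mathfrak{g}=\mathfrak{sl}(V)$. This is proved exactly as in Proposition~\ref{prop:represnetation theory} using Weyl's construction of irreducible modules of the classical Lie algebras and the Young symmetrizer weight argument of Lemma~\ref{lemma1:representation theory}, observing that in type $A$ there is no $\mathfrak{sp}/\mathfrak{so}$ intermediate subgroup preserving a non-degenerate bilinear form on $V$ since $V$ carries only a hermitian (not bilinear) invariant.

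In the dichotomy produced by this classification, the first alternative $Mon^0_{\C,(1)}=\mathfrak{sl}(V_{\C,(1)})$ is the desired conclusion. To rule out the second alternative, suppose $Mon^0_{\C,(1)}\cong SL_{a+b}(\C)$; then after a finite \'etale base change there exists a rank $a+b$ local system $\W$ on $\mathfrak{M}_{AR}$ with $\tilde{\V}_{(1)}\simeq \wedge^n\W$ as local systems. By Proposition~\ref{prop:PVHS decomposition}, $\W$ carries a compatible $\C$-PVHS structure, and because $\tilde{\V}_{(1)}$ is of Calabi--Yau type of weight $n$ the induced structure on $\W$ must be of weight one with Hodge numbers $(n,k-1)$ or $(k-1,n)$; the period map of $\W$ thus gives a local factorization of $\tilde{\V}_{(1)}$ through the canonical $\C$-PVHS $\V_{can}$ on $D^I_{n,k-1}$, contradicting Theorem~\ref{thm:does not factor in general case}. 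Hence $Mon^0_{\C,(1)}=SL(V_{\C,(1)})$, and intersecting with the real form preserving $h$ gives $Mon^0_{\R}\cong SU(p,q)$. The main obstacle in carrying out this program is step four, namely verifying the type $A$ representation-theoretic classification and the base-case Zariski density for the cyclic-cover curve family; the remaining steps are direct translations of the arguments used for Theorem~\ref{thm:Zariski density}.
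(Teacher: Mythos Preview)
Your plan is essentially the paper's own proof: restrict to the curve locus $\mathfrak{M}_C$, invoke Zariski density of the eigen-monodromy of the cyclic-cover curve family in $SU(n,k-1)$, sandwich the monodromy Lie algebra between $\mathfrak{sl}_{n+k-1}\C$ and $\mathfrak{sl}_{p+q}\C$, apply the type~$A$ classification (stated in the paper as Proposition~\ref{prop:represnetation theory in general case}) and the PVHS-lifting (Proposition~\ref{prop:PVHS decomposition in general case}), and exclude the small alternative via Theorem~\ref{thm:does not factor in general case}. Two small points where the paper is more precise than your sketch: the signature $(a,b)$ of $\V_{C(1)}$ is exactly $(n,k-1)$, and for the base-case density the paper cites Rohde~\cite{Rhode}, Theorem~5.1.1 (rather than Deligne--Mostow directly); also, the containment $Mon^0_{\R}\subset SU(p,q)$ (as opposed to merely $U(p,q)$) is obtained in the paper from the existence of a parallel $\Z[\zeta]$-lattice in $H^n(X,\C)_{(1)}$, not from the hermitian form alone.
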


Before starting the proof of this theorem, we  state two propositions parallel to Proposition \ref{prop:represnetation theory} and Proposition  \ref{prop:PVHS decomposition}.
\begin{proposition}\label{prop:represnetation theory in general case}
The $n$-th wedge product $V=\wedge^n \C^{n+k-1}$ of the standard representation of $\mathfrak{sl}_{n+k-1}\C$ induces an embedding $\mathfrak{sl}_{n+k-1}\C\hookrightarrow \mathfrak{sl}_{p+q}$. Suppose $k\geq 3$ and  $\mathfrak{g}$ is a complex semi-simple Lie algebra lying  between  $\mathfrak{sl}_{n+k-1}\C$ and  $\mathfrak{sl}_{p+q}$ such that the induced representation of $\mathfrak{g}$ on $V$ is irreducible, then $\mathfrak{g}$ is one of the following:
\begin{itemize}
\item[(1)] $\mathfrak{sl}_{p+q}$,
\item[(2)] $sl_{n+k-1}\C$, in which case the induced representation of $\mathfrak{g}$ on $V$ is isomorphic to the n-th wedge product of the standard  representation on $\C^{n+k-1}$.
\end{itemize}
\end{proposition}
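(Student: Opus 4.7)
The plan is to follow the proof of Proposition~\ref{prop:represnetation theory} closely, with two main adjustments. The ambient Lie algebra is now $\mathfrak{sl}_{p+q}$ instead of the symplectic or orthogonal $\mathfrak{g}_n$, and in the range relevant to Theorem~\ref{thm:Zariski density in general case} (i.e.\ $n=k(r-1)-1$ with $r\geq 3$) the representation $V=\wedge^n\C^{n+k-1}$ of $\mathfrak{sl}_{n+k-1}\C$ is \emph{not} self-dual, since $V^*\cong\wedge^{k-1}\C^{n+k-1}$ and $k\neq n+1$.

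First I would show $\mathfrak{g}$ is simple. Decomposing $\mathfrak{g}=\bigoplus_{i=1}^{m}\mathfrak{g}_i$ into simple summands, Schur's lemma yields $V=\bigotimes_{i=1}^{m}V_i$ with each $V_i$ irreducible over $\mathfrak{g}_i$, and restriction to $\mathfrak{sl}_{n+k-1}\C$ produces a tensor decomposition of the minuscule irreducible representation $\wedge^n\C^{n+k-1}$; a direct computation of highest weights, as in the proof of Proposition~\ref{prop:represnetation theory}, forces $m=1$. The exceptional Lie algebras are then eliminated by a dimension check, since no exceptional algebra admits an irreducible representation of dimension $\binom{n+k-1}{n}$ compatible with containing $\mathfrak{sl}_{n+k-1}\C$.

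For the classical types I would combine self-duality with Weyl's construction. Every irreducible representation of $\mathfrak{sp}_{2l}\C$, $\mathfrak{so}_{2l+1}\C$, and $\mathfrak{so}_{2l}\C$ with $l$ even is self-dual; since the restriction of $V$ to $\mathfrak{sl}_{n+k-1}\C$ is non-self-dual, these cases are excluded at once. The case $\mathfrak{g}\cong\mathfrak{so}_{4l+2}\C$ is more delicate, because its two half-spin representations are not self-dual: here I would adapt subcases D2--D3 of Proposition~\ref{prop:represnetation theory}, using Weyl's construction to realize the candidate either as $\mathbb{S}_{[\lambda]}\C^{4l+2}$ or as an irreducible constituent of $\mathrm{Sym}^2$ of such, and invoking a variant of Lemma~\ref{lemma1:representation theory} (dropping the isotropy condition on the chosen weight vectors) to build a nonzero $\mathfrak{sl}_{n+k-1}\C$-weight vector in $V$ whose first weight coordinate is at least $\lambda_1$; comparison with the fundamental highest weight $\omega_n=L_1+\cdots+L_n$ of $V$ then yields a contradiction. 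For $\mathfrak{g}\cong\mathfrak{sl}_N\C$ with $n+k-1\leq N\leq p+q$, the same Weyl/weight-vector argument shows $V\cong\wedge^d\C^N$ for some $d$, and then matching dimensions $\binom{N}{d}=\binom{n+k-1}{n}$, highest weights, and the non-self-duality of $V$ over $\mathfrak{sl}_{n+k-1}\C$ (which excludes the mirror partners $d=k-1$ with $N=n+k-1$ and $d=p+q-1$ with $N=p+q$) leaves only $(N,d)=(n+k-1,n)$, which is case~(2), and $(N,d)=(p+q,1)$, which is case~(1).

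The main technical obstacle will be the $\mathfrak{so}_{4l+2}\C$ subcase: because its irreducible representations are not all self-dual, the duality argument alone fails, and one must carry out the full Weyl-construction/weight-vector analysis parallel to cases D2--D3 of Proposition~\ref{prop:represnetation theory}. On the other hand, the absence here of an invariant bilinear form to be preserved actually relaxes the selection constraints on weight vectors, which should somewhat simplify the adaptation of Lemma~\ref{lemma1:representation theory}.
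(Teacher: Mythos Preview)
Your proposal is essentially correct and follows the paper's approach, which literally consists of the single sentence ``The proof of Proposition~\ref{prop:represnetation theory} goes through without difficulty.'' Your write-up is considerably more detailed than this, and the overall architecture (show $\mathfrak{g}$ is simple, then run through the Cartan types) is the same.

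The one genuine difference is your self-duality shortcut. The paper's intended argument is to rerun every case B, C, D via Weyl's construction and the analogue of Lemma~\ref{lemma1:representation theory}, whereas you observe that since $V=\wedge^n\C^{n+k-1}$ is not self-dual over $\mathfrak{sl}_{n+k-1}\C$ (because $n\neq k-1$ in the range $r\geq 3$ relevant to Theorem~\ref{thm:Zariski density in general case}), any $\mathfrak{g}$ for which all irreducibles are self-dual---namely types $B_l$, $C_l$, and $D_l$ with $l$ even---is excluded immediately. This is a clean simplification that the paper does not make; it buys you the elimination of three of the four classical families in one line, at the cost of having to note explicitly that the argument uses $n\neq k-1$ (which is indeed automatic under the standing hypotheses $n=k(r-1)-1$, $r\geq 3$, but is not stated in the proposition itself).

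Two small points to watch. First, the $\mathfrak{sl}$-variant of Lemma~\ref{lemma1:representation theory} that you invoke does need a slightly different proof, since weights of $\mathfrak{sl}_{n+k-1}\C$-modules are not paired as $\pm\alpha$; the conclusion still holds (choose $v_1$ to be a highest weight vector of $W$ and the remaining $v_i$ among weight vectors whose $L_1$-coefficient is nonnegative), and as you note, dropping the isotropy condition makes this easier, not harder. Second, in the type~A endgame the reduction from $\binom{N}{d}=\binom{n+k-1}{n}$ to $(N,d)\in\{(n+k-1,n),(p+q,1)\}$ is not purely numerical---there can be other binomial coincidences---so you should also use that $\C^N$ restricted to $\mathfrak{sl}_{n+k-1}\C$ must have $\wedge^d$ equal to the irreducible $\wedge^n\C^{n+k-1}$, which forces $\C^N$ to be either the standard representation (giving case~(2)) or to have $d=1$ (giving case~(1)). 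The paper glosses this as ``easy to see'' in the proof of Proposition~\ref{prop:represnetation theory}, so you are in good company.
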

\begin{proof}
The proof of Proposition \ref{prop:represnetation theory} goes through without difficulty.
\end{proof}

\begin{proposition}\label{prop:PVHS decomposition in general case}
Let  $\V$ be a $\C$-PVHS over a quasi-projective variety $S$ with quasi-unipotent local monodromy around each component of $D=\bar S-S$ and $\W$ be a rank $n+k-1$ local system of complex vector spaces over $S$. Suppose $\V\simeq \wedge^n \W$ as local systems and $k\geq 3$. Then $\W$ admits a $\C$-PVHS structure such that the induced $\C$-PVHS on the wedge product $\wedge^n \W$ coincides with the given $\C$-PVHS on $\V$.
\end{proposition}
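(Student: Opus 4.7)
The plan is to follow the strategy of Proposition \ref{prop:PVHS decomposition} essentially verbatim, with $2n$ replaced by $n+k-1$; the only representation-theoretic input that enters is that the wedge-$n$ homomorphism $\rho_{\wedge^n}\colon GL(n+k-1,\C)\to GL\bigl(\tbinom{n+k-1}{n},\C\bigr)$ has finite kernel, namely the scalar matrices $\lambda I$ with $\lambda^n=1$. By Corollary~9.18 of \cite{Simpson94}, $\rho_{\wedge^n}$ induces a finite morphism
$$
\phi_{\wedge^n}\colon \mathfrak{M}\bigl(\pi_1(S),GL(n+k-1)\bigr)^{ss}\longrightarrow \mathfrak{M}\bigl(\pi_1(S),GL\bigl(\tbinom{n+k-1}{n}\bigr)\bigr)^{ss}
$$
between Simpson's moduli spaces of semisimple representations, and because the $\C^{*}$-action on each of these spaces is defined through the Hermitian--Yang--Mills metric on the corresponding polystable Higgs bundle, it is compatible with $\phi_{\wedge^n}$.

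In the projective case $D=\emptyset$ this immediately suffices: $[\V]=\phi_{\wedge^n}([\W])$ is $\C^{*}$-fixed because $\V$ already carries a $\C$-PVHS structure, and finiteness of $\phi_{\wedge^n}$ forces the $\C^{*}$-orbit of $[\W]$ to lie in a single finite fibre and hence to be a single point, so $[\W]$ is itself $\C^{*}$-fixed. Simpson \cite{Simpson92} endows $\W$ with a $\C$-PVHS structure whose $n$-th wedge shares its underlying Higgs bundle with the given one on $\V$, so the two $\C$-PVHS structures on $\V$ coincide. For the quasi-projective case I would first verify that the local monodromy of $\W$ is quasi-unipotent: with eigenvalues $\lambda_1,\dots,\lambda_{n+k-1}$ of a local monodromy $T$ of $\W$, the eigenvalues of $\wedge^n T$ are the products $\prod_{j\in J}\lambda_j$ over $n$-subsets $J$, all roots of unity by hypothesis; taking successive ratios yields that $\lambda_i/\lambda_j$ is a root of unity for all $i,j$, and then $\lambda_1^n$ is too, forcing each $\lambda_i$ to be a root of unity. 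After a finite \'etale base change I may assume the local monodromies are unipotent, then invoke Jost--Zuo \cite{Jost-Zuo} to obtain a finite-energy harmonic metric on $\W$ and Mochizuki \cite{Mochizuki} to extend the associated Higgs bundle $(F,\eta)$ to a logarithmic Higgs bundle $(\bar F,\bar\eta)$ along $D$. Restricting to a sufficiently ample complete intersection curve $C_0\subset S$ and applying Simpson's parabolic Higgs theory on curves \cite{Simpson90}, the closed-case $\C^{*}$-fixed-point argument then shows that $(\bar F,\bar\eta)|_{C_0}$ is a $\C^{*}$-fixed point; ampleness of $C$ propagates this back to $(\bar F,\bar\eta)$ globally, and Simpson \cite{Simpson92} finally produces the desired $\C$-PVHS structure on $\W$.

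The main obstacle will be making the last passage precise: going from a $\C^{*}$-fixed point over the ample curve $C_0$ to a $\C^{*}$-fixed point on all of $\bar S$, where the finite fibre of $\phi_{\wedge^n}$ is parametrized by $n$-th root-of-unity twists $\W\otimes L$ with $L^{\otimes n}$ trivial, and the parabolic structure on $(\bar F,\bar\eta)$ must propagate consistently from the curve to the ambient base. This is handled in the $2n$ case by the combined works of Mochizuki and Simpson cited above, and the same apparatus applies in our setting since none of their results is restricted to a particular rank; the key ingredient is the Lefschetz-type surjectivity of $\pi_1(C_0)\to\pi_1(S)$ coming from ampleness, which identifies $\C^{*}$-fixed points on the two moduli spaces. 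Once this is secured, compatibility of the induced $\C$-PVHS on $\wedge^n\W$ with the given one on $\V$ is automatic because both structures are read off from the same Hermitian--Yang--Mills metric via the wedge construction, so they share their underlying Higgs bundles and hence their Hodge filtrations.
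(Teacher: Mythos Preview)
Your proposal is correct and follows essentially the same approach as the paper; indeed the paper's entire proof of this proposition is the single sentence ``The proof of Proposition \ref{prop:PVHS decomposition} goes through without difficulty,'' and you have faithfully expanded that out with $2n$ replaced by $n+k-1$. Your eigenvalue argument for quasi-unipotency (via ratios $\lambda_i/\lambda_j$ and then $\lambda_1^n$) is in fact cleaner and more general than the paper's version in the $2n$ case, and your discussion of the curve-to-global passage makes explicit what the paper leaves implicit.
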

\begin{proof}
The proof of Proposition \ref{prop:PVHS decomposition} goes through without difficulty.
\end{proof}

Now we can proceed to the proof of Theorem \ref{thm:Zariski density in general case}.

\begin{proof}
Consider the family of curves $g: \mathcal{C}\rightarrow \mathfrak{M}_C$ and assume the  base point $s\in \mathfrak{M}_{C}\hookrightarrow \mathfrak{M}_{AR}$. Let  $C$ and $X$ be the fibers over $s$  of the families  $\mathcal{C}\xrightarrow{g} \mathfrak{M}_C$ and $\mathcal{X}_{AR}\xrightarrow{f}\mathfrak{M}_{AR}$ respectively. By Proposition \ref{prop:general results, summary1},
$$
H^n(X,\C)_{(1)}=\wedge^nH^1(C,\C)_{(1)}.
$$
The embedding $\R\hookrightarrow \C $ allows to consider $H^1(C, \C)_{(1)}$ and $H^n(X, \C)_{(1)}$ as $\R$-vector spaces.
Consider the monodromy representation of the family $\mathcal{X}_{AR}\xrightarrow{f}\mathfrak{M}_{AR}$:
$$
\tau: \pi_1(\mathfrak{M}_{AR}, s)\rightarrow GL_{\R}(H^n(X, \C)_{(1)}).
$$
Since $\tilde{\V}_{(1)}=\V_{(1)}$, and $\tilde{\V}_{\R,(1)}\simeq \tilde{\V}_{(1)}$ as $\R$-local systems, we can identify $Mon_{\R}$ with the Zariski closure of $\tau(\pi_1(\mathfrak{M}_{AR}, s))$ in $GL_{\R}(H^n(X, \C)_{(1)})$. By Theorem 5.1.1 in \cite{Rhode},  for the family $g: \mathcal{C}\rightarrow \mathfrak{M}_C$, the identity component of the Zariski closure of the monodromy representation
$$
\pi_1(\mathfrak{M}_{C}, s)\rightarrow GL_{\R}(H^1(C, \C)_{(1)})
$$
is $SU(n,k-1)$. So similar to the proof of Theorem \ref{thm:Zariski density} we have a commutative diagram
\begin{diagram}
SU(n,k-1)&  &\rTo^{\rho_{\wedge^n}}& &Aut(H^n(X, \C)_{(1)},h)=U(p,q)\\
&\rdInto^{}& & \ruInto^{}\\
& &Mon_{\R}^0
\end{diagram}
where the homomorphism  $\rho_{\wedge^n}$ is induced by the $n$-th wedge product of the standard representation of $SU(n,k-1)$. Note that there exists a parallel $\Z[\zeta]$-lattice $H^n(X, \Z[\zeta])_{(1)}$ inside $H^n(X, \C)_{(1)}$. From this one can deduce that $Mon^0_{\R}\subset SU(p,q)$.  Arguing in the same way as the proof of Theorem \ref{thm:Zariski density}, we can show the complex representation of $Mon_{\R}^0$ on $H^n(X, \C)_{(1)}$ is irreducible. Since $Mon_{\R}$ is semi-simple by a result of Deligne(cf. Corollary 4.2.9 in \cite{D-HodgeII}),   by taking the complexification  and using Proposition \ref{prop:represnetation theory in general case}, Proposition \ref{prop:PVHS decomposition in general case} in the same way as Theorem \ref{thm:Zariski density}, we get $Mon_{\R}^0=SU(p,q)$.
\end{proof}


\section{Two calculations}\label{sec:two calculations}
In this section, we do two  concrete calculations, one is for the primitive Hodge numbers of the CY manifold $\tilde{X}$, and the other is for the dimension of the characteristic variety $C_{1,a}$.
\subsection{Calculation of Hodge numbers}\label{subsec:Hodge number calculation}
Let us  keep the notations in Section \ref{subsection:double cover and crepant resolution}. We will calculate the primitive Hodge numbers of $\tilde{X}$ and complete the proof of Proposition \ref{hodge number}. Let $\sigma: \tilde\P^n\to \P^n$ be the composite of all blow-ups and $\tilde H$ the strict transform of $H$. Let $\tilde \sL$ be a line bundle on $\tilde X$ such that $\tilde \sL^2=\sO_{\tilde X}(\tilde H)$. As
$$
\tilde \pi_*\Omega^p_{\tilde X}=\Omega^p_{\tilde \P^n}\oplus \Omega^p_{\tilde \P^n}(\log \tilde H)\otimes \tilde \sL^{-1},
$$
it follows that
$$
H^{p,q}_{prim}(\tilde X)=H^q(\tilde \P^n, \Omega^p_{\tilde \P^n}(\log \tilde H)\otimes \tilde \sL^{-1}).
$$
\begin{claim}
For all $k\neq q$, $H^k(\Omega^p_{\tilde \P^n}(\log \tilde H)\otimes \tilde \sL^{-1})=0$. Therefore,
$$
\chi(\Omega^p_{\tilde \P^n}(\log \tilde H)\otimes \tilde{\mathcal{L}}^{-1})=(-1)^q\dim H^q(\Omega^p_{\tilde \P^n}(\log \tilde H)\otimes \tilde \sL^{-1}).
$$
\end{claim}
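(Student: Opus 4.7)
The plan is to deduce the vanishing from the Esnault--Viehweg logarithmic vanishing theorem, combined with Serre duality on $\tilde{\P}^n$ to cover the two ranges of $k$.

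I would first recall the Esnault--Viehweg theorem: if $Y$ is a smooth projective $n$-fold, $D \subset Y$ a reduced simple normal crossings divisor, and $\mathcal{M}$ a nef and big line bundle on $Y$ with $\mathcal{M}^N \cong \mathcal{O}_Y(D)$ for some $N \geq 1$, then
$$
H^k(Y, \Omega^p_Y(\log D) \otimes \mathcal{M}^{-1}) = 0 \text{ for all } p+k < n.
$$
To apply this to $(\tilde{\P}^n, \tilde H, \tilde{\mathcal{L}})$, I need to verify two hypotheses. (i) The divisor $\tilde H$ is simple normal crossings: this is built into the canonical embedded resolution $\sigma: \tilde{\P}^n \to \P^n$, where the centers are smooth intersection strata of $H$ blown up in order of increasing dimension, so that at each step the strict transform meets the new exceptional divisor transversally. (ii) The line bundle $\tilde{\mathcal{L}}$ is nef and big: on $\P^n$ the corresponding line bundle is $\mathcal{O}_{\P^n}(n+1)$ (since $\deg H = 2n+2$), which is ample; and $\tilde{\mathcal{L}} = \sigma^*\mathcal{O}_{\P^n}(n+1) \otimes \mathcal{O}(-\sum_\alpha m_\alpha E_\alpha)$, where the $m_\alpha$ are the (integral) half-multiplicities dictated by $\tilde{\mathcal{L}}^{\otimes 2} = \mathcal{O}_{\tilde{\P}^n}(\tilde H)$. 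A blow-up-by-blow-up check of the intersection with any $\sigma$-contracted curve shows $\tilde{\mathcal{L}}$ remains nef, while bigness follows from the fact that its top self-intersection inherits positivity from $(n+1)^n > 0$. Esnault--Viehweg then gives the vanishing in the range $p+k < n$.

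To cover the range $p+k > n$, I would use Serre duality together with the log residue pairing $\Omega^p_{\tilde{\P}^n}(\log \tilde H) \otimes \Omega^{n-p}_{\tilde{\P}^n}(\log \tilde H) \to \omega_{\tilde{\P}^n}(\tilde H)$. Combined with $\mathcal{O}(\tilde H) = \tilde{\mathcal{L}}^{\otimes 2}$, this yields an isomorphism
$$
\bigl(\Omega^p_{\tilde{\P}^n}(\log \tilde H) \otimes \tilde{\mathcal{L}}^{-1}\bigr)^{\!\vee} \otimes \omega_{\tilde{\P}^n} \;\cong\; \Omega^{n-p}_{\tilde{\P}^n}(\log \tilde H) \otimes \tilde{\mathcal{L}}^{-1},
$$
so Serre duality identifies $H^k(\Omega^p(\log \tilde H) \otimes \tilde{\mathcal{L}}^{-1})$ with the dual of $H^{n-k}(\Omega^{n-p}(\log \tilde H) \otimes \tilde{\mathcal{L}}^{-1})$. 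Thus vanishing for $p+k < n$ forces vanishing for $(n-p)+(n-k) < n$, i.e.\ for $p+k > n$. Together we get vanishing for every $k \neq n-p = q$, which is exactly the claim. The Euler characteristic formula $\chi = (-1)^q \dim H^q$ is then immediate.

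The main obstacle is the verification in (ii) that $\tilde{\mathcal{L}}$ is nef and big after the full sequence of blow-ups. This requires a careful inductive bookkeeping of how $\sigma^*\mathcal{O}_{\P^n}(n+1)$ twisted by integer multiples of the exceptional divisors behaves along each new center: one must both ensure the parity conditions that make $\tilde{\mathcal{L}}$ an honest line bundle (not merely a $\Q$-divisor) and check nonnegative intersection with every contracted curve. Once these numerical properties are pinned down, the rest of the argument is formal.
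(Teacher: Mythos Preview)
Your overall strategy---invoke an Esnault--Viehweg logarithmic vanishing for $p+k<n$ and then use Serre duality to cover $p+k>n$---is the right shape, and the duality half is fine. This matches the spirit of the paper's one-line citation of Esnault--Viehweg.

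The gap is in hypothesis (ii): $\tilde{\mathcal{L}}$ is in general neither nef nor big. Already for $n=2$ one sees this explicitly. Blowing up the $15$ nodes of the six lines gives $\tilde{\mathcal{L}} = \sigma^*\mathcal{O}_{\P^2}(3) - \sum_{i=1}^{15} E_i$, whose self-intersection is $9 - 15 = -6 < 0$, so $\tilde{\mathcal{L}}$ is not big; and intersecting with the strict transform $\tilde H_j$ of any one of the six lines (which is \emph{not} a $\sigma$-contracted curve) gives $\tilde{\mathcal{L}}\cdot \tilde H_j = 3 - 5 = -2 < 0$, so $\tilde{\mathcal{L}}$ is not nef. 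Your ``blow-up-by-blow-up check of the intersection with any $\sigma$-contracted curve'' only tests exceptional curves, while nefness requires nonnegativity against every curve; the strict transforms of the branch components are exactly where it fails. More conceptually, since $\tilde X$ is Calabi--Yau the Hurwitz formula forces $\tilde{\mathcal{L}}$ to be the anticanonical bundle of $\tilde{\P}^n$, and these iterated blow-ups are far from weak Fano.

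So the obstacle you flagged is not a matter of bookkeeping: the positivity you want simply does not hold on $\tilde{\P}^n$. The formulation of Esnault--Viehweg that applies here uses instead that $(\tilde{\P}^n, \tilde H, \tilde{\mathcal{L}})$ arises from a log resolution of $(\P^n, H, \mathcal{L})$ with $\mathcal{L} = \mathcal{O}_{\P^n}(n+1)$ ample; their results are set up so that the vanishing depends on positivity downstairs rather than on nefness after blowing up. You should look for that version rather than the nef-and-big one.
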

\begin{proof}
This is a direct application of the vanishing result {\cite[Proposition 6.1]{EV}}.
\end{proof}
Next, we show that the Euler characteristic keep unchanged under resolution. Namely, we have the following
\begin{claim}\label{invariance}
Put $\sL=\sO_{\P^n}(n+1)$. It holds that
$$
\chi(\P^n, \Omega^p_{\P^n}(\log H)\otimes \sL^{-1})=\chi(\tilde \P^n, \Omega^p_{\tilde \P^n}(\log \tilde H)\otimes \tilde \sL^{-1}).
$$
\end{claim}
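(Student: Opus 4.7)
The plan is to prove Claim \ref{invariance} by induction on the number $N$ of blow-ups composing $\sigma : \tilde\P^n \to \P^n$, so that it suffices to show invariance of $\chi$ under one blow-up step $\sigma_i : \P_i \to \P_{i-1}$ along a smooth center $Z_i$ of codimension $c_i$ with exceptional divisor $E_i$. I will write $H^{(j)}$ for the strict transform of $H$ at stage $j$ (so $H^{(0)} = H$ and $H^{(N)} = \tilde H$) and $\sL_j$ for the corresponding line bundle at stage $j$; the canonical-resolution recipe yields $\sL_j = \sigma_j^*\sL_{j-1}(-[c_j/2]\,E_j)$, together with an explicit prescription for whether $E_j$ enters the reduced log divisor depending on the parity of $c_j$. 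The starting case $j=0$ is $\sL_0 = \sO_{\P^n}(n+1)$, which matches the hypothesis of the claim.

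The heart of the argument is to establish, for a single blow-up step, the derived pushforward identity
\[
R\sigma_{i,*}\bigl(\Omega^p_{\P_i}(\log H^{(i)}) \otimes \sL_i^{-1}\bigr) \;\cong\; \Omega^p_{\P_{i-1}}(\log H^{(i-1)}) \otimes \sL_{i-1}^{-1}
\]
in the derived category of coherent sheaves on $\P_{i-1}$. Granted this, the projection formula together with the Leray spectral sequence immediately gives $\chi(\P_i, \Omega^p(\log H^{(i)}) \otimes \sL_i^{-1}) = \chi(\P_{i-1}, \Omega^p(\log H^{(i-1)}) \otimes \sL_{i-1}^{-1})$, and iterating over $i=1,\dots,N$ yields Claim \ref{invariance}. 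The identity is trivial over the open locus where $\sigma_i$ is an isomorphism, so the verification is local around $Z_i$; by the ordering in the canonical resolution and the general-position hypothesis on the hyperplanes, $H^{(i-1)}$ is analytically a union of coordinate hyperplanes through a coordinate subspace near $Z_i$, which reduces the computation to an \'etale-local toric model. There the identity becomes a blow-up formula of Esnault--Viehweg type for logarithmic differentials, with the twist by the fractional exceptional class $-[c_i/2]\,E_i$ absorbed by the projection formula.

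The main obstacle is bookkeeping: the local form of $H^{(i-1)}$ near $Z_i$, the parity of the codimension $c_i$, and whether $E_i$ becomes a new boundary component of the reduced divisor $H^{(i)}_\mathrm{red}$ all have to be tracked simultaneously. In particular, when $c_i$ is odd, $E_i$ enters $H^{(i)}_\mathrm{red}$ as a new component, and one must verify via the residue sequence along $E_i$ that the resulting $\Omega^{p-1}_{E_i}$-contribution on the $\P^{c_i-1}$-bundle $E_i \to Z_i$ pushes down, after taking into account the fractional twist $[c_i/2]\,E_i$, to exactly the residue piece of $\Omega^p_{\P_{i-1}}(\log H^{(i-1)}) \otimes \sL_{i-1}^{-1}$ along the divisorial strata of $H^{(i-1)}$ that collapse through $Z_i$. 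This vanishing-and-matching of cohomology along the projective fibres, essentially a computation on $\P^{c_i-1}$ combined with the weight filtration on log differentials under blow-up, is where the technical work of the proof lies.
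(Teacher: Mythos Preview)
Your inductive structure---one blow-up step at a time---matches the paper's. But two substantive points diverge, and the second is a genuine gap.

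First, in this canonical resolution every center $Z_i$ has codimension exactly $2$: the singular locus of $H$ (and of each successive branch divisor $H^{(i-1)}$) is the union of pairwise intersections of its components, with exactly two components meeting along each. Hence $c_i=2$ throughout, $[c_i/2]=1$, the exceptional divisor $E_i$ is always a $\P^1$-bundle over $Z_i$, and $E_i$ never enters the reduced branch divisor. Your general-codimension and parity discussion is beside the point, and it obscures the key geometric simplification the paper exploits: every computation lives on a $\P^1$-bundle.

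Second, and more seriously, the paper does \emph{not} prove the derived identity $R\sigma_{i,*}\bigl(\Omega^p(\log H^{(i)})\otimes\sL_i^{-1}\bigr)\cong \Omega^p(\log H^{(i-1)})\otimes\sL_{i-1}^{-1}$, and it is not clear that it holds. The paper works only at the level of Euler characteristics. From the residue sequence
\[
0\to\Omega^p_Y(\log D_Y)\to\sigma^*\Omega^p_X(\log D_X)\to\Omega^{p-1}_E(\log D_Y\cdot E)\to 0
\]
tensored with $\sL_Y^{-1}=\sigma^*\sL_X^{-1}(E)$, the task reduces to showing $\chi\bigl(\Omega^{p-1}_E(\log D_Y\cdot E)(-1)\bigr)=0$. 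The paper does this by explicit bookkeeping on the $\P^1$-bundle $E\to Z$: the boundary $D_Y\cdot E$ consists of two sections $S_1,S_2$ and some fibre-type divisors $F_j$, and one finds that the contribution $\chi(Z,\Omega^{p-1}_Z\otimes\sN)$ from the sections cancels exactly against $-\chi(Z,\Omega^{p-1}_Z\otimes\sN)$ arising from $R^1\sigma_*(\sigma^*\Omega^{p-1}_Z\otimes\Omega_{E/Z}(-1))=\Omega^{p-1}_Z\otimes\sN$. This is a cancellation between \emph{nonzero} terms: individual higher direct images do not vanish. Your derived identity would therefore require checking that specific connecting maps in the long exact sequence of $R\sigma_*$ are isomorphisms---work you have not indicated how to do, and which the paper sidesteps entirely by staying at the level of $\chi$. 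The ``blow-up formula of Esnault--Viehweg type'' you invoke does not deliver this; the twist by $\sL^{-1}$ takes you outside the range where such formulas apply directly.
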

The claim follows from a general consideration, which we postpone after stating the last
\begin{claim}\label{formula}
One has the following formula:
$$
\chi(\P^n,\Omega^{p}_{\P^n}(\log H)\otimes \sL^{-1})=(-1)^q{n \choose p}^2.
$$
\end{claim}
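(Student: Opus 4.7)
The plan is to compute $\chi(\P^n,\Omega^p_{\P^n}(\log H)\otimes\sL^{-1})$ directly by combining the weight filtration of the logarithmic sheaf with Bott's formula and a short generating-function identity.

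First, I would use the weight filtration $W_\bullet \Omega^p_{\P^n}(\log H)$ coming from the Poincar\'e residue for the simple normal crossing divisor $H=\sum H_i$. Its graded pieces are
$$\mathrm{Gr}^W_k \Omega^p_{\P^n}(\log H)\;\cong\;\bigoplus_{|I|=k,\,H_I\neq\emptyset}(a_I)_*\Omega^{p-k}_{H_I},$$
where $a_I:H_I\hookrightarrow \P^n$ is the inclusion of $H_I=\bigcap_{i\in I}H_i$. In general position $H_I$ is nonempty precisely when $|I|\le n$, and then $H_I\cong \P^{n-k}$ with $\sL^{-1}|_{H_I}=\sO(-(n+1))$. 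Additivity of Euler characteristics along the filtration therefore yields
$$\chi\bigl(\P^n,\Omega^p_{\P^n}(\log H)\otimes\sL^{-1}\bigr)=\sum_{k=0}^{p}\binom{2n+2}{k}\,\chi\bigl(\P^{n-k},\Omega^{p-k}_{\P^{n-k}}(-(n+1))\bigr),$$
the summation stopping at $k=p$ because $\Omega^{p-k}$ vanishes once $k>p$.

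Next, I would evaluate each summand by Serre duality and Bott. Serre duality identifies $H^i(\P^{n-k},\Omega^{p-k}(-(n+1)))^{\vee}$ with $H^{n-k-i}(\P^{n-k},\Omega^{n-p}(n+1))$, which by Bott is concentrated in degree $0$ with dimension $\binom{n+p-k+1}{n+1}\binom{n}{p}$. Hence
$$\chi\bigl(\P^{n-k},\Omega^{p-k}_{\P^{n-k}}(-(n+1))\bigr)=(-1)^{n-k}\binom{n+p-k+1}{n+1}\binom{n}{p},$$
and the claim reduces to the binomial identity
$$\sum_{k=0}^{p}(-1)^k\binom{2n+2}{k}\binom{n+p-k+1}{n+1}=(-1)^p\binom{n}{p}.$$

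This last identity is immediate from the generating-function factorization $(1-x)^{2n+2}\cdot (1-x)^{-(n+2)}=(1-x)^n$: the left-hand side is the coefficient of $x^p$ in the product of $\sum_k(-1)^k\binom{2n+2}{k}x^k=(1-x)^{2n+2}$ with $\sum_j\binom{n+j+1}{n+1}x^j=(1-x)^{-(n+2)}$, while the right-hand side is the coefficient of $x^p$ in $(1-x)^n$. Substituting back,
$$\chi=(-1)^n\binom{n}{p}\cdot (-1)^p\binom{n}{p}=(-1)^{n-p}\binom{n}{p}^2=(-1)^q\binom{n}{p}^2,$$
as required. The main technical point is not delicate analysis but the bookkeeping for the weight filtration: verifying that strata with $|I|>n$ give no contribution and that the twist $\sL^{-1}$ restricts correctly on each stratum. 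Bott's formula and the generating-function identity are otherwise routine.
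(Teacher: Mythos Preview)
Your argument is correct, and it is genuinely different from the paper's. The paper proves this claim via a Cayley-type construction: it passes to the projective bundle $\P=\mathbf{Proj}(S(\sE))\to\P^n$ with $\sE=\bigoplus^{2n+2}\sO(1)$, builds the sheaf $\Sigma^0_\sE$ of first-order differential operators, and extracts two short exact sequences relating $T_{\P^n}(-\log H)$, $\Sigma^0_\sE$, $\sE$, and $T_{\P^n}$. After taking a Koszul-type long exact sequence of wedge/symmetric powers and invoking Bott vanishing for the graded pieces, the Euler characteristic becomes a triple alternating sum of products of four binomial coefficients, which is then collapsed step by step to $(-1)^p\binom{n}{p}^2$.

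By contrast, you exploit directly the weight filtration of $\Omega^p(\log H)$ for the SNCD $H$, reducing everything to the single Bott value $h^0(\P^{n-k},\Omega^{n-p}(n+1))$ and the one-line generating-function identity $(1-x)^{2n+2}(1-x)^{-(n+2)}=(1-x)^n$. Your route is considerably shorter and more transparent; the only point where care is needed is exactly the bookkeeping you flag (that $|I|\le p\le n$ so no empty strata enter, and that $\sL^{-1}$ restricts to $\sO(-(n+1))$ on each $H_I$, which is immediate since $H_I$ is a linear subspace). The paper's approach, while heavier here, has the advantage that the bundle $\Sigma^0_\sE$ and its two resolutions are a general-purpose device (used elsewhere for Torelli-type results on complete intersections), so it fits the Jacobian-ring framework of the paper; your approach is tailored to this specific computation but gets there faster.
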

It is clear that the above claims implies Proposition \ref{hodge number}. \\

{\itshape Proof of Claim \ref{invariance}:}\\

Let $X$ be an $n$-dimensional smooth projective variety, $D_X \subset X$ a SNCD(simple normal crossing divisor). Let $Z \subset D_X$ be a smooth irreducible component of the singularities of $D_X$.  Let
$\sigma: Y\stackrel{\sigma}{\longrightarrow} X$ be the blow-up of $X$ along  $Z$ and $E$ be the exceptional divisor. Put $D_Y:=\sigma^{*}D_X-2E$.

\begin{proposition}
Notation as above. Let $\sL_{X}$ be an ample invertible sheaf on $X$. Put $\sL_Y:=\sigma^{*}\sL_X-E$. Then it holds that
\begin{eqnarray*}
\chi(X, \Omega^{p}_{X}(\log D_X)\otimes \sL_X^{-1}) &=&
\chi(Y, \Omega^{p}_Y(\log D_Y)\otimes \sL_Y^{-1}).
\end{eqnarray*}
\end{proposition}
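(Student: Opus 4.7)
Since $X$ is smooth and $D_X$ is a SNCD, every irreducible component $Z$ of the singular locus of $D_X$ is a transverse pairwise intersection $D_i\cap D_j$ of two components, hence smooth of codimension two. Consequently $\sigma^*D_X=\tilde D_X+2E$, the divisor $D_Y=\sigma^*D_X-2E$ coincides with the reduced strict transform $\tilde D_X$ (a SNCD on $Y$), and $\pi:E=\P(N_{Z/X})\to Z$ is a $\P^1$-bundle. The plan is to combine the residue exact sequence for $E\subset Y$,
\begin{equation*}
0\to\Omega^p_Y(\log D_Y)\to\Omega^p_Y(\log(D_Y+E))\to\Omega^{p-1}_E(\log(D_Y\cap E))\to 0,
\end{equation*}
with the classical identification $\Omega^p_Y(\log(D_Y+E))\cong\sigma^*\Omega^p_X(\log D_X)$ valid when one blows up a smooth center lying inside a SNCD. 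Tensoring with $\sL_Y^{-1}=\sigma^*\sL_X^{-1}\otimes\sO_Y(E)$ and taking Euler characteristics reduces the proposition to two assertions.

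First, the middle term contributes exactly $\chi(X,\Omega^p_X(\log D_X)\otimes\sL_X^{-1})$: by the projection formula it equals $\chi\bigl(X,\Omega^p_X(\log D_X)\otimes\sL_X^{-1}\otimes R\sigma_*\sO_Y(E)\bigr)$, and the short exact sequence $0\to\sO_Y\to\sO_Y(E)\to\sO_E(-1)\to 0$ together with $R(\sigma|_E)_*\sO_E(-1)=0$ (a fibrewise consequence of $H^*(\P^1,\sO(-1))=0$) yields $R\sigma_*\sO_Y(E)=\sO_X$.

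The main step, where I expect the genuine work to lie, is to show that the boundary term $\chi\bigl(E,\,\Omega^{p-1}_E(\log(D_Y\cap E))\otimes(\sigma^*\sL_X^{-1}\otimes\sO_Y(E))|_E\bigr)$ vanishes. Using $\sO_Y(E)|_E=N_{E/Y}$ together with $\sigma^*\sL_X^{-1}|_E=\pi^*(\sL_X^{-1}|_Z)$, the projection formula reduces the task to verifying $R\pi_*\bigl(\Omega^{p-1}_E(\log(D_Y\cap E))\otimes N_{E/Y}\bigr)=0$. For this I would restrict to an arbitrary $\P^1$-fibre $F$ of $\pi$: writing $u$ and $v=1/u$ for the standard affine coordinates on the two natural charts of $E$, the chart transition $du/u\mapsto -dv/v$ is a constant on $F$, so $\Omega^1_E(\log(D_Y\cap E))|_F\cong\sO_F^{\oplus(n-1)}$ is trivial, regardless of whether $F$ lies over a point where further components of $\tilde D_X$ meet $Z$. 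Hence $\Omega^{p-1}_E(\log(D_Y\cap E))\otimes N_{E/Y}|_F\cong\sO_F(-1)^{\oplus\binom{n-1}{p-1}}$, whose cohomology vanishes in every degree, and cohomology-and-base-change then forces $R\pi_*$ of the whole sheaf to vanish, completing the proof.
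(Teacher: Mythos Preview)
Your argument is correct, and the overall architecture---residue sequence along $E$, identification $\Omega^p_Y(\log(D_Y+E))\cong\sigma^*\Omega^p_X(\log D_X)$, projection formula for the middle term---matches the paper. The genuine difference is in how you dispose of the boundary term on $E$.

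The paper proves $\chi\bigl(\Omega^{p}_E(\log D_Y\!\cdot\!E)\otimes\sO_E(-1)\bigr)=0$ by induction on $p$: it writes $D_Y\!\cdot\!E=S_1+S_2+F_1+\cdots+F_k$ with $S_i$ sections and $F_j=\pi^{-1}(H_j)$, strips off the $F_j$ one at a time via residue sequences (using the inductive hypothesis on each $F_j$), and is then left with an explicit computation for the case $D=S_1+S_2$, where various terms $\chi(Z,\Omega^{p-1}_Z\otimes\sN)$ appear and cancel in pairs. Your route is more direct: you show $R\pi_*$ of the relevant sheaf vanishes outright by checking that its restriction to every fibre is a sum of $\sO_{\P^1}(-1)$'s, and then invoke base change. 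This bypasses the induction and the bookkeeping of cancellations, and it also makes transparent why the extra twist by $\pi^*(\sL_X^{-1}|_Z)$ is harmless, since it factors through $R\pi_*$ by the projection formula.

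One small point worth tightening: your sentence about the chart transition $du/u\mapsto -dv/v$ only treats the fibre-direction summand. To conclude that the full rank-$(n-1)$ bundle $\Omega^1_E(\log(D_Y\cap E))|_F$ is trivial you should also say why the remaining $n-2$ directions are trivial on $F$. The cleanest way is to use the short exact sequence
\[
0\;\to\;\pi^*\Omega^1_Z\!\bigl(\log\textstyle\sum_j H_j\bigr)\;\to\;\Omega^1_E\!\bigl(\log(D_Y\cap E)\bigr)\;\to\;\Omega^1_{E/Z}(S_1+S_2)\;\to\;0,
\]
whose first term restricts to $\sO_F^{\oplus(n-2)}$ (pullback from the base) and whose last term restricts to $\sO_F$; the sequence splits on $\P^1$ since $H^1(\P^1,\sO)=0$. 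With that addition your fibrewise computation is airtight.
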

\begin{proof}
For $p\geq 1$,the residue exact sequence reads (cf. {\cite[Properties 2.3]{EV}}):
$$
   0\rightarrow \Omega^{p}_Y(\log D_Y)\ \rightarrow \
   \sigma^{*}\Omega^{p}_{X}(\log D_X)=\Omega^p_X(\log \sigma^*D_X)\ \stackrel{\mathrm{res}}{\longrightarrow} \ \Omega^{p-1}_{E}(\log D_Y\cdot E)\ \to \ 0.
$$
We shall tensor the above short exact sequence with $\sigma^{*}\sL_X^{-1}\otimes
\sO_{Y}(E)$ and take the Euler characteristic of the resulting exact sequence. First, applying the Leray spectral squence to the morphism $\sigma$, it follows that
$$
\chi(\sigma^*\Omega^{p}_{X}(\log D_X)\otimes \sigma^{*}\sL_X^{-1}\otimes
\sO_{Y}(E))=\chi(\Omega^{p}_{X}(\log D_X)\otimes \sL_X^{-1}).
 $$
Furthermore, as $\sL_X$ is ample, one has
\begin{eqnarray*}
  \Omega^{p-1}_{E}(\log D_Y\cdot E)\otimes\sigma^{*}\sL_{X}^{-1}\otimes\sO_E(E)&=&\Omega^{p-1}_{E}(\log D_Y\cdot E)\otimes\sO_E(-1).
\end{eqnarray*}
Thus we get
$$
\chi(\Omega^{p}_{X}(\log D_X)\otimes \sL_X^{-1}) = \chi(\Omega^{p}_Y(\log D_Y)\otimes
\sL_Y^{-1})+\chi(\Omega^{p-1}_{E}(\log D_Y\cdot E)(-1)).
$$
It is to show $\chi(\Omega^{p}_{E}(\log D_Y\cdot E)(-1))=0$ for all $p\geq 0$. We proceed by induction on $p$. Write $D=D_Y\cdot E=S_1+S_2+F_1+\cdots+F_k$, where
$S_i,\ i=1,2$ are two sections of $\sigma: E\to Z$ and
$F_i\stackrel{\sigma}{\to}H_i\subset Z$ are $\P^1$-bundles over
hypersurfaces in $Z$ for $i=1,\cdots,k$. Then the residue sequence
reads:
\begin{displaymath}
\begin{array}{lllllllllllll}
   0&\to&\Omega_E&\to&\Omega_{E}(\log D)&\stackrel{\mathrm{res}}{\to}&
 \bigoplus_{i=1}^{2}\sO_{S_i}\oplus\bigoplus_{j=1}^{k}\sO_{F_j}&\to&0.
\end{array}
\end{displaymath}
Write $\sN:=\sN_{Z/X}=\sO_{Z}(D_1)\oplus\sO_{Z}(D_2)$. Then
$E=\mathbf{Proj}(\sN^{*})$.  For $i=1,2$, we assume the quotient
invertible sheaf $\sO_{Z}(-D_i)$ of $\sN^{*}$ gives the section
map $s_i: Z\to E$ and the image of $s_i$ is just $S_i$. Then
\begin{eqnarray*}
   \sO_{S_i}\otimes \sO_{E}(-1)&=& s_{i*}(\sO_{Z}\otimes s_{i*}\sO_{E}(-1)) \\
    &=& s_{i*}(\sO_{Z}\otimes \sO_{Z}(D_i))   \\
    &=& s_{i*}(\sO_{Z}(D_i)).
\end{eqnarray*}
Thus
\begin{eqnarray*}
\chi(E,\bigoplus^{2}_{i=1} \sO_{S_i}\otimes
\sO_{E}(-1))&=&\sum^{2}_{i=1}\chi(E, s_{i*}(\sO_{Z}(D_i)))\\
&=&\sum^{2}_{i=1}\chi(Z, \sO_{Z}(D_i))\\
&=&\chi(Z,\sN)
\end{eqnarray*}
And obviously, $\sO_{F_i}\otimes\sO_{E}(-1)=\sO_{F_i}(-1)$. Now we
shall compute $\chi(\Omega_E\otimes \sO_{E}(-1))$. Recall the
following exact sequence:
\begin{displaymath}
\begin{array}{lllllllllllll}
0&\longrightarrow & \sigma^{*}\Omega_Z&\longrightarrow
&\Omega_E&\longrightarrow&\Omega_{E/Z}&\longrightarrow&0
\end{array}
\end{displaymath}
Tensoring with $\sO_E(-1)$ and taking the Euler characteristic, we obtain
\begin{eqnarray*}
\chi(\Omega_E \otimes\sO_{E}(-1))&=& \chi(\sigma^{*}\Omega_Z \otimes\sO_{E}(-1))+\chi(\Omega_{E/Z}\otimes\sO_{E}(-1)).\\
\end{eqnarray*}
For $\mathrm{R}^i\sigma_{*}(\sO_{E}(-1))=0\ \textrm{for all}\
i$, $\chi(\sigma^{*}\Omega_Z \otimes\sO_{E}(-1))=0$. And since
\begin{displaymath}
 \mathrm{R}^i\sigma_{*}(\Omega_{E/Z}\otimes\sO_{E}(-1))=\left\{\begin{array}{ll}
0& \textrm{i=0}\\
 \sN_{Z/X}& \textrm{i=1}\end{array}
\right.
\end{displaymath}
one computes that
\begin{eqnarray*}
\chi( \Omega_{E/Z}\otimes
\sO_{E}(-1))&=&\sum_{i,j}(-1)^{i+j}\dim H^i(Z,\mathrm{R}^j\sigma_{*}\Omega_{E/Z}\otimes
\sO_{E}(-1)) \\
  &=&-\sum_{i}(-1)^{i}\dim H^i(Z,\sN) \\
  &=&-\chi({Z,\sN}).
\end{eqnarray*}
Therefore, it follows that
\begin{eqnarray*}
  \chi(\Omega_{E}(\log D)\otimes \sO_{E}(-1)) &=& \chi(\Omega_E\otimes \sO_{E}(-1))+\sum_{i=1}^{2}\chi(\sO_{S_i}\otimes
\sO_{E}(-1)) \\
   &+& \sum_{j=1}^{k}\chi(\sO_{F_i}\otimes\sO_{E}(-1)) \\
    &=& -\chi({Z,\sN})+\chi({Z,\sN}) \\
    &=& 0
\end{eqnarray*}
This shows the $p=1$ case.  Consider the residue sequence
along $F_k$,
\begin{displaymath}
\begin{array}{lllllllllllll}
0&\to &\Omega^{p}_{E}(\log D-F_k)&\to &\Omega^{p}_{E}(\log
D)&\to&\Omega^{p-1}_{F_k}(\log (D-F_k)\cdot F_k)&\to&0.
\end{array}
\end{displaymath}
By the induction hypothesis, we have that
\begin{eqnarray*}
  \chi(\Omega^{p}_{E}(\log D)\otimes\sO_{E}(-1)) &=& \chi(\Omega^{p}_{E}(\log D-F_k)\otimes\sO_{E}(-1)),
\end{eqnarray*}
and then
\begin{eqnarray*}
  \chi(\Omega^{p}_{E}(\log D)\otimes\sO_{E}(-1)) &=& \chi(\Omega^{p}_{E}(S_1+S_2)\otimes\sO_{E}(-1))
\end{eqnarray*}
Consider the following three short exact sequences:
\begin{displaymath}
\begin{array}{lllllllllllll}
0&\longrightarrow &\Omega^{p}_{E}(\log S_1)&\longrightarrow
&\Omega^{p}_{E}(\log
S_1+S_2)&\longrightarrow&\Omega^{p-1}_{S_2}&\longrightarrow&0,
\end{array}
\end{displaymath}

\begin{displaymath}
\begin{array}{lllllllllllll}
0&\longrightarrow &\Omega^{p}_{E}&\longrightarrow
&\Omega^{p}_{E}(\log
S_1)&\longrightarrow&\Omega^{p-1}_{S_1}&\longrightarrow&0,
\end{array}
\end{displaymath}

\begin{displaymath}
\begin{array}{lllllllllllll}
0&\longrightarrow & \sigma^{*}\Omega_Z^p&\longrightarrow
&\Omega_E^p&\longrightarrow&\sigma^{*}\Omega_Z^{p-1}\otimes\Omega_{E/Z}&\longrightarrow&0.
\end{array}
\end{displaymath}
From the last sequence, it follows that
\begin{eqnarray*}
\chi( \Omega_{E}^p\otimes
\sO_{E}(-1))&=&\chi(\sigma^{*}\Omega_Z^{p-1}\otimes\Omega_{E/Z}\otimes
\sO_{E}(-1))\\
&=&\sum_{i,j}(-1)^{i+j}\dim H^i(Z,\Omega_Z^{p-1}\otimes\mathrm{R}^j\sigma_{*}(\Omega_{E/Z}\otimes
\sO_{E}(-1))) \\
  &=&-\sum_{i}(-1)^{i}\dim H^i(Z,\Omega_Z^{p-1}\otimes\sN) \\
  &=&-\chi({Z,\Omega_Z^{p-1}\otimes\sN}).
\end{eqnarray*}
Moreover, we compute that
\begin{eqnarray*}
\sum^{2}_{i=1}\chi(S_i, \Omega^{p-1}_{S_i}\otimes
\sO_{E}(-1))&=&\sum^{2}_{i=1}\chi(Z, \Omega^{p-1}_{Z}\otimes s_{i}^{*}\sO_{E}(-1))\\
 &=&\chi(Z,\Omega^{p-1}_{Z}\otimes(\bigoplus^{2}_{i=1}s_{i}^{*}\sO_{E}(-1)))\\
&=&\chi(Z,\Omega^{p-1}_{Z}\otimes \sN).
\end{eqnarray*}
Therefore, we get finally that
\begin{eqnarray*}
  \chi(\Omega^{p}_{E}(\log D)\otimes\sO_{E}(-1))&=&\sum^{2}_{i=1}\chi(S_i, \Omega^{p-1}_{S_i}\otimes \sO_{E}(-1))+
  \chi(\Omega_{E}^p\otimes \sO_{E}(-1)) \\
   &=& \chi(Z,\Omega^{p-1}_{Z}\otimes \sN)-\chi(Z,\Omega^{p-1}_{Z}\otimes \sN)  \\
   &=& 0.
\end{eqnarray*}
This completes the proof.

\end{proof}

{\itshape Proof of Claim \ref{formula}:}\\

Put $\sE:=\displaystyle \bigoplus^{2n+2}\sO_{\P^n}(1)$. Let
$\P=\mathbf{Proj}(S(\sE))\stackrel{p}{\to} \P^n$ be the associated projective vector bundle together with the invertible sheaf $\sM:=\sO(1)$. The sheaf of differential operators $\Sigma_{\sM}$ of $\sM$ with order
$\leq 1$ is defined by the following exact sequence:
\begin{displaymath}
\begin{array}{lllllllllllll}
0&\longrightarrow & \sO_{\P} & \longrightarrow &\Sigma_{\sM}&
\stackrel{q}{\longrightarrow} &T_{\P}&\longrightarrow& 0
\end{array}
\end{displaymath}
with the extension class $-c_1(\sM)\in
\textrm{Ext}^{1}(T_{\P},\sO_{\P})\simeq
H^1(\P,\Omega^{1}_{\P})$. For $i\in \{1,\cdots, 2n+2\}$,
we let $\lambda_i\in H^0(\P,\sM\otimes
p^{*}\sO_{\P^n}(-1))$ such that $\mathrm{P}_i=\{\lambda_i=0\}$ is
the divisor of $\P$ whose fiber under the projection $p$ is the
$i^{-th}$ coordinate hyperplane of the fiber of $\P$ under $p$.
Put $\sP=\sum^{2n+2}_{i=1}\mathrm{P}_i$.\\

Suppose $H$ is defined by the equation $\prod^{2n+2}_{i=1}F_i=0$
in $\P^n$. Then we associate to $H$ the section
$\sigma=\sum^{2n+2}_{i=1}F_i\cdot\lambda_i \in
H^0(\P,\sM)$. Put $\sZ=\{\sigma=0\}$. Since $H$ is normal
crossing, $\sZ$ is smooth in $\P$. Note that the section
$\sigma\in H^{0}(\P,\sM)$ defines the evaluation map
$j(\sigma): \Sigma_{\sM}\to \sM$.
\begin{lemma}\label{evaluation}
$j(\sigma)$ is surjective with kernel equal to
$T_{\P}(-\log \sZ)$. That is, the following exact
sequence holds:
\begin{displaymath}
\begin{array}{lllllllllllll}
0&\longrightarrow & T_{\P}(-\log \sZ)&\longrightarrow
&\Sigma_{\sM}&\stackrel{j(\sigma)}{\longrightarrow}&\sM&\longrightarrow&0
\end{array}
\end{displaymath}
\end{lemma}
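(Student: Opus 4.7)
The plan is to analyze $j(\sigma)$ by combining the defining extension $0 \to \sO_\P \to \Sigma_{\sM} \xrightarrow{q} T_\P \to 0$ with the residue-type sequence $0 \to \sM(-\sZ) \to \sM \to \sM|_\sZ \to 0$, and to exploit that $\sigma$ is a regular section whose zero locus $\sZ$ is smooth. First I would observe that the composite $\sO_\P \hookrightarrow \Sigma_{\sM} \xrightarrow{j(\sigma)} \sM$ is just multiplication by $\sigma$: a local section $g\in\sO_\P$, viewed as a zeroth-order operator, sends $\sigma$ to $g\sigma$. Since $\sM$ is invertible and $\sigma$ is not identically zero, this multiplication map is injective with image $\sigma\cdot\sO_\P = \sM(-\sZ)\subset \sM$.

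Next I would assemble the commutative diagram whose top row is the defining sequence above and whose bottom row is $0 \to \sM(-\sZ)\to \sM \to \sM|_\sZ \to 0$, joined by the vertical maps $\cdot\sigma$ (an isomorphism), $j(\sigma)$, and an induced map $\bar j(\sigma): T_\P \to \sM|_\sZ$. The map $\bar j(\sigma)$ is defined by $v\mapsto \widetilde v(\sigma)|_\sZ$ for any local lift $\widetilde v\in\Sigma_{\sM}$ of $v$, which is well-defined because two lifts differ by a section of $\sO_\P$, whose image under $j(\sigma)$ lies in $\sM(-\sZ)$ and therefore vanishes modulo that subsheaf. Because the left vertical arrow is an isomorphism, the snake lemma supplies canonical isomorphisms $\ker j(\sigma)\simeq \ker \bar j(\sigma)$ and $\mathrm{coker}\,j(\sigma)\simeq \mathrm{coker}\,\bar j(\sigma)$, reducing both the surjectivity claim and the kernel computation to the analogous statements for $\bar j(\sigma)$.

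Finally I would identify $\bar j(\sigma)$ with the natural quotient map $T_\P \twoheadrightarrow N_{\sZ/\P}$ under the isomorphism $\sM|_\sZ \simeq N_{\sZ/\P}$ induced by $\sO(\sZ)=\sM$. A short local check justifies this: in coordinates $x_1,\ldots,x_N$ with $\sZ=\{x_1=0\}$ and a trivialization of $\sM$ in which $\sigma\leftrightarrow x_1$, one lifts $v=\sum a_i\partial_{x_i}$ by $\widetilde v=\sum a_i\partial_{x_i}$, so $\widetilde v(\sigma)|_\sZ = a_1|_\sZ$, which is exactly the normal component of $v$ along $\sZ$. The quotient $T_\P\to N_{\sZ/\P}$ is surjective (giving surjectivity of $j(\sigma)$), and its kernel is by definition the sheaf of vector fields tangent to $\sZ$, namely $T_\P(-\log \sZ)$. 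Combined with the snake-lemma isomorphism and the inclusion $\ker j(\sigma)\hookrightarrow \Sigma_{\sM}$, this yields the asserted exact sequence.

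The only nontrivial point is really verifying the canonicity of $\sM|_\sZ \simeq N_{\sZ/\P}$ and matching $\bar j(\sigma)$ with the standard projection; everything else is formal diagram chasing. The smoothness of $\sZ$, guaranteed by the normal-crossing hypothesis on $H$, is precisely what is needed to ensure $\sM|_\sZ=N_{\sZ/\P}$ and thereby the surjectivity of $\bar j(\sigma)$.
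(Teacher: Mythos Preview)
Your proof is correct but proceeds differently from the paper's. The paper argues in two short strokes: for surjectivity it observes directly that smoothness of $\sZ$ means the partial derivatives of $\sigma$ have no common zero on $\P$, so locally some first-order operator evaluates nontrivially on $\sigma$; for the kernel it identifies $T_\P(-\log\sZ)$ inside $\Sigma_\sM$ as the first-order operators $P$ preserving $\sZ$, i.e.\ those with $P(\sigma)=\sigma$, and notes that subtracting the identity gives an isomorphism onto $\{P:P(\sigma)=0\}=\ker j(\sigma)$.

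Your route is more structural: you set up a map of short exact sequences from the Atiyah sequence of $\sM$ to the residue sequence $0\to\sM(-\sZ)\to\sM\to\sM|_\sZ\to 0$, use that the left vertical arrow $\sO_\P\xrightarrow{\cdot\sigma}\sM(-\sZ)$ is an isomorphism, and apply the snake lemma to reduce everything to the induced map $\bar j(\sigma):T_\P\to\sM|_\sZ$, which you then recognize as the normal-bundle projection via $\sM|_\sZ\simeq N_{\sZ/\P}$. This buys a cleaner conceptual picture (the kernel is visibly the logarithmic tangent sheaf as a subsheaf of $T_\P$, and surjectivity is inherited from the standard normal-bundle sequence), at the cost of a little more bookkeeping. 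The paper's argument is shorter and more hands-on, essentially unwinding the same content in local coordinates without invoking the diagram. Both rely on the smoothness of $\sZ$ at exactly the same point: to ensure $\sO_\P(\sZ)|_\sZ = N_{\sZ/\P}$ in your version, or equivalently that the partials of $\sigma$ do not vanish simultaneously in the paper's.
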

\begin{proof} Since $\sZ$ is smooth, the system of equations $
\{\displaystyle \scriptstyle \frac{\partial}{\partial
x_1}(\sM_x)=\cdots=\frac{\partial}{\partial\lambda_{2n+2}}(\sM_x)=0
\}$ has no common solutions in $\P$. This means $j(\sigma)$ is
surjective. The local sections of $T_{\P}(-\log \sZ)$ are
the first order differential operators preserving $\sZ$. Then for
each open subset $U\subset \P$, one has
\begin{eqnarray*}
  T_{\P}(-\log \sZ)(U)&=&\{P\in \Sigma_{\sM}(U),\ P(\sigma)=\sigma\}\\
   &\simeq& \{P\in \Sigma_{\sM}(U),\ P(\sigma)=0\},
\end{eqnarray*}
Hence $\ker (\sigma)\simeq T_{\P}(-\log \sZ)$.

\end{proof}

Now define $\Sigma_{\sM}(-\log
\sP)=q^{-1}(T_{\P}(-\log \sP))\subset
\Sigma_{\sM}$.
\begin{lemma}\label{upstairs}
The following two exact sequences are exact:
\begin{displaymath}
\begin{array}{lllllllllll}
  0 &\longrightarrow& T_{\P}(-\log \sP+\sZ) & \longrightarrow & \Sigma_{\sM}(-\log \sP)&\longrightarrow & \sM  & \longrightarrow & 0  \\
\end{array}
\end{displaymath}
\begin{displaymath}
\begin{array}{lllllllllll}
0 &\longrightarrow& \sO_{\P}^{\oplus 2n+2} & \longrightarrow & \Sigma_{\sM}(-\log \sP) &\longrightarrow& p^{*}T_{\P^n}& \longrightarrow & 0 \\
\end{array}
\end{displaymath}
\end{lemma}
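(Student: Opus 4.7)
The plan is to handle both sequences by descending the relevant structures from $\Sigma_{\sM}$ along the definition $\Sigma_{\sM}(-\log\sP)=q^{-1}(T_{\P}(-\log\sP))$.

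For the first sequence, I would restrict the evaluation map $j(\sigma)\colon \Sigma_{\sM}\to \sM$ of Lemma \ref{evaluation} to the subsheaf $\Sigma_{\sM}(-\log\sP)$. Its kernel is tautologically
\[
T_{\P}(-\log\sZ)\cap \Sigma_{\sM}(-\log\sP)=q^{-1}\bigl(T_{\P}(-\log\sZ)\cap T_{\P}(-\log\sP)\bigr)\cap T_{\P}(-\log\sZ)=T_{\P}(-\log(\sP+\sZ)),
\]
provided that $\sP+\sZ$ is simple normal crossing; this is a direct local check from $\sigma=\sum F_i\lambda_i$ using that no $n{+}1$ of the hyperplanes $H_i$ meet (so the partials of $\sigma$ remain independent from the $d\lambda_{i_j}$ whenever $p$ lies on several components $\mathrm{P}_{i_j}$). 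Surjectivity of the restriction is the one non-formal point: near a stratum where $p\in \mathrm{P}_{i_1}\cap\cdots\cap\mathrm{P}_{i_k}\cap \sZ$, the operators $\lambda_{i_j}\partial_{\lambda_{i_j}}$, $\partial_{\lambda_i}\ (i\notin\{i_j\})$ and $\partial_{x_l}$ all lie in $\Sigma_{\sM}(-\log\sP)$; applying them to $\sigma$ produces, in particular, the sections $F_i\ (i\notin\{i_j\})$, and the general position assumption forces at least one $F_i$ to be nonzero at $p$, generating $\sM_p$. Away from $\sZ$ one simply uses that $\sigma$ is a local generator of $\sM$ together with the operator $\sum_i\lambda_i\partial_{\lambda_i}\in \Sigma_{\sM}(-\log \sP)$, which sends $\sigma$ to $\sigma$.

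For the second sequence, I would combine $q$ with the relative tangent sequence $0\to T_{\P/\P^n}(-\log\sP)\to T_{\P}(-\log\sP)\to p^*T_{\P^n}\to 0$ to obtain a surjection $\Sigma_{\sM}(-\log\sP)\twoheadrightarrow p^*T_{\P^n}$ whose kernel $K$ fits in
\[
0\to \sO_{\P}\to K\to T_{\P/\P^n}(-\log\sP)\to 0.
\]
I would then define a map $\sO_{\P}^{\oplus 2n+2}\to K$ by $e_i\mapsto \lambda_i\partial_{\lambda_i}$ (each such operator preserves every $\mathrm{P}_j$ and its symbol is fiberwise, hence it lies in $K$) and show it is an isomorphism. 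The key identity is that the Euler operator $\sum_i\lambda_i\partial_{\lambda_i}$ acts on $\sM=\sO_{\P}(1)$ as the identity, so it coincides with the image of $1\in \sO_{\P}\hookrightarrow\Sigma_{\sM}$. This gives a morphism of short exact sequences to the relative log-Euler sequence $0\to \sO_{\P}\xrightarrow{(1,\dots,1)} \sO_{\P}^{\oplus 2n+2}\to T_{\P/\P^n}(-\log\sP)\to 0$, and the five-lemma then upgrades $\sO_{\P}^{\oplus 2n+2}\xrightarrow{\sim} K$.

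I expect the main obstacle to be the surjectivity step in the first sequence, since it is the only part that genuinely uses the hyperplane arrangement geometry (specifically, that $\sP+\sZ$ is SNC and that general position bounds the number of vanishing $F_i$ at any point). Everything else reduces to bookkeeping with the two canonical short exact sequences defining $\Sigma_{\sM}$ and $\Sigma_{\sM}(-\log\sP)$ together with the log-Euler sequence on the fibers of $p\colon \P\to\P^n$.
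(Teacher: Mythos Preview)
Your approach is the same as the paper's, which merely says the first sequence is proved ``the same as in Lemma~\ref{evaluation}'' and the second ``follows from the defining sequence of $\Sigma_{\sM}$ and the Euler sequences.'' Your expansion of the second sequence via the relative log tangent sequence and the fibrewise log--Euler sequence, followed by the five-lemma, is exactly what is meant and is correct.

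There is one genuine gap in your surjectivity argument for the first sequence. Your claim that ``general position forces at least one $F_i$ ($i\notin\{i_j\}$) to be nonzero at $p$'' fails when $k\geq n+2$: then there are only $2n+2-k\leq n$ indices outside $\{i_j\}$, and general position allows \emph{all} of the corresponding hyperplanes $H_i$ to pass through $x=p(p)$. Such points do lie on $\sZ$, since $\sigma(p)=\sum_{i\notin\{i_j\}}F_i(x)\lambda_i(p)=0$ automatically. The remedy is already in your list of operators: apply the base-direction operators $\partial_{x_l}$ to $\sigma$, obtaining $\sum_{i\notin\{i_j\}}(\partial_{x_l}F_i)(x)\,\lambda_i(p)$. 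Since $2n+2-k\leq n+1$ in this range and any $n{+}1$ of the linear forms $F_i$ are independent, the matrix $(\partial_{x_l}F_i)_{l,\;i\notin\{i_j\}}$ has full column rank; as $(\lambda_i(p))_{i\notin\{i_j\}}\neq 0$, some $\partial_{x_l}(\sigma)$ is nonzero at $p$ and generates $\sM_p$. With this case added, your argument is complete.
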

\begin{proof}
The proof for the first sequence is the same as
the one in Lemma \ref{evaluation}. The second sequence follows
from the defining sequence of $\Sigma_{\sM}$ and the Euler
sequences.

\end{proof}

Define $\Sigma^{0}_{\sE}=p_{*}\Sigma_{\sM}(-\log \sP)$.
Note that $\mathrm{R}^{1}p_{*}T_{\P}(-\log
\sP+\sZ)=\mathrm{R}^{1}p_{*}\sO_{\P}=0$. Then $p_*$ of
two short exact sequences in Lemma \ref{upstairs} gives the
following two short exact sequences of $\Sigma^{0}_{\sE}$:
\begin{corollary}\label{downstairs}
The following two sequences of vector bundles are exact:
\begin{displaymath}
\begin{array}{ccccccccc}
  0 &\longrightarrow& T_{\P^n}(-\log H) & \longrightarrow & \Sigma^{0}_{\sE}&\longrightarrow & \sE& \longrightarrow & 0  \\
\end{array}
\end{displaymath}
\begin{displaymath}
\begin{array}{ccccccccc}
 0 &\longrightarrow& \sO_{\P^n}^{\oplus 2n+2} & \longrightarrow &\Sigma^{0}_{\sE}&\longrightarrow&  T_{\P^n}& \longrightarrow & 0 \\
\end{array}
\end{displaymath}
\end{corollary}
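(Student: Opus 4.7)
The plan is to push forward the two short exact sequences of Lemma \ref{upstairs} along $p : \P \to \P^n$ and identify the resulting terms on $\P^n$. The two vanishings flagged in the text, $R^1 p_* T_{\P}(-\log(\sP+\sZ))=0$ and $R^1 p_* \sO_{\P}=0$, are exactly what is needed so that $p_*$ preserves short-exactness on the right.

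The second sequence of Lemma \ref{upstairs} is the easier to handle. The projection formula gives $p_* p^* T_{\P^n} = T_{\P^n} \otimes p_*\sO_\P = T_{\P^n}$, and $p_* \sO_{\P}^{\oplus 2n+2} = \sO_{\P^n}^{\oplus 2n+2}$ from $p_* \sO_{\P} = \sO_{\P^n}$, while $p_* \Sigma_{\sM}(-\log \sP) = \Sigma^0_{\sE}$ is the definition of $\Sigma^0_{\sE}$. Combined with the standard projective-bundle vanishing $R^1 p_* \sO_{\P}=0$, this immediately produces the second sequence of the corollary.

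For the first sequence of Lemma \ref{upstairs}, pushforward together with $p_* \sM = p_* \sO_{\P}(1) = \sE$ (tautological for $\P = \mathbf{Proj}(S(\sE))$) and $p_* \Sigma_{\sM}(-\log \sP) = \Sigma^0_{\sE}$ reduces the claim to two assertions: (a) a canonical isomorphism $p_* T_{\P}(-\log(\sP+\sZ)) \simeq T_{\P^n}(-\log H)$, and (b) the vanishing $R^1 p_* T_{\P}(-\log(\sP+\sZ))=0$. For (a) one constructs the natural morphism via functoriality of log tangent bundles under $p$, noting that $\sZ$ lies over $H$ in the sense that the ramification of the projection $\sZ \to \P^n$ occurs exactly over $H$. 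For (b) I would apply cohomology and base change to reduce to a fiberwise computation on the $\P^{2n+1}$-fibers of $p$.

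The main obstacle is (b): a Bott-type vanishing for the log tangent bundle of the pair $(\P^{2n+1}, \sP|_{\mathrm{fib}} + \sZ|_{\mathrm{fib}})$, where generically the divisor is a union of $2n+3$ hyperplanes in general position (the $2n+2$ coordinate hyperplanes together with the hyperplane cut out by $\sum F_i(x)\lambda_i=0$). I would prove this via the residue exact sequence $0 \to T_{\P^{2n+1}}(-\log D) \to T_{\P^{2n+1}} \to \bigoplus_i \sO_{D_i} \to 0$ combined with the Euler sequence for $T_{\P^{2n+1}}$ to control both $H^0$ and $H^1$, using that in general position there are no nontrivial infinitesimal automorphisms of $(\P^{2n+1}, D)$. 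The fiberwise identification in (a) would then follow by comparing residue sequences for the pairs $(\P, \sP+\sZ)$ and $(\P^n, H)$ via $p_*$ and invoking the five lemma.
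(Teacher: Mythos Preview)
Your overall strategy---push forward the two sequences of Lemma \ref{upstairs} along $p$ and invoke the two $R^1$-vanishings to preserve right-exactness---is exactly the paper's approach; the paper in fact says nothing more than this and leaves all identifications implicit. Your treatment of the second sequence and of $p_*\sM=\sE$ is correct.

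There is, however, a genuine gap in your justification of (a). The map $\sZ\to\P^n$ is a $\P^{2n}$-bundle: over $x\in\P^n$ the fibre of $\sZ$ is the hyperplane $\{\sum_i F_i(x)\lambda_i=0\}\subset\P^{2n+1}$, which is always smooth of the expected dimension since the $F_i$ never vanish simultaneously. So there is no ramification of $\sZ\to\P^n$, over $H$ or anywhere else, and ``functoriality of log tangent bundles'' in the form you invoke does not apply, because $p:(\P,\sP+\sZ)\to\P^n$ is not a log-smooth morphism for any log structure on the base. The link between $\sP+\sZ$ upstairs and $H$ downstairs is that over $x\in H_i$ the hyperplane $\sZ_x$ passes through the $i$-th coordinate point, so the configuration $\sP_x+\sZ_x$ acquires an extra incidence precisely there.

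A cleaner route to (a), once (b) is known, is to use the second (already established) pushforward sequence. The composite $\sO_{\P^n}^{\oplus 2n+2}\hookrightarrow\Sigma^0_{\sE}\to\sE=\bigoplus_i\sO(1)$ is the diagonal map $\mathrm{diag}(F_1,\dots,F_{2n+2})$, since upstairs the $i$-th generator corresponds to $\lambda_i\partial_{\lambda_i}$ and $j(\sigma)(\lambda_i\partial_{\lambda_i})=F_i\lambda_i$. This composite is injective, so $K:=\ker(\Sigma^0_{\sE}\to\sE)$ embeds into $T_{\P^n}$ via the second sequence; a local check then shows that the image is exactly the subsheaf of vector fields tangent to each $H_i$, i.e.\ $T_{\P^n}(-\log H)$. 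This replaces your ramification argument and avoids any direct computation of $p_*T_{\P}(-\log(\sP+\sZ))$.
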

Recall the following
\begin{lemma}\label{algebra lemma}
Let X be a compact complex manifold. Given a short exact sequence
of vector bundles on X:
\begin{displaymath}
\begin{array}{lllllllll}
0&\longrightarrow & E&\longrightarrow &F& \longrightarrow&
G&\longrightarrow &0,
\end{array}
\end{displaymath}
one has then a long exact sequence of the form:
\begin{displaymath}
\begin{array}{lllllllllllll}
0&\to & S^{k}E&\to &S^{k-1}E\otimes F& \to&\cdots&\to
&\wedge^{k}F&\to& \wedge^{k}G&\to &0.
\end{array}
\end{displaymath}
\end{lemma}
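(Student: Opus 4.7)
The plan is to prove the lemma by reducing to the case where the short exact sequence $0\to E\to F\to G\to 0$ splits, and then recognizing the resulting complex as a direct sum of classical Koszul complexes. First I would make the differentials explicit: the asserted sequence takes the form
$$0\to S^kE \to S^{k-1}E\otimes F \to \cdots \to E\otimes \wedge^{k-1}F \to \wedge^kF \to \wedge^kG \to 0,$$
where the internal differential $S^{k-j}E\otimes \wedge^jF \to S^{k-j-1}E\otimes \wedge^{j+1}F$ is the composite of the comultiplication $S^{k-j}E\to S^{k-j-1}E\otimes E$, the inclusion $E\hookrightarrow F$, and exterior multiplication $F\otimes \wedge^jF \to \wedge^{j+1}F$, while the last map is induced by the quotient $F\twoheadrightarrow G$.

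Exactness of a sequence of vector bundles can be checked stalkwise, and over a local ring any short exact sequence of free modules splits, so I would assume $F\cong E\oplus G$. Under this identification, $\wedge^jF = \bigoplus_{a+b=j} \wedge^aE\otimes \wedge^bG$, and the differential defined above preserves the $G$-grading (i.e.\ the index $b$), because it only uses elements of the summand $E\subset F$. Consequently the whole complex decomposes as a direct sum over $b=0,1,\dots,k$ of the subcomplexes
$$K_b^{\bullet}\colon\quad 0\to S^{k-b}E\otimes \wedge^bG \to S^{k-b-1}E\otimes E\otimes \wedge^bG \to \cdots \to \wedge^{k-b}E\otimes \wedge^bG,$$
together with the identity map $\wedge^kG\to \wedge^kG$ arising from the final quotient (the piece that is surjected onto $\wedge^kG$ from $\wedge^kF$).

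After factoring out the constant tensor factor $\wedge^bG$, each $K_b^{\bullet}$ is exactly the degree-$(k-b)$ strand of the Koszul complex of the vector bundle $E$, which is classically known to be exact whenever $k-b\geq 1$; for the extreme value $b=k$ the complex degenerates to a single copy of $\wedge^kG$ sitting at the end, matched isomorphically with the cokernel term $\wedge^kG$. Assembling these pieces yields exactness of the original sequence, proving the lemma. The main thing to verify carefully is the first bullet — that the differential indeed respects the $G$-grading under the splitting $F=E\oplus G$ — but once one unpacks the definition, this is immediate since the map $E\hookrightarrow F$ lands in the $E$-summand and therefore only affects the $\wedge^{\bullet}E$ factors. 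No real analytic or geometric input is required; the lemma is a purely multilinear-algebra statement applied fiberwise (or stalkwise).
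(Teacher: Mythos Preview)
Your proof is correct and follows the standard route: localize to reduce to the split case, decompose by the $G$-grading, and identify each summand with a strand of the Koszul complex of $E$. The paper itself does not prove this lemma at all---it is introduced with ``Recall the following'' and stated without argument, so there is no paper proof to compare against. Your write-up supplies exactly the verification one would expect for this well-known piece of multilinear algebra.
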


Applying the previous lemma to the dual of the first short exact sequence in
Corollary \ref{downstairs}, one obtains the following long exact
sequence:
\begin{displaymath}
\begin{array}{ccccccccccccccc}
0&\to&\Omega^{n-p}_{\P^n}(\log H)(-n-1)&\to &\wedge^{p}\Sigma^{0}_{\sE}&\to&\cdots&\to &S^{p}\sE& \to & 0.  \\
\end{array}
\end{displaymath}
\begin{lemma}\label{wedge product of Sigma}
$H^i(\P^n,S^{p-k}\sE\otimes\wedge^{k}\Sigma^{0}_{\sE})=0$,
for each $i>0$ and $0\leq k\leq p$.
\end{lemma}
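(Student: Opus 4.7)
The plan is to reduce the computation to the standard Bott/Kodaira--Akizuki--Nakano vanishing on $\P^n$ by exploiting the second exact sequence of Corollary \ref{downstairs}, namely
$$0 \longrightarrow \sO_{\P^n}^{\oplus 2n+2} \longrightarrow \Sigma^0_{\sE} \longrightarrow T_{\P^n} \longrightarrow 0.$$
This sequence is attractive because both ends have extremely tractable cohomology, whereas the first sequence in Corollary \ref{downstairs} involves $T_{\P^n}(-\log H)$, whose cohomology is exactly what we are ultimately trying to control.

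First I would take the $k$-th wedge power of the above short exact sequence. This produces a filtration on $\wedge^k \Sigma^0_{\sE}$ whose associated graded pieces are
$$\wedge^j\bigl(\sO_{\P^n}^{\oplus 2n+2}\bigr) \otimes \wedge^{k-j} T_{\P^n} \;\simeq\; \sO_{\P^n}^{\oplus \binom{2n+2}{j}} \otimes \wedge^{k-j} T_{\P^n}, \qquad 0 \le j \le k.$$
Tensoring with $S^{p-k}\sE$ and using $\sE = \sO_{\P^n}(1)^{\oplus 2n+2}$ (so that $S^{p-k}\sE$ is a direct sum of copies of $\sO_{\P^n}(p-k)$), a standard induction on the length of the filtration, via the long exact sequences in cohomology coming from its successive short exact sequences, reduces the problem to showing that for all $i>0$ and all admissible $j$,
$$H^i\bigl(\P^n,\; \wedge^{k-j} T_{\P^n}(p-k)\bigr)=0.$$

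Next I would translate this using the canonical identification $\wedge^m T_{\P^n} \simeq \Omega^{n-m}_{\P^n}(n+1)$ (which comes from $\omega_{\P^n} = \sO(-n-1)$ together with the perfect pairing $\wedge^m T_{\P^n} \otimes \Omega^{n-m}_{\P^n} \to \wedge^n T_{\P^n} = \omega_{\P^n}^{-1}$). This rewrites the target as
$$H^i\bigl(\P^n,\; \Omega^{n-k+j}_{\P^n}(n+1+p-k)\bigr).$$
Since $0 \le k \le p$ and $0 \le j \le k$, one has $0 \le n-k+j \le n$ and the twist $n+1+p-k \ge n+1 \ge 1$ is strictly positive. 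Bott's explicit formula for $H^q(\P^n, \Omega^p(\ell))$ (equivalently, the Kodaira--Akizuki--Nakano vanishing theorem applied to the ample line bundle $\sO_{\P^n}(n+1+p-k)$) then gives the desired vanishing for all $i > 0$.

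There is no real obstacle here; the only minor point is to verify that the filtration obtained from wedging the short exact sequence is well-behaved (its graded pieces are precisely as claimed, and the induction through the long exact sequences goes through). Running this inductively across $j=0,1,\ldots,k$ yields $H^i(\P^n, S^{p-k}\sE \otimes \wedge^k \Sigma^0_{\sE})=0$ for every $i > 0$ and every $0 \le k \le p$, which is the statement of the lemma.
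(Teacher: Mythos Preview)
Your proposal is correct and follows essentially the same route as the paper: both take the filtration on $\wedge^k\Sigma^0_{\sE}$ induced by the second short exact sequence in Corollary \ref{downstairs}, identify the graded pieces as sums of $\wedge^{k-j}T_{\P^n}$, rewrite these via $\wedge^m T_{\P^n}\simeq \Omega^{n-m}_{\P^n}(n+1)$, and conclude with Bott vanishing. If anything, you are slightly more careful than the paper in tracking the extra twist by $\sO(p-k)$ coming from $S^{p-k}\sE$; the paper simply says ``it suffices to show $H^i(\P^n,\wedge^k\Sigma^0_{\sE})=0$'' and runs the same argument without the twist, which works for the same reason since $p-k\geq 0$ only makes the line bundle more positive.
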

\begin{proof}
It suffices to show $H^i(\P^n,\wedge^{k}\Sigma^{0}_{\sE})=0$. The sheaf $\wedge^{k}\Sigma^{0}_{\sE}$ has a filtration from the second exact sequence in Corollary \ref{downstairs},
\begin{displaymath}
\wedge^{k}\Sigma^{0}_{\sE}=\sF^0\supset
\sF^1\supset\cdots\supset\sF^k\supset\sF^{k+1}=0.
\end{displaymath}
with for each $0\leq \nu \leq k$,
\begin{eqnarray*}
  \textrm{Gr}^{\nu}=\sF^{\nu}/\sF^{\nu+1} &=& \wedge^{\nu}(\oplus^{2n+2}\sO_{\P^n}\otimes \wedge^{k-\nu}T_{\P^n}) \\
   &=& \bigoplus^{2n+2\choose \nu}\wedge^{k-\nu}T_{\P^n} \\
   &=& \bigoplus^{2n+2\choose \nu}\Omega^{n+\nu-k}_{\P^n}(n+1).
\end{eqnarray*}
By the Bott vanishing theorem,
$H^i(\P^n,\Omega^{n+\nu-k}_{\P^n}(n+1))=0$, which implies
$H^i(\P^n,\sF^{\nu})=0$. In particular,
$H^i(\P^n,\wedge^{k}\Sigma^{0}_{\sE})=0$.

\end{proof}

\begin{proposition}\label{key computation}
$\chi(\P^n,\Omega^{n-p}_{\P^n}(\log H)(-n-1))=(-1)^p
{n \choose p}^2$.
\end{proposition}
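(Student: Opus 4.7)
The plan is to exploit the two short exact sequences involving $\Sigma^0_\sE$ from Corollary \ref{downstairs}. First, I would apply (the dual version of) Lemma \ref{algebra lemma} to the exact sequence
$$0 \to T_{\P^n}(-\log H) \to \Sigma^0_\sE \to \sE \to 0$$
to build a Koszul-type resolution
$$0 \to \wedge^p T_{\P^n}(-\log H) \to \wedge^p \Sigma^0_\sE \to \wedge^{p-1}\Sigma^0_\sE \otimes \sE \to \cdots \to \Sigma^0_\sE \otimes S^{p-1}\sE \to S^p \sE \to 0.$$
The leftmost term then needs to be identified with the target of the proposition: the residue sequence $0 \to \Omega^1_{\P^n} \to \Omega^1_{\P^n}(\log H) \to \bigoplus_{i=1}^{2n+2}\sO_{H_i} \to 0$ gives $\det \Omega^1_{\P^n}(\log H) = \sO_{\P^n}(n+1)$, hence $\wedge^p T_{\P^n}(-\log H) \cong \Omega^{n-p}_{\P^n}(\log H)\otimes\sO_{\P^n}(-n-1)$.

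Next, additivity of the Euler characteristic along the resolution yields
$$\chi\bigl(\Omega^{n-p}_{\P^n}(\log H)(-n-1)\bigr) \;=\; \sum_{k=0}^{p}(-1)^{k}\, \chi\bigl(\wedge^{p-k}\Sigma^0_\sE \otimes S^{k}\sE\bigr).$$
Each summand on the right can be unfolded using the second sequence of Corollary \ref{downstairs}: the filtration of $\wedge^{p-k}\Sigma^0_\sE$ coming from $0 \to \sO^{\oplus 2n+2}_{\P^n} \to \Sigma^0_\sE \to T_{\P^n} \to 0$ has graded pieces $\binom{2n+2}{\nu}\wedge^{p-k-\nu}T_{\P^n} = \binom{2n+2}{\nu}\Omega^{n-p+k+\nu}_{\P^n}(n+1)$, and $S^k\sE$ is a direct sum of $\binom{2n+1+k}{k}$ copies of $\sO_{\P^n}(k)$ since $\sE = \sO_{\P^n}(1)^{\oplus 2n+2}$. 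Each resulting Euler characteristic is therefore of the form $\chi\bigl(\Omega^q_{\P^n}(m)\bigr)$ with $m > q$; by Bott vanishing it coincides with $h^0$, computed by the classical formula $h^0(\Omega^q_{\P^n}(m)) = \binom{m-1}{q}\binom{m+n-q}{n-q}$.

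The main obstacle will be the resulting combinatorial identity: collapsing the double sum
$$\sum_{k=0}^{p}\sum_{\nu=0}^{p-k}(-1)^{k}\binom{2n+2}{\nu}\binom{2n+1+k}{k}\,\binom{n+k+\nu-p}{n-p+k+\nu}\binom{2n+1+\nu}{n-p+k+\nu}$$
to the clean form $(-1)^{p}\binom{n}{p}^{2}$. I would attack this either by repeated application of the Vandermonde convolution and standard hypergeometric identities, or—more elegantly—by reorganizing the alternating sum so that it is recognized as $\chi$ of an already‐known bundle (for instance, comparing against the Euler characteristic of $\Omega^{n-p}_{\P^n}(\log D)(-n-1)$ for a normal crossing divisor $D$ of the same total degree $2n+2$ composed of hyperplanes in general position, where a direct check in a baby case $p=0$ gives $\chi = \binom{2n+1}{n} - (2n+2)\binom{2n}{n} + \cdots$ telescoping to $1 = \binom{n}{0}^2$). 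Once a clean algebraic collapse is found in one case, induction on $p$ via the splitting of the Koszul complex into short exact sequences should deliver the general formula.
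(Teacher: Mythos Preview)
Your approach is essentially the same as the paper's. Both proofs build the Koszul-type resolution from the first short exact sequence of Corollary~\ref{downstairs} (the paper phrases it as ``applying Lemma~\ref{algebra lemma} to the dual''), identify the leftmost term with $\Omega^{n-p}_{\P^n}(\log H)(-n-1)$, then unfold each $\chi(\wedge^{p-k}\Sigma^0_\sE\otimes S^k\sE)$ via the filtration coming from the second sequence of Corollary~\ref{downstairs} and Bott vanishing, and finally reduce everything to a binomial identity.

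Two small points. First, your displayed double sum seems to contain a typo: the factor $\binom{n+k+\nu-p}{n-p+k+\nu}$ has equal upper and lower entries, so it is identically $1$; presumably you intended the Bott formula $h^0(\Omega^q_{\P^n}(m))=\binom{m+n-q}{n-q}\binom{m-1}{q}$ evaluated at $q=n-p+k+\nu$, $m=n+1+k$. Second, the combinatorial collapse is the only part you leave genuinely open. The paper does not induct on $p$ or invoke an auxiliary geometric comparison; it simply manipulates the triple sum directly, using two successive Vandermonde-type contractions to reach $\sum_{l=0}^{p}(-1)^{l}\binom{n+l}{l}^{2}\binom{2n+1}{p-l}=(-1)^{p}\binom{n}{p}^{2}$. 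This last identity is standard, so your plan to use Vandermonde convolutions is exactly right---you just need to carry it out rather than defer to an alternative argument.
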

\begin{proof}
Taking the Euler characteristic of the long exact sequence
below the Corollary \ref{downstairs}, we have
\begin{eqnarray*}
   \chi(\Omega^{n-p}_{\P^n}(\log H)(-n-1)) &=&
   \sum^{p}_{k=0}(-1)^{k}\chi(S^{p-k}\sE\otimes\wedge^{k}\Sigma^{0}_{\sE}).
\end{eqnarray*}
Again by the filtration of the sheaf
$\wedge^{k}\Sigma^{0}_{\sE}$ in the proof of Lemma \ref{wedge product of Sigma}, one computes that
\begin{eqnarray*}
  \chi({\wedge^{k}\Sigma^{0}_{\sE}}) &=& \displaystyle \sum^{k}_{i=0}\chi(\textrm{Gr}_i) \\
     &=&\displaystyle \sum^{k}_{i=0} \displaystyle
     \sum^{i}_{j=0}(-1)^{i+j}{2n+2 \choose j}{n+1 \choose
     k-i}\chi(\sO_{\P^n}(k-i)).
\end{eqnarray*}
Therefore,
\begin{eqnarray*}
  \chi(\Omega^{n-p}_{\P^n}(\log B)(-n-1)) &=&   \sum^{p}_{i=0}  \sum^{p-i}_{j=0}
\sum^{p-i-j}_{k=0}\textstyle (-1)^{i+k}{2n+1+i \choose i}{n+1
\choose j}{n+i+j \choose i+j}{2n+2 \choose
     p-i-j-k} \\
   &=&  \sum^{p}_{i=0} \sum^{p-i}_{j=0}\textstyle(-1)^{i}{2n+1+i \choose i}{n+1 \choose
     j}{n+i+j \choose i+j}{2n+1 \choose p-i-j} \\
   &=&   \sum^{p}_{l=0}   \sum^{l}_{i=0}\textstyle(-1)^{i}{2n+1+i \choose i}{n+1 \choose l-i}{n+l \choose n}{2n+1 \choose p-l}\\
    &=&  \sum^{p}_{l=0}\textstyle(-1)^{l}{n+l \choose l}^{2}{2n+1 \choose p-l} \\
    &=&  (-1)^{p}{n \choose p}^2.
\end{eqnarray*}

\end{proof}

\subsection{Calculations in Jacobian ring}\label{subsection:Jacobian ring}
In this subsection we will prove the upper bound of the dimension of the characteristic variety $C_{1,a}$ claimed in Proposition \ref{prop:dim of characteristic variety}. Recall the definitions of $X$ and $Y$ from Section \ref{subsection:Kummer cover}.
We want to compute the Hodge structure and the Higgs maps on $X$. Since the Hodge structure on $X$ is determined by that on $Y$, we first analyze the Hodge structure on $Y$. In order to do that, we use the tool of Jacobian ring. It is constructed as follows.  In the polynomial ring $\C[\mu_0,\cdots, \mu_{n}, y_0,\cdots, y_{2n+1}]$, consider the polynomial
$$
F=\mu_0F_0+\cdots +\mu_{n}F_{n}
$$
where
\begin{equation}\notag
\begin{split}
&F_0:=y_{n+1}^2-(y_0^2+y_1^2+\cdots +y_n^2),\\
&F_i:=y_{n+i+1}^2-(y_0^2+a_{1i}y_1^2+\cdots +a_{ni}y_n^2), \ 1\leq i\leq   n.
\end{split}
\end{equation}

Let $J=<\frac{\partial F}{\partial \mu_i}, \frac{\partial F}{\partial y_j} \mid 0\leq i\leq n, 0\leq j\leq 2n+1>$ be the ideal of $\C[\mu_0,\cdots, \mu_{n},$ $ y_0,\cdots, y_{2n+1}]$ generated by the partial derivatives of $F$. Define the Jacobian ring to be
$$
R:=\C[\mu_0,\cdots, \mu_{n}, y_0,\cdots, y_{2n+1}]/J
$$

There is a natural bigrading on the polynomial ring $\C[\mu_0,\cdots, \mu_{n}, y_0,\cdots, y_{2n+1}]$, that is: the $(p,q)-$part $\C[\mu_0,\cdots, \mu_{n}, y_0,\cdots, y_{2n+1}]_{(p,q)}$ is linearly spanned by the monomials $\Pi_{i=0}^{n}\mu_i^{a_i}\Pi_{j=0}^{2n+1}y_j^{b_j}$ with $\sum_{i=0}^{n}a_i=p$, $\sum_{j=0}^{2n+1}b_j=q$. Since the ideal $J$ is a homogeneous ideal, there is a natural induced bigrading of $R=\C[\mu_0,\cdots, \mu_{n}, y_0,\cdots, y_{2n+1}]/J$, written as  $R=\oplus_{p,q\geq 0} R_{(p,q)}$.

The group $N=\oplus_{j=0}^{2n+1}\F_2$ acts on $R$ through $y_0,\cdots, y_{2n+1}$. Explicitly, $\forall g=(a_j)\in N$, we define the action of $g$ on $R$  by
\begin{equation}\notag
\begin{split}
g\cdot y_j&:=(-1)^{a_j}y_j, \ \ \forall \ 0\leq j\leq 2n+1.\\
g\cdot \mu_i&:=\mu_i,\ \ \forall \ 0\leq i\leq n.
\end{split}
\end{equation}

It is obviously that the action of $N$ on $R$ preserves the bigrading.
Let $R_{(p,q)}^{N}$ be the $N-$invariant part of $R_{(p,q)}$, then we have the decomposition of the $N-$invariant subring: $R^{N}=\oplus_{p,q\geq 0} R_{(p,q)}^{N}$.

Recall $\V_{(1)}$ is the $(-1)$-eigen $\Q$-PVHS associated to the  family $\mathcal{X}_{AR}\rightarrow \mathfrak{M}_{AR}$. The following proposition gives an identification of the Higgs map associated to $\V_{(1)}$ and the Jacobian ring multiplication.
\begin{proposition}\label{prop:identification of Hodge structures with Jacobian ring}
\begin{itemize}
\item[(1)] $\forall \ 0\leq q\leq n$, $H^{n-q,q}(X)_{(1)}\simeq R^{N}_{(q,2q)}$;
\item[(2)] $\forall \ 0\leq q\leq n$,  we have a commutative diagram
\begin{equation}\notag
\begin{array}{ccc}
  T_{\mathfrak{M}_{AR},a}\otimes H^{n-q,q}(X)_{(1)} & \xrightarrow{\theta^{n-q,q}} & H^{n-q-1,q+1}(X)_{(1)} \\
   \downarrow{\simeq}&  & \downarrow{\simeq} \\
  R^{N}_{(1,2)}\otimes R^{N}_{(q,2q)} &\xrightarrow{} & R^{N}_{(q+1,2q+2)}.
\end{array}
\end{equation}
\end{itemize}
Here $X$ is the fiber over $a\in \mathfrak{M}_{AR}$, and the lower horizontal arrow is the ring  multiplication map.
\end{proposition}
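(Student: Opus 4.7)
The plan is to derive both parts from Terasoma's Jacobian ring description of the Hodge structure of a smooth complete intersection, applied to the Kummer cover $Y\subset\P^{2n+1}$, and then to extract the correct character subspace via Proposition \ref{prop:pure Hodge structure of X}. The variety $Y$ is smooth with trivial canonical bundle since $2(n+1)=\dim\P^{2n+1}+1$. Terasoma's theorem produces a canonical isomorphism $H^{n-q,q}(Y)_{pr}\simeq R_{(q,2q)}$ in which the class of $P\in R_{(q,2q)}$ corresponds to the residue of $P\,\Omega/(F_0\cdots F_n)^{q+1}$, where $\Omega$ is the Euler volume form on $\P^{2n+1}$. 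The bidegree is forced by balancing $y$-degrees: the denominator contributes $2(n+1)(q+1)$, $\Omega$ contributes $2n+2$, and zero total degree of the form requires $P$ to have $y$-degree $2q$. The same theorem identifies the Higgs map with ring multiplication $R_{(1,2)}\otimes R_{(q,2q)}\to R_{(q+1,2q+2)}$.

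Next, I would trace the $N$-equivariance. Each $F_i$ is $N$-invariant (being quadratic in the $y_j$), but $\Omega=\sum_j(-1)^j y_j\,dy_0\wedge\cdots\wedge\widehat{dy_j}\wedge\cdots\wedge dy_{2n+1}$ transforms by the sign character $\chi(g)=(-1)^{\sum_j a_j}$ for $g=(a_j)\in N$. Hence Terasoma's isomorphism intertwines the natural $N$-action on $R_{(q,2q)}$ with the geometric $N$-action on $H^{n-q,q}(Y)_{pr}$ twisted by $\chi$. By Proposition \ref{prop:pure Hodge structure of X}(3)-(4), $H^{n-q,q}(X)_{(1)}$ is precisely the $\chi$-character subspace of $H^{n-q,q}(Y)_{pr}$: it is $N_1$-invariant and the deck involution of $X\to\P^n$, represented by any $g\in N\setminus N_1$, acts by $-1=\chi(g)$. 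After the $\chi$-twist this selects exactly the $N$-invariants $R^N_{(q,2q)}$, proving (1).

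For (2), the Kodaira-Spencer map $T_{\mathfrak{M}_{AR},a}\to R^N_{(1,2)}$ is induced by differentiating the defining data: a perturbation of the moduli coordinate $a_{kl}$ alters $F_l$ by a scalar multiple of $-y_k^2$, and the corresponding infinitesimal deformation of $X$ is encoded by the $N$-invariant element $\mu_l y_k^2\in R^N_{(1,2)}$. Injectivity is Lemma \ref{lemma:infinitesimal deformation of X}, and a dimension count $n^2=\dim\mathfrak{M}_{AR}=\dim R^N_{(1,2)}$ (the latter by eliminating $\mu_i y_l^2$ with $l\geq n+1$ through the $\mu_iF_j$ relations and then reducing modulo the $y_l\,\partial F/\partial y_l$ relations) upgrades it to an isomorphism. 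Commutativity of the diagram is the classical Griffiths IVHS formula: differentiating $P\,\Omega/(F_0\cdots F_n)^{q+1}$ in a direction $\dot a$ produces, up to Hodge order, the residue associated to $(\partial F/\partial\dot a)\cdot P\in R_{(q+1,2q+2)}$, which is exactly ring multiplication by the image of $\dot a$.

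The chief technical obstacle is the character bookkeeping in the second step: one must confirm that the $\chi$-twist from $\Omega$ exactly compensates for the sign coming from the $(1)$-eigenspace selection, so that the final answer is honest $N$-invariants rather than a twisted character subspace. Pinning down Terasoma's bigraded index as $(q,2q)$ requires careful tracking of $y$-degrees in the residue calculus but is otherwise routine once bigrading conventions are fixed.
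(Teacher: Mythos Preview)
Your proposal is correct and follows essentially the same route as the paper: both deduce the result from Terasoma's Jacobian ring description of $H^{n-q,q}(Y)_{pr}$ for the Kummer cover $Y$, combined with Proposition \ref{prop:pure Hodge structure of X} to pass from $Y$ to $X$. The paper simply cites Terasoma's Corollary 2.5 (and its proof) for part (1) and his Proposition 2.6 for part (2), whereas you unpack the residue calculus and the $\chi$-twist explicitly; your observation that the sign character of $\Omega$ exchanges the $(1)$-eigenspace on cohomology with the $(0)$-eigenspace on the ring is exactly the content of the paper's line $H^{n-q,q}(Y)^{N_1}_{(1)}\simeq R^{N_1}_{(q,2q)(0)}=R^{N}_{(q,2q)}$.
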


\begin{proof}
(1) Let $N_1=Ker (\oplus_{j=0}^{2n+1}\F_2\xrightarrow{\sum}\F_2)$ be the  subgroup of $N=\oplus_{j=0}^{2n+1}\F_2$. By Corollary 2.5 in \cite{T} and its proof, we can see that  $\forall \ 0\leq q\leq n$, $H^{n-q,q}(Y)^{N_1}_{(1)}\simeq R^{N_1}_{(q,2q)(0)}=R^N_{(q,2q)}$. By Proposition \ref{prop:pure Hodge structure of X}, $\forall \ 0\leq q\leq n$, $H^{n-q,q}(X)_{(1)}\simeq H^{n-q,q}(Y)^{N_1}_{(1)}$, these two isomorphisms together gives (1).

(2) follows from (1) and Proposition  2.6 in \cite{T}.
\end{proof}



Next we will present $\C$-bases of $R^{N}_{(1,2)}$ and $R^{N}_{(2,4)}$.
  Note that the ideal $J$ is generated by the following elements:
\begin{equation}\notag
\begin{split}
\frac{\partial F}{\partial \mu_0}&=y_{n+1}^2-(y_0^2+y_1^2+\cdots +y_n^2);\\
\frac{\partial F}{\partial\mu_i}&=y_{n+i+1}^2-(y_0^2+a_{1i}y_1^2+\cdots+a_{ni}y_n^2), \ 1\leq i\leq n;\\
-\frac{\partial F}{2\partial y_0}&=y_0(\mu_0+\mu_1+\cdots+\mu_{n});\\
-\frac{\partial F}{2\partial y_i}&=y_i(\mu_0+a_{i1}\mu_1+\cdots+a_{in}\mu_{n}), \ 1\leq i\leq n;\\
\frac{\partial F}{2\partial y_{n+i+1}}&=\mu_iy_{n+i+1}, \ 0\leq i\leq n.
\end{split}
\end{equation}

 By the relations above, we can see easily that
$$
R^{N}_{(1,2)}=\C<\mu_iy_j^2 \mid 0\leq i,j \leq n>,
$$
where $\C<\mu_iy_j^2 \mid 0\leq i,j \leq n>$ means the linear subspace of $R$ spanned by elements in the set $\{\mu_iy_j^2 \mid 0\leq i,j \leq n\}$. Similarly, one finds  that
$$
R^{N}_{(2,4)}=\C<\mu_iy_j^2\mu_py_q^2 \mid 0\leq i,j,p,q\leq n>.
$$

In order to obtain bases from  $\{\mu_iy_j^2 \mid 0\leq i,j \leq n\}$ and $\{\mu_iy_j^2\mu_py_q^2 \mid 0\leq i,j,p,q\leq n\}$, we study the relations in $R$.


In order to write the relations  more symmetrically,
we define
$$
a_{i0}=a_{0j}=1, \ \ \forall \ 0\leq i, j\leq n.
$$


Then we find easily that  the following relations hold in $R$:
\begin{equation}\label{equation:basic relations in R}
 \left\{
  \begin{array}{ll}
    \sum_{j=0}^{n}a_{ji}\mu_i y_j^2=0, & \hbox{$\forall \ 0\leq i\leq n$;} \\
    \sum_{i=0}^{n}a_{ji}\mu_i y_j^2=0, & \hbox{$\forall \ 0\leq j\leq n$.}
  \end{array}
\right.
\end{equation}

Note that all the discussion above depends on the parameter $a:=(a_{ij})\in M(n\times n,\C)$.
From  these basic relations (\ref{equation:basic relations in R}), we can get  some other useful  relations.
\begin{lemma}\label{lemma:reduction}
For a generic parameter $a\in M(n\times n,\C)$, the following relations hold in $R$:
\begin{itemize}
 \item[(R1)]$ (\sum_{j=0}^{n}a_{ji}\mu_i y_j^2)\mu_p y_q^2=0$, $\forall \  0\leq i,p, q\leq n$.
  \item[(R2)] $(\sum_{i=0}^{n}a_{ji}\mu_i y_j^2)\mu_p y_q^2=0$, $ \forall \ 0\leq p, q,j \leq n$.
\item[(R3)] $  \mu_py_q^2\mu_iy_q^2=\sum_{j=1,j\neq q}^{n}\frac{a_{0p}a_{ji}-a_{0i}a_{jp}}{a_{0i}a_{qp}-a_{0p}a_{qi}}\mu_p y_q^2\mu_iy_j^2$, $ \forall \ 0\leq p\neq i\leq n$, $ \forall \ 1\leq q\leq n$.
\item[(R4)]$  \mu_py_j^2\mu_py_q^2=\sum_{i=1,i\neq p}^{n}\frac{a_{q0}a_{ji}-a_{j0}a_{qi}}{a_{j0}a_{qp}-a_{q0}a_{jp}}\mu_i y_j^2\mu_py_q^2$, $ \forall \ 1\leq p\leq n$, $ \forall \ 0\leq j\neq q\leq n$.
\end{itemize}
\end{lemma}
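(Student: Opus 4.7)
The plan is straightforward algebraic elimination inside the Jacobian ring $R$, once one observes that (R1) and (R2) are nothing but the two families of basic relations in \eqref{equation:basic relations in R} multiplied through by $\mu_p y_q^2$. Thus the real content of the lemma lies in (R3) and (R4), and both are symmetric computations: combine the appropriate basic relation for two different indices and use commutativity of the underlying polynomial ring to cancel the unwanted $\mu\cdot y_0^2$ contribution.

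For (R3), I would multiply the basic relation $\sum_{j=0}^n a_{ji}\mu_i y_j^2=0$ by $\mu_p y_q^2$ and the basic relation $\sum_{j=0}^n a_{jp}\mu_p y_j^2=0$ by $\mu_i y_q^2$, and in each case peel off the $j=q$ and $j=0$ summands (using $a_{0i}=a_{0p}=1$):
\begin{align*}
a_{qi}\,\mu_p y_q^2\,\mu_i y_q^2 + \mu_p y_q^2\,\mu_i y_0^2 + \sum_{j=1,\, j\neq q}^{n} a_{ji}\,\mu_p y_q^2\,\mu_i y_j^2 &= 0,\\
a_{qp}\,\mu_i y_q^2\,\mu_p y_q^2 + \mu_i y_q^2\,\mu_p y_0^2 + \sum_{j=1,\, j\neq q}^{n} a_{jp}\,\mu_i y_q^2\,\mu_p y_j^2 &= 0.
\end{align*}
Since the underlying polynomial ring is commutative, $\mu_p y_q^2\,\mu_i y_0^2 = \mu_i y_q^2\,\mu_p y_0^2$ and $\mu_p y_q^2\,\mu_i y_j^2 = \mu_i y_q^2\,\mu_p y_j^2$ as elements of $R$. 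Subtracting the second identity from the first annihilates the $y_0^2$ term and produces
\[
(a_{qp}-a_{qi})\,\mu_p y_q^2\,\mu_i y_q^2 = \sum_{j=1,\,j\neq q}^n (a_{ji}-a_{jp})\,\mu_p y_q^2\,\mu_i y_j^2.
\]
For generic $a$ the scalar $a_{qp}-a_{qi}$ is non-vanishing; dividing and reinserting the factors $a_{0i}=a_{0p}=1$ reproduces exactly the claimed coefficient $(a_{0p}a_{ji}-a_{0i}a_{jp})/(a_{0i}a_{qp}-a_{0p}a_{qi})$ of (R3).

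Relation (R4) is the mirror computation based on the second family of basic relations. I would multiply $\sum_{i=0}^n a_{ji}\mu_i y_j^2=0$ by $\mu_p y_q^2$ and $\sum_{i=0}^n a_{qi}\mu_i y_q^2=0$ by $\mu_p y_j^2$, split off the $i=p$ and $i=0$ summands, and again invoke commutativity to equate the two $\mu_0\,y_j^2 y_q^2$ contributions. Subtraction yields
\[
(a_{qp}-a_{jp})\,\mu_p y_j^2\,\mu_p y_q^2 = \sum_{i=1,\,i\neq p}^n (a_{ji}-a_{qi})\,\mu_i y_j^2\,\mu_p y_q^2,
\]
and division by the generically nonzero $a_{qp}-a_{jp}$, together with $a_{j0}=a_{q0}=1$, gives (R4). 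There is no substantial obstacle; the only analytic requirement is that the denominators $a_{0i}a_{qp}-a_{0p}a_{qi}$ and $a_{j0}a_{qp}-a_{q0}a_{jp}$ do not vanish, and these inequalities cut out a Zariski-dense open subset of $\mathfrak{M}_{AR}\subset\C^{n^2}$ on which the reductions apply.
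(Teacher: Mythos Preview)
Your argument is correct and is essentially the same elimination as in the paper: both derive (R3) by taking the two basic relations $\sum_j a_{ji}\mu_i y_j^2=0$ and $\sum_j a_{jp}\mu_p y_j^2=0$, multiplying through so that the $y_0^2$ contributions coincide (the paper phrases this as expressing $\mu_p\mu_i y_0^2 y_q^2$ in two ways), and then subtracting to isolate the $y_q^2 y_q^2$ term; (R4) is the symmetric computation. The only cosmetic difference is that the paper keeps the formal factors $a_{0i},a_{0p}$ throughout before specializing to $1$, whereas you substitute $a_{0i}=a_{0p}=1$ from the outset.
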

\begin{proof}
The relations $(R1)$ and $(R2)$ follow obviously from the basic relations (\ref{equation:basic relations in R}).

Given $0\leq p\neq i\leq n$, $1\leq q\leq n$, in order to prove $(R3)$, we consider the element $\mu_p\mu_iy_0^2y_q^2$. By relations $(R1)$, we have
\begin{equation}\notag
\begin{split}
 a_{0p}\mu_p\mu_iy_0^2y_q^2&=a_{0p}\mu_py_0^2\mu_iy_q^2=-\sum_{j=1}^{n}a_{jp}\mu_py_j^2\mu_iy_q^2,\\
a_{0i}\mu_p\mu_iy_0^2y_q^2&=a_{0i}\mu_iy_0^2\mu_py_q^2=-\sum_{j=1}^na_{ji}\mu_iy_j^2\mu_py_q^2.
 \end{split}
\end{equation}
Let $a_{0i}$ times the first identity and $a_{0p}$ times the second identity, we get
$$
\sum_{j=1}^{n}a_{0i}a_{jp}\mu_py_j^2\mu_iy_q^2=\sum_{j=1}^na_{0p}a_{ji}\mu_iy_j^2\mu_py_q^2.
$$
From this equality we get
\begin{equation}\notag
\begin{split}
(a_{0i}a_{qp}-a_{0p}a_{qi})\mu_py_q^2\mu_iy_q^2&=a_{0i}a_{qp}\mu_py_q^2\mu_iy_q^2-a_{0p}a_{qi}\mu_iy_q^2\mu_p y_q^2\\
&=\sum_{j=1,j\neq q}^na_{0p}a_{ji}\mu_iy_j^2\mu_py_q^2-\sum_{j=1,j\neq q}^{n}a_{0i}a_{jp}\mu_py_j^2\mu_iy_q^2\\
&=\sum_{j=1,j\neq q}^n(a_{0p}a_{ji}-a_{0i}a_{jp})\mu_py_q^2\mu_iy_j^2.
\end{split}
\end{equation}

Since for a generic parameter $a\in \C^{n^2}$, $a_{0i}a_{qp}-a_{0p}a_{qi}=a_{qp}-a_{qi}\neq 0$,  we get $(R3)$.

The relations $(R4)$ can be proved similarly.
\end{proof}
Now we can determine $\C$-bases of $R^{N}_{(1,2)}$ and $R^{N}_{(2,4)}$.
\begin{proposition} \label{prop:bases}
\begin{itemize}
\item[(1)] For any parameter $  a\in \C^{n^2}$, $R^{N}_{(1,2)}$ has a $\C$-basis $\{\mu_iy_j^2 \mid 1\leq i, j\leq n\}$.
\item[(2)]For a generic parameter $a\in \C^{n^2}$, $R^{N}_{(2,4)}$ has a $\C$-basis $\{\mu_iy_j^2\mu_py_q^2 \mid 1\leq i<p\leq n, 1\leq j<q\leq n.\}$
    \item[(3)] For any parameter $  a\in \C^{n^2}$, $\forall q\geq 1$, the multiplication map $Sym^q R^{N}_{(1,2)}\rightarrow R^{N}_{(q,2q)}$ is surjective.
\end{itemize}
\end{proposition}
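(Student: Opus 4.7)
The plan is to prove the three parts in the order (3), (1), (2), moving from purely formal algebra to arguments using the Hodge-theoretic dimension count, and finally to the more delicate reduction needed for the quadratic piece.

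For part~(3), since $F$ is $N$-invariant (each $y_j$ appears to an even power in every $F_k$), the Jacobian ideal $J$ is $N$-invariant, and averaging over $N$ shows that every element of $R^N_{(q,2q)}$ lifts to a polynomial in the $\mu_i$'s and $y_j^2$'s. Hence $R^N_{(q,2q)}$ is spanned by images of monomials $\mu_{i_1}\cdots\mu_{i_q}\,y_{j_1}^2\cdots y_{j_q}^2$, and each such monomial equals the product $(\mu_{i_1}y_{j_1}^2)\cdots(\mu_{i_q}y_{j_q}^2)$ of $q$ elements of $R^N_{(1,2)}$. This gives surjectivity for every parameter~$a$. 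For part~(1), the discussion preceding Lemma~\ref{lemma:reduction}, together with $\mu_iy_{n+i+1}^2=0$ and $y_{n+k+1}^2=\sum_{j=0}^n a_{jk}y_j^2$, shows $R^N_{(1,2)}=\C\langle\mu_iy_j^2:0\le i,j\le n\rangle$; using (\ref{equation:basic relations in R}) with $a_{0i}=a_{j0}=1$ I solve for $\mu_0y_j^2$ and $\mu_iy_0^2$ in terms of the $n^2$ vectors $\mu_iy_j^2$ with $1\le i,j\le n$. Combining Propositions~\ref{hodge number}, \ref{prop:reduce to V_1}, and \ref{prop:identification of Hodge structures with Jacobian ring},
\[
\dim_\C R^N_{(1,2)}=\dim H^{n-1,1}(X)_{(1)}=\binom{n}{n-1}^2=n^2,
\]
so the spanning set is forced to be a basis. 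No parameter-dependent denominator enters, so this works for every $a\in\mathfrak{M}_{AR}$.

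For part~(2), by part~(3) with $q=2$ the space $R^N_{(2,4)}$ is spanned by products $(\mu_iy_j^2)(\mu_py_q^2)$ with $0\le i,p,j,q\le n$. The goal is to show that the claimed set $B=\{\mu_iy_j^2\mu_py_q^2:1\le i<p\le n,\ 1\le j<q\le n\}$, of cardinality $\binom{n}{2}^2$, already spans; since
\[
\dim R^N_{(2,4)}=\dim H^{n-2,2}(X)_{(1)}=\binom{n}{n-2}^2=\binom{n}{2}^2
\]
from the same Hodge-theoretic input, matching size and dimension then forces $B$ to be a basis. The reduction of the full spanning set to $B$ proceeds iteratively via Lemma~\ref{lemma:reduction}: (a) eliminate the index $0$ among the $\mu$'s and the $y^2$'s using (R1), (R2) (with $a_{0i}=a_{j0}=1$); (b) remove the $\mu$-diagonal cases $i=p$ with $j\ne q$ via (R4), and the $y$-diagonal cases $j=q$ with $i\ne p$ via (R3); (c) reduce the doubly diagonal case $i=p$, $j=q$ by first rewriting $\mu_py_q^2=-a_{qp}^{-1}\sum_{j'\ne q}a_{j'p}\mu_py_{j'}^2$ via (\ref{equation:basic relations in R}) and then applying (R4) to each resulting term, which now satisfies $j'\ne q$; (d) reorder the remaining monomials, using commutativity to arrange $i<p$ and combining several instances of (R3) to rewrite a term with $j>q$ as a combination of monomials in $B$. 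All denominators in (R3), (R4) are nonvanishing polynomials in the $a_{ij}$'s, so the procedure succeeds on a Zariski-open subset of the parameter space. The main obstacle is step~(d): the ``wrong-order'' cases $i<p$, $j>q$ are not directly killed by any single relation and require a combinatorial argument showing (via a monovariant counting inversions in the $(j,q)$-order) that iterated applications of (R3) terminate inside $B$.
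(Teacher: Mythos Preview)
Your approach matches the paper's: spanning via the relations (R1)--(R4), then a dimension count via the Hodge numbers (Propositions~\ref{prop:reduce to V_1} and~\ref{prop:identification of Hodge structures with Jacobian ring}). The one point to correct is your step~(d), which invents a difficulty that does not exist. In the commutative ring $R$ one has
\[
(\mu_i y_j^2)(\mu_p y_q^2)=\mu_i\mu_p\,y_j^2 y_q^2=(\mu_i y_q^2)(\mu_p y_j^2),
\]
so the product is symmetric not only under $(i,j)\leftrightarrow(p,q)$ but also under $j\leftrightarrow q$ with $(i,p)$ fixed. Thus once $i\ne p$ and $j\ne q$ (your steps (a)--(c)), both pairs can be ordered simultaneously by commutativity alone; no iterated use of (R3) and no inversion-counting monovariant is needed. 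The paper's six-step sample computation stops exactly at this stage and takes the reordering as implicit.
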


\begin{proof}
(1): By the basic relations (\ref{equation:basic relations in R}), we have
$$
R^{N}_{(1,2)}=\C<\mu_iy_j^2 \mid 1\leq i, j\leq n>.
$$
 Proposition \ref{prop:reduce to V_1}  and Proposition \ref{prop:identification of Hodge structures with Jacobian ring}   imply that the dimension of the $\C$-linear space $R^{N}_{(1,2)}$ is $n^2$. So we get that $\{\mu_iy_j^2 \mid 1\leq i, j\leq n\}$ is a $\C$-basis of $R^{N}_{(1,2)}$. This proves (1).

(2): Similarly as (1),  the dimension of the $\C$-linear space $R^{N}_{(2,4)}$ is ${n\choose 2}^2$, and $R^{N}_{(2,4)}$ is linearly spanned by $\{\mu_iy_j^2\mu_py_q^2 \mid 0\leq i, j, p, q\leq n\}$, so it suffices to show that $\forall \ 0\leq i, j, p, q\leq n$,
$$
 \mu_iy_j^2\mu_py_q^2 \in \C<\mu_iy_j^2\mu_py_q^2 \mid 1\leq i<p\leq n, 1\leq j<q\leq n>.
$$
This can be proved using the relations $R(1)-R(4)$ in Lemma \ref{lemma:reduction}. Next we show $\mu_0y_0^2\mu_0y_0^2 \in \C<\mu_iy_j^2\mu_py_q^2 \mid 1\leq i<p\leq n, 1\leq j<q\leq n>$ by the following steps to illustrate the ideas.

Step 1: By relations $(R1)$, $\mu_0y_0^2\mu_0y_0^2 \in \C<\mu_0y_0^2\mu_0y_q^2 \mid  1\leq q\leq n>$.

Step 2: $\forall 1\leq q\leq n$, by relations $(R2)$, $\mu_0y_0^2\mu_0y_q^2 \in \C<\mu_0y_0^2\mu_py_q^2 \mid  1\leq p\leq n>$.

Step 3: $\forall 1\leq p, q\leq n$,  by relations $(R2)$, $\mu_0y_0^2\mu_py_q^2 \in \C<\mu_iy_0^2\mu_py_q^2 \mid  1\leq i\leq n>$.

Step 4: $\forall 1\leq i, p, q\leq n$,   if $i=p$, by relations $(R4)$, $\mu_iy_0^2\mu_py_q^2 \in \C<\mu_{i_1}y_0^2\mu_py_q^2 \mid  1\leq i_1\leq n, i_1\neq p>$.

Step 5: $\forall 1\leq q\leq n$, $\forall 1\leq i\neq p\leq n$, by relations $(R1)$, $\mu_iy_0^2\mu_py_q^2 \in \C<\mu_{i}y_j^2\mu_py_q^2 \mid  1\leq j \leq n>$.

Step 6: $\forall 1\leq q\leq n$, $\forall 1\leq i\neq p\leq n$, by relations $(R3)$, $\mu_iy_q^2\mu_py_q^2 \in \C<\mu_{i}y_j^2\mu_py_q^2 \mid  1\leq j \leq n, j\neq q>$.

After these six steps, we have shown $\mu_0y_0^2\mu_0y_0^2 \in \C<\mu_iy_j^2\mu_py_q^2 \mid 1\leq i<p\leq n, 1\leq j<q\leq n>$. Other cases can be treated similarly.

(3) follows directly from the definition of the Jacobian ring.
\end{proof}


Given $\alpha \in R^{N}_{(1,2)}$, we can expand it under the basis above:
$$
\alpha =\sum_{1\leq i, j\leq n}\lambda_{ij}\mu_iy_j^2,
$$
with $(\lambda_{ij})\in M(n\times n, \C)$.

Since $\{\mu_iy_j^2\mu_py_q^2 \mid 1\leq i<p\leq n, 1\leq j<q\leq n\}$ is a basis of $R^{N}_{(2,4)}$, we have the expression
$$
\alpha^2=\sum_{1\leq i<p\leq n, 1\leq j<q\leq n}f_{ijpq}\mu_iy_j^2\mu_py_q^2.
$$
Obviously each $f_{ijpq}$ is a homogeneous quadratic polynomial of $\lambda_{ij}(1\leq i, j\leq n)$, with coefficients being rational functions of the parameters $a_{ij}(1\leq i, j\leq n)$. As for the information of these $f_{ijpq}$, we have the following proposition.
\begin{proposition}\label{proposition: shape of f_ijpq}The following statements hold:
\begin{itemize}
\item[(1)]
$\forall \ 1\leq i<p\leq n$, $\forall \ 1\leq j<q\leq n$, we have
\begin{equation}\notag
\begin{split}
f_{ijpq}&=c_{ijpq}^{ijij}\lambda_{ij}^2+c_{ijpq}^{iqiq}\lambda_{iq}^2+c_{ijpq}^{pjpj}\lambda_{pj}^2+c_{ijpq}^{pqpq}\lambda_{pq}^2\\
&+c_{ijpq}^{ijiq}\lambda_{ij}\lambda_{iq}+c_{ijpq}^{pjpq}\lambda_{pj}\lambda_{pq}+c_{ijpq}^{ijpj}\lambda_{ij}\lambda_{pj}+c_{ijpq}^{iqpq}\lambda_{iq}\lambda_{pq}\\
&+ c_{ijpq}^{ijpq}\lambda_{ij}\lambda_{pq}+ c_{ijpq}^{iqpj}\lambda_{iq}\lambda_{pj},
\end{split}
\end{equation}
where each of the ten coefficients $c_{ijpq}^{ijij},\cdots, c_{ijpq}^{iqpj}$ is a nonzero rational function of $a_{ji}, a_{qi}, a_{jp}, a_{qp}$.
\item[(2)] Notations as in (1). $\forall \ 2\leq j<q\leq n$, let $R_{jq}$ be the following resultant:
\begin{displaymath}
R_{jq}:=det \left(
  \begin{array}{cccc}
    c_{112q}^{1q1q} & 0 & c_{1j2q}^{1q1q} & 0 \\
    c_{112q}^{1q2q} &  c_{112q}^{1q1q} & c_{1j2q}^{1q2q} & c_{1j2q}^{1q1q} \\
    c_{112q}^{2q2q} & c_{112q}^{1q2q} & c_{1j2q}^{2q2q} & c_{1j2q}^{1q2q} \\
    0 & c_{112q}^{2q2q} & 0 & c_{1j2q}^{2q2q} \\
  \end{array}
\right).
\end{displaymath}
Then $R_{jq}$ is a nonzero rational function of $a_{ij}$  $(1\leq i, j\leq n)$.
\item[(3)] Notations as in (1). $\forall \ 2\leq i<p\leq n$, let $Q_{ip}$ be the following resultant:
\begin{displaymath}
Q_{ip}:=det \left(
  \begin{array}{cccc}
    c_{11p2}^{p1p1} & 0 & c_{i1p2}^{p1p1} & 0 \\
    c_{11p2}^{p1p2} &   c_{11p2}^{p1p1} &  c_{i1p2}^{p1p2} & c_{i1p2}^{p1p1} \\
    c_{11p2}^{p2p2} & c_{11p2}^{p1p2} &  c_{i1p2}^{p2p2} & c_{i1p2}^{p1p2} \\
    0 & c_{11p2}^{p2p2} & 0 & c_{i1p2}^{p2p2} \\
  \end{array}
\right).
\end{displaymath}
Then $Q_{ip}$ is a nonzero rational function of $a_{ij}$  $(1\leq i, j\leq n)$.
\end{itemize}
\end{proposition}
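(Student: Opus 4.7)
The plan is to compute the expansion of $\alpha^2$ in the polynomial ring and reduce each monomial to the basis of Proposition~\ref{prop:bases}(2). The expansion gives
\[
\alpha^2 = \sum_{i,j,p,q=1}^n \lambda_{ij}\lambda_{pq}\,\mu_i y_j^2 \mu_p y_q^2,
\]
and I would partition the sum into four types according to whether $i=p$ and whether $j=q$. Type~(A) with $i\ne p$, $j\ne q$ already lies in the basis after ordering $I<P$ and $J<Q$; Type~(B) with $i\ne p$, $j=q$ is reduced by (R3); Type~(C) with $i=p$, $j\ne q$ by (R4); and Type~(D) with $i=p$, $j=q$ requires first applying one of the basic relations~\eqref{equation:basic relations in R} to decouple the squared factor, then (R3) or (R4) (together with (R1)--(R2) to clear any $\mu_0$ or $y_0$ terms that appear). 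The crucial structural observation is that each reduction step preserves the underlying pair of nonzero $\mu$-indices and the pair of nonzero $y$-indices, up to the replacement of one repeated index.

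To prove~(1), I fix a basis element $\mu_I y_J^2 \mu_P y_Q^2$ with $I<P$, $J<Q$, and track which monomials in $\alpha^2$ contribute to its coefficient $f_{IJPQ}$. Direct contributions yield $\lambda_{IJ}\lambda_{PQ}$ and $\lambda_{IQ}\lambda_{PJ}$. Type~(B) monomials $\mu_I\mu_P y_J^4$ and $\mu_I\mu_P y_Q^4$, after (R3), contribute the cross terms $\lambda_{IJ}\lambda_{PJ}$ and $\lambda_{IQ}\lambda_{PQ}$; Type~(C) monomials $\mu_I^2 y_J^2 y_Q^2$ and $\mu_P^2 y_J^2 y_Q^2$, after (R4), contribute $\lambda_{IJ}\lambda_{IQ}$ and $\lambda_{PJ}\lambda_{PQ}$; and the Type~(D) monomials $\mu_I^2 y_J^4$, $\mu_I^2 y_Q^4$, $\mu_P^2 y_J^4$, $\mu_P^2 y_Q^4$, after double reduction, produce the four pure squares $\lambda_{IJ}^2, \lambda_{IQ}^2, \lambda_{PJ}^2, \lambda_{PQ}^2$. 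A check using the explicit formulas in (R3)--(R4) and the basic relations (with $a_{0\bullet}=a_{\bullet 0}=1$ absorbed) confirms that the only parameters surviving into each of the ten coefficients are $a_{JI}, a_{QI}, a_{JP}, a_{QP}$, and the resulting rational expressions are visibly nonzero because the numerators and denominators produced by (R3)--(R4) are nontrivial algebraic combinations of these four quantities.

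For parts~(2) and~(3), the matrices $R_{jq}$ and $Q_{ip}$ are Sylvester resultants of pairs of binary quadratic forms extracted from the $f_{ijpq}$'s computed in~(1). To verify each determinant is a nonzero rational function of the $a_{ij}$, I would substitute the explicit formulas from~(1) and specialize the $a_{ij}$ to a convenient generic point (algebraically independent indeterminates, or a numerical value chosen outside any obvious bad locus), and confirm the $4\times 4$ determinant does not vanish there; since it is a rational function of the $a_{ij}$, nonvanishing at a single point suffices. The main obstacle throughout is purely combinatorial bookkeeping: the Type~(D) double reductions generate many intermediate summands (including $\mu_0$ and $y_0$ auxiliary terms) that must be collated, and the symbolic expansion of the two $4\times 4$ resultants is tedious though mechanical. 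A computer algebra verification is the cleanest route, but each step remains elementary.
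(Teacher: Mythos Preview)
Your proposal is correct and follows essentially the same approach as the paper. The paper's proof states precisely your ``crucial structural observation'' as its key fact (a monomial $\mu_{i_1}y_{j_1}^2\mu_{i_2}y_{j_2}^2$ contributes to the $(i,j,p,q)$-coefficient only when $\{(i_1,j_1),(i_2,j_2)\}\subset\{(i,j),(i,q),(p,j),(p,q)\}$), then writes out the ten coefficients explicitly using the reductions (R1)--(R4) exactly as you outline, and for (2) and (3) likewise appeals to the resulting explicit formulas; your Type (A)--(D) decomposition is just a clean way to organize that same bookkeeping.
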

\begin{proof}
(1): Examine the proof of Proposition \ref{prop:bases}, one can get the following fact:

$\forall 1\leq i_1,i_2, j_1,j_2\leq n$, if $\{(i_1,j_1),(i_2,j_2)\}\nsubseteqq \{(i,j),(i,q),(p,j),(p,q)\}$, then when we express  $\mu_{i_1}y_{j_1}^2\mu_{i_2}y_{j_2}^2$ as a linear combination of the basis $\{\mu_iy_j^2\mu_py_q^2 \mid 1\leq i<p\leq n, 1\leq j<q\leq n\}$, the coefficient before $\mu_{i}y_{j}^2\mu_{p}y_{q}^2$ is zero.

Using this observation, we find $f_{ijpq}$ has the required expression. That  each of the ten coefficients $c_{ijpq}^{ijij},\cdots, c_{ijpq}^{iqpj}$ is a nonzero rational function of $a_{ji}, a_{qi}, a_{jp}, a_{qp}$ follows from an explicit computation. Explicitly, we have:
\begin{equation}\notag
\begin{split}
& c_{ijpq}^{ijpq}=2; \ c_{ijpq}^{iqpj}=2; \\
&c_{ijpq}^{ijiq}=\frac{2(a_{jp}-a_{qp})}{a_{qi}-a_{ji}}; \  c_{ijpq}^{pjpq}=\frac{2(a_{ji}-a_{qi})}{a_{qp}-a_{jp}};\\
& c_{ijpq}^{ijpj}=\frac{2(a_{qi}-a_{qp})}{a_{jp}-a_{ji}}; \ c_{ijpq}^{iqpq}=\frac{2(a_{ji}-a_{jp})}{a_{qp}-a_{qi}};\\
& c_{ijpq}^{ijij}=\frac{1}{a_{ji}}\cdot \frac{a_{jp}-1}{a_{ji}-1}\cdot (\frac{a_{jp}(a_{qp}-a_{qi})}{a_{jp}-a_{ji}}-a_{qp})+\frac{a_{qi}}{a_{ji}}\cdot \frac{a_{qp}-a_{jp}}{a_{qi}-a_{ji}};\\
& c_{ijpq}^{iqiq}=\frac{1}{a_{qi}}\cdot \frac{a_{qp}-1}{a_{qi}-1}\cdot (\frac{a_{qp}(a_{jp}-a_{ji})}{a_{qp}-a_{qi}}-a_{jp})+\frac{a_{ji}}{a_{qi}}\cdot \frac{a_{jp}-a_{qp}}{a_{ji}-a_{qi}};\\
& c_{ijpq}^{pjpj}=\frac{1}{a_{jp}}\cdot \frac{a_{ji}-1}{a_{jp}-1}\cdot (\frac{a_{ji}(a_{qi}-a_{qp})}{a_{ji}-a_{jp}}-a_{qi})+\frac{a_{qp}}{a_{jp}}\cdot \frac{a_{qi}-a_{ji}}{a_{qp}-a_{jp}};\\
& c_{ijpq}^{pqpq}=\frac{1}{a_{qp}}\cdot \frac{a_{qi}-1}{a_{qp}-1}\cdot (\frac{a_{qi}(a_{ji}-a_{jp})}{a_{qi}-a_{qp}}-a_{ji})+\frac{a_{jp}}{a_{qp}}\cdot \frac{a_{ji}-a_{qi}}{a_{jp}-a_{qp}}.
\end{split}
\end{equation}

(2) and (3): From the above  explicit computation of the coefficients $c_{ijpq}^{ijij},\cdots, c_{ijpq}^{iqpj}$, we can  see that $R_{jq}$ and  $Q_{ip}$ are nonzero rational functions of $a_{ij}$  $(1\leq i, j\leq n)$.

\end{proof}

Now we can complete the proof of Proposition \ref{prop:dim of characteristic variety}.
Recall $C_{1,a}$ is the first characteristic variety of $\V_{(1)}$ at $a\in \mathfrak{M}_{AR}$. By Proposition \ref{prop:identification of Hodge structures with Jacobian ring}, it is easy to see that
$$
C_{1,a}\simeq C^{'}_{1,a}:=\{[\alpha]\in \P(R^{N}_{(1,2)})\mid \alpha^2=0 \in R^{N}_{(2,4)}  \},
$$
where $\P(R^{N}_{(1,2)})$ is the projectification of the $\C$-linear space $R^{N}_{(1,2)}$, and $[\alpha]$ means the class represented by an element $\alpha\in R^{N}_{(1,2)}$. For each $a\in \C^{n^2}$, $C^{'}_{1,a}$ is a closed subvariety in the projective space $\P(R^{N}_{(1,2)})$.

For later use, we state the following elementary lemma.
\begin{lemma}\label{lemma:dimension lemma}
Let $V=\C^p$ be an affine space with coordinates $x_1,\cdots, x_p$. Then the following holds:
\begin{itemize}
\item[(1)] Given
\begin{displaymath}
\begin{array}{ccc}
  V_1 & \subset & V \\
  \cup &  & \cup \\
  X_1 &  & X
\end{array}
\end{displaymath}
where
\begin{itemize}
\item $V_1$ is the subspace of $V$ defined by $x_1=0$;
\item $X_1$ is the subvariety of $V_1$ defined by the simultaneous vanishing of the polynomials $f_j(x_2,\cdots, x_p)$, $2\leq j\leq q$;
\item $X$ is the subvariety of $V$ defined by the simultaneous vanishing of the polynomials  $f_1(x_1,\cdots, x_p)$, $f_j(x_2,\cdots, x_p)$, $2\leq j\leq q$, where  $f_1(x_1,\cdots, x_p)=ax_1^2+g_1(x_2,\cdots, x_p)x_1+h_1(x_2,\cdots, x_p)$, with  $0\neq a\in \C$.
\end{itemize}
Then we have $dim X\leq dim X_1$.
\item[(2)]
Given
\begin{displaymath}
\begin{array}{ccc}
  V_2 & \subset & V \\
  \cup &  & \cup \\
  X_2 &  & X
\end{array}
\end{displaymath}
where
\begin{itemize}
\item  $V_2$ is the subspace of $V$ defined by $x_1=x_2=0$;
\item $X_2$ is the subvariety of $V_2$ defined by the simultaneous vanishing of the polynomials $f_j(x_3,\cdots, x_p)$, $3\leq j\leq q$;
\item $X$ is the subvariety of $V$ defined by the simultaneous vanishing of the polynomials $f_1(x_1,\cdots,x_p)$, $f_2(x_1,\cdots,x_p)$, $f_j(x_3,\cdots, x_p)$, $3\leq j\leq q$, and for $i=1,2$, $f_i(x_1,\cdots,x_p)=a_ix_1^2+ b_ix_1x_2+c_ix_2^2+x_1g_i(x_3,\cdots,x_p)+x_2h_i(x_3,\cdots,x_p)$ $+$ $r_i(x_3,\cdots,x_p)$, with $a_i,b_i,c_i\in \C$ such that the following resultant is not zero:
\begin{displaymath}
 det \left(
  \begin{array}{cccc}
    a_1 & 0 & a_2 & 0 \\
    b_1 &   a_1 &  b_2 & a_2 \\
    c_1 &  b_1  &  c_2 & b_2 \\
    0 &  c_1  & 0 & c_2  \\
  \end{array}
\right)\neq 0.
\end{displaymath}
Then we have $dim X\leq dim X_2$.
\end{itemize}
\end{itemize}
\end{lemma}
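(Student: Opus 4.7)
The plan is to handle both parts by a projection argument and the fiber dimension theorem: project $X$ onto the subvariety $X_1$ (resp.\ $X_2$) defined by the equations that do not involve $x_1$ (resp.\ $x_1,x_2$), and show the fibers of this projection are zero-dimensional thanks to the quadratic equation(s) being genuinely non-degenerate in the forgotten variable(s).

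For (1), consider $\pi\colon V=\C^p\to \C^{p-1}$, $(x_1,\ldots,x_p)\mapsto(x_2,\ldots,x_p)$, and identify $V_1$ with $\C^{p-1}$ via $\pi|_{V_1}$, so that $X_1$ sits inside the target. Since $f_2,\ldots,f_q$ do not involve $x_1$, one has $\pi(X)\subset X_1$ set-theoretically. For any $\xi=(x_2,\ldots,x_p)\in \pi(X)$, the fiber $\pi|_X^{-1}(\xi)$ is the zero set in $\C$ of $f_1(x_1,\xi)=a x_1^2+g_1(\xi)x_1+h_1(\xi)$; as $a\neq 0$ this is a non-zero degree-$2$ polynomial in $x_1$, hence the fiber has at most two points. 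By the fiber-dimension theorem $\dim X\le\dim\pi(X)\le\dim X_1$.

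For (2), consider $\pi\colon V=\C^p\to\C^{p-2}$, $(x_1,\ldots,x_p)\mapsto(x_3,\ldots,x_p)$, identify $V_2$ with the target via $\pi|_{V_2}$, and note again $\pi(X)\subset X_2$. For a fixed $\xi\in X_2$, the fiber $\pi|_X^{-1}(\xi)$ is the zero locus in $\C^2$ of the two polynomials $f_i(x_1,x_2,\xi)\in\C[x_1,x_2]$ ($i=1,2$), each of degree at most $2$ with fixed top homogeneous part $Q_i:=a_i x_1^2+b_i x_1x_2+c_i x_2^2$. The displayed $4\times 4$ determinant is (up to transpose) the Sylvester matrix of $a_i t^2+b_i t+c_i$, $i=1,2$, so its non-vanishing is equivalent to $Q_1$ and $Q_2$ having no common linear factor in $\C[x_1,x_2]$; in particular both $Q_i$ are non-zero, hence both $f_i(\cdot,\cdot,\xi)$ are non-zero polynomials in $(x_1,x_2)$.

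The key step is then to verify that these two fiber polynomials share no common irreducible factor of positive degree; granting that, Bézout in $\C^2$ forces the fiber to be finite, and the fiber-dimension theorem again gives $\dim X\le\dim\pi(X)\le\dim X_2$. Suppose for contradiction that $h\in\C[x_1,x_2]$ of positive degree divides both $f_i(\cdot,\cdot,\xi)$. If $\deg h=2$, then $f_1(\cdot,\cdot,\xi)$ and $f_2(\cdot,\cdot,\xi)$ are each scalar multiples of $h$, so their top parts $Q_1,Q_2$ are proportional, making the resultant vanish. If $\deg h=1$, write $h=\alpha x_1+\beta x_2+\gamma$ and $f_i(\cdot,\cdot,\xi)=h\cdot m_i$ with $\deg m_i\le 1$; extracting the $(x_1,x_2)$-homogeneous degree-$2$ part gives $Q_i=(\alpha x_1+\beta x_2)\cdot(\text{top part of }m_i)$, so $\alpha x_1+\beta x_2$ is a common linear factor of $Q_1,Q_2$ (note $(\alpha,\beta)\neq(0,0)$ since otherwise $h$ would be constant), again contradicting the non-vanishing of the resultant. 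This case analysis is the only non-formal point; everything else is the standard projection–fiber dimension argument.
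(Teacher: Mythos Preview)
Your proof is correct and follows exactly the approach sketched in the paper: project onto $V_i$ and show that the fibers of $X$ over $X_i$ are finite. The paper's proof is a one-sentence sketch of this same projection/fiber-dimension argument; you have supplied the details, in particular the verification in part~(2) that the resultant hypothesis on the leading forms $Q_1,Q_2$ forces the two fiber polynomials to be coprime in $\C[x_1,x_2]$ for every $\xi$, which is the only point requiring care.
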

\begin{proof}
The proof is direct. Since in each case we can consider the natural projections:
$\pi_i: V\rightarrow V_i$ $(i=1,2)$ and the conditions guarantee that $\forall x\in V_i$, the dimension of $X\cap \pi^{-1}(x)$ is either empty or a zero dimensional variety.
\end{proof}

Proposition \ref{prop:dim of characteristic variety} follows directly from
\begin{proposition}\label{prop:Jacobian ring: dim upper bound}
If $n\geq 2$, then for generic $a\in \C^{n^2}$, we have dim$C^{'}_{1,a} \leq 2$.
\end{proposition}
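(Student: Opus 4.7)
The plan is to bound the affine dimension of $\tilde C_{1,a}\subset \C^{n^2}$, the affine cone over $C'_{1,a}$, by $3$; since $C'_{1,a}=\P(\tilde C_{1,a})$, this yields the required projective bound $\dim C'_{1,a}\leq 2$. By construction $\tilde C_{1,a}$ is cut out by the $\binom{n}{2}^2$ quadrics $f_{(i,j,p,q)}$ in the variables $\lambda_{ij}$, $1\leq i,j\leq n$, which by Proposition~\ref{proposition: shape of f_ijpq}(1) are quadratic forms in exactly four variables $\lambda_{ij},\lambda_{iq},\lambda_{pj},\lambda_{pq}$ with all ten coefficients nonvanishing rational functions of $a$.

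The main engine of the proof is iterated use of Lemma~\ref{lemma:dimension lemma}(2), fed by the resultants from Proposition~\ref{proposition: shape of f_ijpq}(2),(3). For each column index $q\in\{3,\ldots,n\}$ pick $j_q\in\{2,\ldots,q-1\}$ with $R_{j_qq}(a)\neq 0$: viewing $f_{(1,1,2,q)}$ and $f_{(1,j_q,2,q)}$ as a pair of binary quadratics in $(\lambda_{1q},\lambda_{2q})$, the determinant in Proposition~\ref{proposition: shape of f_ijpq}(2) is exactly their Sylvester resultant, so Lemma~\ref{lemma:dimension lemma}(2) applies and bounds the dimension by that of the subvariety with $\lambda_{1q}=\lambda_{2q}=0$. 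Since these pairs of variables are disjoint across different $q$, the applications chain without interference; a parallel argument with $Q_{i_pp}(a)\neq 0$ and the pair $(f_{(1,1,p,2)},f_{(i_p,1,p,2)})$ for $p\in\{3,\ldots,n\}$ further forces $\lambda_{p1}=\lambda_{p2}=0$.

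Denote by $\tilde C'_a$ the resulting subvariety, living in the linear subspace of dimension $4+(n-2)^2$ with coordinates $(\lambda_{11},\lambda_{12},\lambda_{21},\lambda_{22})$ and $(\lambda_{pq})_{3\leq p,q\leq n}$. The restrictions of the remaining $f_{(i,j,p,q)}$ to this subspace separate into a single \emph{corner} quadric $f_{(1,1,2,2)}$ in the four upper-left variables, a family of \emph{block} quadrics $\{f_{(i,j,p,q)}:3\leq i<p\leq n,\;3\leq j<q\leq n\}$ of exactly the same shape as the original on the $(n-2)\times(n-2)$ submatrix $(\lambda_{ij})_{3\leq i,j\leq n}$, and $4(n-2)^2$ \emph{coupling} quadrics, each of which reduces to a nondegenerate binary quadratic in exactly one corner variable and one block variable --- with nonzero pure-square coefficients for generic $a$ by the explicit formulas in Proposition~\ref{proposition: shape of f_ijpq}(1). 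A descending induction on $n$ then closes the argument: the inductive hypothesis bounds the block contribution by $3$, while for each fixed corner point the couplings pin each block variable to a finite set of values, so the total dimension does not exceed the dimension of the corner quadric, which is $3$. The base case $n=2$ is a single quadric hypersurface $\{f_{(1,1,2,2)}=0\}\subset \C^4$ of dimension exactly $3$.

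The principal obstacle will be verifying the inductive step: one needs the restricted block quadrics to satisfy the analogues of the resultant non-vanishing from Proposition~\ref{proposition: shape of f_ijpq}(2),(3). The restricted coefficients are rational functions of $a$ whose shape is controlled by the explicit formulas in Proposition~\ref{proposition: shape of f_ijpq}(1); no algebraic identity forces simultaneous cancellation, so the analogues remain generically non-zero, but this requires an explicit check. An alternative that bypasses a formal induction is to perform the reduction in $\lceil (n^2-3)/2\rceil$ individually chosen steps with each resultant verified directly from the explicit coefficient formulas, which is feasible at the cost of extra bookkeeping.
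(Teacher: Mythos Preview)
Your proposal contains a genuine gap in the inductive step. The issue is with your use of the ``coupling quadrics'' after the first round of eliminations. Recall that Lemma~\ref{lemma:dimension lemma}(2) requires the retained polynomials $f_3,\ldots,f_q$ to be \emph{independent} of the eliminated variables $x_1,x_2$; it yields a bound $\dim X\le \dim X_2$ where $X_2$ is cut out only by those independent polynomials. It does \emph{not} say that $X$ is contained in the subspace $V_2$, nor that the restrictions of the remaining $f$'s to $V_2$ constrain $X_2$. Now every quadric $f_{i,j,p,q}$ with $i\in\{1,2\}$, $p\ge 3$ (or $j\in\{1,2\}$, $q\ge 3$) involves at least one of the variables $\lambda_{iq},\lambda_{pj}$ that you have eliminated; hence none of your ``coupling quadrics'' can appear among the retained $f_3,\ldots,f_q$. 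After all your eliminations, the only surviving constraints are the corner quadric $f_{1,1,2,2}$ (in $\lambda_{11},\lambda_{12},\lambda_{21},\lambda_{22}$) and the block quadrics $f_{i,j,p,q}$ with $i,j,p,q\ge 3$. These live in disjoint sets of variables, so the reduced variety is a \emph{product}: its dimension is $3+\dim(\text{block variety})$, which your induction only bounds by $3+3=6$, not $3$. Your sentence ``for each fixed corner point the couplings pin each block variable to a finite set of values'' is precisely the illegitimate step: those restricted couplings are not constraints on the variety $X_2$ that Lemma~\ref{lemma:dimension lemma} produces.

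The paper circumvents this by choosing a different collection of $n^2-3$ quadrics: alongside $f_{1,1,2,q}$ and $f_{1,1,p,2}$ it uses the ``diagonal'' quadrics $f_{i,j,i+1,j+1}$ for $1\le i,j\le n-1$. These form a chain linking adjacent rows and columns, so that when one adds the variables of column $k{+}2$ (or row $k{+}2$) one at a time, the quadric $f_{k+1,l,k+2,l+1}$ used to eliminate the new variable $\lambda_{k+2,l+1}$ involves only variables already present at the previous stage. This chain structure is exactly what prevents the decoupling your block decomposition suffers. Your final ``alternative'' paragraph gestures at such a direct step-by-step reduction; that is in fact the correct route, but it is the paper's argument rather than a fallback, and it requires the specific choice of the diagonal quadrics (or something equivalent), not just ``individually chosen steps''.
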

\begin{proof}
By Proposition \ref{proposition: shape of f_ijpq}, we can choose a generic parameter $a=(a_{ij})\in \C^{n^2}$, such that Proposition \ref{prop:bases} holds, and $\forall 1\leq i<p\leq n$, $\forall 1\leq j<q\leq n$, each of the rational functions $c_{ijpq}^{pqpq}$, $R_{jq}$, $Q_{ip}$ takes nonzero value at the point $a$. We only need to show that at this point $a$, $dim C^{'}_{1,a}\leq 2$. In the following, we fix this parameter $a$.

Under the basis $\mu_jy_j^2$ $(1\leq i, j\leq n)$, we identify $R^{N}_{(1,2)}$ with $\C^{n^2}$, and we view $(\lambda_{ij})$  $(1\leq i, j\leq n)$ as the coordinates on the affine space  $\C^{n^2}$.

It is obvious that the cone in $R^{N}_{(1,2)}=\C^{n^2}$ corresponding to $C^{'}_{1,a}$ is the variety $\tilde{X}\subset \C^{n^2}$ defined by the simultaneously vanishing of the ${n\choose 2}^2$ homogeneous  quadratic polynomials $f_{ijpq}$ $(1\leq i<p\leq n, 1\leq j<q\leq n)$. Define $X\subset \C^{n^2}$ by the simultaneously vanishing of the following $n^2-3$ homogeneous  quadratic polynomials:
\begin{displaymath}
\begin{split}
  &f_{i,j,i+1,j+1} \ \ (1\leq i, j\leq n-1);\\
  &f_{1,1,2,q} \ \ (3\leq q\leq n); \\
  &f_{1,1,p,2} \ \ (3\leq p\leq n).
\end{split}
\end{displaymath}

Since $dim C^{'}_{1,a}=$$dim \tilde{X}-1$ and $\tilde{X}\subset X$, it suffices to show $dim X\leq 3$.

In order to prove $dim X\leq 3$ using Lemma \ref{lemma:dimension lemma}, we give a filtration of $\C^{n^2}$ by affine spaces and define a subvariety in each of these affine spaces, i.e. we want to get the following  diagram:
\begin{displaymath}
\begin{array}{ccccccccc}
  V_1 & \subset & V_2 & \subset & \cdots & \subset & V_t & = & \C^{n^2} \\
  \cup &  & \cup &  &  &  & \cup &  & \cup \\
  X_1 &  & X_2 &  & \cdots &  & X_t & = & X
\end{array}
\end{displaymath}
with $t=(n-1)^2$.


Recall $(\lambda_{ij})$ $(1\leq i, j\leq n)$ are coordinates on $\C^{n^2}$.  First give a filtration of the index set $S:=\{(i,j) \mid 1\leq i, j\leq n\}$ as follows
$$
S_1\subset S_2\subset\cdots \subset S_t=S,
$$
where we define inductively
\begin{itemize}
\item $\forall 1\leq p \leq n-1$, $S_p:=\{(i,j)\in S\mid 1\leq i\leq 2, 1\leq j\leq p+1\}$;
\item $\forall k\geq 1$, $\forall k(n-1)+1\leq p\leq (k+1)(n-1), S_p:=S_{k(n-1)}\cup \{(k+2,j)\mid 1\leq j\leq p-k(n-1)+1\}$.
\end{itemize}
Now $\forall 1\leq p\leq t$, define the affine space
$$
V_{p}:=\{(\lambda_{ij})\in  \C^{n^2}\mid \lambda_{ij}=0, \forall (i,j)\notin S_p\}.
$$
Then $\forall 1\leq p\leq t$, we define  $X_p\subset V_p$  by the simultaneous vanishing of the polynomials in $\mathcal{F}_p$, where $\mathcal{F}_p$ is the set of polynomials defined inductively   as follows:
\begin{itemize}
\item $\mathcal{F}_1:=\{f_{1122}\}$;
\item $\forall 2\leq p\leq n-1$, $\mathcal{F}_p:=\mathcal{F}_{p-1}\cup \{f_{1,1,2,p+1}, f_{1,p,2,p+1}\}$;
\item $\forall k\geq 1$, $\forall k(n-1)+1\leq p\leq (k+1)(n-1)$, \newline $\mathcal{F}_p:= \mathcal{F}_{p-1}\cup \{f_{1,1,k+2,2}, f_{k+1,p-k(n-1),k+2,p-k(n-1)+1}\}$.
\end{itemize}
By Proposition \ref{proposition: shape of f_ijpq}, each $f_{ijpq}$ is a polynomial of the four variables $\lambda_{ij},\lambda_{iq},\lambda_{pi},\lambda_{pq}$,  so  $X_{p}\subset V_{p}$ is well defined.
According to our choice of the  parameter $a$, $\forall 1\leq i<p\leq k-1$, $\forall 1\leq j<q\leq n$, each of the rational functions $c_{ijpq}^{pqpq}$, $R_{jq}$, $Q_{ip}$ takes nonzero value at $a$. Then  a direct verification shows that $\forall 1\leq p\leq t-1$, the diagram
\begin{displaymath}
\begin{array}{ccc}
  V_{p-1} & \subset & V_{p} \\
  \cup &  & \cup \\
  X_{p-1} &  & X_{p}
\end{array}
\end{displaymath}
satisfies the conditions in (1) or (2) of Lemma \ref{lemma:dimension lemma}, hence we get $dim X=dim X_t$ $\leq$ $dim X_{t-1}$ $\leq \cdots \leq $ $ dim X_1$. By definition, one can see that $X_1$ is a hypersurface in $\C^4$ defined by a nonzero polynomial, so $dim X_1\leq 3$, and finally we get $dim X\leq 3$.
\end{proof}

\end{document}